\tikzset{anchorbase/.style={baseline={([yshift=-0.5ex]current bounding 
box.center)}}}
\tikzset{wipe/.style={white,line width=4pt}}
\tikzset{->-/.style={decoration={
  markings,
  mark=at position #1 with {\arrow{>}}},postaction={decorate}}}
\tikzset{-<-/.style={decoration={
  markings,
  mark=at position #1 with {\arrow{<}}},postaction={decorate}}}
\tikzset{darkg/.style={green!70!black}}
\theoremstyle{plain}
\newtheorem*{theorem*}{Theorem}
\newtheorem*{remark*}{Remark}
\newtheorem*{example*}{Example}
\newtheorem{lemma}{Lemma}[subsection]
\newtheorem{proposition}[lemma]{Proposition}
\newtheorem{corollary}[lemma]{Corollary}
\newtheorem{theorem}[lemma]{Theorem}
\newtheorem*{conjecture*}{Conjecture}
\theoremstyle{definition}
\newtheorem{definition}[lemma]{Definition}
\newtheorem{example}[lemma]{Example}
\theoremstyle{remark}
\newtheorem{remark}[lemma]{Remark}
\newtheorem{notation}[lemma]{Notation}
\newcommand{\Hom}{\operatorname{Hom}}
\newcommand{\eps}{\varepsilon}
\newcommand{\triv}{{\mathbbm 1}}
\newcommand{\id}{\operatorname{Id}}
\newcommand{\End}{\operatorname{End}}
\newcommand{\C}{{\mathbb C}}
\newcommand{\Z}{{\mathbb Z}}
\newcommand{\V}{\mathbf{V}}
\newcommand{\abs}[1]{\left|{#1}\right|}
\newcommand{\uRep}{\underline{{\rm Rep}}}
\newcommand{\Rep}{\mathrm{Rep}}
\newcommand{\A}{{\mathcal A}}
\newcommand{\F}{{\mathbb F}}
\newcommand{\InnaA}[1]{{\color{black}{#1}}}
\newcommand{\urep}{\Rep^{\InnaA{ab}}(\underline{GL}_{t}(\F_q))}
\newcommand{\kar}[1]{\InnaA{\underline{\Rep}(GL_{#1}(\F_q))}}
\newcommand{\repinf}{Rep(GL_{\infty}(\F_q))}
\newcommand{\Vect}{\mathrm{Vect}_{\C}}
\newcommand{\one}{{\mathbf{1}}}
\newcommand{\z}{{\dot{0}}}
\newcommand{\comment}[1]{}
\newcommand{\subscript}[2]{$#1 _ #2$}
\def\quotient#1#2{%
    \raise1ex\hbox{$#1$}\Big/\lower1ex\hbox{$#2$}%
}
\begin{document}

\title{Deligne categories and representations of the finite general linear group, part 1: universal property}
\date{\today}
  \author{Inna Entova-Aizenbud, Thorsten Heidersdorf}

 \address{I. E.: Department of Mathematics, Ben Gurion University of the Negev, Beer-Sheva, Israel}
 \email{entova@bgu.ac.il}
 \address{T. H.: Mathematisches Institut Universit\"at Bonn, Germany}
 \email{heidersdorf.thorsten@gmail.com}

 \thanks{2010 {\it Mathematics Subject Classification}: 05E05, 18D10, 20C30.}

\begin{abstract} We study the Deligne \InnaA{interpolation} categories $\kar{t}$ for $t\in \mathbb{C}$\InnaA{, first introduced by F. Knop}. These categories interpolate the categories of finite dimensional complex representations of the finite general linear group $GL_n(\mathbb{F}_q)$. We describe the morphism spaces in this category via generators and relations. 

We show that the generating object of this category (\InnaA{an} analogue of the representation $\C\F_q^n$ of $GL_n(\mathbb{F}_q)$) carries the structure of a Frobenius algebra with a compatible $\F_q$-linear structure; we call such objects $\mathbb{F}_q$-linear Frobenius spaces, and show that $\kar{t}$ is the universal symmetric monoidal category generated by such an $\mathbb{F}_q$-linear Frobenius space of categorical dimension $t$. 

In the second part of the paper, we prove a similar universal property for a category of representations of $GL_{\infty}(\mathbb{F}_q)$.
\end{abstract}

\maketitle

\section{Introduction} 
\subsection{}
An important theme in representation theory is to study representations in large rank. One incarnation of this is due to Deligne who constructed families of universal rigid symmetric monoidal categories $\uRep(GL_t)$, $\uRep(O_t)$, $\uRep(S_t)$, $t \in \mathbb{C}$ (see \cite{Deligne-Milne, Del07}) which capture stabilization phenomena that occur when the rank of the group goes to infinity. These categories interpolate the representation categories $\Rep(GL_n(\mathbb{C}))$, $\Rep(O_n(\mathbb{C}))$, $\Rep(S_n)$ ($n \in \mathbb{N}$) in the sense that these \emph{Deligne categories} admit for integral parameters $t \in \mathbb{N}$ the usual representation categories as quotients. Knop (see \cite{K}) extended Deligne's construction and defined a vast number of interpolating categories which capture the phenomenon of stabilization with respect to rank. Among Knop's examples are the categories of finite dimensional complex representations of the finite general linear group $GL_n(\F_q)$.

The current article is the first paper in a series in which we study \InnaA{Knop's interpolation categories for $GL_n(\F_q)$, which we denote by $\kar{t}$}. 

In order to define this category, we first define a ``skeletal'' subcategory $\InnaA{\mathcal{T}(\underline{GL}_t)}$ generated by a self-dual object $[1]$, which is the analogue of the $GL_n(\F_q)$-representation $\C\F_q^n$ (this is analogous to the construction of $\uRep(S_t)$ in \cite{Del07}). The objects in this skeletal subcategory are of the form $[k]$ for $k \in \mathbb{N}$ and should be thought of as the $k$-th tensor powers of the generating object $\V_t := [1]$. We assign to every subspace $R \subset \F_q^{l+k}$ a morphism $[l] \to [k]$. The Hom space $\Hom_{\mathcal{T}(\underline{GL}_t)} ([l], [k])$ is given by the span of the set $$ \{f_R: R \subset \F_q^{l+k} \; \text{ linear subspace }\}$$ and the composition of morphisms of the form $f_R$ depends polynomially on a fixed parameter $t \in\mathbb{C}$. The rule $[k] \otimes [l] = [k + l]$ turns this skeletal subcategory into a symmetric monoidal category. Its additive Karoubi envelope is baptized $\kar{t}$; this idea is essentially the same as in Knop's construction from \cite{K}, and we show in Section \ref{ssec:knop} that the categories obtained in both constructions are equivalent.

One major difficulty of the $\kar{t}$-case compared to the $S_t$-case is to obtain a more explicit description of the morphism spaces $\Hom([k],[s])$. The first crucial observation is that the generating object $[1]$ is an $\mathbb{F}_q$-linear Frobenius space in $\kar{t}$; this roughly means that $[1]$ carries simultaneously a commutative Frobenius algebra structure and an $\mathbb{F}_q$-module structure with some obvious compatibility conditions (see Definition \ref{def:Frob_linear_space}). 

Let us explain the origin of this structure. \InnaA{Given a symmetric monoidal category $\mathcal{T}$, one may consider the category of commutative algebra objects in $\mathcal{T}$, and its opposite: the category of affine schemes in $\mathcal{T}$.} 
The categories $\Rep(GL_n(\F_q))$ are ``generated'' by the finite \InnaA{affine} scheme $X_n:=\F^n_q$ on which $GL_n(\F_q)$ acts. It is \InnaA{an affine scheme} in  $\Rep(GL_n(\F_q))$, and the standard representation $\C\F_q^n$ of $GL_n(\F_q)$ is isomorphic to $O(X_n)^*$. Furthermore, the \InnaA{affine} scheme $X_n$ has an $\F_q$-linear structure, which induces a corresponding $\F_q$-linear structure structure on $\C\F_q^n$, compatible with the coalgebra structure on $\C\F_q^n=O(X_n)^*$. Together, these structures make the representation $\C\F_q^n$ into an $\mathbb{F}_q$-linear Frobenius space in $\Rep(GL_n(\F_q))$. We show that the category $\Rep(GL_n(\F_q))$ is generated by this space, and extend this statement to the Deligne categories $\kar{t}$.

In order to show this we first describe Hom spaces in $\Rep(GL_n(\mathbb{F}_q))$ by generators. By Corollary \ref{cor:specialization} the category $\kar{t}$ interpolates the finite-dimensional complex representations of $GL_n(\mathbb{F}_q)$ in the sense that for values $t = q^n$, $n \in \Z_{\geq 0}$, we have a full, essentially surjective SM functor
 $$F_n: \kar{t=q^n} \longrightarrow \Rep(GL_n(\F_q)).$$
Since the functor $F_n$ is fully faithful on small tensor powers, we may check the relations for morphisms in $\kar{t}$ by looking at their image under the functor $F_n$.

The main result of this article is that the category $\kar{t}$ satisfies the following universal property:

\begin{theorem}\label{introthrm:univ_prop_Del} (see Theorem \ref{thrm:univ_prop_Del}) 
 Let $\mathcal{C}$ be a Karoubi additive rigid SM category, and let $\V $ be an $\F_q$-linear Frobenius space in $\mathcal{C}$. Let $t = \dim(\V)$. Then there exists a SM functor
 $$ F_{\V}: \kar{t} \to \mathcal{C}, \;\; \V_t \longmapsto \V.$$
 \end{theorem}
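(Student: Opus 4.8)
The plan is to construct the functor $F_\V$ on the skeletal category $\mathcal{T}(\underline{GL}_t)$ first and then extend to the Karoubi envelope. On objects we must send $[k] \mapsto \V^{\otimes k}$ (forced by monoidality and $\V_t = [1] \mapsto \V$). The heart of the matter is the assignment on morphisms: recall that $\Hom_{\mathcal{T}(\underline{GL}_t)}([l],[k])$ is spanned by the symbols $f_R$ indexed by linear subspaces $R \subseteq \F_q^{l+k}$. So I need to produce, for each such $R$, a morphism $\phi_R \colon \V^{\otimes l} \to \V^{\otimes k}$ in $\mathcal{C}$, built purely out of the $\F_q$-linear Frobenius space structure on $\V$ (multiplication, comultiplication, unit, counit, the $\F_q$-action maps, and the symmetry of $\mathcal{C}$). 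The natural recipe: a subspace $R \subseteq \F_q^{l+k}$ should be encoded as a composite of (i) diagonal/comultiplication maps that ``copy'' tensor factors, (ii) the $\F_q$-linear combination maps that take an $\F_q$-linear functional applied to a tuple of copies (this is where $R$, viewed via its defining linear equations or equivalently via a spanning set, enters), and (iii) multiplication/counit maps that ``merge'' or ``close off'' factors. Concretely one presents $R$ by a matrix and reads off a string diagram; one should check this is independent of the presentation, using the Frobenius relations and the $\F_q$-bilinearity axioms. I would organize this by first treating a few building-block subspaces (the graph of a linear map, the full space, the zero space, ``coincidence'' subspaces $x_i = x_j$) and expressing a general $f_R$ as a composition/tensor product of these, mirroring whatever generators-and-relations presentation was established earlier in the paper.

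Once the map on $\Hom$-spaces is defined, I must verify it is a functor, i.e.\ compatible with composition: $\phi_{R'} \circ \phi_R$ must equal the linear combination of $\phi$'s prescribed by the composition rule for the $f_R$'s in $\mathcal{T}(\underline{GL}_t)$. This is the step where the categorical dimension $t = \dim(\V)$ intervenes: composing two subspace-symbols produces closed loops, each evaluating to $\dim(\V) = t$ in $\mathcal{C}$ exactly as, by construction, it produces a factor of $t$ in $\mathcal{T}(\underline{GL}_t)$. So the polynomial-in-$t$ composition law on the source side is matched term-by-term on the target side precisely because $\V$ has categorical dimension $t$. I would also check $F_\V$ respects the symmetric monoidal structure — the tensor rule $[k]\otimes[l] = [k+l]$ goes to $\V^{\otimes k} \otimes \V^{\otimes l}$, and the braiding symbols $f_R$ for permutation subspaces go to the braiding of $\mathcal{C}$ — and that it is additive. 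Finally, since $\mathcal{C}$ is Karoubi additive, the universal property of the additive Karoubi envelope gives a unique (up to iso) extension from $\mathcal{T}(\underline{GL}_t)$ to all of $\kar{t} = \mathrm{Kar}(\mathcal{T}(\underline{GL}_t)^{\oplus})$.

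The main obstacle I anticipate is the well-definedness and functoriality of the morphism assignment: showing that the string-diagram built from a subspace $R$ depends only on $R$ and not on the chosen generators/equations, and that the composition rule — which in $\mathcal{T}(\underline{GL}_t)$ is dictated by counting intersections of subspaces and produces the powers of $t$ — is reproduced on the $\mathcal{C}$-side purely from the Frobenius and $\F_q$-linearity axioms plus $\dim\V = t$. In practice this is where one leans hardest on the generators-and-relations description of $\Hom$-spaces obtained earlier in the paper: it suffices to send each generating morphism to the corresponding Frobenius-space morphism in $\mathcal{C}$ and check that each defining relation is sent to an identity in $\mathcal{C}$, which reduces an infinite verification to a finite list. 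The rigidity of $\mathcal{C}$ is used to make sense of duals of $\V$ (here $\V$ is self-dual via the Frobenius form), and the role of $t$ in the relations involving ``bubbles'' is the only place the hypothesis $t = \dim(\V)$ is consumed. I expect no difficulty in the Karoubi extension step, which is formal.
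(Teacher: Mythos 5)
Your proposal follows essentially the same route as the paper: define the functor on the skeletal category by sending the basis morphism $f_R$ to a morphism of tensor powers of $\V$ built from a matrix presentation of $R$ (in the paper, $\widetilde f_R = T^{-1}\bigl((z^*)^{\otimes d}\circ\mu_B\bigr)$ where the rows of $B$ span $R$), prove independence of the chosen presentation, verify that composition and tensor product of these morphisms reproduce the rules $t^{d(R,S)}\,f_{S\star R}$ and $f_{R_1\times R_2}$ --- with $t=\dim\V$ entering exactly through the closed loops, as you predict --- check that permutation subspaces go to the braiding, and extend formally to the Karoubi additive envelope. One caveat: your suggested shortcut of sending generators to generators and ``checking that each defining relation is sent to an identity'' is not available as stated, since the paper establishes only a generating set of morphisms (Proposition \ref{prop:gen_morphisms}), not a presentation of $\mathcal{T}(\underline{GL}_t)$ by relations (that would amount to a freeness statement at least as strong as the theorem itself); the verification must instead be carried out on the basis $\{f_R\}$ directly, via the calculus of the maps $\mu_A$ and the composition law $\widetilde f_S\circ\widetilde f_R=(\dim\V)^{d(R,S)}\widetilde f_{S\star R}$ of Proposition \ref{prop:composition_phi_R}, which is where the real work of the proof lies.
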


Let us give some details on the construction \InnaA{of} such a functor. First of all, we need to send $[k]$ to $\V^{\otimes k}$. We then need to know where to send morphisms $[k] \to [s]$ in $\kar{t}$.

Let us discuss the example $\mathcal{C} = \Rep(GL_n(\F_q))$ with its $\F_q$-linear Frobenius algebra $\V_n = \mathbb{C}\F_q^n$ first. To any $R \subset \F_q^{k+s}$ we associate a morphism $f_R: \V_n^{\otimes k} \to \V_n^{\otimes k}$. 
We show that the set $\{ f_R| R \subset \F_q^{s+k}\}$ spans the space $\Hom_{GL_n(\F_q)}(\V_n^{\otimes s}, \V_n^{\otimes k})$ (see Lemma \ref{lem:basis}) and is a basis if $n\geq s+k$. It is then easy to see (see Corollary \ref{cor:specialization}) that for $t=q^n$ the functor $F_{\V}: \kar{q^n} \to \Rep(GL_n(\F_q))$ agrees with the specialization functor $F_n$ from above when $\V =\V_n \in \Rep(GL_n(\F_q))$.

For a general $\mathcal{C}$ we define morphisms  $\widetilde{f}_R:\V^{\otimes s}\to \V^{\otimes r}$, $R\subset \F_q^{s+k}$ which serve as analogues of the maps $f_R \in \kar{t}$ described above\comment{ in an outburst of creative energy}. We then prove that the morphisms $\widetilde{f}_R \in \mathcal{C}$ satisfy the same relations on their compositions and tensor products as their counterparts in $\kar{t}$. This allows us to construct a symmetric monoidal functor from the Deligne category into $\mathcal{C}$.

We also state other related universal properties, e.g. for $\Rep(GL_n(\F_q))$ itself.  As a Corollary we obtain:

\begin{corollary} 
Let $\mathcal{C}$ be a pre-Tannakian symmetric tensor category, and let $\V$ be an $\F_q$-linear Frobenius space in $\mathcal{C}$ annihilated by some exterior power. Let $n\in \mathbb{Z}_{\geq 0}$ such that $\dim \V = q^n$. Then the SM functor $F_{\V}: \kar{t=q^n} \to \mathcal{C}$ seen in \ref{thrm:univ_prop_Del} factors through the functor $F_n: \kar{t=q^n} \to \Rep(GL_n(\F_q))$, and we have a natural SM isomorphism $$ F_{\V} \longrightarrow \widetilde{F}_{\V} \circ F_n. $$ 
\end{corollary}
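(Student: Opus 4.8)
The plan is to factor the functor $F_{\V}$ through $\Rep(GL_n(\F_q))$ by exhibiting $\widetilde{F}_{\V}$ as a quotient-type construction and then checking that the composite $\widetilde{F}_{\V} \circ F_n$ receives $F_{\V}$. First I would use the fact that $F_{\V}$ sends $\V_t \mapsto \V$ and hence $[k] \mapsto \V^{\otimes k}$, and that the morphisms $f_R \in \kar{t}$ span the Hom spaces; so $F_{\V}$ is determined by its values $\widetilde{f}_R$ on the generating morphisms. Since $\mathcal{C}$ is pre-Tannakian (hence abelian) and $\V$ is annihilated by an exterior power $\Lambda^{m}\V = 0$, the same computation that shows $\dim \V = q^n$ forces the morphisms $\widetilde{f}_R$ to satisfy exactly the additional relations that hold in $\Rep(GL_n(\F_q))$ — namely, that $\{f_R : R \subset \F_q^{s+k}\}$ ceases to be linearly independent once $n < s+k$ and that the linear dependencies are governed by the kernel of $F_n$. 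By Lemma \ref{lem:basis} and Corollary \ref{cor:specialization}, those dependencies are precisely generated by the relations already present in $\kar{t=q^n}$ once we pass through $F_n$.

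Concretely, I would argue as follows. The universal functor $F_n: \kar{t=q^n} \to \Rep(GL_n(\F_q))$ is full and essentially surjective, so $\Rep(GL_n(\F_q))$ is the quotient of $\kar{q^n}$ by the tensor ideal $\mathcal{I} = \ker F_n$ (the "negligible-type" ideal coming from $n < s+k$). To produce $\widetilde{F}_{\V}: \Rep(GL_n(\F_q)) \to \mathcal{C}$ it therefore suffices to show that $F_{\V}$ kills $\mathcal{I}$; then $\widetilde{F}_{\V}$ is induced on the quotient and the natural isomorphism $F_{\V} \xrightarrow{\sim} \widetilde{F}_{\V}\circ F_n$ is automatic from the universal property of the quotient. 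To see that $F_{\V}$ kills $\mathcal{I}$, note that $\mathcal{I}$ is generated as a tensor ideal by the morphisms expressing that a certain alternating-type idempotent (the analogue of the projector onto $\Lambda^{n+1}$ of the standard representation, or more precisely the obstruction living in $\Hom([n+1],[n+1])$ witnessing $\dim\V_t = q^n$) vanishes in $\Rep(GL_n(\F_q))$. But in $\mathcal{C}$ the hypothesis $\Lambda^{m}\V = 0$ for the appropriate $m$ — combined with $\dim\V = q^n$, which by the structure of an $\F_q$-linear Frobenius space pins down $m$ in terms of $n$ — makes $\widetilde{F}_{\V}$ of that same idempotent zero, because the $\F_q$-linear Frobenius structure on $\V$ refines to an honest filtration/grading mirroring the flag structure of $\F_q^n$. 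Hence $F_{\V}(\mathcal{I}) = 0$.

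The remaining step is to assemble these observations into the stated natural symmetric monoidal isomorphism. Since $F_n$ is a full, essentially surjective SM functor realizing $\Rep(GL_n(\F_q))$ as $\kar{q^n}/\mathcal{I}$, any SM functor out of $\kar{q^n}$ annihilating $\mathcal{I}$ factors uniquely (up to canonical SM natural isomorphism) through $F_n$; apply this with $F_{\V}$ to obtain $\widetilde{F}_{\V}$ together with $F_{\V} \xrightarrow{\sim} \widetilde{F}_{\V}\circ F_n$. One should check that the factorization is compatible with the monoidal and symmetry constraints, but this is formal once $\mathcal{I}$ is recognized as a tensor ideal and $F_{\V}$, $F_n$ are symmetric monoidal.

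The main obstacle I anticipate is the second step: precisely identifying the tensor ideal $\mathcal{I} = \ker F_n$ and checking that the hypothesis "$\V$ annihilated by some exterior power" (together with $\dim\V = q^n$) is exactly strong enough to force $F_{\V}(\mathcal{I}) = 0$. This requires understanding how the condition $\Lambda^{m}\V=0$ propagates through the $\F_q$-linear Frobenius structure — in effect, that an $\F_q$-linear Frobenius space in a pre-Tannakian category with $\dim\V = q^n$ and bounded exterior powers must "look like" $\mathbb{C}\F_q^n$ in the relevant sense, so that the same relations hold. Establishing this faithfully, rather than just heuristically, is where the real work lies; the rest is the formal universal-property bookkeeping summarized above.
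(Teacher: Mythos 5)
Your overall strategy (realize $\Rep(GL_n(\F_q))$ as the quotient of $\kar{t=q^n}$ by the tensor ideal $\ker F_n$, then show $F_{\V}$ kills that ideal) is legitimate in outline, but the step that carries all the content is exactly the one you leave as a heuristic. You assert that $\ker F_n$ is generated as a tensor ideal by a single ``alternating-type'' idempotent in $\Hom([n+1],[n+1])$, and that the hypotheses $\Lambda^{m}\V=0$ and $\dim\V=q^n$ force its image under $F_{\V}$ to vanish ``because the $\F_q$-linear Frobenius structure refines to a filtration mirroring the flag structure of $\F_q^n$.'' Neither claim is proved, and neither is available in the paper: the kernel of $F_n$ (the negligible ideal, by Knop's semisimplification theorem) is not computed anywhere, and there is no general principle that a SM functor to a pre-Tannakian (possibly non-semisimple) category kills negligible morphisms — that is precisely why a hypothesis like ``$\V$ is annihilated by some exterior power'' must be converted into an actual argument. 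As written, the justification that $F_{\V}(\ker F_n)=0$ is an assertion of the conclusion, and you acknowledge as much in your last paragraph. A secondary point: the corollary asserts the factorization through the specific functor $\widetilde{F}_{\V}$ of Theorem \ref{thrm:univ_prop_classical}, so even if your quotient construction produced some induced functor, you would still need to identify it with that $\widetilde{F}_{\V}$.

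The paper takes a different and shorter route that avoids analyzing $\ker F_n$ altogether. Theorem \ref{thrm:univ_prop_classical} constructs $\widetilde{F}_{\V}$ directly by Tannakian reconstruction: the exterior-power hypothesis makes the tensor subcategory generated by $\V$ Tannakian, so it is $\Rep(G)$ for an affine group $G$; the Frobenius structure realizes $\V$ as $O(X)^*$ for a finite $G$-set $X$, the $\F_q$-linear structure makes $X$ an $\F_q$-vector space $V$, and the resulting embedding $G\hookrightarrow GL(V)$ gives the restriction functor $\widetilde{F}_{\V}$ (this is where $\Lambda^N\V=0$ and $\dim\V=q^n$ actually get used). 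The corollary is then immediate from the uniqueness half of the universal property (Corollary \ref{cor:univ_prop_2}): both $F_{\V}$ and $\widetilde{F}_{\V}\circ F_n$ are SM functors out of $\kar{t=q^n}$ sending the generating object to the same $\F_q$-linear Frobenius space $\V$, hence they are naturally SM isomorphic. If you want to salvage your approach, the missing work is a genuine description of the tensor ideal $\ker F_n$ together with a proof that the exterior-power condition kills its generators — which is substantially harder than the Tannakian argument the paper uses.
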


In a subsequent article, we will show that for a pre-Tannakian  symmetric tensor category $\mathcal{C}$, and an $\F_q$-linear Frobenius space $\V$ in $\mathcal{C}$ {\it not} annihilated by any exterior power, the functor $F_{\V}$ factors through a certain tensor category $\urep$ in which $\kar{t}$ sits as a full SM subcategory. This category $\urep$ is called the {\it abelian envelope} of $\kar{t}$. We expect that this abelian envelope is related to the category of $VI$-modules of Putman and Sam (see \cite{PS}) and algebraic representations $\repinf$ of $GL_{\infty}(\mathbb{F}_q)$ (cf. \cite{N2}) similarly to the situation for symmetric groups (see \cite{BEAH}). The abelian envelope of $\kar{t}$ in the sense of \cite{EHS} has also been constructed in recent work of Harman-Snowden (see \cite{HS}).

In this article we establish a first link between algebraic representations of the group $GL_{\infty}(\mathbb{F}_q)$ (as defined in Section \ref{def:repinf}) and $\kar{t}$. Let $\mathcal{T}(GL_{\infty}(\mathbb{F}_q))$ denote the full subcategory of $\repinf$ consisting of tensor powers of $\C\mathbb{F}_q^{\infty}$. We establish in Proposition \ref{prop:rep_infty_inj_objects} that there exists a (non-full) symmetric monoidal embedding of $\mathcal{T}(GL_{\infty}(\mathbb{F}_q))$ into $\kar{t}$. The image of this embedding is called $\mathcal{I}_{\infty}$; it contains all objects $\kar{t}$ as well as linear combinations of morphisms $f_R:[k]\to[s]$, $R\subset \F_q^{k+s}$ such that the projection $R\to \F_q^s$ is onto. Both $\mathcal{T}(GL_{\infty}(\mathbb{F}_q))$ and $\mathcal{I}_{\infty}$ do not possess a good notion of duality anymore. In particular, the object $\C\mathbb{F}_q^{\infty} \in \mathcal{T}(GL_{\infty}(\mathbb{F}_q))$ is no longer an $\F_q$-linear Frobenius space. We therefore define in Definition \ref{def:semi_Frob_space} the weaker notion of an $\F_q$-linear semi-Frobenius space which is essentially an $\F_q$-linear Frobenius space which does not necessarily possess a unit (and thus might not \InnaA{be a} rigid \InnaA{object}). An example of this notion is given by the object $\C\mathbb{F}_q^{\infty} \in \mathcal{T}(GL_{\infty}(\mathbb{F}_q))$, and by any $\F_q$-linear Frobenius space.

\begin{theorem}(see Theorem \ref{thrm:inf_univ_prop})
  Let $\mathcal{C}$ be a symmetric monoidal $\C$-linear category, and let $\V $ be an $\F_q$-linear semi-Frobenius space in $\mathcal{C}$. Then there exists a symmetric monoidal $\C$-linear functor
 $$ \Gamma_{\V}: \mathcal{I}_{\infty} \to \mathcal{C}, \;\; [1] \longmapsto \V.$$
\end{theorem}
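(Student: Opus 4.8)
The plan is to construct $\Gamma_{\V}$ directly, by adapting the construction of the functor $F_{\V}$ from Theorem \ref{thrm:univ_prop_Del} and simply omitting the algebra unit throughout. On objects we are forced to set $\Gamma_{\V}([k]) = \V^{\otimes k}$, so in particular $[1]\mapsto\V$ and $[0]\mapsto\mathbf{1}$. For morphisms, recall from Proposition \ref{prop:rep_infty_inj_objects} that a basis of $\Hom_{\mathcal{I}_{\infty}}([k],[s])$ is given by the morphisms $f_R$ with $R\subseteq\F_q^{k+s}$ whose projection onto $\F_q^s$ is surjective, and that composition and tensor product in $\mathcal{I}_{\infty}$ are the ones inherited from $\kar{t}$; since $\mathcal{I}_{\infty}$ is a copy of $\mathcal{T}(GL_{\infty}(\F_q))$, a category defined with no reference to $t$, these structure constants do not depend on $t$. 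Hence it suffices to (i) attach to each such $R$ a morphism $\widetilde{f}_R\in\Hom_{\mathcal{C}}(\V^{\otimes k},\V^{\otimes s})$ built from the structure maps of the $\F_q$-linear semi-Frobenius space $\V$ (Definition \ref{def:semi_Frob_space}), and then (ii) check that $R\mapsto\widetilde{f}_R$, extended $\C$-linearly, is compatible with composition and with tensor products. This yields the desired symmetric monoidal $\C$-linear functor. Note that $\mathcal{C}$ is required here to be neither additive nor rigid nor Karoubian, which is exactly why the statement cannot be reduced to the universal property of $\kar{t}$: even after enlarging $\mathcal{C}$, the object $\V$ remains only semi-Frobenius, and an honest $\F_q$-linear Frobenius space cannot in general be produced from it.

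For step (i) I would reuse verbatim the recipe that produces the morphisms $\widetilde{f}_R$ in the proof of Theorem \ref{thrm:univ_prop_Del} (the one that specialises to the $f_R$ in $\Rep(GL_n(\F_q))$): starting from a spanning set of $R$, one iterates the comultiplication to create enough copies of the $k$ input tensor factors, applies the $\F_q$-scalings, and then multiplies copies together, or discards them via the counit, according to the linear relations encoded by $R$, finally permuting the outputs. In the full Frobenius setting the recipe also inserts a unit $\mathbf{1}\to\V$ to manufacture a fresh, input-independent factor in each output slot not constrained by $R$; the hypothesis that $R\to\F_q^s$ is onto is precisely what guarantees, for a suitably chosen presentation of $R$, that no such inserted unit is ever needed, so the whole recipe lives inside the semi-Frobenius structure of $\V$. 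One then has to check that $\widetilde{f}_R$ is independent of the chosen presentation of $R$; this is a finite diagrammatic verification that reduces to the defining axioms of an $\F_q$-linear semi-Frobenius space --- associativity, coassociativity, cocommutativity, the Frobenius relation, and the compatibilities of the scalings with the (co)multiplication and the counit --- and it can be organised exactly as in the $\kar{t}$ case, with the obvious unit-free simplifications.

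For step (ii), compatibility with tensor products and with the associativity and symmetry constraints is immediate from the recipe. The composition relation is the heart of the matter: given $\widetilde{f}_R$ and $\widetilde{f}_{R'}$ with matching source and target, one concatenates the two recipes and uses the Frobenius relation to slide the comultiplications coming from $\widetilde{f}_R$ past the multiplications coming from $\widetilde{f}_{R'}$ in the shared layer of tensor factors, collects the scalings, and then reads off the outcome as a sum over the ``gluings'' of $R$ and $R'$ along the shared middle coordinates, which is the composition rule of $\mathcal{I}_{\infty}$. It is decisive here that we work inside $\mathcal{I}_{\infty}$ rather than inside all of $\kar{t}$: the surjectivity hypotheses force every gluing to be again surjective onto the new output space, and, more importantly, they prevent the formation of a closed diagram component --- such a component would necessarily involve the unit $\mathbf{1}\to\V$, and the appearance of such components is exactly what produces the powers of $t$ in the composition law of $\kar{t}$. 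Since no such component can occur, no scalar correction intervenes, and the coefficients match those of $\mathcal{I}_{\infty}$ directly. I expect this gluing computation --- tracking which subspace gluings occur, with which coefficients, and confirming that no unit and no closed loop is ever required --- to be the only genuinely laborious step; it runs parallel to, and is if anything simpler than, the composition lemma underlying Theorem \ref{thrm:univ_prop_Del}. Finally, since $[1]$ generates $\mathcal{I}_{\infty}$ as a symmetric monoidal category, the functor obtained in this way is the unique symmetric monoidal $\C$-linear functor sending $[1]$ to $\V$ compatibly with their semi-Frobenius structures.
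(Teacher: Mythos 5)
Your proposal is correct and follows essentially the same route as the paper: you exploit the surjectivity of the projection of $R$ onto the target coordinates to choose a presentation of $R$ with an identity block, so that the morphism attached to $R$ can be built from the semi-Frobenius data alone (the paper's $\hat{f}_R$ of Definition \ref{def:f_R_semiFrob_def}, a composition of $\mu_A$'s with $z^*$'s and no unit), check independence of the presentation, and then verify the composition and tensor laws as in Proposition \ref{prop:composition_f_R_infty} and Lemma \ref{lem:tensor_prod_f_R_infty}. Your observation that surjectivity is preserved under gluing and rules out closed components (hence no $\dim\V$ factors) is exactly the content of Lemma \ref{lem:Rel_infty_well_def}.
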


The proof is very similar to the proof of Theorem \ref{introthrm:univ_prop_Del}. The main difficulty lies in the construction of analogues of the morphisms $\widetilde{f}_R$ in this context. 

\subsection{Structure of the paper}
In Section \ref{sec:classical_endom} we define the spanning set $\{ f_R| R \subset \F_q^{s+k}\}$ for $\Hom_{GL_n(\F_q)}(\V_n^{\otimes s}, \V_n^{\otimes k})$, and show in Section \ref{ssec:composition} that the composition of two morphisms from this set is up to scalar again a morphism in this set. We also determine, in Section \ref{ssec:gen_morph}, a set of generators for the morphisms in $\Rep(GL_n(\F_q))$.

In Section \ref{sec:Deligne_def}, we give an explicit construction of the Deligne categories $\kar{t}$, $t\in \C$ and show that for $t=q^n$ there exists an essentially surjective, full symmetric monoidal functor $F_n: \kar{t=q^n} \longrightarrow \Rep(GL_n(\F_q))$.

In Section \ref{sec:Frob_line_spaces} we define $\F_q$-linear Frobenius spaces as Frobenius algebra objects in a $\C$-linear rigid symmetric monoidal category $\mathcal{C}$ equipped with certain maps which essentially describe an $\F_q$-linear structure on these objects; we show that the objects $\C\F_q^n \in \Rep(GL_n(\F_q))$ and $[1]\in \kar{t=q^n}$ are $\F_q$-linear Frobenius spaces, and in Section \ref{ssec:string_diagr} we introduce string diagrams which are a useful visual tool to manage morphisms between tensor powers of an $\F_q$-linear Frobenius space. We also construct important examples of such morphisms: namely, given such a space $\V$ in a $\C$-linear symmetric monoidal category $\mathcal{C}$, we define for any matrix $A\in Mat_{s\times k}(\F_q)$ a map $\mu_A: \V^{\otimes k} \to \V^{\otimes s}$. 

In Section \ref{sec:linear_algebra_for_Frob_space}, we study the maps $\mu_A$ and show that their compositions and tensor products can be conveniently described using linear algebra over $\F_q$. 

In Section \ref{sec:morph_fR} we define morphisms $\widetilde{f}_R:\V^{\otimes s}\to \V^{\otimes r}$, $R\subset \F_q^{s+k}$ in $\mathcal{C}$, which will serve as analogues of the maps $f_R$ defined in Sections \ref{sec:classical_endom}, \ref{sec:Deligne_def}. We will then prove that they satisfy the same relations on their compositions and tensor products as their counterparts in Section \ref{sec:Deligne_def}.

In Section \ref{sec:univ_prop} we use the results on the morphisms $\widetilde{f}_R$ to construct a SM functor from the Deligne category into a category $\mathcal{C}$ which contains a $t$-dimensional $\F_q$-linear Frobenius space. Any $t$-dimensional $\F_q$-linear Frobenius space in $\mathcal{C}$ gives rise to such a functor. The representation category $\Rep(GL_n(\F_q))$ satisfies a similar universal property with respect to $q^n$-dimensional $\F_q$-linear Frobenius spaces $\V$ annihilated by some exterior power.

In Section \ref{sec:repinf} we study the category $\mathcal{T}(GL_{\infty}(\F_q))$, the full subcategory of $GL_{\infty}(\F_q)-mod$ whose objects are tensor powers of $\V_{\infty} = \C\F_q^{\infty}$ where $\F_q^{\infty} = \bigcup_{n> 0} \F_q^n$. The representation $\V_{\infty}$ is an $\F_q$-linear semi-Frobenius space, and we show that $\mathcal{T}(GL_{\infty}(\F_q))$ satisfies a universal property with respect to $\F_q$-linear semi-Frobenius spaces. For this we need analogs of the morphisms $\widetilde{f}_R$. These morphisms $\hat{f}_R:\V^{\otimes k} \to \V^{\otimes l}$ are then defined and studied in Section \ref{ssec:semi-diagrams}.


\section{Notation}\label{sec:notn}
Our main base field throughout the paper is $\C$, but since we work with complex representations of $GL_n(\F_q)$, we will also use vector spaces over $\F_q$.

All functors and all categories in this paper will be $\C$-linear.

\subsection{Finite linear groups}\label{ssec:notn_gen_lin_finite} Let $q \in \mathbb{Z}_{>0}$ be a power of a prime.

To distinguish the vector spaces over the two fields, we denote by $\z$ the zero vector in a vector space over $\F_q$, and by $0$ the zero vector in a complex vector space. 

Given a vector space $V$ over $\F_q$, we denote by $v\dot{+}w \in V$ the sum of vectors $v, w\in V$ and by $\dot{a}v$ multiplication of $v$ by a scalar $a\in \F_q$; the operations over $\C$ will be denoted by the usual notation. For larger sums, we denote by $\dot{\sum}$ sums over $\F_q$ and by $\sum$ sums over $\C$.

\begin{definition}

\mbox{}

 \begin{itemize}
  \item We denote by $GL_n(\F_q) $ the group of all invertible $n \times n$-
matrices over the finite field $\F_q$. 
\item Let $A\in Mat_{d\times m}(\F_q)$. We denote by $Row(A) \subset \F_q^m$ the span of rows of $A$. 
\item
For any $v_1, \ldots, v_s \in \F_q^n$, write $\vec{v} = \begin{bmatrix}
v_1 \\v_2 \\ \vdots \\ v_s
\end{bmatrix} \in \left(\F_q^n\right)^{\oplus s}$. For any matrix $M \in Mat_{k\times s}(\F_q)$ we can now define $M\vec{v}$ as an element of $\left(\F_q^n\right)^{\oplus k}$ (we will write it again as a column vector).
\comment{\item For $1 \leq m \leq n$, let $P_{m, n} 
\subset GL_n(\F_q)$ denote the parabolic subgroup of matrices of the form 
$\begin{bmatrix}
                                   A &B\\ 0 &D
                                  \end{bmatrix}$.}
\item We will denote $\V_{n} := \C\F_q^{n}$.
\item $I_d$ stands for the identity matrix of size $d \times d$ over $\F_q$.
 \end{itemize}

\end{definition}

%
%
%
%
%
%

\subsection{SM categories and SM functors} Let $k$ be any field. We adopt the same notion of a $k$-linear symmetric monoidal (SM) category as \cite{Deligne-Milne}. In particular we have a binatural family of braiding morphisms $\gamma_{XY}:X\otimes Y\stackrel{\sim}{\to} Y\otimes X$ which satisfy the constraints of \cite{Deligne-Milne}. For an object $X\in \mathcal{C}$, let $X^*$ denotes its dual (if it exists). If every object has a dual, we call $\mathcal{C}$ {\it rigid}.

\medskip
A $k$-linear functor $F:\mathcal{C}\to \mathcal{C}'$ between two SM categories $\mathcal{C}$ and $\mathcal{C}'$ is symmetric monoidal ({\it SM} for short) if it is equipped with natural isomorphisms $c_{XY}^F:F(X)\otimes F(Y)\stackrel{\sim}{\to}F(X\otimes Y)$ and $\one\stackrel{\sim}{\to}F(\one)$ satisfying the usual  compatibility conditions \cite[Definition 1.8]{Deligne-Milne}. If $F$ is a monoidal functor and $X^*$ the dual of $X$, then $F(X^\ast)$ is a dual of $F(X)$.


\medskip
We say that a $k$-linear SM category $\mathcal{C}$ is a tensor category if

\begin{enumerate} 
\item[(i)] $\mathcal{C}$ is abelian;
\item [(ii)] the canonical morphism $k \to \End(\one)$ is an isomorphism;
\item [(iii)] $\mathcal{C}$ is rigid;
\item [(iv)] every object in $\mathcal{C}$ has finite length.
\end{enumerate}

We also call such categories `pre-Tannakian categories' as in \cite{CO2}. If such a category has in addition a faithful $k$-linear tensor functor to $Vec_k$, the category of finite dimensional $k$ vector spaces, we will call it Tannakian.

\section{Endomorphisms of \texorpdfstring{$\V_n^{\otimes k}$}{tensor powers of Vn}}\label{sec:classical_endom}
\subsection{Basis of endomorphisms}\label{ssec:basis_of_endomorphisms}
Fix $n\geq 1$. Let $G = GL_n(\F_q)$, $V = \F_q^n$ (seen as a vector space over $\F_q$).

Let $s, k\in \Z_{\geq 0}$. 
Recall that $$\V_n^{\otimes s} = \C V^{\times s}.$$ We will consider a natural basis for $\V_n^{\otimes s}$ given by the elements of $V^{\times s}$. For $v_1, \ldots, v_s \in V$, we write $(v_1| \ldots| v_s) \in V^{\times s}$.

Let $R \subset \F_q^{s+k} = \F_q^s \times \F_q^k$ be a linear $\F_q$-subspace. We define a $G$-invariant subspace $$R^{\perp}_{\InnaA{V}} = \{(v_1| \ldots| v_{s+k}) \in V^{\times (s+k)} \; \rvert \; \forall u = (u_1, \ldots, u_{s+k}) \in R, \; \dot{\sum}_i u_i v_i =0  \} $$ in $V^{\times (s+k)}$. This allows us to define a map
$$f_R:\V_n^{\otimes s} \to \V_n^{\otimes k}, \;\;  v_1\otimes \ldots \otimes v_s \mapsto \sum_{\substack{(w_1|\ldots|w_k)\in V^{\times k} \\ (v_1| \ldots| v_s|w_1|\ldots|w_k) \in R^{\perp}_{\InnaA{V}}}} w_1 \otimes\ldots\otimes w_k. $$

\begin{example}
 \begin{enumerate}
  \item For $R=\{\z\} \subset \F_q^{s+k}$ we have: $$f_{\{\z\}} :\V_n^{\otimes s} \to \V_n^{\otimes k}, \;\; v_1\otimes \ldots \otimes v_s \mapsto \sum_{(w_1|\ldots|w_k)\in V^{\times k}} w_1\otimes\ldots\otimes w_k. $$ For instance, for $s=k=1$, we obtain: $f_{\{\z\}} :\V_n \to \V_n$, $v\mapsto \sum_{w \in V} w$ for any $v\in V$.
  \item For $R=\F_q^{s+k}$ we have: $$f_{\F_q^{s+k}} :\V_n^{\otimes s} \to \V_n^{\otimes k}, \;\; v_1\otimes \ldots\otimes v_s \mapsto \begin{cases}
   \z \otimes\ldots\otimes \z &\text{ if } \; v_1 =\ldots = v_s =\z\\                                                                                                                         
   0  &\text{ else }   
   \end{cases}. $$ For instance, for $s=0$, $k=1$, we obtain: $f_{\F_q^{1}} :\C \to \V_n$, $1\mapsto \z$.
   \item For $s=k=1$ and $R=span_{\F_q}(1,0)$ we obtain the map $f_R: \V_n \to \V_n$, $v \to \z$ for any $v\in V\setminus\{\z\}$, and $\z \mapsto \sum_{w\in V} w$.
   \item For $s=k=1$ and $b\in \F_q$, let $R=span_{\F_q}(-b, 1)$. We obtain the map $f_R: \V_n \to \V_n$, $v \to \dot{b} v$ for any $v\in V$.
 \end{enumerate}

\end{example}

\begin{lemma}\label{lem:basis}
 The set $\{ f_R| R \subset \F_q^{s+k}\}$ spans the space $\Hom_G(\V_n^{\otimes s}, \V_n^{\otimes k})$. Furthermore, if $n\geq s+k$ then this set is a basis of $\Hom_G(\V_n^{\otimes s}, \V_n^{\otimes k})$.
\end{lemma}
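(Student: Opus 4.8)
The plan is to analyze $\Hom_G(\V_n^{\otimes s}, \V_n^{\otimes k})$ using the standard description of $G$-invariants in a permutation representation, and match this with the combinatorial data of subspaces $R \subset \F_q^{s+k}$. Since $\V_n^{\otimes s} = \C V^{\times s}$ and $\V_n^{\otimes k} = \C V^{\times k}$ as $G$-representations, we have $\Hom_G(\V_n^{\otimes s}, \V_n^{\otimes k}) \cong (\C V^{\times(s+k)})^G$, the space of $G$-invariant functions on $V^{\times(s+k)}$; equivalently, it has a basis given by characteristic functions of the $G$-orbits on $V^{\times(s+k)} = (\F_q^n)^{\oplus(s+k)}$. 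Concretely, a morphism $\phi: \V_n^{\otimes s} \to \V_n^{\otimes k}$ corresponds to the ``matrix'' $M_\phi((v_\bullet),(w_\bullet))$ of coefficients, which is $G$-invariant in the combined tuple.

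First I would recall (or prove in a line) that two tuples $(v_1,\dots,v_{s+k})$ and $(v_1',\dots,v_{s+k}')$ in $V^{\times(s+k)}$ lie in the same $GL_n(\F_q)$-orbit if and only if they have the same ``linear dependency data'', i.e. the map $\F_q^{s+k} \to V$ sending the $i$-th standard basis vector to $v_i$ has the same kernel in both cases. Thus $G$-orbits on $V^{\times(s+k)}$ are parametrized by subspaces $S \subset \F_q^{s+k}$ that arise as such a kernel — and a subspace $S$ arises this way precisely when $\dim(\F_q^{s+k}/S) \le n$, i.e. $\codim S \le n$. The orbit corresponding to $S$ consists exactly of the tuples lying in what the paper calls $S^\perp_V$ (the tuples killed by all of $S$) but \emph{not} in $S'^\perp_V$ for any strictly larger $S' \supsetneq S$. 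Next, I would observe that $f_R$ is, by its very definition, the morphism whose coefficient matrix is the characteristic function of the set $R^\perp_V = \{(v_1|\dots|v_{s+k}) : \dot\sum u_i v_i = \z \ \forall u \in R\}$; this set is a union of $G$-orbits, namely the union over all $S \supseteq R$ (with $\codim S \le n$) of the orbit $\mathcal{O}_S$.

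Now the spanning statement and the basis statement both follow from an inclusion-exclusion / Möbius inversion argument on the poset of subspaces of $\F_q^{s+k}$. Writing $\chi_{R^\perp_V} = \sum_{S \supseteq R,\ \codim S \le n} \chi_{\mathcal{O}_S}$, Möbius inversion over the lattice of subspaces expresses each $\chi_{\mathcal{O}_S}$ (for $\codim S \le n$) as a $\Z$-linear combination of the $\chi_{R^\perp_V}$ with $R \supseteq S$; since the $\chi_{\mathcal{O}_S}$ with $\codim S \le n$ form a basis of $\Hom_G$, the set $\{f_R\}_R$ spans. For the basis claim, note that if $n \ge s+k$ then \emph{every} subspace $S \subset \F_q^{s+k}$ has $\codim S \le n$, so the orbits $\mathcal{O}_S$ are indexed by \emph{all} subspaces $S$, and the Möbius matrix relating $\{f_R\}$ to $\{\chi_{\mathcal{O}_S}\}$ is unitriangular with respect to the (reverse) inclusion order on the full subspace lattice, hence invertible; therefore $\{f_R : R \subset \F_q^{s+k}\}$ is also a basis. (When $n < s+k$ the map $R \mapsto f_R$ is no longer injective, since $f_R$ only ``sees'' $R$ through which orbits $\mathcal{O}_S$ with $\codim S \le n$ it contains, and indeed $f_R = f_{R'}$ whenever $R, R'$ have the same set of codimension-$\le n$ superspaces.)

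The main obstacle, and the step requiring the most care, is the precise bookkeeping in the Möbius inversion: one must be sure that the relevant incidence algebra computation takes place over the lattice of \emph{all} subspaces of $\F_q^{s+k}$ (so that the Möbius function is well understood and the triangularity is genuine), while simultaneously tracking the constraint $\codim S \le n$ that cuts down which orbits actually exist inside $V^{\times(s+k)}$. Concretely, I need that $\chi_{R^\perp_V} \ne 0$ iff $\codim R \le n$ and that, after restricting attention to the sub-poset $\{S : \codim S \le n\}$, the transition matrix between $\{f_R\}_{\codim R \le n}$ and $\{\chi_{\mathcal{O}_S}\}_{\codim S \le n}$ remains unitriangular — this is automatic because the sub-poset is an up-set (order filter) in the subspace lattice. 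Once this is set up cleanly, both assertions of the lemma drop out, with the $n \ge s+k$ hypothesis used exactly to guarantee the sub-poset is the whole lattice and the indexing set $\{R\}$ matches the orbit set bijectively.
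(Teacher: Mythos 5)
Your argument is correct and is essentially the paper's own proof: both identify $\Hom_G(\V_n^{\otimes s},\V_n^{\otimes k})$ with the span of orbit-sum morphisms $\breve f_{O}$ for $G$-orbits $O$ on $V^{\times(s+k)}$, parametrize the (nonempty) orbits by subspaces $S\subset\F_q^{s+k}$ of codimension at most $n$ via the space of linear dependencies, note that $f_R=\sum_{S\supseteq R}\breve f_{O_S}$, and conclude spanning (and, when $n\ge s+k$, linear independence) from the unitriangularity of this relation, i.e.\ M\"obius inversion on the subspace lattice. One minor aside, irrelevant to the lemma: your parenthetical claim that $R\mapsto f_R$ fails to be injective when $n<s+k$ is not accurate --- since $R^{\perp}_{V}=V\otimes_{\F_q}R^{\perp}$, distinct subspaces $R$ give distinct morphisms $f_R$ for any $n\ge 1$ (and no two distinct $R,R'$ share the same set of superspaces of codimension at most $n$); what fails for $n<s+k$ is only their linear independence.
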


\begin{proof}
The space $\Hom_G(\V_n^{\otimes s}, \V_n^{\otimes k})$ has a basis $\{\breve{f}_O | O \in V^{\times (s+k)}//G\}$ which is parameterized by the orbits of $G$ on $V^{\times s} \times V^{\times k} = V^{\times (s+k)}$. Here $\breve{f}_O$ is defined as follows: for $(v_1|\ldots |v_s) \in V^{\times s}$, $$\breve{f}_O(v_1\otimes \ldots \otimes v_s) =  \sum_{\substack{(w_1|\ldots|w_k)\in V^{\times k} \\ (v_1| \ldots| v_s|w_1|\ldots|w_k) \in O}} w_1 \otimes\ldots\otimes w_k.$$
The orbits $O \in V^{\times (s+k)}//G$ are in bijection with subspaces $R \subset \F_q^{s+k}$. This bijection can be constructed as follows: a subspace $R \subset \F_q^{s+k}$ such that $\dim R \geq s+k-n$ corresponds to the orbit
$$O_R:= \{(v_1| \ldots| v_s|w_1|\ldots|w_k)\in R^{\perp}_{\InnaA{V}}\,\rvert \; \dim span\{v_1, \ldots, v_s,w_1,\ldots,w_k\} = s+k-\dim R\}$$
If $\dim R < s+k-n$, let us write $O_R := \emptyset$. Thus $\{\breve{f}_{O_R}| \;R \subset \F_q^{s+k}\}$ is a spanning set of $\Hom_G(\V_n^{\otimes s}, \V_n^{\otimes k})$, and it is a basis if $s+k\leq n$. 

Note that for any $(v_1| \ldots| v_s|w_1|\ldots|w_k)\in R^{\perp}_{\InnaA{V}}$, we have: $$\dim span\{v_1, \ldots, v_s,w_1,\ldots,w_k\} \leq s+k-\dim R.$$

Hence $$f_R = \sum_{R \subset S \subset \F_q^{s+k}} \breve{f}_{O_S}$$ and so $\{f_R | \;R \subset \F_q^{s+k}\}$ is a spanning set of $\Hom_G(\V_n^{\otimes s}, \V_n^{\otimes k})$, and it is a basis if $s+k\leq n$.
\end{proof}

\subsection{Composition}\label{ssec:composition}
Let $f_R:\V_n^{\otimes l} \to \V_n^{\otimes k}$, $f_S:\V_n^{\otimes k} \to \V_n^{\otimes s}$ be two endomorphisms as above corresponding to subspaces $R \subset \F_q^{l+k}, S\subset \F_q^{k+s}$.

Consider the subspaces $R\times \{\vec{0}\},  \{\vec{0}\}\times S \subset \F_q^{l+k+s}$. We will denote them $(R, 0)$ and $(0, S)$.  

We will also denote by $(\F_q^l ,0, \F_q^s)$ the image of $\F_q^l \times \F_q^s \hookrightarrow \F_q^{l+k+s}$: namely, this is the subspace of $\F_q^{l+k+s}$ consisting of vectors whose coordinates $l+1, \ldots, l+k$ are zero.

\begin{definition}\label{def:composition_star_d_R_S}
 Denote $$S\circledast R :=  \left((R, 0)+ (0,S)\right) \cap (\F_q^l ,0, \F_q^s),$$ and denote by $S\star R$ the (isomorphic) image of $S\circledast R$ under the identification $ \F_q^{l+s} \cong (\F_q^l ,0, \F_q^s)$. Let
 $$d(R, S) =  k - \dim \left(\substack{\text{ projection of } (R, 0)+ (0,S) \\ \text{ on the subspace } \F_q^k \subset \F_q^{l+k+s}}\right)$$
 where the subspace $\F_q^k \subset \F_q^{l+k+s}$ consists of the vectors in $\F_q^{l+k+s}$ whose coordinates $1,\ldots, l$, and $ l+k+1,\ldots, l+k+s$ are zero.
\end{definition}

\begin{proposition}\label{prop:composition}
 The composition $f_S \circ f_R: \V_n^{\otimes s} \to \V_n^{\otimes l}$ equals
$$ q^{n \cdot d(R, S)} f_{S \star R}.$$
\end{proposition}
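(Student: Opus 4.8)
The plan is to compute the composition $f_S\circ f_R$ directly on the natural basis $(v_1|\dots|v_s)\in V^{\times s}$ of $\V_n^{\otimes s}$ and to compare the result, coefficient by coefficient, with $q^{n\cdot d(R,S)}f_{S\star R}$ evaluated on the same basis vector. Fix $(v_1|\dots|v_s)\in V^{\times s}$. By definition, $f_R$ applied to an intermediate tuple $(w_1|\dots|w_k)$ produces $\sum_{(u_1|\dots|u_l)} u_1\otimes\dots\otimes u_l$ where the sum runs over tuples with $(u_1|\dots|u_l|w_1|\dots|w_k)\in R^\perp_V$, and similarly $f_S(v_1\otimes\dots\otimes v_s)=\sum_{(w_1|\dots|w_k)\colon (w_1|\dots|w_k|v_1|\dots|v_s)\in S^\perp_V} w_1\otimes\dots\otimes w_k$. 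Hence the coefficient of $u_1\otimes\dots\otimes u_l$ in $(f_S\circ f_R)(v_1\otimes\dots\otimes v_s)$ equals the number of intermediate tuples $(w_1|\dots|w_k)\in V^{\times k}$ such that simultaneously $(u_1|\dots|u_l|w_1|\dots|w_k)\in R^\perp_V$ and $(w_1|\dots|w_k|v_1|\dots|v_s)\in S^\perp_V$.

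The key reformulation is that this set of intermediate tuples is an affine subspace (a coset of a linear subspace) of $V^{\times k}\cong (\F_q^k)^{\otimes ?}$ — more precisely, of the $\F_q$-vector space $V^{\times k}$ — so its cardinality is either $0$ or a power of $q$. I would make this precise using the ``column vector'' formalism from the Notation section: the conditions $(u|w)\in R^\perp_V$ and $(w|v)\in S^\perp_V$ say that for every row of a matrix whose rows span $R$ (resp.\ $S$), the corresponding $\F_q$-linear combination of the $u_i,w_j$ (resp.\ $w_j,v_i$) vanishes in $V$. Assembling all these linear conditions, the set of admissible $(w_1|\dots|w_k)$ for fixed $u,v$ is the solution set of an inhomogeneous linear system over $\F_q$ with values in $V$; it is nonempty iff $(u_1|\dots|u_l|v_1|\dots|v_s)$ lies in $(S\star R)^\perp_V$ — this is exactly the content of the definition of $S\circledast R$ as $((R,0)+(0,S))\cap(\F_q^l,0,\F_q^s)$, since a constraint on the $u$'s and $v$'s alone survives precisely when the corresponding functional lies in the intersection. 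And when nonempty, the number of solutions is $q^{n\cdot c}$ where $c$ is the $V$-dimension count, i.e.\ $n$ times the dimension of the space of solutions to the homogeneous system in $(\F_q^k)$-coordinates; that homogeneous solution space has dimension $k$ minus the rank of the combined constraints on the $w$-coordinates, which is exactly $k-\dim(\text{projection of }(R,0)+(0,S)\text{ onto }\F_q^k)=d(R,S)$. So the coefficient of $u_1\otimes\dots\otimes u_l$ is $q^{n\cdot d(R,S)}$ if $(u_1|\dots|u_l|v_1|\dots|v_s)\in(S\star R)^\perp_V$ and $0$ otherwise, which is precisely $q^{n\cdot d(R,S)}$ times the coefficient in $f_{S\star R}(v_1\otimes\dots\otimes v_s)$.

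The main obstacle is the bookkeeping: one must carefully identify the affine-space structure on the set of intermediate tuples and verify both (a) that nonemptiness of the fiber over $(u,v)$ is governed exactly by membership in $(S\star R)^\perp_V$ rather than some larger or smaller space, and (b) that the count $d(R,S)$ as defined via the projection onto the middle factor $\F_q^k$ genuinely equals the homogeneous-solution dimension, independently of $(u,v)$ (as long as the fiber is nonempty). Part (a) is essentially a diagram-chase in linear algebra: a linear functional $\phi$ on $\F_q^{l+k+s}$ gives a constraint that only involves the $u$- and $v$-coordinates precisely when its $\F_q^k$-component can be cancelled using elements of $(R,0)+(0,S)$, i.e.\ $\phi\in(R,0)+(0,S)$ has a representative in $(\F_q^l,0,\F_q^s)^*$ — dualize and this matches $S\circledast R$. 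Part (b) follows because the homogeneous system on the $w$'s is the restriction of the constraints coming from $(R,0)+(0,S)$ to the $\F_q^k$ block, whose rank is the dimension of the projection; this is constant in $(u,v)$. I would also handle the edge cases $l=0$ or $s=0$ and the possibility that some $R^\perp_V$ is empty-ish (i.e.\ forces $v_i=\z$) as consequences of the general computation rather than separately. A brief remark that the identification $\F_q^{l+s}\cong(\F_q^l,0,\F_q^s)$ is what lets us write $f_{S\star R}$ in place of $f_{S\circledast R}$ completes the argument.
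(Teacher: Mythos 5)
Your proposal is correct and follows essentially the same route as the paper's proof: evaluate on basis vectors, identify the coefficient of each output basis vector as the cardinality of the affine solution set of an $\F_q$-linear system in the intermediate variables $w_1,\dots,w_k\in V$, match its (non)emptiness with membership in $(S\star R)^{\perp}_{V}$, and obtain the factor $q^{n\,d(R,S)}$ from the rank of the projection of $(R,0)+(0,S)$ onto the middle block $\F_q^k$. If anything, you make the consistency criterion for the inhomogeneous system (why nonemptiness is governed exactly by membership in $(S\star R)^{\perp}_{V}$) more explicit than the paper does.
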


\begin{proof}
Let $v_1, \ldots, v_s \in V$. We have:
$$(f_S \circ f_R)(v_1 \otimes \ldots\otimes v_s) = \sum_{\substack{(u_1| \ldots| u_l) \in V^{\times l}, \;\; (w_1|\ldots|w_k)\in V^{\times k} \\ (v_1| \ldots| v_s|w_1|\ldots|w_k) \in R^{\perp}_{\InnaA{V}} \, \text{ and } \\
(w_1|\ldots|w_k|u_1| \ldots| u_l) \in S^{\perp}_{\InnaA{V}} }} u_1 \otimes\ldots\otimes u_l$$
Clearly any summand in this sum satisfies:
$$(v_1| \ldots| v_s|w_1|\ldots|w_k|u_1| \ldots| u_l) \in (R,0)^{\perp}_{\InnaA{V}} \cap (0,S)^{\perp}_{\InnaA{V}} = \left( (R,0) + (0,S) \right)^{\perp}_{\InnaA{V}}$$ for some $(w_1|\ldots|w_k)\in V^{\times k}$ and so
$(v_1| \ldots| v_s|u_1| \ldots| u_l) \in (S\star R)^{\perp}_{\InnaA{V}}$. Vice versa, for every $(u_1| \ldots| u_l) \in V^{\times l}$ such that $(v_1| \ldots| v_s|u_1| \ldots| u_l) \in (S\star R)^{\perp}_{\InnaA{V}}$, we have: the term $u_1 \otimes\ldots\otimes u_l$ appears in the sum above with multiplicity which is equal to the number of elements in the set
$$Z=\{(w_1|\ldots|w_k)\in V^{\times k} \;\rvert\; (v_1| \ldots| v_s|w_1|\ldots|w_k) \in R^{\perp}_{\InnaA{V}} \, \text{ and } \\
(w_1|\ldots|w_k|u_1| \ldots| u_l) \in S^{\perp}_{\InnaA{V}}\}$$
The set $Z \subset V^{\times k}$ is thus an affine $\F_q$-space, described by a system of linear equations over $\F_q$ with variables $w_1,\ldots,w_k\in V$. The rank of this system is precisely the dimension of the projection of $ (R,0) +(0,S)$ on the subspace $\F_q^k \subset \F_q^{s+k+l}$, that is, $k-d(R, S)$. Then the size of $Z$ is $\abs{V}^{d(R, S)} = q^{n d(R, S)}$ which completes the proof of the statement.

\end{proof}

\subsection{Orthogonal indexing}\label{ssec:knop_indexing}

In \cite{K, K2}, there appears another indexing of the basis $\{f_R: R\subset \F_q^{s+k}\}$ for the $Hom$-space $Hom_{GL_n(\F_q)}(\V_n^{\otimes s},\V_n^{\otimes k})$: namely, the morphism which we denoted by $f_R$ would correspond in the notation of \cite{K, K2} to the subspace
\begin{equation}\label{eq:orthog}
 \{(v_1, \ldots, v_{s+k}) \in \F_q^{s+k} \; \rvert \; \forall u = (u_1, \ldots, u_{s+k}) \in R, \; \dot{\sum}_i u_i v_i =0  \}.
\end{equation}

The goal of this subsection is to establish the compatibility between the bases of morphisms used in this paper and the bases used in \cite{K, K2}.

In this Subsection we will denote the subspace \eqref{eq:orthog} of $\F_q^{s+k} $ by $R^{\perp}$ \InnaA{(so that $R^{\perp}:=R^{\perp}_{\F_q^1}$)}. Note that for any $R, S \subset \F_q^{s+k}$, we have: 
$$(R^{\perp})^{\perp} = R, \;\; (R+S)^{\perp} = R^\perp \cap S^\perp. $$

\begin{remark}
 Considering the isomorphism $V^{\times (s+k)} = V\otimes_{\F_q} \F_q^{s+k}$, the subspace $R^{\perp}_{\InnaA{V}} \subset V^{\times (s+k)}$ we considered before would be given by $V\otimes_{\F_q} R^{\perp}$. 
\end{remark}

To see \InnaA{what} the composition of morphisms \InnaA{looks like} in the alternative indexing, we will use the following construction.

Let $f_R:\V_n^{\otimes l} \to \V_n^{\otimes k}$, $f_S:\V_n^{\otimes k} \to \V_n^{\otimes s}$ correspond to subspaces $R \subset \F_q^{l+k}, S\subset \F_q^{k+s}$.

Consider the diagram 
$$\xymatrix{ &{} &{} &{R^{\perp} \times_{\F_q^k} S^{\perp}} \ar[ld] \ar[rd]&{}&{}\\&{} &R^{\perp} \ar[ld] \ar[rd] &{} &S^{\perp} \ar[ld]\ar[rd] &{}\\ &\F_q^l &{} &\F_q^k &{} &\F_q^s} $$
where the bottom arrows are compositions of embeddings $R^{\perp} \hookrightarrow \F_q^{l+k}, S^{\perp}\hookrightarrow \F_q^{k+s}$ with the projections on the subspaces $\F_q^l, \F_q^k, \F_q^s$. 
\begin{notation}\label{notn:Knop}
    We denote by $S^{\perp}\diamond R^{\perp}$ the image of the natural map $R^{\perp} \times_{\F_q^k} S^{\perp} \to \F_q^l\times\F_q^s$, and by $e(S^{\perp}, R^{\perp})$ the dimension of the kernel of this map.
\end{notation}

\begin{lemma}\label{lem:Knop_basis_vs_ours}
 We have: $S^{\perp}\diamond R^{\perp} = (S\star R)^{\perp}$ and $e(S^{\perp}, R^{\perp})=d(R, S)$.
\end{lemma}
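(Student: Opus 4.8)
The claim is a purely linear-algebraic identity comparing two constructions built from $R\subset\F_q^{l+k}$ and $S\subset\F_q^{k+s}$. My plan is to reduce everything to the single operation $(-)^\perp$ on subspaces of $\F_q^N$ and use the two formal properties recalled just before the lemma, namely $(R^\perp)^\perp=R$ and $(R+S)^\perp=R^\perp\cap S^\perp$ (and dually $(R\cap S)^\perp=R^\perp+S^\perp$), together with $\dim R+\dim R^\perp=N$. The first step is to set up matching ambient spaces: work inside $\F_q^{l+k+s}$, where $R^\perp_{\F_q^1}$ of the first lemma sits as $(R,0)^\perp\cap(\text{span of last }s\text{ coords})$ — more precisely I will identify $R^{\perp}\times_{\F_q^k}S^{\perp}$ with the subspace $W:=(R,0)^{\perp}\cap(0,S)^{\perp}\subset\F_q^{l+k+s}$, since a vector $(x,y,z)$ with $x\in\F_q^l,y\in\F_q^k,z\in\F_q^s$ lies in $W$ iff $(x,y)\in R^\perp$ and $(y,z)\in S^\perp$, which is exactly the fibre-product condition (the common image in $\F_q^k$ being the $y$-coordinate). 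By the formal property, $W=\bigl((R,0)+(0,S)\bigr)^{\perp}$.

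Next I would analyze the two maps. On the "dagger" side, $S^\perp\diamond R^\perp$ is the image of $W$ under the projection $\pi\colon\F_q^{l+k+s}\to\F_q^l\times\F_q^s$ that forgets the middle $k$ coordinates, and $e(S^\perp,R^\perp)=\dim(\ker\pi|_W)$. On the "star" side, $S\circledast R=\bigl((R,0)+(0,S)\bigr)\cap(\F_q^l,0,\F_q^s)$, and $S\star R$ is its image under the identification $(\F_q^l,0,\F_q^s)\cong\F_q^{l+s}$. The key computation is then:
\[
(S\star R)^{\perp}\ \text{in }\F_q^{l+s}\ \longleftrightarrow\ \bigl((R,0)+(0,S)\bigr)^{\perp}+\bigl(\F_q^l,0,\F_q^s\bigr)^{\perp}\ \text{modulo the middle coordinates,}
\]
using $(A\cap B)^\perp=A^\perp+B^\perp$ with $A=(R,0)+(0,S)$ and $B=(\F_q^l,0,\F_q^s)$. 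Now $B^\perp$ is precisely the $k$-dimensional "middle" subspace $\F_q^k\subset\F_q^{l+k+s}$ (vectors supported on coordinates $l+1,\dots,l+k$), so $A^\perp+B^\perp=W+\F_q^k$; intersecting/quotienting by $B$ to land in $\F_q^{l+s}$ exactly amounts to applying $\pi$. Hence $(S\star R)^\perp=\pi(W+\F_q^k)=\pi(W)=S^\perp\diamond R^\perp$ (the last equality because $\F_q^k\subseteq\ker\pi$), which is the first assertion.

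For the dimension statement I would just chase dimensions through the same picture. On one hand $d(R,S)=k-\dim\mathrm{pr}_{\F_q^k}\bigl((R,0)+(0,S)\bigr)=\dim\bigl(((R,0)+(0,S))\cap \ker(\mathrm{pr}_{\F_q^k})\bigr)$ after using rank–nullity and noting $\ker(\mathrm{pr}_{\F_q^k})=(\F_q^l,0,\F_q^s)$; so $d(R,S)=\dim(S\circledast R)=\dim(S\star R)$. On the other hand, $e(S^\perp,R^\perp)=\dim(W\cap\F_q^k)=\dim W-\dim\pi(W)=\dim W-\dim(S^\perp\diamond R^\perp)$. Combining $\dim W=(l+k+s)-\dim((R,0)+(0,S))$ with $\dim(S^\perp\diamond R^\perp)=\dim(S\star R)^\perp=(l+s)-\dim(S\star R)$ and the displayed formula $d(R,S)=k-\dim\mathrm{pr}_{\F_q^k}(\cdots)$ (so $\dim((R,0)+(0,S))=\dim\mathrm{pr}_{\F_q^k}(\cdots)+\dim(S\star R)$, the projection being along $(\F_q^l,0,\F_q^s)$ and its kernel-intersection being $S\circledast R$) makes both sides collapse to the same number; the bookkeeping is routine. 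The only place requiring care — the "main obstacle" — is keeping the several identifications honest: which $(-)^\perp$ lives in which ambient space, and checking that the identification $(\F_q^l,0,\F_q^s)\cong\F_q^{l+s}$ intertwines $\perp$ correctly with $\pi$, i.e. that no spurious contribution from the middle block survives. I would handle this by doing the entire argument inside $\F_q^{l+k+s}$ and only passing to $\F_q^{l+s}$ at the very end, so that every orthogonal complement is taken with respect to one fixed bilinear form.
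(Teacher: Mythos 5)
Your proposal follows essentially the same route as the paper: work inside $\F_q^{l+k+s}$, identify $R^{\perp}\times_{\F_q^k}S^{\perp}$ with $W=(R,0)^{\perp}\cap(0,S)^{\perp}=\bigl((R,0)+(0,S)\bigr)^{\perp}$, recognize $S^{\perp}\diamond R^{\perp}$ as $\pi(W)$ for the projection $\pi$ killing the middle $k$ coordinates, deduce $(S\star R)^{\perp}=\pi(W)$ from $(A\cap B)^{\perp}=A^{\perp}+B^{\perp}$ with $A=(R,0)+(0,S)$, $B=(\F_q^l,0,\F_q^s)$, and finish with a dimension count. That part is fine and matches the paper's proof.

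There is, however, one false step in your dimension bookkeeping. You assert that
\[
d(R,S)=k-\dim\mathrm{pr}_{\F_q^k}\bigl((R,0)+(0,S)\bigr)=\dim\Bigl(\bigl((R,0)+(0,S)\bigr)\cap\ker\mathrm{pr}_{\F_q^k}\Bigr)=\dim(S\star R),
\]
"by rank--nullity". Rank--nullity applied to $\mathrm{pr}_{\F_q^k}$ restricted to $A=(R,0)+(0,S)$ gives $\dim(A\cap\ker\mathrm{pr}_{\F_q^k})=\dim A-\dim\mathrm{pr}_{\F_q^k}(A)$, not $k-\dim\mathrm{pr}_{\F_q^k}(A)$; the two agree only when $\dim A=k$. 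Concretely, for $l=s=0$, $k=1$, $R=S=\{\z\}\subset\F_q^1$ one has $d(R,S)=1$ while $S\star R\subset\F_q^0$ is zero-dimensional (this is the composition $f_{\{\z\}}\circ f_{\{\z\}}=q^n\,\mathrm{id}_{\triv}$ in $\Rep(GL_n(\F_q))$, so the exponent really must be $d=1$, not $\dim(S\star R)=0$). Fortunately your final "combining" computation does not use this claim: it only uses the correct identity $\dim A=\dim\mathrm{pr}_{\F_q^k}(A)+\dim(S\star R)$ (which you state in parentheses), together with $\dim W=(l+k+s)-\dim A$, $e=\dim W-\dim\pi(W)$, and $\dim(S^{\perp}\diamond R^{\perp})=(l+s)-\dim(S\star R)$ from the first assertion; these indeed collapse to $e=k-\dim\mathrm{pr}_{\F_q^k}(A)=d(R,S)$. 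So the proof is salvageable verbatim once you delete the sentence claiming $d(R,S)=\dim(S\circledast R)$.
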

\begin{proof}

Consider the subspace $S\circledast R \subset \F_q^{l+k+s}$ we defined in Section \ref{ssec:composition}. Then $$(S\circledast R)^\perp = \left((R, 0)+(0, S)\right)^{\perp} + \InnaA{(\F_q^l, 0, \F_q^s)}^\perp = (R, 0)^{\perp}\cap(0, S)^{\perp} + (0, \F_q^k, 0).$$
Let us use this equality to describe $(S\star R)^{\perp}$. Under the indentification $(\F_q^l, 0, \F_q^s)\cong \F_q^l\times\F_q^s$, the space $(S\star R)^{\perp}$ is identified with the subspace of $(\F_q^l, 0, \F_q^s)$ which is orthogonal to $S\circledast R$, so it is given by the intersection $$(S\circledast R)^\perp \cap (\F_q^l, 0, \F_q^s) = (R, 0)^{\perp}\cap(0, S)^{\perp} \cap (\F_q^l, 0, \F_q^s).$$
Thus
$$(S\star R)^{\perp} = \{(v, u)\in \F_q^l\times\F_q^s \;|\; \exists w\in \F_q^k \,:\, (v,w)\in R^\perp, \,(w,u)\in S^\perp\}.$$
On the other hand, the map $R^{\perp} \times_{\F_q^k} S^{\perp} \to \F_q^l\times\F_q^s$ is the composition of the embedding $$\iota:R^{\perp} \times_{\F_q^k} S^{\perp} \hookrightarrow  \F_q^{l+k+s}, \;\; ((v, w), (w, u)) \longmapsto (v, w, u)$$ and the projection $\F_q^{l+k+s} \twoheadrightarrow \F_q^l\times\F_q^s$. This proves that $S^{\perp}\diamond R^{\perp}=(S\star R)^{\perp}$.

Now, $e(S^{\perp}, R^{\perp})$ equals
$$ \dim Ker\left( R^{\perp} \times_{\F_q^k} S^{\perp} \to \F_q^l\times\F_q^s \right) = \dim \left(Im(\iota) \cap (0, \F_q^k, 0) \right) = \dim  \left( (R, 0)^{\perp}\cap(0, S)^{\perp} \cap (0, \F_q^k, 0) \right),$$
while 
$$d(R, S) =  k - \dim \left(\substack{\text{ projection of } (R, 0)+ (0,S) \\ \text{ on the subspace } \F_q^k \subset \F_q^{l+k+s}}\right) = \dim Coker \left(\substack{\text{ projection of } (R, 0)+ (0,S) \\ \text{ on the subspace } \F_q^k \subset \F_q^{l+k+s}}\right).$$

Denote: $U:= (R, 0)+(0, S)$, $W:= (\F_q^l, 0, \F_q^s)$ subspaces of $\F_q^{l+k+s}$. Then $W^{\perp} = (0, \F_q^k, 0)$. Let $p:\F_q^{l+k+s} \to W^{\perp}$ denote the natural projection.

We have: $e(S^{\perp}, R^{\perp}) = \dim U^{\perp}\cap W^{\perp}$, while 
\begin{align*}
 d(R, S) &= \dim Coker(p\vert_{U}) = \dim W^{\perp} - \dim Im(p\vert_{U}) = \dim W^{\perp} - \dim U +\dim Ker(p\vert_{U}) = \\
 &= \dim W^{\perp} - \dim U +\dim U\cap W = \dim W^{\perp} -\dim U^{\perp} \cap W \\ &= \dim U^{\perp} \cap W^{\perp} = e(S^{\perp}, R^{\perp}).
\end{align*}

\end{proof}

\subsection{Generating morphisms}\label{ssec:gen_morph}
We will now give a list of generating morphisms in the full subcategory $\mathcal{T}(GL_n(\F_q))$ of $\Rep(GL_n(\F_q))$ generated by tensor powers of $\V_n$.

The following proposition will follow from the results of Section \ref{sec:univ_prop}. Namely, we will construct a Deligne category which will allow $\Rep(GL_n(\F_q))$ as a quotient, and show that the analogues of the maps below generate the morphisms in the Deligne category. 

However, for motivational purposes, we include here a direct proof of the proposition.
\begin{proposition}\label{prop:gen_morphisms}
Consider the following morphisms:
 \begin{enumerate}
  \item Morphisms 
\begin{align*}
&\eps:=f_{\{\z\}}:\triv \to \V_n, \;\;,  1 \mapsto \sum_{v\in V} v, \; \; \eps^*:=f_{\{\z\}}:\V_n \to \triv, \;\;, v \to 1   \;\; \text{ for } v\in V,\\ 
&m:=f_{\{(a+b,-a,-b)|a,b\in \F_q\}}: \V_n \otimes \V_n \to \V_n , \;\; v\otimes w \mapsto \delta_{v, w} v \;\; \text{ for } v,w\in V, \\
&m^*:=f_{\{(a+b,-a,-b)|a,b\in \F_q\}}: \V_n  \to \V_n\otimes \V_n , \;\; v \mapsto v\otimes v \;\; \text{ for } v\in V,  \\
&\sigma:=f_{\{(a,b,-b, -a)|a,b\in \F_q\}}: \V_n \otimes \V_n \to \V_n \otimes \V_n, \;\; v\otimes w \mapsto w\otimes v \;\; \text{ for } v,w\in V.
\end{align*}

  \item Morphisms 
  \begin{align*}
  &z:=f_{\F_q^1}: \triv \to \V_n,\;\;  1 \mapsto \z, \\
&\forall a\in \F_q, \;  \mu_a:=f_{\{(-ab,b)|b\in \F_q\}}: \V_n \to \V_n, \; \; v \mapsto \dot{a}v  \;\; \text{ for } v\in V,
\\& \dot{+}:=f_{\{(b,b,-b)|b\in \F_q\}}:\V_n \otimes \V_n \to \V_n, \;\; v\otimes w \mapsto v\dot{+}w   \;\; \text{ for } v,w\in V.           
\end{align*}

 \end{enumerate}
These morphisms generate the space $\left(\bigoplus_{k,l \geq 0} \Hom_G(\V_n^{\otimes k}, 
\V_n^{\otimes l}), \otimes, \circ \right)$.
\end{proposition}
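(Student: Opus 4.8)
The strategy is to show that every $f_R$ with $R \subset \F_q^{s+k}$ can be written as a composition and tensor product of the listed generators, and then invoke Lemma~\ref{lem:basis}, which says the $f_R$ span all of $\Hom_G(\V_n^{\otimes k}, \V_n^{\otimes l})$. So the whole claim reduces to: \emph{every $f_R$ lies in the monoidal subcategory generated by the morphisms in (1) and (2).}

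First I would record a few building blocks. The morphisms $m, m^*, \eps, \eps^*, \sigma$ make $\V_n$ into a commutative Frobenius algebra object (this should be a routine diagram check on the basis $V$, using $\delta_{v,w}$ and the diagonal/counit formulas); in particular the subcategory they generate already contains all $f_R$ for $R$ of the ``classical'' (set-partition) type coming from $\Rep(S_{q^n})$-like combinatorics. Next, the maps $z$, $\mu_a$ ($a\in\F_q$) and $\dot{+}$ encode the $\F_q$-affine structure on $V$: from them one builds, for any affine-linear map $\phi\colon V^{\times k}\to V^{\times l}$ over $\F_q$, the ``graph'' morphism $v_1\otimes\cdots\otimes v_k \mapsto \phi(v_1,\dots,v_k)_1\otimes\cdots\otimes\phi(\dots)_l$, by composing copies of $\mu_a$, $\dot{+}$, the diagonal $m^*$ (to duplicate inputs), and $\eps^*$ (to discard inputs). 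Concretely, for a matrix $A\in Mat_{l\times k}(\F_q)$ this yields exactly the map sending $(v_1|\dots|v_k)$ to $(A\vec v)$; call it $\mu_A$ — these are the maps the introduction promises to study in Section~\ref{sec:linear_algebra_for_Frob_space}, and here I only need that each $\mu_A$ is in the generated subcategory.

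The key step is then to express an arbitrary $f_R$ in terms of these. Unwinding the definition, $f_R\colon \V_n^{\otimes s}\to\V_n^{\otimes k}$ sends $(v_1|\dots|v_s)$ to $\sum_{(w_1|\dots|w_k)} (w_1|\dots|w_k)$ over all $w$ with $(v_1|\dots|v_s|w_1|\dots|w_k)\in R^\perp_V$, i.e. over the $\F_q$-affine space cut out by the linear conditions in $R$. I would choose a matrix presentation: pick $A\in Mat_{\bullet\times(s+k)}(\F_q)$ with $Row(A) = R$, split $A = [B\,|\,C]$ with $B$ acting on the $\V_n^{\otimes s}$ slot and $C$ on the $\V_n^{\otimes k}$ slot. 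Then $R^\perp_V = \ker$ of the corresponding map, and $f_R$ factors as: apply $\mu_B$ to the inputs and compare with the outputs fed through $\mu_C$, which is exactly $(\eps^*)^{\otimes(\text{rank})}\circ(\text{equality / }m\text{-type pairing})\circ(\mu_B\otimes\mu_C)$ — schematically, $f_R = (\text{sum over }w\text{ of }\mu_C\text{-preimages})$ realized by composing $\mu_C$ with its Frobenius adjoint and a ``delta'' pairing with $\mu_B$. The cleanest version: write $f_R$ as $\mu_{C}^{\flat}\circ\delta\circ\mu_B$ where $\delta$ is built from $m$'s and $\eps^*$'s imposing $B\vec v = C\vec w$, and $\mu_C^{\flat}$ (built from $m^*$, $z$, $\eps$) produces the sum over all $w$ mapping to a fixed value. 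Since $z$, $\eps$, $\eps^*$ supply exactly the ``$\sum_{w\in V}$'' and ``evaluate at $\z$'' operations, and $m,m^*$ supply the diagonal/codiagonal, every such $f_R$ is assembled from the listed generators.

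\emph{Main obstacle.} The genuinely delicate point is the bookkeeping in the last step: matching the \emph{scalar} (a power of $q^n = \dim\V_n$) that appears when one composes $\mu_C$ with a codiagonal and sums over fibres, so that one gets $f_R$ on the nose rather than a nonzero multiple of it — and verifying that the affine-versus-linear subtlety (the conditions $\dot\sum u_i v_i = 0$ are linear in the $v_i$, so $R^\perp_V$ is a genuine subspace, but intermediate preimages are affine) does not force extra $z$'s in the wrong place. Here Proposition~\ref{prop:composition}, which computes precisely the factor $q^{n\cdot d(R,S)}$ for composites of $f$'s, is the right tool: rather than track scalars by hand I would re-express each generator as an $f_{R'}$ (the proposition's statement already does this for all of them) and then use Proposition~\ref{prop:composition} inductively to show the relevant composite of generators equals $q^{n\cdot(\text{something})}f_R$, hence $f_R$ itself is in the span. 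This reduces the scalar bookkeeping to a single already-proved formula, which is why I would lean on it rather than on direct computation with the basis $V^{\times(s+k)}$.
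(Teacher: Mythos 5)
Your proposal is correct in outline, but it takes a genuinely different route from the paper's direct proof. The paper first uses $\sigma$ together with $ev=\eps^*\circ m$ and $coev=m^*\circ \eps$ to bend legs and reduce to the case where $R=\{(Au,-u)\mid u\in\F_q^l\}$ is the graph of a linear map; in that case $f_R$ is literally the ``apply the matrix $A$'' operator, which is then written explicitly as $\left(\otimes_{i=1}^l \mu_{A_i}\right)\circ w\circ\left((m^*)^{it}\right)^{\otimes k}$, with $\mu_{A_i}$ assembled from the $\mu_a$'s and $\dot{+}$ --- so no scalar factors ever appear. You instead keep a general $R$, present it by a full-row-rank matrix $[B\,|\,C]$ with $Row([B\,|\,C])=R$, and realize $f_R$ as a fibre-sum (Frobenius transpose) of $\mu_{-C}$ composed with $\mu_B$; this is essentially the $\phi_R$/$\widetilde{f}_R$ construction the paper develops later in Sections~\ref{sec:morph_fR}--\ref{sec:univ_prop}, specialized to $\Rep(GL_n(\F_q))$ (and the paper itself notes before its proof that the proposition follows from those results). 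Two remarks on your plan: your ``main obstacle'' is not actually one --- taking $d=\dim R$ independent rows, the composite $(\mu_{-C})^{*}\circ\mu_B$ equals $f_R$ on the nose, since $\mu_B(\vec v)$ is a single basis vector and the transpose under the $\delta_{v,w}$ pairing sums exactly over the fibre $\{\vec w: B\vec v\dot{+}C\vec w=\vec 0\}$; and even if you prefer to track scalars via Proposition~\ref{prop:composition}, nonzero multiples are harmless because the claim is that the generators span a subspace closed under $\otimes$ and $\circ$, though on that route you would also need the (easy, but not stated in Section~\ref{sec:classical_endom}) tensor identity $f_{R_1}\otimes f_{R_2}=f_{R_1\times R_2}$. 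In short, the paper's graph-reduction is shorter and self-contained at this point of the text, while your argument anticipates the general machinery that later proves the universal property for arbitrary $\F_q$-linear Frobenius spaces.
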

The morphisms described in Proposition \ref{prop:gen_morphisms} can be interpreted as follows: \begin{itemize}
  \item The morphisms $\mu_a, \dot{+}$ give $\V_n$ the structure of an (infinite) dimensional vector space over $\F_q$, with $z:\triv \to \V_n$ defining the zero vector $\z \in \V_n$; \item The rest of the morphisms make $\V_n$ into a commutative Frobenius algebra in $\Rep(GL_n(\F_q))$. The algebra is self-dual, via the pairings 
  \begin{align*}
&ev:=f_{\{(a,-a)|a\in \F_q\}}: \V_n \otimes \V_n \to \triv, \;\; v\otimes w \mapsto \delta_{v, w}   \;\; \text{ for } v,w\in V,\\
&coev:=f_{\{(a,-a)|a\in \F_q\}}: \triv \to \V_n\otimes \V_n,\;\; 1 \mapsto \sum_{v\in V} v\otimes v. 
\end{align*}
where  $$ev = \eps^* \circ m, \;\; coev = m^*\circ \eps.$$
  \begin{align*}
                                                                      \end{align*}
In particular, we may replace  
$m, \eps$ with $coev, ev$ in the list above. Indeed, 
$$m =  (ev \otimes \id ) \circ  ( \id \otimes m^*), \;\; \eps =  (\eps^* \otimes \id) \circ coev .$$
                                                                                    \end{itemize}

\begin{remark}
We have: $\mu_0 = z \circ \eps^*$.
\end{remark}
\begin{proof}

Let $R \subset \F_q^{k+l}$ be an $\F_q$-subspace. We will show that $f_R: 
\V_n^{\otimes k} \to \V_n^{\otimes l}$ can be obtained from the morphisms 
listed in Proposition \ref{prop:gen_morphisms} using tensor products and 
composition.

We already saw that the morphisms $ev, coev$ can be obtained 
from the generating morphisms.

Using the maps $\sigma$, $ev$ and $coev$, we can reduce the question to the case when 
$f_R$ sends 
$v_1\otimes \ldots \otimes v_k$ ($v_1, \ldots, v_k \in V$) to the tensor 
product of $l$ vectors 
$$ \left( \dot{\sum}_{j=1}^{k} \dot{A}_{j1} v_j \right) \otimes  \left( 
\dot{\sum}_{j=1}^{k} \dot{A}_{j2} v_j \right) \otimes \ldots \otimes \left( 
\dot{\sum}_{j=1}^{k} \dot{A}_{jl} v_j \right).$$ for some fixed $A \in 
Mat_{k\times l}(\F_q)$. 

In other words, 
$R=\{(Au, -u)| u \in \F_q^l\}$ for some $A \in Mat_{k\times l}(\F_q)$.

If $l=0$, then $f_R  = (\eps^*)^{\otimes k}$ and we are done.

If $k=0$, then $f_R  = z^{\otimes l}$ and we are done.
So from now on we assume that 
$l, k\neq 0$.

Next, we can present our morphism $f_R$ as composition 
$$ f_R = \left(\otimes_{i=1}^l \mu_{A_i} \right)\circ w \circ \left((m^*)^{it}\right)^{\otimes k}$$
where 
\begin{itemize}
 \item $\left((m^*)^{it}\right)^{\otimes k}$ is given by $$\left((m^*)^{it}\right)^{\otimes k}: \V_n^{\otimes k} \to 
\V_n^{\otimes kl}, \; \; v_1\otimes \ldots \otimes v_k \mapsto  v_1^{\otimes l} 
\otimes \ldots \otimes v_k^{\otimes l},\ \text{ for } v_1, \ldots, v_k \in V $$ 
 \item $w: \V_n^{\otimes kl}\to \V_n^{\otimes kl}$ is an endomorphism given by 
a permutation of the tensor factors, 
so that $w \circ \left((m^*)^{it}\right)^{\otimes k}$ is the morphism
$$\V_n^{\otimes k} \to 
\V_n^{\otimes kl}, \; \; v_1\otimes \ldots \otimes v_k \mapsto  (v_1 
\otimes \ldots \otimes v_k)^{\otimes l},\ \text{ for } v_1, \ldots, v_k \in V. 
$$ Given a pure tensor $v_0 \otimes \ldots 
\otimes v_{kl-1}$, $w$ sends it to a pure tensor $v'_0 \otimes \ldots \otimes 
v'_{kl-1}$ such that $v'_{r+qk} :=v_{q + rl}$ for $0\leq q < l$, $0\leq r <k$.
 \item $\mu_{A_i}: \V_n^{\otimes k} \to \V_n$ is given by $$v_1 \otimes \ldots 
\otimes v_k \mapsto  \dot{\sum}_j \dot{A}_{ji} v_j.$$
\end{itemize}

Clearly, $\left((m^*)^{it}\right)^{\otimes k}$ is obtained from $m^*$ through tensor products and 
iterated compositions, while $w$ is obtained from $\sigma$ through tensor products 
(with $\id$) and compositions. So it remains to check that for each $[a_1, 
\ldots, a_k]\in Mat_{1\times k}(\F_q)$, the map $$\mu_{[a_1, \ldots, a_k]} :\V_n^{\otimes k} 
\to \V_n, \;\;v_1 \otimes \ldots \otimes v_k \mapsto  \dot{\sum}_{j=1}^k 
\dot{a}_{j} v_j$$ can be obtained from the morphisms listed in Proposition 
\ref{prop:gen_morphisms}. Indeed, this is done by composing the map 
$$\otimes_{j=1}^k \mu_{a_j}: \V_n^{\otimes k} \to  \V_n^{\otimes k},\;\; v_1 
\otimes \ldots \otimes v_k \mapsto  \dot{a}_1 v_1 \otimes \ldots \otimes 
\dot{a}_k v_k$$ with iterative applications of the map $$\dot{+} \otimes \id: 
\V_n^{\otimes s} \to \V_n^{\otimes s-1},\;\; v_1 \otimes \ldots \otimes v_s 
\mapsto  (v_1 \dot{+} v_2) \otimes \ldots \otimes v_s$$ for $s=k, k-1,\ldots, 
2$. This completes the proof of the proposition.
\end{proof}

\section{Construction of the Karoubian Deligne category}\label{sec:Deligne_def}
Fix $t\in \C$. We construct the Deligne category $\kar{t}$ in this section, the analogue of $\underline{\Rep}(S_t)$. The original construction is due to Knop (\cite{K, K2}), but his definition is a bit different and we only show in Section \ref{ssec:knop} that our categories are equivalent. It also appears in work of Meir (\cite{M}) (although there only the semisimplified version is considered) and has \InnaA{recently been studied} by Harman and Snowden \cite{HS}.

\medskip
We define $\mathcal{T}(\underline{GL}_t)$ as a category whose objects are non-negative integers, denoted by $[k]$.
Define $$Rel_{s, k} = \{ R \subset \F_q^{s+k} \; \text{ linear subspace }\}, \quad \; \Hom_{\mathcal{T}(\underline{GL}_t)} ([s], [k]) = \C Rel_{s, k}$$
with composition of morphisms given as follows: for $R \in Rel_{s, k}$, $S \in Rel_{k, l}$ we set $$S \circ R := t^{d(R, S)} S \star R.$$

We will also define the tensor structure of $\mathcal{T}(\underline{GL}_t)$ by setting $[l]\otimes [k]:=[l+k]$, and tensor products of morphisms are given as follows: for $\F_q$-linear subspaces $R_1 \subset \F_q^{r_1}$ and $R_2\subset \F_q^{r_2}$, set $f_{R_1} \otimes f_{R_2} :=f_{R_1\times R_2}$ where $R_1\times R_2 \subset \F_q^{r_1+r_2}$ is considered as an $\F_q$-linear subspace.

Each object is then self-dual via the evaluation and coevaluation maps
$$f_R:[k]\otimes [k] \to \triv, \;\; f_R: \triv \to [k]\otimes [k]$$
where $R \subset \F_q^{2k}$ is given by $R=\{(a_1, \ldots, a_k, -a_k, \ldots, -a_1)|a_1, \ldots, a_k \in \F_q\}.$

The symmetric structure is given by the morphisms
$\sigma=f_{R'}: [l]\otimes [k] \to  [k]\otimes [l]$ where $$ R' = \{(a_1, \ldots, a_l, b_1, \ldots, b_k,-b_1, \ldots, -b_k,-a_1, \ldots, -a_l)|\, \forall i, j, \; a_i,  b_j \in \F_q\} \subset  \F_q^{2k+2l}$$

It is easy to see that we obtain a rigid symmetric monoidal category in this way.

\medskip

We define $\kar{t}$ as the Karoubi additive envelope of $\mathcal{T}(\underline{GL}_t)$. The reasons for this  notation will only become fully clear in an upcoming article. In the $S_t$-case the category $\underline{\Rep}(S_t)$ embeds into a tensor category $\underline{\Rep}^{ab}(S_t)$, the abelian envelope \cite{CO}. The original Deligne category then coincides with the full subcategory of tilting objects in $\underline{\Rep}(S_t)$. We expect the same statement in the $\kar{t}$-case.

The following result is a straightforward corollary of the results in Section \ref{sec:classical_endom}:

\begin{proposition}\label{prop:functor_F_n}
For values $t = q^n$, $n \in \Z_{\geq 0}$, we have a full SM functor
 $$F_n: \mathcal{T}(\underline{GL}_t) \longrightarrow \Rep(GL_n(\F_q))$$
 sending the object $[k]$ to $\V_n^{\otimes k}$ and sending the morphism $f_R:[l] \to [k]$ to the morphism $f_R \in \Hom_{GL_n(\F_q)}(\V_n^{\otimes l},\V_n^{\otimes k})$ as defined in Section \ref{sec:classical_endom}.
\end{proposition}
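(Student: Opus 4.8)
The plan is to check that the proposed assignment on objects and morphisms is well-defined and compatible with composition and the symmetric monoidal structure, since fullness will then follow immediately from Lemma~\ref{lem:basis}.

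First I would verify that $F_n$ is well-defined as a functor. On objects there is nothing to check: $[k]\mapsto \V_n^{\otimes k}$. On morphisms, $\Hom_{\mathcal{T}(\underline{GL}_t)}([l],[k])=\C\, Rel_{l,k}$ has the basis $\{f_R : R\subset \F_q^{l+k}\}$ by construction, and we send $f_R$ (as a symbol in the Deligne category) to the morphism $f_R\in\Hom_{GL_n(\F_q)}(\V_n^{\otimes l},\V_n^{\otimes k})$ defined in Section~\ref{sec:classical_endom}; extending $\C$-linearly gives a well-defined linear map on each $\Hom$-space. The identity morphism $\id_{[k]}$ in $\mathcal{T}(\underline{GL}_t)$ is $f_{R_0}$ where $R_0=\{(a_1,\dots,a_k,-a_1,\dots,-a_k)\}\subset\F_q^{2k}$ (the evaluation/coevaluation pattern gives, via composition, the diagonal relation whose image is the identity); one checks directly that the corresponding classical $f_{R_0}$ sends $v_1\otimes\cdots\otimes v_k\mapsto v_1\otimes\cdots\otimes v_k$, i.e.\ it is $\id_{\V_n^{\otimes k}}$. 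So $F_n$ preserves identities.

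The key step is compatibility with composition. In $\mathcal{T}(\underline{GL}_t)$ with $t=q^n$ we have $f_S\circ f_R = q^{n\,d(R,S)} f_{S\star R}$ by definition of the category (with $t=q^n$). On the other side, Proposition~\ref{prop:composition} states precisely that in $\Rep(GL_n(\F_q))$ the composition of the classical morphisms satisfies $f_S\circ f_R = q^{n\cdot d(R,S)}\, f_{S\star R}$. Hence $F_n(f_S\circ f_R)=F_n(q^{n\,d(R,S)}f_{S\star R}) = q^{n\,d(R,S)}f_{S\star R} = f_S\circ f_R = F_n(f_S)\circ F_n(f_R)$, and the general case follows by bilinearity. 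Next, compatibility with the tensor structure: on objects $[l]\otimes[k]=[l+k]\mapsto \V_n^{\otimes(l+k)}=\V_n^{\otimes l}\otimes\V_n^{\otimes k}$, giving the structure isomorphisms $c^{F_n}_{[l],[k]}$ as the canonical identifications; on morphisms, $f_{R_1}\otimes f_{R_2}=f_{R_1\times R_2}$ in $\mathcal{T}(\underline{GL}_t)$, and a direct inspection of the defining formula for $f_R$ in Section~\ref{sec:classical_endom} shows that the classical $f_{R_1\times R_2}:\V_n^{\otimes(l_1+l_2)}\to\V_n^{\otimes(k_1+k_2)}$ is the tensor product of the classical $f_{R_1}$ and $f_{R_2}$ (the defining sums over $R^\perp_V$ split along the block decomposition $\F_q^{r_1+r_2}=\F_q^{r_1}\times\F_q^{r_2}$). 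One likewise checks that $F_n$ carries the braiding $\sigma=f_{R'}$ of $\mathcal{T}(\underline{GL}_t)$ to the genuine tensor braiding on $\Rep(GL_n(\F_q))$, using the formula for $f_{R'}$ and the Example computations (item~4 of the Example describes exactly the swap map for $k=l=1$). The coherence hexagon/pentagon axioms hold because both the source structure maps and the target ones are "the obvious" identifications, so they reduce to identities.

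Finally, fullness. For each $l,k$, Lemma~\ref{lem:basis} says the set $\{f_R : R\subset\F_q^{l+k}\}$ \emph{spans} $\Hom_{GL_n(\F_q)}(\V_n^{\otimes l},\V_n^{\otimes k})$; since these elements are exactly the images under $F_n$ of the spanning set $\{f_R\}$ of $\Hom_{\mathcal{T}(\underline{GL}_t)}([l],[k])$, the map $F_n$ is surjective on every $\Hom$-space. Hence $F_n$ is a full SM functor. I expect the only genuinely fiddly point to be bookkeeping the braiding and the tensor-product-of-morphisms identity at the level of the explicit $f_R$-formulas; the composition compatibility, which is the conceptual heart, is handed to us for free by Proposition~\ref{prop:composition} once we observe the match $t=q^n$ between the scalar $t^{d(R,S)}$ in the Deligne category and the scalar $q^{n\,d(R,S)}$ appearing classically.
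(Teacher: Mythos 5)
Your proposal is correct and follows essentially the same route the paper intends: the composition compatibility is exactly Proposition~\ref{prop:composition} specialized to $t=q^n$, the tensor/braiding compatibility is a direct check from the defining formula for $f_R$, and fullness is immediate from the spanning statement in Lemma~\ref{lem:basis}. One trivial slip: the swap morphism for $k=l=1$ is the map $\sigma=f_{\{(a,b,-b,-a)\}}$ listed in Proposition~\ref{prop:gen_morphisms}, not item~4 of the Example in Section~\ref{sec:classical_endom} (which is scalar multiplication $\mu_b$); this does not affect the argument.
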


This functor extends naturally to the Karoubi additive envelope $\kar{t=q^n}$ of $\mathcal{T}(\underline{GL}_t)$, and we obtain:

\begin{corollary}\label{cor:specialization}
 For values $t = q^n$, $n \in \Z_{\geq 0}$, we have a full, essentially surjective SM functor
 $$F_n: \kar{t=q^n} \longrightarrow \Rep(GL_n(\F_q)).$$
\end{corollary}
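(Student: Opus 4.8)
The plan is to leverage Proposition~\ref{prop:functor_F_n} and standard facts about Karoubi envelopes. Recall that the Karoubi additive envelope $\mathcal{A}^{\mathrm{kar}}$ of an additive (or even just preadditive) category $\mathcal{A}$ comes equipped with a fully faithful functor $\mathcal{A} \hookrightarrow \mathcal{A}^{\mathrm{kar}}$, and that any additive functor $\mathcal{A} \to \mathcal{B}$ into a category $\mathcal{B}$ which is already Karoubi-complete (idempotent-complete and additive) extends, uniquely up to natural isomorphism, to an additive functor $\mathcal{A}^{\mathrm{kar}} \to \mathcal{B}$. Moreover this extension inherits a symmetric monoidal structure whenever the original functor is symmetric monoidal and the target is a symmetric monoidal Karoubi-complete category. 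Since $\Rep(GL_n(\F_q))$ is an abelian (hence idempotent-complete), $\C$-linear, rigid symmetric monoidal category, it satisfies all these hypotheses.

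First I would take the functor $F_n: \mathcal{T}(\underline{GL}_t) \to \Rep(GL_n(\F_q))$ from Proposition~\ref{prop:functor_F_n}, which is already symmetric monoidal. One should first pass to the additive envelope $\mathcal{T}(\underline{GL}_t)^{\mathrm{add}}$ (formal finite direct sums of the objects $[k]$); the functor $F_n$ extends canonically to this additive envelope by sending $\bigoplus_i [k_i]$ to $\bigoplus_i \V_n^{\otimes k_i}$, and this extension remains symmetric monoidal since $\Rep(GL_n(\F_q))$ is additive and $\otimes$ is biadditive. Next I would invoke the universal property of the Karoubi envelope to extend further to $\kar{t=q^n} = \left(\mathcal{T}(\underline{GL}_t)^{\mathrm{add}}\right)^{\mathrm{kar}} \to \Rep(GL_n(\F_q))$: an idempotent $e: X \to X$ in $\mathcal{T}(\underline{GL}_t)^{\mathrm{add}}$ has an image in $\Rep(GL_n(\F_q))$ since the latter is abelian, and $F_n(e)$ splits there; functoriality of this assignment and its compatibility with the monoidal structure are routine diagram chases using biadditivity of $\otimes$ and the fact that the braiding and associativity constraints are already defined at the level of $\mathcal{T}(\underline{GL}_t)$.

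It then remains to check fullness and essential surjectivity. Fullness is inherited directly: $F_n$ on $\mathcal{T}(\underline{GL}_t)$ is full by Proposition~\ref{prop:functor_F_n}, hence so is its extension to direct sums, and on the Karoubi envelope a morphism between summands $(X,e) \to (Y,f)$ is (the restriction of) a morphism $X \to Y$, which lifts to $\mathcal{T}(\underline{GL}_t)^{\mathrm{add}}$ by fullness there; conjugating by the idempotents in the target gives the desired preimage. For essential surjectivity, the key input is that $\Rep(GL_n(\F_q))$ is \emph{generated} as a Karoubi-additive tensor category by $\V_n = \C\F_q^n$; concretely, every irreducible representation of $GL_n(\F_q)$ appears as a summand of some tensor power $\V_n^{\otimes k}$. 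This is the one genuinely non-formal point. I would argue it by noting that the permutation representation $\C\F_q^n$ is faithful, so by a theorem of Brauer (or more simply, because the character of $\V_n$ together with its powers separates conjugacy classes after one checks that $\F_q^n$ is a faithful $G$-set) every irreducible $\C[G]$-module occurs in $\bigoplus_k \V_n^{\otimes k}$; one may alternatively cite that $\C\F_q^n \otimes \C\F_q^n \supset \C[G/B]$-type summands, or simply that $\V_n^{\otimes k}$ for $k \geq n$ already contains the regular representation as a summand (indeed $O(X_n) = \V_n^*$ and large tensor powers of a faithful permutation module contain $\C[G]$). Given this, every object of $\Rep(GL_n(\F_q))$, being a summand of a sum of tensor powers of $\V_n$, lies in the essential image of $F_n$.

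The main obstacle is thus not the categorical bookkeeping (extending along the additive and Karoubi completions, transporting the SM structure, inheriting fullness) but rather pinning down essential surjectivity, i.e. verifying that tensor powers of the permutation representation $\C\F_q^n$ suffice to build every irreducible representation of $GL_n(\F_q)$ up to direct summands. I expect this to follow from faithfulness of the $GL_n(\F_q)$-action on $\F_q^n$ together with a standard argument (Brauer's theorem on faithful representations, or a direct count showing $\C[GL_n(\F_q)]$ is a summand of $\V_n^{\otimes k}$ for $k$ large), and it may also be extracted from the explicit generating-morphism analysis of Section~\ref{ssec:gen_morph}. I would present the categorical extension argument in full and then dispatch essential surjectivity with a short appeal to this faithfulness fact.
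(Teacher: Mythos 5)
Your proposal is correct and follows essentially the same route as the paper: extend the full SM functor of Proposition \ref{prop:functor_F_n} to the Karoubi additive envelope (where fullness is inherited by the standard idempotent argument), and deduce essential surjectivity from the fact that every object of $\Rep(GL_n(\F_q))$ is a direct summand of a finite direct sum of tensor powers of $\V_n$. The only difference is that the paper simply recalls this generation fact, while you justify it via faithfulness of the permutation action on $\F_q^n$ (Brauer's theorem, or a free orbit in $(\F_q^n)^{\times k}$ for $k\geq n$), which is a valid and welcome addition of detail rather than a different approach.
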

\begin{proof}
 To show that this functor is essentially surjective, recall the category $\Rep(GL_n(\F_q))$ is generated by the objects $\V_n^{\otimes k}$, $k\geq 0$ under taking finite direct sums and direct  summands.
\end{proof}

\begin{remark}\label{rmk:F_n_ff}
 By Lemma \ref{lem:basis}, the functor $F_n$ is fully faithful on the full subcategory $\kar{t}^{\leq \lfloor n/2\rfloor }$ of $\kar{t}$ generated by the objects $[k]$ where $ 0\leq k\leq \lfloor n/2\rfloor$. Analogous results in the $S_t$, $O_t$ and $GL_t$-case were proven in \cite{Del07}.
\end{remark}

\section{Frobenius linear spaces}\label{sec:Frob_line_spaces}

Throughout this section, let $\mathcal{C}$ be a $\C$-linear rigid SM category. We introduce the notion of a Frobenius linear space which is required to state the universal property of $\kar{t}$. The crucial assertions for the proof are Lemma \ref{lem:tensor_prod_f} and Proposition \ref{prop:composition_phi_R}.

\subsection{Frobenius linear spaces: definition}\label{ssec:Frob_obj}
\begin{definition}\label{def:Frob_linear_space}
 Let $\V\in\mathcal{C}$ be an object equipped with the following 
structures:
 \begin{enumerate}[label=(\subscript{Str}{{\arabic*}})]
  \item \label{rel:Frob_alg} $\V$ is equipped with the structure of a Frobenius algebra object in 
$\mathcal{C}$. That is, $\V$ is equipped with maps $m:\V^{\otimes 2}\to \V$, $\eps:\triv \to \V$, $m^*:\V\to \V^{\otimes 2}$, $\eps^*:\V \to \triv$ such that the following conditions hold:
\begin{enumerate}[label=(\subscript{Fr}{{\arabic*}})]
\item\label{rel:Frob_alg1} $\V$ is a commutative unital algebra object with 
multiplication $m$ and unit $\eps$, and a 
cocommutative counital coalgebra object
with comultiplication $m^*$ and 
counit $\eps^*$,

 \item\label{rel:Frob_alg2} Frobenius Relations: $m^* \circ m = (\id \otimes m)\circ (m^* 
\otimes \id) =(m \otimes \id) \circ  (\id \otimes m^*) $, and 

 Speciality Relation: $m\circ m^*=\id$.
\end{enumerate}

  \item\label{rel:F_q_lin} $\V$ is a module over the field $\F_q$. In other words, $\V$ is equipped 
with
maps \begin{align*}
      &\dot{+}: \V \otimes \V \to \V, \\ &\mu:(\F_q, 
\cdot) \to (End_{\mathcal{C}}(\V), \circ),\;\; a \mapsto \mu_a, \\ &z :\triv 
\to \V
     \end{align*}
 satisfying the following conditions:
  \begin{enumerate}[label=(\subscript{Lin}{{\arabic*}})]
  \item\label{rel:F_q_lin_plus_ass_comm} $\dot{+}$ is associative and commutative: $\dot{+} \circ (\dot{+} \otimes \id) = \dot{+} \circ (\id \otimes \dot{+})$, $\dot{+} \circ \sigma = \dot{+}$ where $\sigma\in \End( 
\V \otimes \V)$ is the symmetry morphism.
   \item\label{rel:F_q_lin_zero} $z$ serves as ``the embedding of the $\dot{0}$ vector'': $\dot{+} \circ (z \otimes \id) = \id, \dot{+} \circ (\id \otimes z) = \id$,
\item\label{rel:F_q_lin_mu} Properties of the maps $\mu_a$: for all $a, b\in \F_q$, $\mu_a \circ \mu_b = \mu_{ab}$ and $\mu_1 = \id$, $\mu_0=z\circ \eps^*$.
\item\label{rel:F_q_lin_plus_lin_distr} Linearity of $\mu_a$ with respect to 
$\dot{+}$: for any $a, b \in \F_q$, 
$\mu_{a+b} = \dot{+} \circ (\mu_a \otimes \mu_b) \circ m^*$.

Distributivity of $\mu_a$: for any $a \in \F_q$, $\mu_a \circ 
\dot{+} = \dot{+} \circ (\mu_a \otimes \mu_a)$.

   \end{enumerate}

 \end{enumerate}
 Assume furthermore that the above structures satisfy the following 
compatibility relations:
\begin{enumerate}[label=(\subscript{Rel}{{\arabic*}})]
\item\label{rel:mu_coalg_mor} For each $a \in \F_q^{\times}$, $\mu_a$ is a morphism of coalgebras, i.e. the 
following diagrams are commutative:
 $$ \xymatrix{&\V \ar[r]^{m^*} &\V \otimes \V \\ &\V \ar[u]^{\mu_a} 
\ar[r]^{m^*} &{\V \otimes \V} \ar[u]_{\mu_a \otimes \mu_a}} \;\;  
\xymatrix{&\V \ar[r]_{\eps^*} &{\triv} \\ &\V \ar[u]^{\mu_a} 
\ar[ru]_{\eps^*}  
&{} } $$ Furthermore, $\mu_a$ respects the multiplication structure, i.e. the 
following diagram is commutative: $$\xymatrix{&\V 
\otimes \V \ar[r]^{m} &\V  \\  &{\V \otimes \V} \ar[u]^{\mu_a \otimes \mu_a}
\ar[r]^{m} &\V \ar[u]_{\mu_a}} $$
 \item\label{rel:z_coalg_mor} The map $z$ is a morphism of coalgebras, i.e. the 
following diagrams are commutative:
 $$ \xymatrix{&\V \ar[r]^{m^*} &\V \otimes \V \\ &\triv \ar[u]^{z} 
\ar[r]^{\id} &{\triv \otimes \triv} \ar[u]_{z \otimes z}} \;\;  
\xymatrix{&\V \ar[r]_{\eps^*} &{\triv} \\ &\triv  \ar[u]^{z} \ar[ru]_{\id}  
&{} }$$
Furthermore, $z$ respects the multiplication structure, i.e. the 
following diagram is commutative:
$$\xymatrix{&\V 
\otimes \V \ar[r]^{m} &\V  \\  &{\triv \otimes \triv} \ar[u]^{z 
\otimes z}
\ar[r]^{\id} &\triv \ar[u]_{z}}$$
 
\item\label{rel:plus_coalg_mor} The morphism $\dot{+}$ is a morphism of coalgebras, i.e. the 
following diagrams are commutative:
 $$ \xymatrix{&\V^{\otimes 2} \ar[d]^{\dot{+}} \ar[rrr]^{(\id \circ \sigma 
\circ \id )\circ (m^* \otimes m^*)} &{}  &{} &\V^{\otimes 4} \ar[d]_{\dot{+} 
\otimes \dot{+}} \\ &\V  
\ar[rrr]^{m^*} &{} &{} &{\V \otimes \V} } \;\;  
\xymatrix{&\V \ar[r]_{\eps^*} &{\triv} \\ &{\V \otimes \V} \ar[u]^{\dot{+}} 
\ar[ru]_{\eps^*\otimes \eps^*}  
&{} } $$ 
Here $\sigma: \V^{\otimes 2} \to  \V^{\otimes 2}$ is the symmetry morphism.
\item\label{rel:cancellation_axiom} Cancellation Axiom: $$\xymatrix{&\V^{ \otimes 3} 
\ar[d]^{\sigma\otimes \id} \ar[rr]^{\id \otimes m^*\otimes \id } &{} &\V^{ 
\otimes 4} \ar[r]^{\dot{+} \otimes 
\dot{+}} &\V\otimes \V \ar[d]^{m}  \\  &\V^{\otimes 3} \ar[rr]_{\id \otimes 
m}&{} 
 &\V\otimes \V \ar[r]_{\dot{+}} &\V} $$
\end{enumerate}

 Such an object $\V$ will be called {\it an $\F_q$-linear Frobenius space} in $\mathcal{C}$.
\end{definition}

\begin{example}
 We have seen in Section \ref{sec:classical_endom} that the object $\V_n$ in $\Rep(GL_n(\F_q))$ is equipped with the required morphisms $(m, \eps, m^*, \eps^*, \dot{+}, \mu, z)$, and it is of dimension $q^n$. We have already seen that these satisfy the Frobenius algebra relations \ref{rel:Frob_alg1}, \ref{rel:Frob_alg2} and it is straightforward that the relations \ref{rel:F_q_lin} (``being a module over $\F_q$'') are satisfied as well.
 
 The compatibility relations state that the Frobenius algebra structure is compatible with the $\F_q$-module structure on $\V_n$.
 
  Intuitively, the morphisms $(m, \eps, m^*, \eps^*)$ serve as logical tools, allowing us to ``see'' the vectors $v\in V$ in the space $\V_n$ and to write down ``equations'' with these vectors $v\in V$. 
 The map $\eps^*:\V_n \to \C$ which sends each $v\in V$ to $1$ serves to ``verify'' whether a given vector belongs to $V$, the map $m$ serves to ``compare'' vectors $v, w\in V$ and check whether they are equal, and the map $m^*$ which allows us to ``double'' a given vector $v\in V$ in order to be used multiple times in an equation.
 
 Let us illustrate the compatibility relations in this case.
 Relation \ref{rel:mu_coalg_mor} translates to:
 \begin{align*}
   \forall v,w \in V, \forall a\in \F_q^{\times}, \;\; &\left((\mu_a\otimes \mu_a)\circ m^*\right)(v) = \dot{a}v\otimes \dot{a}v = m^*(\dot{a}v), \;\; \eps^*(\dot{a}v)=1, \\  \;\; 
   &m(\dot{a}v\otimes \dot{a}w) = \delta_{\dot{a}v,\dot{a}w} \dot{a} v =  \delta_{v,w} \dot{a} v =\dot{a}m(v\otimes w).
 \end{align*}
 Intuitively, these conditions state that $\dot{a}v \in V$, that $\dot{a}v=\dot{a}w \; \Leftrightarrow \; v=w$ for any $a\in \F_q^{\times}$, and that $\dot{a}v\otimes \dot{a}v$ is both a ``doubling'' of the vector $\dot{a}v \in V$ and a multiplication of two copies of $v$ by the same scalar $a$.
 
 Relation \ref{rel:z_coalg_mor} translates to:
 \begin{align*}
   &m^*(\dot{0}) = \dot{0} \otimes \dot{0} = (z^*\otimes z^*)(1), \;\; \eps^*(\dot{0})=1, \\
   &m(\dot{0}\otimes \dot{0}) = \dot{0}=z^*(1).
 \end{align*}
In other words, this condition states that $\dot{0} \in V$, that $\dot{0}=\dot{0} $ and that ``doubling'' the vector $\dot{0} \in V$ is the same as taking the image of $z^*\otimes z^*$.
 
 Relation \ref{rel:plus_coalg_mor} translates to the following: for any $v,w \in V$, applying the map $\left(\dot{+} \otimes \dot{+}\right)\left(\id \otimes \sigma \otimes \id\right)\left(m^*\otimes m^*\right)$ to $v\otimes w$ gives $(v\dot{+}  w)\otimes (v \dot{+} w) $, which is also $m^* (v\dot{+} w)$. Intuitively, this means that ``doubling'' $v \dot{+} w$ is the same as ``doubling'' $v$, ``doubling'' $w$ and then adding the copies separately.
 
 The second part of the condition just states that
 \begin{align*}
   \forall v,w \in V,  \;\; &\eps^*(v)\otimes \eps^*(w) = 1 = \eps^*(v \dot{+} w),
 \end{align*}
 which intuitively means that for $v, w\in V$, $v \dot{+} w \in V$ as well.
 
 Relation \ref{rel:cancellation_axiom} (``Cancellation Axiom'') translates to:
  \begin{align*}
   \forall v,w, u \in V, \;\; \left(m\circ \left(\dot{+} \otimes \dot{+}\right)\right)(v\otimes u \otimes u\otimes w) =  \delta_{v\dot{+}u, u\dot{+}w} (v\dot{+} u) = \dot{+}(u\otimes \delta_{v, w} v)
 \end{align*}
 In other words, in this setting the Cancellation Axiom is equivalent to the statement
 \begin{align*}
   \forall v,w, u \in V, \;\; v=w 
\Leftrightarrow v\dot{+}u=w\dot{+}u,
 \end{align*}
which justifies its name (see also Lemma \ref{lem:eq_axiom_corollary}).

\end{example}

The following is the main example we will consider:
\begin{example}
 \InnaA{Let $t\in \C$.} Consider the object $[1]$ in $\kar{t}$; by abuse of notation, we will denote it by $\V_t$, and often write $\V_t^{\otimes k}$ instead of $[k]$.

Just like we proved in Proposition \ref{prop:gen_morphisms}, the following 
morphisms generate the space $\left(\bigoplus_{m, k 
\geq 0} \Hom_{\InnaA{\kar{t}}}(\V_t^{\otimes m}, \V_t^{\otimes k}), \otimes, \circ 
\right)$:
 \begin{enumerate}
  \item Morphisms 
\begin{align*}
&\eps:=f_{\{\z\}}:\triv \to \V_t, \;\;  &m:=f_{\{(a+b,-a,-b)|a,b\in \F_q\}}: \V_t \otimes \V_t \to \V_t \\
&\eps^*:=f_{\{\z\}}:\V_t \to \triv, \;\;  &m^*:=f_{\{(a+b,-a,-b)|a,b\in \F_q\}}: \V_t \to \V_t \otimes \V_t \\
&\sigma:=f_{\{(a,b,-b, -a)|a,b\in \F_q\}}: \V_t \otimes \V_t \to \V_t \otimes \V_t
\end{align*}

  \item Morphisms 
  \begin{align*}
  &z:=f_{\F_q^1}: \triv \to \V_t,\;\;  1 \mapsto \z, \\
&\forall a\in \F_q, \;  \mu_a:=f_{\{(-ab,b)|b\in \F_q\}}: \V_t \to \V_t, \;\; & \dot{+}:=f_{\{(b,b,-b)|b\in \F_q\}}:\V_t \otimes \V_t \to \V_t.           
\end{align*}
\end{enumerate}
To show that these maps satisfy the conditions of Definition \ref{def:Frob_linear_space}, \InnaA{we first note that in the setting of the category $\kar{t}$, the required relations on the morphisms $\eps,m, \eps^*,m^*,\sigma, z, \mu_a, \dot{+}$ are equivalent to the condition that certain polynomials in $t$ are constant and equal to zero. Therefore it is enough to establish that the required relations hold for an infinite set of values of $t$: for example, for $t=q^{2n}$ where $n\in \Z_{\geq 1}$.}

Recall that by Remark \ref{rmk:F_n_ff}, for $t=q^{2n}$, the functor $F_{2n}:\kar{t} \to \Rep(GL_{2n}(\F_q))$ is fully faithful on the full subcategory $\kar{t}^{\leq n}$ generated by the objects $[k]$, $ 0\leq k\leq n$. The morphisms described in Definition \ref{def:Frob_linear_space} belong to $\kar{t}^{\leq 3}$. Thus it is enough to check that the relations in Definition \ref{def:Frob_linear_space} hold in $\Rep(GL_6(\F_q))$, which we have seen already.

This makes $\V_t = [1]$ an $\F_q$-linear Frobenius space of categorical dimension 
$t$ in $\kar{t}$.

\end{example}

\begin{remark} \InnaA{In \cite{KS}, Khovanov and Sazdanovic constructed $\C$-linear Karoubi additive symmetric monoidal categories $Kob_{\alpha}$ attached to any power series $\alpha$ with coefficients $\alpha = (\alpha_0,\alpha_1,\alpha_2,\ldots)$ with $\alpha_i \in \C$. These were later studied, under the name $DCob_{\alpha}$, by  Khovanov, Ostrik and Kononov in \cite{KOK}. The categories} $DCob_{\alpha}$ all contain a distinguished commutative Frobenius algebra which in some sense realizes the power series $\alpha$.  Since $\kar{t}$ contains a distinguished commutative Frobenius algebra, it is tempting to ask whether it is of the form $DCob_{\alpha}$ for some power series $\alpha$. This is not the case. Computing the power series $\alpha$ for $[1]$ and parameter $t = q^n$ yields the constant power series $\alpha  = (q^n, q^n, \ldots)$. In this case $DCob_{\alpha} \cong \underline{\Rep}(S_{t =q^n})$. This result should be expected since the power series $\alpha$ only depends on the Frobenius structure and not on the additional $\mathbb{F}_q$-module structure.
\end{remark}

\begin{remark}
 The relation $\eps^* \circ z = \id$ appearing in \ref{rel:z_coalg_mor} implies that $z^* \circ z = \id$, so $\triv$ is a direct summand of $\V$. 
Indeed, 
$$z^* \circ z = ev \circ (z \otimes z) = \eps^* \circ m \circ (z \otimes z) = 
\eps^* \circ z =\id.$$
\end{remark}

We now introduce some notation which will be used throughout the paper:

\begin{notation}
We denote: 
\begin{align*}
ev_{\V^{\otimes k}} &= ev \circ (\id \otimes ev \otimes \id) \circ (\id \otimes ev \otimes \id) \circ \ldots \\ coev_{\V^{\otimes k}} &=\ldots\circ(\id \otimes coev \otimes \id) \circ (\id \otimes coev \otimes \id) \circ coev 
\end{align*}
as well as the morphisms \begin{align*}
&\overline{ev}_{\V^{\otimes k}} = {ev}_{\V^{\otimes k}} \circ (\id\otimes w), &\overline{coev}_{\V^{\otimes k}} =(\id \otimes w)\circ {coev}_{\V^{\otimes k}}
\end{align*}
where $w:\V^{\otimes k}\to \V^{\otimes k}$ stands for the composition of symmetry morphisms $\sigma$ which corresponds to the longest element in the symmetric group $S_k$.
\end{notation} 
The morphisms $\overline{ev}_{\V^{\otimes k}}$, $\overline{coev}_{\V^{\otimes k}}$ make $\V^{\otimes k}$ self-dual. 

\begin{example}
For $\mathcal{C} = \Rep(GL_n(\F_q))$, $\V:=\V_n$, the map $ev_{\V_n^{\otimes k}}: \V_n^{\otimes k} \to \V_n$ takes $v_1 \otimes \ldots \otimes v_k$ (where $v_1, \ldots, v_k \in V$) to $1$ if $v_1=v_2=\ldots=v_k$, and to $0$ otherwise.

Similarly, the map $coev_{\V_n^{\otimes k}}: \V_n\to \V_n^{\otimes k}$ takes $v$ to $v \otimes \ldots \otimes v$ whenever $v\in V$.
\end{example}

\subsection{String diagrams}\label{ssec:string_diagr}
Let us consider an $\F_q$-linear Frobenius space $\V$ in $\mathcal{C}$. 

In order to study the homomorphisms $\V^{\otimes s}\to \V^{\otimes k}$, we 
will draw string diagrams to encode morphisms generated by the morphisms $m, \eps, m^*, \eps^*, \sigma, z, \dot{+}$ and $\mu_a$ ($a\in \F_q$). A morphism $\V^{\otimes s}\to \V^{\otimes k}$ for given $s,k \geq 0$ will be drawn as a diagram located in a horizontal strip of the plane, connecting $s$ endpoints (denoted by vertical strands) on the lower border of the strip with $k$ endpoints on the upper border of the strip. The diagram consists of a collection of strings with decorations in the spirit of \cite{S}.

We say that two diagrams are equivalent when they encode equal morphisms.

Composition corresponds to vertical concatenation of diagrams: given two diagrams $D$ and $D'$ corresponding to morphisms $f: \V^{\otimes s}\to \V^{\otimes k}$ and $f': \V^{\otimes k}\to \V^{\otimes l}$, we draw the diagram for $f'\circ f$ by stacking $D'$ on top of $D$ and identifying the $k$ upper endpoints of $D$ with the $k$ lower  endpoints of $D'$.

Drawing strings next to each other horizontally denotes the monoidal product, and having no strings at all denotes an endomorphism of $\triv$. Reflection with respect to a horizontal axis sends a morphism $f$ to $f^*$ due to the self-duality of $\V$ (see Lemma \ref{lem:self-dual} and also Remark \ref{rmk:m_m_star_duality} below).

The morphisms described in Definition \ref{def:Frob_linear_space} will be drawn as follows:
\begin{equation*}
m \;=\;  
\begin{tikzpicture}[anchorbase,scale=1]
\draw[-] (-0.4,0)--(0.4,0);
\draw[-] (0,0)--(0,0.3);
\draw[-] (-0.4,0)--(-0.4,-0.3);
\draw[-] (0.4,0)--(0.4,-0.3);
\end{tikzpicture}\;,\quad, 
m^* \;=\;  \begin{tikzpicture}[anchorbase,scale=1]
\draw[-] (-0.4,0)--(0.4,0);
\draw[-] (0,0)--(0,-0.3);
\draw[-] (-0.4,0)--(-0.4,0.3);
\draw[-] (0.4,0)--(0.4,0.3);
\end{tikzpicture}\;,\quad
\eps^* \;=\;
 \begin{tikzpicture}[anchorbase,scale=1]
\draw[-] (0,0)--(0,-0.5);
\node at (0,0) {$\bullet$};
\end{tikzpicture}, \quad 
\eps \;=\;
 \begin{tikzpicture}[anchorbase,scale=1]
\draw[-] (0,0)--(0,-0.5);
\node at (0,-0.5) {$\bullet$};
\end{tikzpicture}, \quad 
z \;=\;
 \begin{tikzpicture}[anchorbase,scale=1]
\draw[-] (0,0)--(0,-0.4);
\node at (0,-0.5) {$\circ$};
\end{tikzpicture}, \quad 
\dot{+}\;=\;
\begin{tikzpicture}[anchorbase,scale=1]
\draw[-] (-0.5,0)--(-0.2,0);
\draw[-] (0.2,0)--(0.5,0);
\draw[-] (-0.2,-0.2)--(-0.2,0.2)--(0.2,0.2)--(0.2,-0.2)--(-0.2,-0.2);
\node at (-0,-0) {$\dot{+}$};
\draw[-] (0,0.2)--(0,0.45);
\draw[-] (-0.5,0)--(-0.5,-0.4);
\draw[-] (0.5,0)--(0.5,-0.4);
\end{tikzpicture}\;, \quad 
\mu_a \;=\;
 \begin{tikzpicture}[anchorbase,scale=1]
 \draw[-] (0, -0.6)--(0, -0.3);
 \draw[-] (0, 0.6)--(0, 0.3);
 
 \node[draw,circle] at (0,0) {$\scriptstyle a$};  
 \end{tikzpicture}
\end{equation*}
We also draw the morphisms $\sigma:=\sigma_{\V, \V}$, $ev :=\eps^*\circ m$, $coev:=m^*\circ \eps$, $z^*=ev\circ (\id \otimes z)$ as 
\begin{equation*}  
\sigma \;=\;\begin{tikzpicture}[anchorbase,scale=1]
\draw[-] (0,0)--(0.5,-0.5);
\draw[-] (0.5,0)--(0,-0.5);
\end{tikzpicture}\;,\quad ev \;=\;\begin{tikzpicture}[anchorbase,scale=1]
\draw[-] (0,0)--(0.8,0);
\draw[-] (0,0)--(0,-0.3);
\draw[-] (0.8,0)--(0.8,-0.3);
\end{tikzpicture}\;,\quad coev \;=\;\begin{tikzpicture}[anchorbase,scale=1]
\draw[-] (0,-0.3)--(0.8,-0.3);
\draw[-] (0,0)--(0,-0.3);
\draw[-] (0.8,0)--(0.8,-0.3);
\end{tikzpicture}\;,\quad z^* \;=\;\begin{tikzpicture}[anchorbase,scale=1]
\draw[-] (0,0)--(0,0.45);
\node at (0,0.5) {$\circ$};
\end{tikzpicture}
\;.
\end{equation*}

 Let us write the relations listed in Definition \ref{def:Frob_linear_space} diagrammatically.
 
 \begin{enumerate}[label=(\subscript{DFrob}{{\arabic*}})]
\item\label{itm:str_rel_diag_bialg} Relation \ref{rel:Frob_alg1}: commutativity and associativity relations on $(m, \eps)$, cocommutativity and coassociativity relations on $(m^*, \eps^*)$:
\begin{equation*}
\begin{tikzpicture}[anchorbase,scale=0.6]

\draw[-] (0.5,0)--(0.5,-0.5);
\draw[-] (1.5,0)--(1.5,-0.5);
\draw[-] (0.5,-0.5)--(1.5,-0.5);

\draw[-] (1,-0.5)--(1,-1);
\draw[-] (-0.4,0)--(-0.4,-1);

\draw[-] (-0.4,-1)--(1,-1);
\draw[-] (0.3,-1)--(0.3,-1.5);
\end{tikzpicture} \quad = \quad 
\begin{tikzpicture}[anchorbase,scale=0.6]

\draw[-] (-0.5,0)--(-0.5,-0.5);
\draw[-] (-1.5,0)--(-1.5,-0.5);
\draw[-] (-0.5,-0.5)--(-1.5,-0.5);

\draw[-] (-1,-0.5)--(-1,-1);
\draw[-] (0.4,0)--(0.4,-1);

\draw[-] (0.4,-1)--(-1,-1);
\draw[-] (-0.3,-1)--(-0.3,-1.5);
\end{tikzpicture} 
   \quad,\quad \quad 
   \begin{tikzpicture}[anchorbase,scale=0.6]

\draw[-] (0.5,0)--(0.5,0.5);
\draw[-] (1.5,0)--(1.5,0.5);
\draw[-] (0.5,0.5)--(1.5,0.5);

\draw[-] (1,0.5)--(1,1);
\draw[-] (-0.4,0)--(-0.4,1);

\draw[-] (-0.4,1)--(1,1);
\draw[-] (0.3,1)--(0.3,1.5);
\end{tikzpicture} \quad = \quad 
\begin{tikzpicture}[anchorbase,scale=0.6]

\draw[-] (-0.5,0)--(-0.5,0.5);
\draw[-] (-1.5,0)--(-1.5,0.5);
\draw[-] (-0.5,0.5)--(-1.5,0.5);

\draw[-] (-1,0.5)--(-1,1);
\draw[-] (0.4,0)--(0.4,1);

\draw[-] (0.4,1)--(-1,1);
\draw[-] (-0.3,1)--(-0.3,1.5);
\end{tikzpicture}    \quad,\quad \quad 
\begin{tikzpicture}[anchorbase,scale=0.6]
\draw[-] (0,-1.5)--(0,-1);
\draw[-] (-0.5,-1)--(0.5,-1);
\draw[-] (-0.5,-1)--(-0.5,-0.5);
\draw[-] (0.5,-1)--(0.5,-0.5);
\draw[-] (-0.5,-0.5)--(0.5,0);
\draw[-] (0.5,-0.5)--(-0.5,0);
\end{tikzpicture} \quad  = \quad 
 \begin{tikzpicture}[anchorbase,scale=0.6]

\draw[-] (0,-1.5)--(0,-1);
\draw[-] (-0.5,-1)--(0.5,-1);
\draw[-] (-0.5,-1)--(-0.5,0);
\draw[-] (0.5,-1)--(0.5,0);
\end{tikzpicture}
\end{equation*}

\begin{equation*} 
\begin{tikzpicture}[anchorbase,scale=0.6]
\draw[-] (0,1.5)--(0,1);
\draw[-] (-0.5,1)--(0.5,1);
\draw[-] (-0.5,1)--(-0.5,0.5);
\draw[-] (0.5,1)--(0.5,0.5);
\draw[-] (-0.5,0.5)--(0.5,0);
\draw[-] (0.5,0.5)--(-0.5,0);
\end{tikzpicture} \quad  = \quad 
 \begin{tikzpicture}[anchorbase,scale=0.6]

\draw[-] (0,1.5)--(0,1);
\draw[-] (-0.5,1)--(0.5,1);
\draw[-] (-0.5,1)--(-0.5,0);
\draw[-] (0.5,1)--(0.5,0);
\end{tikzpicture}   \quad,\quad \quad
 \begin{tikzpicture}[anchorbase,scale=0.6]

\draw[-] (0,-1.5)--(0,-1);
\draw[-] (-0.5,-1)--(0.5,-1);
\draw[-] (-0.5,-1)--(-0.5,-0.5);
\draw[-] (0.5,-1)--(0.5,-0.5);
\node at (0.5,-0.5) {$\bullet$};
\end{tikzpicture} \quad  = \quad 
 \begin{tikzpicture}[anchorbase,scale=0.6]

\draw[-] (0,-1.5)--(0,-1);
\draw[-] (-0.5,-1)--(0.5,-1);
\draw[-] (-0.5,-1)--(-0.5,-0.5);
\draw[-] (0.5,-1)--(0.5,-0.5);
\node at (-0.5,-0.5) {$\bullet$};
\end{tikzpicture} \quad  = \quad 
\begin{tikzpicture}[anchorbase,scale=0.3]
\draw[-] (0,-1.5)--(0,1.5);
\end{tikzpicture}  \quad  = \quad  
\begin{tikzpicture}[anchorbase,scale=0.6]
\draw[-] (0,1.5)--(0,1);
\draw[-] (-0.5,1)--(0.5,1);
\draw[-] (-0.5,1)--(-0.5,0.5);
\draw[-] (0.5,1)--(0.5,0.5);
\node at (0.5,0.5) {$\bullet$};
\end{tikzpicture} \quad  = \quad 
 \begin{tikzpicture}[anchorbase,scale=0.6]
\draw[-] (0,1.5)--(0,1);
\draw[-] (-0.5,1)--(0.5,1);
\draw[-] (-0.5,1)--(-0.5,0.5);
\draw[-] (0.5,1)--(0.5,0.5);
\node at (-0.5,0.5) {$\bullet$};
\end{tikzpicture}
\end{equation*} 

  \item\label{itm:str_rel_diag_Frob} Relation \ref{rel:Frob_alg2}: Frobenius Relations and the Special Relation:
 
\begin{equation*}
\begin{tikzpicture}[anchorbase,scale=0.3]
\draw[-] (-2,1)--(-2,-2);
\draw[-] (-2,1)--(0,1);
\draw[-] (-1,2)--(-1,1);
\draw[-] (0,1)--(0,-1);
\draw[-] (0,-1)--(2,-1);
\draw[-] (1,-1)--(1,-2);
\draw[-] (2,2)--(2,-1);
\end{tikzpicture} \quad  = \quad \begin{tikzpicture}[anchorbase,scale=0.3]
\draw[-] (-2,2)--(-2,-1);
\draw[-] (-2,-1)--(0,-1);
\draw[-] (-1,-1)--(-1,-2);
\draw[-] (0,1)--(0,-1);
\draw[-] (0,1)--(2,1);
\draw[-] (1,1)--(1,2);
\draw[-] (2,1)--(2,-2);
\end{tikzpicture} \quad = \quad \begin{tikzpicture}[anchorbase,scale=0.3]
\draw[-] (-1,2)--(-1,1);
\draw[-] (-1,-1)--(-1,-2);
\draw[-] (-1,-1)--(1,-1);
\draw[-] (0,-1)--(0,1);
\draw[-] (-1,1)--(1,1);
\draw[-] (1,2)--(1,1);
\draw[-] (1,-1)--(1,-2);

\end{tikzpicture} \quad,\quad \quad
 \begin{tikzpicture}[anchorbase,scale=0.3]

\draw[-] (0,-2)--(0,-1);
\draw[-] (-1,-1)--(1,-1);
\draw[-] (-1,-1)--(-1,1);
\draw[-] (1,-1)--(1,1);
\draw[-] (-1,1)--(1,1);
\draw[-] (0,1)--(0,2);

\end{tikzpicture} \quad  = \quad \begin{tikzpicture}[anchorbase,scale=0.3]
\draw[-] (0,-2)--(0,2);
\end{tikzpicture}  
\end{equation*} 
\end{enumerate}
\begin{enumerate}[label=({DLin})]
\item\label{itm:str_rel_Lin} The relations on the morphisms $\dot{+}$, $\mu_a$, $z$ ($a\in \F_q$) described in \ref{rel:F_q_lin}: for any $a, b \in \F_q$, 
\begin{enumerate}[label=(\subscript{DLin}{{\arabic*}})]
 \item Relations \ref{rel:F_q_lin_plus_ass_comm}: associativity and commutativity of $\dot{+}$:
 \begin{equation*}
 \begin{tikzpicture}[anchorbase,scale=1.3]
\draw[-] (-1,0.2)--(-1,0.4); 
\draw[-] (-1.2,-0.2)--(-1.2,0.2)--(-0.8,0.2)--(-0.8,-0.2)--(-1.2,-0.2);
\node at (-1,0) {$\dot{+}$};
\draw[-] (-1.5,0)--(-1.2,0);
\draw[-] (-0.8,0)--(-0.5,0);
\draw[-] (-0.5,0)--(-0.5,-0.3);
\draw[-] (-1.5,0)--(-1.5,-0.8);
\draw[-] (-0.7,-0.3)--(-0.7,-0.7)--(-0.3,-0.7)--(-0.3,-0.3)--(-0.7,-0.3);
\node at (-0.5,-0.5) {$\dot{+}$};
\draw[-] (-0.9,-0.5)--(-0.7,-0.5);
\draw[-] (-0.3,-0.5)--(-0.1,-0.5);
\draw[-] (-0.9,-0.5)--(-0.9,-0.8);
\draw[-] (-0.1,-0.5)--(-0.1,-0.8);
    
 \end{tikzpicture}
 \quad=\quad   \begin{tikzpicture}[anchorbase,scale=1.3]
\draw[-] (0,0.2)--(0,0.4); 
\draw[-] (-0.2,-0.2)--(-0.2,0.2)--(0.2,0.2)--(0.2,-0.2)--(-0.2,-0.2);
\node at (0,0) {$\dot{+}$};
\draw[-] (-0.5,0)--(-0.2,0);
\draw[-] (0.2,0)--(0.5,0);
\draw[-] (-0.5,0)--(-0.5,-0.3);
\draw[-] (0.5,0)--(0.5,-0.8);
\draw[-] (-0.7,-0.3)--(-0.7,-0.7)--(-0.3,-0.7)--(-0.3,-0.3)--(-0.7,-0.3);
\node at (-0.5,-0.5) {$\dot{+}$};
\draw[-] (-0.9,-0.5)--(-0.7,-0.5);
\draw[-] (-0.3,-0.5)--(-0.1,-0.5);
\draw[-] (-0.9,-0.5)--(-0.9,-0.8);
\draw[-] (-0.1,-0.5)--(-0.1,-0.8);
\end{tikzpicture}\quad, \quad\quad 
\begin{tikzpicture}[anchorbase,scale=1.3]
\draw[-] (0,0.2)--(0,0.4); 
\draw[-] (-0.2,-0.2)--(-0.2,0.2)--(0.2,0.2)--(0.2,-0.2)--(-0.2,-0.2);
\node at (0,0) {$\dot{+}$};
\draw[-] (-0.5,0)--(-0.2,0);
\draw[-] (0.2,0)--(0.5,0);
\draw[-] (-0.5,0)--(-0.5,-0.3);
\draw[-] (0.5,0)--(0.5,-0.3);
\draw[-] (-0.5,-0.3)--(0.5,-0.8);
\draw[-] (0.5,-0.3)--(-0.5,-0.8);
 \end{tikzpicture}
 \quad=\quad   \begin{tikzpicture}[anchorbase,scale=1.3]
\draw[-] (0,0.2)--(0,0.4); 
\draw[-] (-0.2,-0.2)--(-0.2,0.2)--(0.2,0.2)--(0.2,-0.2)--(-0.2,-0.2);
\node at (0,0) {$\dot{+}$};
\draw[-] (-0.5,0)--(-0.2,0);
\draw[-] (0.2,0)--(0.5,0);
\draw[-] (-0.5,0)--(-0.5,-0.8);
\draw[-] (0.5,0)--(0.5,-0.8);
\end{tikzpicture}
\end{equation*}
\item Relation \ref{rel:F_q_lin_zero}: properties of the zero map $z$: 
\begin{equation*}
 \begin{tikzpicture}[anchorbase,scale=1.3]
\draw[-] (0,0.2)--(0,0.4); 
\draw[-] (-0.2,-0.2)--(-0.2,0.2)--(0.2,0.2)--(0.2,-0.2)--(-0.2,-0.2);
\node at (0,0) {$\dot{+}$};
\draw[-] (-0.5,0)--(-0.2,0);
\draw[-] (0.2,0)--(0.5,0);
\draw[-] (0.5,0)--(0.5,-0.5);
\draw[-] (-0.5,0)--(-0.5,-0.27);
\node at (-0.5,-0.32) {$\circ$};
\end{tikzpicture}\quad = \quad
 \begin{tikzpicture}[anchorbase,scale=1.3]
\draw[-] (0,-0.5)--(0,0.4); 
 \end{tikzpicture}
 \quad=\quad   \begin{tikzpicture}[anchorbase,scale=1.3]
\draw[-] (0,0.2)--(0,0.4); 
\draw[-] (-0.2,-0.2)--(-0.2,0.2)--(0.2,0.2)--(0.2,-0.2)--(-0.2,-0.2);
\node at (0,0) {$\dot{+}$};
\draw[-] (-0.5,0)--(-0.2,0);
\draw[-] (0.2,0)--(0.5,0);
\draw[-] (-0.5,0)--(-0.5,-0.5);
\draw[-] (0.5,0)--(0.5,-0.27);
\node at (0.5,-0.32) {$\circ$};
\end{tikzpicture}
\end{equation*}
 \item Relations \ref{rel:F_q_lin_mu}: $\mu_{ab}=\mu_a \circ \mu_b$, $\mu_1=\id$, $\mu_0 = z\circ \eps^*$:
 \begin{equation*}
  \begin{tikzpicture}[anchorbase,scale=1]
 \draw[-] (0, -1)--(0, -0.8);
 \draw[-] (0, 0.8)--(0, 1);
  \node[draw,circle] at (0,-0.5) {$\scriptstyle b$}; 
 \node[draw,circle] at (0,0.5) {$\scriptstyle a$};  
  \draw[-] (0, -0.2)--(0, 0.2);
 \end{tikzpicture}\quad = \quad
 \begin{tikzpicture}[anchorbase,scale=1]
 \draw[-] (0, -0.8)--(0, -0.33);
 \draw[-] (0, 0.8)--(0, 0.33);
 \node[draw,circle] at (0,0) {$\scriptstyle ab$};  
 \end{tikzpicture}
  \quad,\quad \quad
 \begin{tikzpicture}[anchorbase,scale=1]
\draw[-] (0, -0.8)--(0, -0.3);
 \draw[-] (0, 0.8)--(0, 0.3); 
 \node[draw,circle] at (0,0) {$\scriptstyle 1$};  
 \end{tikzpicture}
 \quad  = \quad 
 \begin{tikzpicture}[anchorbase,scale=1]
\draw[-] (0,-0.8)--(0,0.8);
\end{tikzpicture}    \quad,\quad \quad
 \begin{tikzpicture}[anchorbase,scale=1]
\draw[-] (0, -0.8)--(0, -0.3);
 \draw[-] (0, 0.8)--(0, 0.3); 
 \node[draw,circle] at (0,0) {$\scriptstyle 0$};  
 \end{tikzpicture}
 \quad  = \quad 
 \begin{tikzpicture}[anchorbase,scale=1]
\draw[-] (0,-0.8)--(0,-0.3);
\node at (0,-0.3) {$\bullet$};
\node at (0,0.27) {$\circ$};
\draw[-] (0,0.8)--(0,0.34);
\end{tikzpicture} 
\end{equation*}

\item Relation \ref{rel:F_q_lin_plus_lin_distr}: linearity of $\mu_a$ with respect to 
$\dot{+}$ and distributivity of $\mu_a$:
\begin{equation*}
 \begin{tikzpicture}[anchorbase,scale=1.5]
\draw[-] (0.3,0.2) -- (1.5, 0.2);
\draw[-] (1.5,0.2) -- (1.5, 0.38);
\draw[-] (0.9,0) --(0.9, 0.2);
\draw[-] (0.3,0.2)--(0.3,0.38);

\draw[-] (1.5,0.83) -- (1.5, 1.1);
\node[draw,circle] at (1.5,0.6) {${\scriptstyle b}$};

\draw[-] (0.3,0.83)--(0.3, 1.1);
\node[draw,circle] at (0.3,0.6) {${\scriptstyle a}$};

\draw[-] (0.3,1.1)--(0.7,1.1);
\draw[-] (1.1,1.1)--(1.5,1.1);
\draw[-] (0.7,0.9)--(0.7,1.3)--(1.1,1.3)--(1.1,0.9)--(0.7,0.9);
\node at (0.9,1.1) {$\dot{+}$};

\draw[-] (0.9,1.3)--(0.9,1.45);
\end{tikzpicture} \quad = \quad
 \begin{tikzpicture}[anchorbase,scale=1.5]
 \draw[-] (0, -0.8)--(0, -0.3);
 \draw[-] (0, 0.65)--(0, 0.3);
 
 \node[draw,circle] at (0,0) {$\scriptstyle a+b$};  
 \end{tikzpicture}  \quad ,  \quad \quad
 \begin{tikzpicture}[anchorbase,scale=1.5]
\draw[-] (1.5,0) -- (1.5, 0.28);
\draw[-] (0.3,0)--(0.3,0.28);
\draw[-] (1.5,0.73) -- (1.5, 1.1);
\node[draw,circle] at (1.5,0.5) {${\scriptstyle a}$};

\draw[-] (0.3,0.73)--(0.3, 1.1);
\node[draw,circle] at (0.3,0.5) {${\scriptstyle a}$};

\draw[-] (0.3,1.1)--(0.7,1.1);
\draw[-] (1.1,1.1)--(1.5,1.1);
\draw[-] (0.7,0.9)--(0.7,1.3)--(1.1,1.3)--(1.1,0.9)--(0.7,0.9);
\node at (0.9,1.1) {$\dot{+}$};

\draw[-] (0.9,1.3)--(0.9,1.45);
\end{tikzpicture} \quad = \quad
\begin{tikzpicture}[anchorbase,scale=1.5]
\draw[-] (1.5,0.5) -- (1.5, 0);
\draw[-] (0.3,0.5)--(0.3, 0);

\draw[-] (0.3,0.5)--(0.7,0.5);
\draw[-] (1.1,0.5)--(1.5,0.5);

\draw[-] (0.7,0.3)--(0.7,0.7)--(1.1,0.7)--(1.1,0.3)--(0.7,0.3);
\node at (0.9,0.5) {$\dot{+}$};

\draw[-] (0.9,0.7)--(0.9,0.9);

\node[draw,circle] at (0.9,1.1) {${\scriptstyle a}$};
\draw[-] (0.9,1.3)--(0.9,1.45);
\end{tikzpicture}
\end{equation*}
\end{enumerate}
\end{enumerate}

\begin{enumerate}[label=(\subscript{DRel}{{\arabic*}})]
\item\label{itm:str_rel_diag_mu_coalg} Relation \ref{rel:mu_coalg_mor} (``$\mu_a$ is a coalgebra and algebra morphism for $a\in \F_q^{\times}$''): 
\begin{equation*}
 \begin{tikzpicture}[anchorbase,scale=1]
  \draw[-] (-0.5, 0.6)--(-0.5, 0.3);
   \draw[-] (0.5, 0.6)--(0.5, 0.3);
 \draw[-] (-0.5, 0.3)--(0.5, 0.3);
\draw[-] (0, 0.3)--(0, -0.1);
 \draw[-] (0, -0.8)--(0, -0.7); 
 \node[draw,circle] at (0,-0.4) {$\scriptstyle a$};  
 \end{tikzpicture}
 \quad  = \quad 
 \begin{tikzpicture}[anchorbase,scale=1]
 \draw[-] (-0.5, 0.6)--(-0.5, 0.4);
   \draw[-] (0.5, 0.6)--(0.5, 0.4);
    \node[draw,circle] at (-0.5,0.1) {$\scriptstyle a$}; 
     \node[draw,circle] at (0.5,0.1) {$\scriptstyle a$}; 
     
 \draw[-] (-0.5, -0.2)--(-0.5, -0.6);
  \draw[-] (0.5, -0.2)--(0.5, -0.6);
  \draw[-] (-0.5, -0.6)--(0.5, -0.6);
\draw[-] (0, -0.6)--(0, -0.8);
\end{tikzpicture} 
   \quad,\quad \quad
 \begin{tikzpicture}[anchorbase,scale=1]
\draw[-] (0, -0.7)--(0, -0.3);
 \draw[-] (0, 0.7)--(0, 0.3); 
 \node[draw,circle] at (0,0) {$\scriptstyle a$};  
 \node at (0,0.7) {$\bullet$};
 \end{tikzpicture}
 \quad  = \quad 
 \begin{tikzpicture}[anchorbase,scale=1]
\draw[-] (0,-0.7)--(0,0.7);
 \node at (0,0.7) {$\bullet$};
\end{tikzpicture} 
  \quad,\quad \quad
 \begin{tikzpicture}[anchorbase,scale=1]
  \draw[-] (-0.5, -0.6)--(-0.5, -0.3);
   \draw[-] (0.5, -0.6)--(0.5, -0.3);
 \draw[-] (-0.5, -0.3)--(0.5, -0.3);
\draw[-] (0, -0.3)--(0, 0.15);
 \draw[-] (0, 0.8)--(0, 0.65); 
 \node[draw,circle] at (0,0.4) {$\scriptstyle a$};  
 \end{tikzpicture}
 \quad  = \quad 
 \begin{tikzpicture}[anchorbase,scale=1]
 \draw[-] (-0.5, -0.6)--(-0.5, -0.4);
   \draw[-] (0.5, -0.6)--(0.5, -0.4);
    \node[draw,circle] at (-0.5,-0.1) {$\scriptstyle a$}; 
     \node[draw,circle] at (0.5,-0.1) {$\scriptstyle a$}; 
     
 \draw[-] (-0.5, 0.2)--(-0.5, 0.6);
  \draw[-] (0.5, 0.2)--(0.5, 0.6);
  \draw[-] (-0.5, 0.6)--(0.5, 0.6);
\draw[-] (0, 0.6)--(0, 0.8);
\end{tikzpicture}  
\end{equation*}
\item\label{itm:str_rel_diag_z_coalg} Relation \ref{rel:z_coalg_mor} (``$z$ is a coalgebra and algebra morphism''):
\begin{equation*}
 \begin{tikzpicture}[anchorbase,scale=1]
  \draw[-] (-0.5, 0.6)--(-0.5, 0.3);
   \draw[-] (0.5, 0.6)--(0.5, 0.3);
 \draw[-] (-0.5, 0.3)--(0.5, 0.3);
\draw[-] (0, 0.3)--(0, -0.14);
\node at (0,-0.2) {$\circ$};
 \end{tikzpicture}
 \quad  = \quad 
 \begin{tikzpicture}[anchorbase,scale=1]
\draw[-] (-0.5, 0.6)--(-0.5, -0.14);
   \draw[-] (0.5, 0.6)--(0.5, -0.14);
\node at (-0.5,-0.2) {$\circ$};
\node at (0.5,-0.2) {$\circ$};
\end{tikzpicture} 
   \quad,\quad \quad
 \begin{tikzpicture}[anchorbase,scale=1]
\draw[-] (0, -0.64)--(0, 0.1); 
\node at (0,-0.7) {$\circ$};
 \node at (0,0.1) {$\bullet$};
 \end{tikzpicture}
 \quad  = \quad \id_{\triv}
  \quad,\quad \quad
  \begin{tikzpicture}[anchorbase,scale=1]
  \draw[-] (-0.5, -0.6)--(-0.5, -0.3);
   \draw[-] (0.5, -0.6)--(0.5, -0.3);
 \draw[-] (-0.5, -0.3)--(0.5, -0.3);
\draw[-] (0, -0.3)--(0, 0.15);
\node at (0,0.2) {$\circ$};
 \end{tikzpicture}
 \quad  = \quad 
 \begin{tikzpicture}[anchorbase,scale=1]
\draw[-] (-0.5, -0.6)--(-0.5, 0.15);
   \draw[-] (0.5,-0.6)--(0.5, 0.15);
\node at (-0.5,0.2) {$\circ$};
\node at (0.5,0.2) {$\circ$};
\end{tikzpicture} 
\end{equation*}

\item\label{itm:str_rel_diag_plus_coalg} Relation \ref{rel:plus_coalg_mor} (``$\dot{+}$ is a coalgebra morphism''):
 \begin{equation*}
 \begin{tikzpicture}[anchorbase,scale=1.2]
   \draw[-] (0.4,-0.5)--(0.4,-0.2);
     \draw[-] (-0.4,-0.5)--(-0.4,-0.2);
  \draw[-] (-0.4,-0.5)--(0.4,-0.5);

 \draw[-] (0,-0.8)--(0,-0.5);
 
\draw[-] (-0.2,-1.2)--(-0.2,-0.8)--(0.2,-0.8)--(0.2,-1.2)--(-0.2,-1.2);
\node at (0, -1) {$\dot{+}$};
\draw[-] (-0.4,-1)--(-0.2,-1);
\draw[-] (0.2,-1)--(0.4,-1);
  
\draw[-] (-0.4,-1) -- (-0.4, -1.5);
\draw[-] (0.4,-1) -- (0.4, -1.5);
    
 \end{tikzpicture}
 \quad=\quad   \begin{tikzpicture}[anchorbase,scale=1.3]

\draw[-] (-0.2,-0.2)--(-0.2,0.2)--(0.2,0.2)--(0.2,-0.2)--(-0.2,-0.2);
\node at (0,0) {$\dot{+}$};
\draw[-] (-0.4,0)--(-0.4,-0.8);
\draw[-] (0,0.2)--(0,0.45);
\draw[-] (0.4,0)--(0.4,-0.3);

\draw[-] (-0.4,0)--(-0.2,0);
\draw[-] (0.2,0)--(0.4,0);

\draw[-] (1.8,-0.2)--(1.8,0.2)--(2.2,0.2)--(2.2,-0.2)--(1.8,-0.2);
\node at (2,0) {$\dot{+}$};
\draw[-] (1.6,0)--(1.6,-0.3);
\draw[-] (2,0.2)--(2,0.45);
\draw[-] (2.4,0)--(2.4,-0.8);

\draw[-] (1.6,0)--(1.8,0);
\draw[-] (2.2,0)--(2.4,0);

\draw[-] (1.6,-0.3)--(0.4,-0.8);
\draw[-] (0.4,-0.3)--(1.6,-0.8);

\draw[-] (-0.4, -0.8) -- (0.4,-0.8);
\draw[-] (1.6, -0.8) -- (2.4,-0.8);
\draw[-] (-0, -0.8) -- (0,-1.1);
\draw[-] (2, -0.8) -- (2,-1.1);
\end{tikzpicture} 
\quad,\quad \quad
\begin{tikzpicture}[anchorbase,scale=1]
\draw[-] (-0.5,0)--(-0.2,0);
\draw[-] (0.2,0)--(0.5,0);
\draw[-] (-0.2,-0.2)--(-0.2,0.2)--(0.2,0.2)--(0.2,-0.2)--(-0.2,-0.2);
\node at (-0,-0) {$\dot{+}$};
\draw[-] (0,0.2)--(0,0.5);
\node at (0,0.5) {$\bullet$};
\draw[-] (-0.5,0)--(-0.5,-0.5);
\draw[-] (0.5,0)--(0.5,-0.5);
\end{tikzpicture} \quad  = \quad  \begin{tikzpicture}[anchorbase,scale=1]
\draw[-] (-0.5,0.5)--(-0.5,-0.5);
\draw[-] (0.5,0.5)--(0.5,-0.5);
\node at (-0.5,0.5) {$\bullet$};
\node at (0.5,0.5) {$\bullet$};
\end{tikzpicture} 
\end{equation*}  
 \item\label{itm:str_rel_diag_cancel_axiom} Relation \ref{rel:cancellation_axiom} (``the Cancellation Axiom''):
 
\begin{equation*}
\begin{tikzpicture}[anchorbase,scale=1.1]
\draw[-] (-0.5,0)--(-0.2,0);
\draw[-] (0.2,0)--(1,0);
\draw[-] (-0.2,-0.2)--(-0.2,0.2)--(0.2,0.2)--(0.2,-0.2)--(-0.2,-0.2);
\node at (-0,-0) {$\dot{+}$};
\draw[-] (0,0.2)--(0,0.5);
\draw[-] (-0.5,0)--(-0.5,-1);
\draw[-] (1,0)--(1,-0.5);
\draw[-] (0.5,-0.5)--(1.5,-0.5);
\draw[-] (0.5,-0.5)--(0.5,-1);
\draw[-] (1.5,-0.5)--(1.5,-1.5);
\draw[-] (-0.5,-1)--(0.5,-1.5);
\draw[-] (0.5,-1)--(-0.5,-1.5);
\end{tikzpicture} \quad = \quad \begin{tikzpicture}[anchorbase,scale=1.1]
\draw[-] (0,-0.5)--(0,0);
\draw[-] (-1,-0.5)--(1,-0.5);
\draw[-] (-1,-0.5)--(-1,-0.8);
\draw[-] (1,-0.5)--(1,-0.8);
\draw[-] (-1.5,-1)--(-1.2,-1);
\draw[-] (-0.8,-1)--(-0.5,-1);
\draw[-] (-1.2,-1.2)--(-1.2,-0.8)--(-0.8,-0.8)--(-0.8,-1.2)--(-1.2,-1.2);
\node at (-1,-1) {$\dot{+}$};
\draw[-] (0.5,-1)--(0.8,-1);
\draw[-] (1.2,-1)--(1.5,-1);
\draw[-] (0.8,-0.8)--(0.8,-1.2)--(1.2,-1.2)--(1.2,-0.8)--(0.8,-0.8);
\node at (1,-1) {$\dot{+}$};
\draw[-] (-1.5,-1)--(-1.5,-2);
\draw[-] (-0.5,-1)--(-0.5,-1.5);
\draw[-] (0.5,-1)--(0.5,-1.5);
\draw[-] (1.5,-1)--(1.5,-2);
\draw[-] (-0.5,-1.5)--(0.5,-1.5);
\draw[-] (0,-1.5)--(0,-2);
\end{tikzpicture} 
\end{equation*}  
 \end{enumerate}

Here are some special morphisms and their string diagrams, which we will often use:
\begin{definition}[Special string diagrams]\label{def:special_string_diag}
 Let $\V$ be an $\F_q$-linear Frobenius  
space in $\mathcal{C}$. We will use the following notation for iterative 
compositions: for $r,d \geq 2$, 
\begin{align*}
    &(m^*)^{it}: \V \to \V^{\otimes d}, \;\; (m^*)^{it}:=\ldots \circ 
(m^* \otimes 
\id_{\V^{\otimes 2}}) \circ (m^* \otimes 
\id_{\V}) \circ m^*\\
&(\dot{+})^{it}: \V^{\otimes r} \to \V, \;\; (\dot{+})^{it}:=
 \dot{+}\circ (\dot{+} \otimes 
\id_{\V})\circ (\dot{+} \otimes 
\id_{\V^{\otimes 2}}) \circ\ldots
\end{align*}
For $k=d=1$, we will set $(m^*)^{it} = \id_{\V} = (\dot{+})^{it}$.

Due to the coassociativity of $m^*$, we may draw $(m^*)^{it}$ \InnaA{as a diagram with $d$ strands in the upper part:}

\begin{equation*}
\begin{tikzpicture}[anchorbase,scale=1.5]
\draw[-] (-0.8,0)--(0.8,0);
\draw[-] (-0.8,0)--(-0.8,0.3);
\draw[-] (-0.6,0)--(-0.6,0.3);
\draw[-] (0.6,0)--(0.6,0.3);
\draw[-] (0.8,0)--(0.8,0.3);
\draw[-] (0,0)--(0,-0.3);
\node at (0,0.25) {$\cdot$};
\node at (0.2,0.25) {$\cdot$};
\node at (-0.2,0.25) {$\cdot$};
\end{tikzpicture}
\end{equation*}

Due to the associativity of $\dot{+}$, draw $(\dot{+})^{it}$ \InnaA{as a diagram with $r$ strands in the lower part:}

\begin{equation*}
\begin{tikzpicture}[anchorbase,scale=1.5]
\draw[-] (-0.8,0)--(-0.2,0);
\draw[-] (0.2,0)--(0.8,0);
\draw[-] (-0.2,-0.2)--(-0.2,0.2)--(0.2,0.2)--(0.2,-0.2)--(-0.2,-0.2);
\node at (-0,-0) {$\dot{+}$};
\draw[-] (-0.8,0)--(-0.8,-0.4);
\draw[-] (-0.6,0)--(-0.6,-0.4);
\draw[-] (0.6,0)--(0.6,-0.4);
\draw[-] (0.8,0)--(0.8,-0.4);
\draw[-] (0,0.2)--(0,0.4);
\node at (0,-0.35) {$\cdot$};
\node at (0.2,-0.35) {$\cdot$};
\node at (-0.2,-0.35) {$\cdot$};
\end{tikzpicture}
\end{equation*}
Let $A \in Mat_{r\times d}(\F_q)$. We will denote by 
$\mu_A: \V^{\otimes d} \to 
\V^{\otimes r}$ the composition
$$\V^{\otimes d}  \xrightarrow{\left((m^*)^{it}\right)^{\otimes d}} \V^{\otimes 
rd}  \xrightarrow{w} \V^{\otimes 
rd}  \xrightarrow{\bigotimes_{1\leq i \leq r, 1\leq j \leq d} \mu_{A_{i,j}}}  
\V^{\otimes rd}  
\xrightarrow{\left((\dot{+})^{it}\right)^{\otimes r}} \V^{\otimes 
r} $$

where $w$ is a permutation of factors as in the proof of Proposition 
\ref{prop:gen_morphisms} and $\bigotimes_{1\leq i \leq r, 1\leq j \leq d} 
\mu_{A_{i,j}}$ means that we multiply the $(i+d(j-1))$-th factor by $A_{i,j}$ 
for each $i,j$. We will draw $\mu_A$ diagramatically as follows:
\begin{equation*}
\begin{tikzpicture}[anchorbase,scale=1.5]
\draw[-] (-0.5,-0.2)--(-0.5,0.2)--(0.5,0.2)--(0.5,-0.2)--(-0.5,-0.2);
\node at (-0,-0) {$A$};
\draw[-] (-0.4,-0.2)--(-0.4,-0.5);
\node at (-.47,.58) {};
\draw[-] (0.4,-0.2)--(0.4,-0.5);
\draw[-] (-0.4,0.2)--(-0.4,0.5);
\draw[-] (-0.3,0.2)--(-0.3,0.5);
\draw[-] (-0.3,-0.2)--(-0.3,-0.5);
\draw[-] (0.4,0.2)--(0.4,0.5);
\node at (-0.1,0.35) {$\cdot$};
\node at (0.05,0.35) {$\cdot$};
\node at (0.2,0.35) {$\cdot$};
\node at (-0.1,-0.4) {$\cdot$};
\node at (0.05,-0.4) {$\cdot$};
\node at (0.2,-0.4) {$\cdot$}; 
\end{tikzpicture}
\end{equation*}

\end{definition}
If $r=1$ and our matrix $A = [a_1, \ldots, a_d]$ is a row matrix, then  
\begin{equation*}
 \begin{tikzpicture}[anchorbase,scale=1.5]
\draw[-] (-0.5,-0.2)--(-0.5,0.2)--(0.5,0.2)--(0.5,-0.2)--(-0.5,-0.2);
\node at (-0,-0) {${A}$};
\draw[-] (-0.4,-0.2)--(-0.4,-0.8);
\draw[-] (0.4,-0.2)--(0.4,-0.8);
\draw[-] (0,0.2)--(0,0.6);
\draw[-] (-0.3,-0.2)--(-0.3,-0.8);
\node at (-0.1,-0.6) {$\cdot$};
\node at (0.05,-0.6) {$\cdot$};
\node at (0.2,-0.6) {$\cdot$}; 
\end{tikzpicture} \quad
= \quad\begin{tikzpicture}[anchorbase,scale=1.5]
\draw[-] (-1.4,0)--(-0.2,0);
\draw[-] (0.2,0)--(1.4,0);
\draw[-] (-0.2,-0.2)--(-0.2,0.2)--(0.2,0.2)--(0.2,-0.2)--(-0.2,-0.2);
\node at (-0,-0) {$\dot{+}$};
\draw[-] (-1.4,0)--(-1.4,-0.35);
\draw[-] (-0.7,0)--(-0.7,-0.35);
\draw[-] (1.4,0)--(1.4,-0.35);
\draw[-] (0,0.2)--(0,0.4);
\node at (0.2,-0.6) {$\cdot$};
\node at (0.4,-0.6) {$\cdot$};
\node at (0.6,-0.6) {$\cdot$};
\draw[-] (-1.4,-0.84)--(-1.4,-1);
\draw[-] (-0.7,-0.84)--(-0.7,-1);
\draw[-] (1.4,-0.84)--(1.4,-1);
\node[draw,circle] at (-1.4,-0.6) {${\scriptstyle a_1}$};
\node[draw,circle] at (-0.7,-0.6) {${\scriptstyle a_{2}}$};
\node[draw,circle] at (1.4,-0.6) {${\scriptstyle a_{d}}$};
\end{tikzpicture}
\end{equation*}

For a general matrix $A \in Mat_{r\times d}(\F_q)$, if we denote the rows of 
$A$ by $A_1, A_2, \ldots, A_r$, we have the following equality:
\begin{equation}\label{dg:matrix_mult}
\begin{tikzpicture}[anchorbase,scale=1.5]
\draw[-] (-0.5,-0.2)--(-0.5,0.2)--(0.5,0.2)--(0.5,-0.2)--(-0.5,-0.2);
\node at (-0,-0) {$A$};
\draw[-] (-0.4,-0.2)--(-0.4,-1);
\draw[-] (0.4,-0.2)--(0.4,-1);
\draw[-] (-0.4,0.2)--(-0.4,0.5);
\draw[-] (-0.3,0.2)--(-0.3,0.5);
\draw[-] (-0.3,-0.2)--(-0.3,-1);
\draw[-] (0.4,0.2)--(0.4,0.5);
\node at (-0.1,0.35) {$\cdot$};
\node at (0.05,0.35) {$\cdot$};
\node at (0.2,0.35) {$\cdot$};
\node at (-0.1,-0.8) {$\cdot$};
\node at (0.05,-0.8) {$\cdot$};
\node at (0.2,-0.8) {$\cdot$}; 
\end{tikzpicture} \quad
= \quad
\begin{tikzpicture}[anchorbase,scale=1.5]
\draw[-] (-0.5,-0.2)--(-0.5,0.2)--(0.5,0.2)--(0.5,-0.2)--(-0.5,-0.2);
\node at (0,0) {${A_1}$};
\draw[-] (-0.4,-0.2)--(-0.4,-0.7);
\draw[-] (0,0.2)--(0,0.5);
\draw[-] (0.4,-0.2)--(0.4,-0.5);
\node at (-0.2,-0.4) {$\cdot$};
\node at (-0,-0.4) {$\cdot$};
\node at (0.2,-0.4) {$\cdot$};
\draw[-] (1.5,-0.2)--(1.5,0.2)--(2.5,0.2)--(2.5,-0.2)--(1.5,-0.2);
\node at (2,0) {${A_2}$};
\draw[-] (1.6,-0.2)--(1.6,-0.7);
\draw[-] (2,0.2)--(2,0.5);
\draw[-] (2.4,-0.2)--(2.4,-0.5);
\node at (1.8,-0.4) {$\cdot$};
\node at (2,-0.4) {$\cdot$};
\node at (2.2,-0.4) {$\cdot$};
\node at (3.9,0) {$\cdot$};
\node at (4,0) {$\cdot$};
\node at (4.1,0) {$\cdot$};
\draw[-] (5.5,-0.2)--(5.5,0.2)--(6.5,0.2)--(6.5,-0.2)--(5.5,-0.2);
\node at (6,0) {${A_r}$};
\draw[-] (5.6,-0.2)--(5.6,-0.7);
\draw[-] (6,0.2)--(6,0.5);
\draw[-] (6.4,-0.2)--(6.4,-0.5);
\node at (5.8,-0.4) {$\cdot$};
\node at (6,-0.4) {$\cdot$};
\node at (6.2,-0.4) {$\cdot$};
\draw[-] (-0.4,-0.7)--(5.6,-0.7);
\draw[-] (0.4,-0.5)--(1.5, -0.5);
\draw[-] (1.7, -0.5)--(5.5, -0.5);
\draw[-] (5.7,-0.5)--(6.4,-0.5);
\draw[-] (3, -0.7)--(3,-1);
\draw[-] (3.4, -0.5)--(3.4, -0.65);
\draw[-] (3.4, -0.75)--(3.4, -1);
\node at (3.2,-1) {$\ldots$};
\end{tikzpicture}
\end{equation}

Here each intersection of the strands of the form $\begin{tikzpicture}[anchorbase]
\draw[-] (-0.2,0)--(-0.1,0);
\draw[-] (0.1,0)--(0.2,0);
\draw[-] (0,0.2)--(0,-0.2);
\end{tikzpicture}$ or $\begin{tikzpicture}[anchorbase]
\draw[-] (-0.2,0)--(0.2,0);
\draw[-] (0,0.2)--(0, 0.1);
\draw[-] (0,-0.1)--(0,-0.2);
\end{tikzpicture}$ means that we apply the symmetry morphism $\sigma: \V\otimes \V \to \V\otimes \V$ on the relevant factors.

\begin{example}\label{ex:matrix_mult_classical}
For $\mathcal{C} = \Rep(GL_n(\F_q))$, $\V:=\V_n$, and any matrix $A\in Mat_{r\times d}(\F_q)$, the operator $\mu_A:\V_n^{\otimes d}\to \V_n^{\otimes r}$ is given by
$$ v_1\otimes \ldots \otimes v_d \longmapsto \left(\dot{\sum}_{j=1}^{d} \dot{A}_{1,j} v_j \right)\otimes  \ldots  \otimes \left(\dot{\sum}_{j=1}^{d} \dot{A}_{r,j} v_j \right) \in \V_n^{\otimes r}$$
for any $v_1, \ldots, v_d \in V$.

Let us give this map an interpretation in terms of $\F_q$-linear vector spaces.

Recall that $V=\F_q^n$ denotes the basis of $\V_n$ and write the elements of $V^{\times d} = V^{\dot{\oplus} d}$ as column vectors whose entries are given by elements of $V$. Then $\mu_A :\V_n^{\otimes d} \to \V_n^{\otimes r} $ corresponds to the $\F_q$-linear operator 
\begin{align*}
V^{\times d} &\longrightarrow V^{\times r}, \;\; 
\begin{bmatrix}
v_1\\ \vdots\\ v_{d}
\end{bmatrix}  \longmapsto A \begin{bmatrix}
v_1 \\ \vdots\\ v_{d}
\end{bmatrix}.
\end{align*}
\end{example}

\begin{example}
For $A=\begin{bmatrix} 1 &\ldots &1
\end{bmatrix} \in Mat_{1\times k}(\F_q)$, the morphism $\mu_{A}$ is just $(\dot{+})^{it}$. Similarly, for $B=\begin{bmatrix} 1 \\ \vdots \\1
\end{bmatrix} \in Mat_{k\times 1}(\F_q)$, the morphism $\mu_{B}$ is just $(m^*)^{it}$. 
\end{example}

\section{Composing generating morphisms}\label{sec:linear_algebra_for_Frob_space}

In this section, we give some results concerning the morphisms from Definition \ref{def:Frob_linear_space} and the morphisms $\mu_A$ for a general $\C$-linear rigid SM category $\mathcal{C}$ and an $\F_q$-linear Frobenius space $\V$ in $\mathcal{C}$. We will also provide examples for most of the statements in the case $\mathcal{C} = \Rep(GL_n(\F_q))$, $\V:=\V_n$, using the notation of Section \ref{sec:classical_endom}.

The main purpose of this section is to prove the following statements, given in Propositions \ref{prop:compos_mu_A} and \ref{prop:tensor_prod_mu_B}: for any $B \in Mat_{ s\times k}(\F_q), A \in Mat_{k \times l} (\F_q), A'\in Mat_{k' \times l'} (\F_q)$, we have $\mu_B \circ \mu_A = \mu_{BA}$ and $\mu_A \otimes \mu_A' = \mu_{\begin{bmatrix} A &0 \\ 0 &A'
\end{bmatrix}}$; thus the morphisms $\mu_A$ are direct analogues of ``multiplication by a matrix''. Section \ref{ssec:muA_comp} is devoted to proving the first equality, and Section \ref{ssec:muA_tens_prod} is devoted to proving the second equality. 

In Sections \ref{ssec:transp_mu_A} and \ref{ssec:duality_mu_A} we establish several results related to the equality $(\mu_A)^*= \mu_{A^{-1}}$ for invertible square matrices $A$ (the equality itself is proved in Section \ref{ssec:duality_mu_A}).

In Section \ref{sec:repinf} we will deal with a setting very similar to the current one, but lacking the morphism $\eps$ and thus the morphism $coev$ as well, destroying the self-duality of the object $\V$. For the benefit of that section, we will not use the morphisms $\eps$, $coev$ in Sections \ref{ssec:muA_tens_prod},  \ref{ssec:muA_comp} and \ref{ssec:transp_mu_A}, so that the results in these sections can be applied to the setting of Section \ref{sec:repinf} as well.

\subsection{Tensor product of morphisms \texorpdfstring{$\mu_A$}{muA}}\label{ssec:muA_tens_prod}

\begin{lemma}\label{lem:horizontal stacking_with_zero}
Let $d, r_1, r_2\geq 0$, and let $A\in Mat_{d\times r_1}(\F_q)$.
Denote by $\dot{0}$ the zero matrix in $Mat_{d\times r_2}(\F_q)$. Then $$\mu_{\begin{bmatrix} A &0 \end{bmatrix}} = \mu_A \otimes (\eps^*)^{\otimes r_2},\; \mu_{\begin{bmatrix} 0 &A\end{bmatrix}} = (\eps^*)^{\otimes r_2} \otimes \mu_A.$$ 
In terms of diagrams, this means: 
\begin{equation*}
 \begin{tikzpicture}[anchorbase,scale=1.3]
\draw[-] (-0.7,-0.2)--(-0.7,0.4)--(0.7,0.4)--(0.7,-0.2)--(-0.7,-0.2);
\node at (0,0.1) {${\begin{bmatrix} A &0\end{bmatrix}}$};
\draw[-] (-0.4,-0.2)--(-0.4,-0.5);
\draw[-] (0.4,-0.2)--(0.4,-0.5);
\node at (-0.2,-0.4) {$\cdot$};
\node at (-0,-0.4) {$\cdot$};
\node at (0.2,-0.4) {$\cdot$};
\draw[-] (-0.6,0.4)--(-0.6,0.6);
\draw[-] (0,0.4)--(0,0.6);
\node at (-0.25,0.5) {$\cdot$};
\node at (-0.3,0.5) {$\cdot$};
\node at (-0.35,0.5) {$\cdot$};

\end{tikzpicture}
\quad = \quad
\begin{tikzpicture}[anchorbase, scale=1.3]
\draw[-] (-0.5,-0.2)--(-0.5,0.4)--(0.5,0.4)--(0.5,-0.2)--(-0.5,-0.2);
\node at (0,0.1) {${A}$};
\draw[-] (-0.4,-0.2)--(-0.4,-0.5);
\draw[-] (0.4,-0.2)--(0.4,-0.5);
\node at (-0.2,-0.4) {$\cdot$};
\node at (-0,-0.4) {$\cdot$};
\node at (0.2,-0.4) {$\cdot$};
\draw[-] (-0.4,0.4)--(-0.4,0.6);
\draw[-] (0.4,0.4)--(0.4,0.6);
\node at (-0.2,0.5) {$\cdot$};
\node at (-0,0.5) {$\cdot$};
\node at (0.2,0.5) {$\cdot$};

\node at (1, 0.55) {$\bullet$};
\node at (1.6, 0.55) {$\bullet$};
\draw[-] (1,-0.5)--(1,0.55);
\draw[-] (1.6,-0.5)--(1.6,0.55);

\node at (1.25,0) {$\cdot$};
\node at (1.3,0) {$\cdot$};
\node at (1.35,0) {$\cdot$};

\end{tikzpicture}
\quad\quad , \quad \quad
 \begin{tikzpicture}[anchorbase,scale=1.3]
\draw[-] (3.3,-0.2)--(3.3,0.4)--(4.7,0.4)--(4.7,-0.2)--(3.3,-0.2);
\node at (4,0.1) {${\begin{bmatrix} 0 &A\end{bmatrix}}$};
\draw[-] (3.6,-0.2)--(3.6,-0.5);
\draw[-] (4.4,-0.2)--(4.4,-0.5);
\node at (3.8,-0.4) {$\cdot$};
\node at (4,-0.4) {$\cdot$};
\node at (4.2,-0.4) {$\cdot$};
\draw[-] (4,0.4)--(4,0.6);
\draw[-] (4.6,0.4)--(4.6,0.6);
\node at (4.35,0.5) {$\cdot$};
\node at (4.3,0.5) {$\cdot$};
\node at (4.25,0.5) {$\cdot$};

\end{tikzpicture}
\quad=\quad
 \begin{tikzpicture}[anchorbase,scale=1.3]
\draw[-] (3.5,-0.2)--(3.5,0.4)--(4.5,0.4)--(4.5,-0.2)--(3.5,-0.2);
\node at (4,0.1) {${A}$};
\draw[-] (3.6,-0.2)--(3.6,-0.5);
\draw[-] (4.4,-0.2)--(4.4,-0.5);
\node at (3.8,-0.4) {$\cdot$};
\node at (4,-0.4) {$\cdot$};
\node at (4.2,-0.4) {$\cdot$};
\draw[-] (3.6,0.4)--(3.6,0.6);
\draw[-] (4.4,0.4)--(4.4,0.6);
\node at (3.8,0.5) {$\cdot$};
\node at (4,0.5) {$\cdot$};
\node at (4.2,0.5) {$\cdot$};

\node at (2.2, 0.55) {$\bullet$};
\node at (2.8, 0.55) {$\bullet$};
\draw[-] (2.2,-0.5)--(2.2,0.55);
\draw[-] (2.8,-0.5)--(2.8,0.55);

\node at (2.45,0) {$\cdot$};
\node at (2.5,0) {$\cdot$};
\node at (2.55,0) {$\cdot$};

\end{tikzpicture}
\end{equation*}

\end{lemma}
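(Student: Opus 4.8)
The plan is to reduce the statement to three elementary properties of the generating morphisms: the identity $\mu_0=z\circ\eps^*$ from \ref{rel:F_q_lin_mu}; the fact that $z$ is a two-sided unit for the commutative, associative operation $\dot{+}$ (relations \ref{rel:F_q_lin_plus_ass_comm} and \ref{rel:F_q_lin_zero}); and the collapse of iterated comultiplication against counits, $(\eps^*)^{\otimes d}\circ(m^*)^{it}=\eps^*$, which is an immediate induction on $d$ from the counit axiom contained in \ref{rel:Frob_alg1}. I will prove the first equality in detail; the second follows by the same argument with the block of zero columns on the left, so that the erased strands end up on the left.

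First I would settle the single-row case $d=1$. Here $\begin{bmatrix}A & 0\end{bmatrix}=[a_1,\dots,a_{r_1},0,\dots,0]$ is a $1\times(r_1+r_2)$ matrix, and by the formula for $\mu$ on a row matrix displayed just before \eqref{dg:matrix_mult} we have $\mu_{[a_1,\dots,a_{r_1},0,\dots,0]}=(\dot{+})^{it}\circ(\mu_{a_1}\otimes\cdots\otimes\mu_{a_{r_1}}\otimes\mu_0^{\otimes r_2})$. Substituting $\mu_0=z\circ\eps^*$ and pulling the $z$'s out to the right turns the last $r_2$ inputs into a block $z^{\otimes r_2}$ feeding into the tail of $(\dot{+})^{it}$; since $z$ is a unit for $\dot{+}$, these get absorbed one at a time, so that $(\dot{+})^{it}\circ(\id_{\V}^{\otimes r_1}\otimes z^{\otimes r_2})=(\dot{+})^{it}\colon\V^{\otimes r_1}\to\V$. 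What remains is $\big((\dot{+})^{it}\circ(\mu_{a_1}\otimes\cdots\otimes\mu_{a_{r_1}})\big)\otimes(\eps^*)^{\otimes r_2}=\mu_A\otimes(\eps^*)^{\otimes r_2}$.

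For general $d$ I would feed this back into the row decomposition \eqref{dg:matrix_mult}, applied to $B=\begin{bmatrix}A & 0\end{bmatrix}$, whose $i$-th row is $\begin{bmatrix}A_i & 0\end{bmatrix}$ for $A_i$ the $i$-th row of $A$. By \eqref{dg:matrix_mult}, $\mu_B$ is the diagram in which each of the $r_1+r_2$ source strands is duplicated $d$ times by $(m^*)^{it}$ and the $i$-th copies of all strands are fed into $\mu_{\begin{bmatrix}A_i & 0\end{bmatrix}}$, which by the case $d=1$ equals $\mu_{A_i}\otimes(\eps^*)^{\otimes r_2}$. For the last $r_2$ strands, all $d$ copies land on the trailing $(\eps^*)^{\otimes r_2}$'s, so the $j$-th such strand contributes $(\eps^*)^{\otimes d}\circ(m^*)^{it}=\eps^*$; for the first $r_1$ strands the copies feed into the $\mu_{A_i}$'s, which is exactly the picture \eqref{dg:matrix_mult} computing $\mu_A$. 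This gives $\mu_{\begin{bmatrix}A & 0\end{bmatrix}}=\mu_A\otimes(\eps^*)^{\otimes r_2}$, and the mirror-image argument gives $\mu_{\begin{bmatrix}0 & A\end{bmatrix}}=(\eps^*)^{\otimes r_2}\otimes\mu_A$.

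The only genuine difficulty is bookkeeping: making the phrase ``the $i$-th copies of all strands are fed into $\mu_{\begin{bmatrix}A_i & 0\end{bmatrix}}$'' rigorous requires tracking the fixed permutation $w$ of tensor factors built into the definition of $\mu_{(-)}$ (Definition \ref{def:special_string_diag}) and checking that the part of $w$ acting on the strands coming from the last $r_2$ inputs slides past the $\eps^*$'s harmlessly. This is routine but tedious in explicit coordinates, which is why I would carry out the whole argument in the string-diagram calculus, where these cancellations are essentially visual.
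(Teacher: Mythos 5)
Your proof is correct and takes essentially the same route as the paper's: reduce to the one-row case via the row decomposition \eqref{dg:matrix_mult}, then use $\mu_0=z\circ\eps^*$ together with the fact that $z$ is a unit for $\dot{+}$ to absorb the trailing factors. The only difference is cosmetic: you spell out the reduction to $d=1$ (the counit collapse $(\eps^*)^{\otimes d}\circ(m^*)^{it}=\eps^*$ and the sliding past the permutation $w$), which the paper dismisses with ``clearly, it is enough to prove this for $d=1$''.
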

\begin{proof}
We will prove the first equality; the second one is proved in exactly the same manner. 
Clearly, it is enough to prove this for $d=1$: that is, we need to check that $$\mu_{[a_1 \, \ldots \, a_{r_1}\, 0\, \ldots\, 0]} = \mu_{[a_1 \, \ldots\, a_{r_1}]} \otimes (\eps^*)^{\otimes r_2}$$ for any $A = [a_1 \,\ldots\, a_{r_1}] \in Mat_{1\times r_1}(\F_q)$.
Let us draw the corresponding diagram:
\begin{equation*}
 \begin{tikzpicture}[anchorbase,scale=1.5]
\draw[-] (-0.5,-0.2)--(-0.5,0.2)--(0.5,0.2)--(0.5,-0.2)--(-0.5,-0.2);
\node at (-0,-0) {${\begin{bmatrix}
A &0
\end{bmatrix}}$};
\draw[-] (-0.4,-0.2)--(-0.4,-0.8);
\draw[-] (0.4,-0.2)--(0.4,-0.8);
\draw[-] (0,0.2)--(0,0.6);
\draw[-] (-0.3,-0.2)--(-0.3,-0.8);
\node at (-0.1,-0.6) {$\cdot$};
\node at (0.05,-0.6) {$\cdot$};
\node at (0.2,-0.6) {$\cdot$}; 
\end{tikzpicture} \quad
= \quad\begin{tikzpicture}[anchorbase,scale=1.5]
\draw[-] (-1.4,0)--(-0.2,0);
\draw[-] (0.2,0)--(2.8,0);
\draw[-] (-0.2,-0.2)--(-0.2,0.2)--(0.2,0.2)--(0.2,-0.2)--(-0.2,-0.2);
\node at (-0,-0) {$\dot{+}$};
\draw[-] (0,0.2)--(0,0.4);

\draw[-] (-1.4,0)--(-1.4,-0.35);
\draw[-] (-0.7,0)--(-0.7,-0.35);
\draw[-] (0.7,0)--(0.7,-0.35);
\draw[-] (1.4,0)--(1.4,-0.35);
\draw[-] (2.8,0)--(2.8,-0.35);

\node at (-0.1,-0.6) {$\cdot$};
\node at (0,-0.6) {$\cdot$};
\node at (0.1,-0.6) {$\cdot$};

\node at (2,-0.6) {$\cdot$};
\node at (2.1,-0.6) {$\cdot$};
\node at (2.2,-0.6) {$\cdot$};

\draw[-] (-1.4,-0.84)--(-1.4,-1);
\draw[-] (-0.7,-0.84)--(-0.7,-1);
\draw[-] (0.7,-0.84)--(0.7,-1);
\draw[-] (1.4,-0.84)--(1.4,-1);
\draw[-] (2.8,-0.84)--(2.8,-1);

\node[draw,circle] at (-1.4,-0.6) {${\scriptstyle a_1}$};
\node[draw,circle] at (-0.7,-0.6) {${\scriptstyle a_{2}}$};
 \node[draw,circle] at (0.7,-0.6) {${\scriptstyle a_{r}}$};
\node[draw,circle] at (1.4,-0.6) {${\scriptstyle 0}$};
\node[draw,circle] at (2.8,-0.6) {${\scriptstyle 0}$};
\end{tikzpicture}
\end{equation*}
Recall that from Relation \ref{rel:F_q_lin_mu}, we have:
\begin{equation*} 
\begin{tikzpicture}[anchorbase,scale=1]
\draw[-] (0, -0.8)--(0, -0.3);
 \draw[-] (0, 0.8)--(0, 0.3); 
 \node[draw,circle] at (0,0) {$\scriptstyle 0$};  
 \end{tikzpicture}
 \quad  = \quad 
 \begin{tikzpicture}[anchorbase,scale=1]
\draw[-] (0,-0.8)--(0,-0.3);
\node at (0,-0.3) {$\bullet$};
\node at (0,0.27) {$\circ$};
\draw[-] (0,0.8)--(0,0.34);
\end{tikzpicture} 
\end{equation*}
Hence
\begin{equation*}
 \begin{tikzpicture}[anchorbase,scale=1.5]
\draw[-] (-0.5,-0.2)--(-0.5,0.2)--(0.5,0.2)--(0.5,-0.2)--(-0.5,-0.2);
\node at (-0,-0) {${\begin{bmatrix}
A &0
\end{bmatrix}}$};
\draw[-] (-0.4,-0.2)--(-0.4,-0.6);
\draw[-] (0.4,-0.2)--(0.4,-0.6);
\draw[-] (0,0.2)--(0,0.4);
\draw[-] (-0.3,-0.2)--(-0.3,-0.6);
\node at (-0.1,-0.6) {$\cdot$};
\node at (0.05,-0.6) {$\cdot$};
\node at (0.2,-0.6) {$\cdot$}; 
\end{tikzpicture} \quad
= \quad\begin{tikzpicture}[anchorbase,scale=1.5]
\draw[-] (-1.4,0)--(-0.2,0);
\draw[-] (0.2,0)--(2.8,0);
\draw[-] (-0.2,-0.2)--(-0.2,0.2)--(0.2,0.2)--(0.2,-0.2)--(-0.2,-0.2);
\node at (-0,-0) {$\dot{+}$};
\draw[-] (0,0.2)--(0,0.4);

\draw[-] (-1.4,0)--(-1.4,-0.35);
\draw[-] (-0.7,0)--(-0.7,-0.35);
\draw[-] (0.7,0)--(0.7,-0.35);
\draw[-] (1.4,0)--(1.4,-0.35);
\draw[-] (2.8,0)--(2.8,-0.35);

\node at (-0.1,-0.6) {$\cdot$};
\node at (0,-0.6) {$\cdot$};
\node at (0.1,-0.6) {$\cdot$};

\node at (2,-0.6) {$\cdot$};
\node at (2.1,-0.6) {$\cdot$};
\node at (2.2,-0.6) {$\cdot$};

\draw[-] (-1.4,-0.84)--(-1.4,-1);
\draw[-] (-0.7,-0.84)--(-0.7,-1);
\draw[-] (0.7,-0.84)--(0.7,-1);
\draw[-] (1.4,-0.84)--(1.4,-1);
\draw[-] (2.8,-0.84)--(2.8,-1);

\node[draw,circle] at (-1.4,-0.6) {${\scriptstyle a_1}$};
\node[draw,circle] at (-0.7,-0.6) {${\scriptstyle a_{2}}$};
 \node[draw,circle] at (0.7,-0.6) {${\scriptstyle a_{r}}$};
 
 \node at (1.4,-0.4) {${\circ}$};
\node at (2.8,-0.4) {${\circ}$};

\node at (1.4,-0.8) {${\bullet}$};
\node at (2.8,-0.8) {${\bullet}$};
\end{tikzpicture} \quad \InnaA{\xlongequal{\text{\ref{rel:F_q_lin_zero}}}} \quad \begin{tikzpicture}[anchorbase, scale=1.5]
\draw[-] (-0.5,-0.2)--(-0.5,0.4)--(0.5,0.4)--(0.5,-0.2)--(-0.5,-0.2);
\node at (0,0.1) {${A}$};
\draw[-] (-0.4,-0.2)--(-0.4,-0.5);
\draw[-] (0.4,-0.2)--(0.4,-0.5);
\node at (-0.2,-0.4) {$\cdot$};
\node at (-0,-0.4) {$\cdot$};
\node at (0.2,-0.4) {$\cdot$};

\draw[-] (0,0.4)--(0,0.6);

\node at (1, 0.55) {$\bullet$};
\node at (1.6, 0.55) {$\bullet$};
\draw[-] (1,-0.5)--(1,0.55);
\draw[-] (1.6,-0.5)--(1.6,0.55);

\node at (1.25,0) {$\cdot$};
\node at (1.3,0) {$\cdot$};
\node at (1.35,0) {$\cdot$};

\end{tikzpicture}
\end{equation*}

\end{proof}

\begin{proposition}\label{prop:tensor_prod_mu_B}
Let $A \in Mat_{d_1\times r_1}(\F_q)$, $B \in Mat_{d_2\times r_2}(\F_q)$ be two matrices, and denote: $C = \begin{bmatrix}                                                    A &0\\ 
0 &B 
\end{bmatrix}
$. Then $\mu_{A}\otimes \mu_{B} = \mu_C$.
\end{proposition}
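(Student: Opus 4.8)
The plan is to reduce the statement to Lemma~\ref{lem:horizontal stacking_with_zero} on augmentation by zero columns, using only the coassociativity and counitality of $m^*$. Let $P$ denote the submatrix of $C$ formed by its first $d_1$ rows and $Q$ the submatrix formed by its last $d_2$ rows, so that $P=\begin{bmatrix}A&0\end{bmatrix}\in Mat_{d_1\times(r_1+r_2)}(\F_q)$ and $Q=\begin{bmatrix}0&B\end{bmatrix}\in Mat_{d_2\times(r_1+r_2)}(\F_q)$. The first step is to record a ``two-block'' reformulation of the defining description \eqref{dg:matrix_mult}: for any two matrices $P,Q$ with the same number $n$ of columns,
$$\mu_{\begin{bmatrix}P\\ Q\end{bmatrix}}=(\mu_P\otimes\mu_Q)\circ\Delta_n ,$$
where $\Delta_n\colon\V^{\otimes n}\to\V^{\otimes n}\otimes\V^{\otimes n}$ is the diagonal, that is, $(m^*)^{\otimes n}$ followed by the symmetry sorting the $2n$ resulting factors into two consecutive blocks of $n$. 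This is just \eqref{dg:matrix_mult} grouped more coarsely: one factors the duplication $(m^*)^{it}$ of each input strand (into $d_1+d_2$ strands) as a duplication into two strands followed by further duplications into $d_1$ and $d_2$ strands, which is legitimate by coassociativity of $m^*$, and then collects the resulting permutations of tensor factors, using coherence of the symmetric monoidal category $\mathcal{C}$. Applying this with the above $P,Q$ gives $\mu_C=(\mu_P\otimes\mu_Q)\circ\Delta_{r_1+r_2}$.

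By Lemma~\ref{lem:horizontal stacking_with_zero} one has $\mu_P=\mu_A\otimes(\eps^*)^{\otimes r_2}$ and $\mu_Q=(\eps^*)^{\otimes r_1}\otimes\mu_B$, so it remains to identify
$$\bigl((\mu_A\otimes(\eps^*)^{\otimes r_2})\otimes((\eps^*)^{\otimes r_1}\otimes\mu_B)\bigr)\circ\Delta_{r_1+r_2}$$
with $\mu_A\otimes\mu_B$. For each of the first $r_1$ input factors, $\Delta_{r_1+r_2}$ produces two copies via $m^*$: the first is routed into $\mu_A$ and the second into one of the $\eps^*$'s coming from $\mu_P$, so by the counit axiom $(\id\otimes\eps^*)\circ m^*=\id$ (part of \ref{rel:Frob_alg1}) this factor passes unchanged into the corresponding input of $\mu_A$. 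Symmetrically, for each of the last $r_2$ input factors the first copy is absorbed by an $\eps^*$ coming from $\mu_Q$ and the second enters $\mu_B$, so by $(\eps^*\otimes\id)\circ m^*=\id$ it passes unchanged into $\mu_B$. Consequently the ``$A$-part'' of the composite uses only the first $r_1$ inputs and produces the first $d_1$ outputs, the ``$B$-part'' uses only the last $r_2$ inputs and produces the last $d_2$ outputs, and the two do not interact; hence the composite equals $\mu_A\otimes\mu_B$, proving the proposition.

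The step demanding the most care is the first one, i.e.\ establishing the two-block form of \eqref{dg:matrix_mult}: this amounts to tracking precisely the shuffles of tensor factors when $\bigl((m^*)^{it}\bigr)^{\otimes(r_1+r_2)}$ is refactored, and matching them against the permutation $w$ appearing in the definition of $\mu_C$. Once this bookkeeping is done, the remainder is the short counit cancellation above, which is especially transparent in string-diagram form: the $\eps^*$-capped strands supplied by Lemma~\ref{lem:horizontal stacking_with_zero} cap off exactly the ``spare'' halves of the duplications introduced by $\Delta_{r_1+r_2}$.
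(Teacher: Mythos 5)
Your proposal is correct and follows essentially the same route as the paper's proof: group the rows of $C$ into the two blocks $\begin{bmatrix}A&0\end{bmatrix}$ and $\begin{bmatrix}0&B\end{bmatrix}$ (coassociativity of $m^*$ plus symmetry coherence), apply Lemma \ref{lem:horizontal stacking_with_zero} to each block, and cancel the spare duplicated strands against the $\eps^*$ caps via the counit axiom. Only a harmless bookkeeping slip: the spare copy of an input among the first $r_1$ factors is absorbed by an $\eps^*$ coming from $\mu_Q=(\eps^*)^{\otimes r_1}\otimes\mu_B$, not from $\mu_P$, and symmetrically for the last $r_2$ factors; the cancellation argument is unaffected.
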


\begin{example}
 For $\mathcal{C} = \Rep(GL_n(\F_q))$, $\V:=\V_n$, this lemma states that for any $v_1, \ldots, v_{r_1}, w_1, \ldots, w_{r_2} \in V$, we have: $$\mu_A(v_1, \ldots, v_{r_1}) \otimes \mu_B (w_1, \ldots, w_{r_2}) = \mu_{\begin{bmatrix}                                                    A &0\\ 
0 &B 
\end{bmatrix}} (v_1, \ldots, v_{r_1}, w_1, \ldots, w_{r_2}).$$
Indeed, in the setting of Example \ref{ex:matrix_mult_classical}, the map $\mu_A \otimes \mu_B :\V_n^{\otimes r_1+r_2} \to \V_n^{\otimes d_1+d_2} $ corresponds to the $\F_q$-linear operator 
\begin{align*}
V^{\times (r_1+r_2)} =V^{\times r_1} \dot{\oplus} V^{\times r_2}  &\longrightarrow V^{\times d_1} \dot{\oplus} V^{\times d_2} = V^{\times (d_1+d_2)}\\
\begin{bmatrix}
v_1\\ \vdots \\ v_{r_1}
\end{bmatrix} \dot{\oplus} \begin{bmatrix}
w_1\\ \vdots \\ w_{r_2}
\end{bmatrix} &\longmapsto A \begin{bmatrix}
v_1\\ \vdots\\ v_{r_1}
\end{bmatrix}
 \dot{\oplus} B \begin{bmatrix}
w_1\\ \vdots\\ w_{r_2}
\end{bmatrix}
\end{align*} which is exactly the multiplication of 
$\begin{bmatrix}
v_1\\ \vdots \\ v_{r_1} \\w_1\\ \vdots \\ w_{r_2}
\end{bmatrix}$ by $\begin{bmatrix}                                                    A &0\\ 
0 &B 
\end{bmatrix}$.
\end{example}

\begin{proof}[Proof of Proposition \ref{prop:tensor_prod_mu_B}]

Consider the morphism $\mu_C$. The morphism $\mu_C $ is given by the diagram
\begin{equation*}
 \begin{tikzpicture}[anchorbase,scale=1.3]
\draw[-] (0.3,-0.2)--(0.3,0.4)--(1.7,0.4)--(1.7,-0.2)--(0.3,-0.2);
\node at (1,0.1) {${\begin{bmatrix} A &0\end{bmatrix}}$};
\draw[-] (0.6,-0.2)--(0.6,-0.7);
\draw[-] (1.4,-0.2)--(1.4,-0.5);
\node at (0.8,-0.4) {$\cdot$};
\node at (1,-0.4) {$\cdot$};
\node at (1.2,-0.4) {$\cdot$};
\draw[-] (0.6,0.4)--(0.6,0.6);
\draw[-] (1.4,0.4)--(1.4,0.6);
\node at (0.8,0.5) {$\cdot$};
\node at (1,0.5) {$\cdot$};
\node at (1.2,0.5) {$\cdot$};

\node at (2.4,0) {$\cdot$};
\node at (2.5,0) {$\cdot$};
\node at (2.6,0) {$\cdot$};

\draw[-] (3.3,-0.2)--(3.3,0.4)--(4.7,0.4)--(4.7,-0.2)--(3.3,-0.2);
\node at (4,0.1) {${\begin{bmatrix} 0 &B\end{bmatrix}}$};
\draw[-] (3.6,-0.2)--(3.6,-0.7);
\draw[-] (4.4,-0.2)--(4.4,-0.5);
\node at (3.8,-0.4) {$\cdot$};
\node at (4,-0.4) {$\cdot$};
\node at (4.2,-0.4) {$\cdot$};
\draw[-] (3.6,0.4)--(3.6,0.6);
\draw[-] (4.4,0.4)--(4.4,0.6);
\node at (3.8,0.5) {$\cdot$};
\node at (4,0.5) {$\cdot$};
\node at (4.2,0.5) {$\cdot$};
\draw[-] (0.6,-0.7)--(3.6,-0.7);
\draw[-] (1.4,-0.5)--(3.5, -0.5);
\draw[-] (3.7,-0.5)--(4.4,-0.5);
\draw[-] (3, -0.7)--(3,-1);
\draw[-] (3.4, -0.5)--(3.4, -0.65);
\draw[-] (3.4, -0.75)--(3.4, -1);
\node at (3.1,-0.95) {$\cdot$};
\node at (3.2,-0.95) {$\cdot$};
\node at (3.3,-0.95) {$\cdot$};
\end{tikzpicture}
\InnaA{\quad \xlongequal{\text{\cref{lem:horizontal stacking_with_zero}}}\quad}
 \begin{tikzpicture}[anchorbase,scale=1.3]
\draw[-] (-0.5,-0.2)--(-0.5,0.4)--(0.5,0.4)--(0.5,-0.2)--(-0.5,-0.2);
\node at (0,0.1) {${A}$};
\draw[-] (-0.4,-0.2)--(-0.4,-0.7);
\draw[-] (0.4,-0.2)--(0.4,-0.5);
\node at (-0.2,-0.4) {$\cdot$};
\node at (-0,-0.4) {$\cdot$};
\node at (0.2,-0.4) {$\cdot$};
\draw[-] (-0.4,0.4)--(-0.4,0.6);
\draw[-] (0.4,0.4)--(0.4,0.6);
\node at (-0.2,0.5) {$\cdot$};
\node at (-0,0.5) {$\cdot$};
\node at (0.2,0.5) {$\cdot$};

\node at (1, 0.55) {$\bullet$};
\node at (1.6, 0.55) {$\bullet$};
\draw[-] (1,-0.7)--(1,0.55);
\draw[-] (1.6,-0.5)--(1.6,0.55);

\node at (1.25,0) {$\cdot$};
\node at (1.3,0) {$\cdot$};
\node at (1.35,0) {$\cdot$};

\draw[-] (-0.4,-0.7)--(1,-0.7);
\draw[-] (0.4,-0.5)--(0.9, -0.5);
\draw[-] (1.1,-0.5)--(1.6,-0.5);
\draw[-] (0, -0.7)--(0,-1);
\draw[-] (0.6, -0.5)--(0.6, -0.65);
\draw[-] (0.6, -0.75)--(0.6, -1);
\node at (0.25,-0.95) {$\cdot$};
\node at (0.3,-0.95) {$\cdot$};
\node at (0.35,-0.95) {$\cdot$};

\draw[-] (3.5,-0.2)--(3.5,0.4)--(4.5,0.4)--(4.5,-0.2)--(3.5,-0.2);
\node at (4,0.1) {${B}$};
\draw[-] (3.6,-0.2)--(3.6,-0.7);
\draw[-] (4.4,-0.2)--(4.4,-0.5);
\node at (3.8,-0.4) {$\cdot$};
\node at (4,-0.4) {$\cdot$};
\node at (4.2,-0.4) {$\cdot$};
\draw[-] (3.6,0.4)--(3.6,0.6);
\draw[-] (4.4,0.4)--(4.4,0.6);
\node at (3.8,0.5) {$\cdot$};
\node at (4,0.5) {$\cdot$};
\node at (4.2,0.5) {$\cdot$};

\node at (2.2, 0.55) {$\bullet$};
\node at (2.8, 0.55) {$\bullet$};
\draw[-] (2.2,-0.7)--(2.2,0.55);
\draw[-] (2.8,-0.5)--(2.8,0.55);

\node at (2.45,0) {$\cdot$};
\node at (2.5,0) {$\cdot$};
\node at (2.55,0) {$\cdot$};

\draw[-] (2.2,-0.7)--(3.6,-0.7);
\draw[-] (2.8,-0.5)--(3.5, -0.5);
\draw[-] (3.7,-0.5)--(4.4,-0.5);
\draw[-] (3, -0.7)--(3,-1);
\draw[-] (3.4, -0.5)--(3.4, -0.65);
\draw[-] (3.4, -0.75)--(3.4, -1);
\node at (3.15,-0.95) {$\cdot$};
\node at (3.2,-0.95) {$\cdot$};
\node at (3.25,-0.95) {$\cdot$};
\end{tikzpicture}
\end{equation*}
By Frobenius coalgebra relations (see  \ref{rel:Frob_alg}), we conclude that this is just $\mu_{A}\otimes \mu_{B}$ as required.

\end{proof}

\subsection{Composing morphisms \texorpdfstring{$\mu_A$}{muA}}\label{ssec:muA_comp}
For the result about composition of morphisms $\mu_A$, we will need some auxiliary lemmas.

\begin{lemma}\label{lem:mu_identity}
Let $k,l \geq 1$. Consider the matrix $\begin{bmatrix}
I_l \\ \vdots \\ I_l
\end{bmatrix} \in Mat_{kl\times l}(\F_q)$. We have: $$ \mu_{\begin{bmatrix}
I_l \\ \vdots \\ I_l
\end{bmatrix}} = w \circ \left((m^*)^{it}\right)^{\otimes l} $$
 where $(m^*)^{it}: \V \to \V^{\otimes k}$ and $w: \V^{\otimes kl} \to \V^{\otimes kl}$ is the permutation of tensor factors sending the factor $s + rl$ to the position $r+sk$ for $0\leq s < l$, $0\leq r <k$. In particular, $\mu_{I_l} = \id_{\V^{\otimes l}}$.
\end{lemma}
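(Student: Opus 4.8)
The plan is to prove the identity by unwinding the definition of $\mu_A$ for the specific matrix $A=\begin{bmatrix} I_l\\ \vdots\\ I_l\end{bmatrix}\in Mat_{kl\times l}(\F_q)$, using only the counit and coassociativity part of \ref{rel:Frob_alg1} together with \ref{rel:F_q_lin_plus_ass_comm}, \ref{rel:F_q_lin_zero} and \ref{rel:F_q_lin_mu}. In particular neither $\eps$ nor $coev$ is used, so the statement genuinely belongs in this section.

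The first step is to simplify $\mu_B$ when $B$ is a standard covector. For $1\le b\le l$, write $e_b=[\,0\ \cdots\ 0\ 1\ 0\ \cdots\ 0\,]\in Mat_{1\times l}(\F_q)$ with the $1$ in the $b$-th slot. I claim that
\[
\mu_{e_b}\;=\;(\eps^*)^{\otimes(b-1)}\otimes\id_{\V}\otimes(\eps^*)^{\otimes(l-b)}\;:\;\V^{\otimes l}\to\V .
\]
To see this, recall that by the definition of $\mu_B$ for a row matrix, $\mu_{e_b}=(\dot{+})^{it}\circ\bigl(\mu_0^{\otimes(b-1)}\otimes\mu_1\otimes\mu_0^{\otimes(l-b)}\bigr)$; by \ref{rel:F_q_lin_mu} we have $\mu_1=\id_\V$ and $\mu_0=z\circ\eps^*$, and by functoriality of $\otimes$ the middle morphism factors as $\bigl(z^{\otimes(b-1)}\otimes\id_\V\otimes z^{\otimes(l-b)}\bigr)\circ\bigl((\eps^*)^{\otimes(b-1)}\otimes\id_\V\otimes(\eps^*)^{\otimes(l-b)}\bigr)$. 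Finally, \ref{rel:F_q_lin_zero} together with the associativity and commutativity of $\dot{+}$ in \ref{rel:F_q_lin_plus_ass_comm} let one absorb the $z$-legs one at a time, reducing $(\dot{+})^{it}$ on $l$ inputs to $(\dot{+})^{it}$ on a single input, i.e. to $\id_\V$; hence $(\dot{+})^{it}\circ\bigl(z^{\otimes(b-1)}\otimes\id_\V\otimes z^{\otimes(l-b)}\bigr)=\id_\V$, and the claim follows. Diagrammatically this is immediate: a strand decorated by $\mu_0$ entering a $\dot{+}$ is an $\eps^*$-cap followed by a $z$, and a $z$ feeding a $\dot{+}$ disappears.

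The second step is to feed this into the four-stage composition defining $\mu_A$. Label the rows of $A$ as $A_1,\dots,A_{kl}$; then $A_i=e_b$ exactly when $i=(a-1)l+b$ with $1\le a\le k$ and $1\le b\le l$. After the second (shuffling) stage $w'$, the space $\V^{\otimes kl^2}$ is grouped into $kl$ blocks of $l$ factors, each block receiving one copy of each of the $l$ input strands; the last two stages $((\dot{+})^{it})^{\otimes kl}\circ\bigl(\bigotimes_{i,j}\mu_{A_{i,j}}\bigr)$ restrict on the $i$-th block to $\mu_{A_i}=\mu_{e_b}$, which by the first step keeps the $b$-th strand of that block and counits the other $l-1$. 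Hence $\mu_A=\bigl(\bigotimes_{i=1}^{kl}\mu_{A_i}\bigr)\circ w'\circ\bigl((m^*)^{it}\bigr)^{\otimes l}$ with $(m^*)^{it}\colon\V\to\V^{\otimes kl}$. For each input strand exactly $k$ of the $kl$ copies produced by $(m^*)^{it}$ survive — those landing in the $k$ blocks $i$ whose index $b$ matches that strand — while the remaining $k(l-1)$ copies are killed by $\eps^*$; by coassociativity and the counit relation in \ref{rel:Frob_alg1}, $(m^*)^{it}\colon\V\to\V^{\otimes kl}$ post-composed with $\eps^*$ on all but $k$ of its legs equals $(m^*)^{it}\colon\V\to\V^{\otimes k}$ on those $k$ legs. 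Reassembling the surviving legs in block order then rewrites the composite as $w\circ\bigl((m^*)^{it}\bigr)^{\otimes l}$ with $(m^*)^{it}\colon\V\to\V^{\otimes k}$ and $w$ the permutation in the statement; for $k=1$ the shuffle is trivial and $(m^*)^{it}=\id_\V$, so $\mu_{I_l}=\id_{\V^{\otimes l}}$.

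The only genuinely delicate point is the last bookkeeping step: checking that the residual permutation obtained above — the shuffle $w'$ from the definition of $\mu_A$ with the $\eps^*$-capped legs discarded — is literally the map $s+rl\mapsto r+sk$ of the statement. This is a routine but error-prone index chase, cleanest to carry out by tracking individual strands through a string diagram, and it can be sanity-checked against Example \ref{ex:matrix_mult_classical}, where $\mu_A$ is honestly multiplication by $A$, so $\mu_A$ sends the tuple $(v_1|\ldots|v_l)$ to $k$ consecutive copies of itself, in agreement with $w\circ\bigl((m^*)^{it}\bigr)^{\otimes l}$.
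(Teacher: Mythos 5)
Your proof is correct and follows essentially the same route as the paper: identify $\mu_{e_b}$ as $(\eps^*)^{\otimes(b-1)}\otimes\id_{\V}\otimes(\eps^*)^{\otimes(l-b)}$, then use the definition of $\mu_A$ (Diagram \eqref{dg:matrix_mult}) and the fact that $\eps^*$ is a counit for $m^*$ to collapse the redundant legs. The only cosmetic difference is that the paper obtains the $\mu_{e_b}$ identity by citing Lemma \ref{lem:horizontal stacking_with_zero}, whereas you re-derive that special case directly from \ref{rel:F_q_lin_mu} and \ref{rel:F_q_lin_zero}, which amounts to the same computation.
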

\begin{proof}
By Lemma \ref{lem:horizontal stacking_with_zero}, we have:
$\mu_{\begin{bmatrix}
0 &\ldots &0 &1 &0 &\ldots &0
\end{bmatrix}} = (\eps^*)^{\otimes r_1} \otimes \id \otimes (\eps^*)^{\otimes r_2}$ where $r_1$ is the number of zeroes in the left part of the matrix, and $r_2$ is the number of zeroes in the right part of the matrix. Using the definition of $\mu_A$ (see Diagram \eqref{dg:matrix_mult}), and the fact that $\eps^*$ is a counit for $m^*$ (see Relation \ref{rel:Frob_alg}), we obtain the required statement.
\end{proof}
This Lemma immediately implies:
\begin{corollary}\label{cor:vert_stacking}
Let $A\in Mat_{s_1\times l}(\F_q), A' \in Mat_{s_2\times l}(\F_q)$. Then $$\mu_{\begin{bmatrix} A\\A'
\end{bmatrix}} = \left(\mu_A \otimes \mu_{A'}\right) \circ \mu_{\begin{bmatrix}
I_l \\ I_l
\end{bmatrix}}.$$
\end{corollary}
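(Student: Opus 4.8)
The goal is to prove Corollary~\ref{cor:vert_stacking}: for $A\in Mat_{s_1\times l}(\F_q)$ and $A'\in Mat_{s_2\times l}(\F_q)$, we have $\mu_{\begin{bmatrix} A\\A'\end{bmatrix}} = \left(\mu_A \otimes \mu_{A'}\right) \circ \mu_{\begin{bmatrix} I_l \\ I_l\end{bmatrix}}$. The key observation is a factorization of block-row matrices: writing $B := \begin{bmatrix} A\\A'\end{bmatrix} \in Mat_{(s_1+s_2)\times l}(\F_q)$, we have the matrix identity $B = \begin{bmatrix} A & 0 \\ 0 & A'\end{bmatrix} \cdot \begin{bmatrix} I_l \\ I_l\end{bmatrix}$, where the first factor lies in $Mat_{(s_1+s_2)\times 2l}(\F_q)$ and the second in $Mat_{2l\times l}(\F_q)$. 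So the statement should follow by combining the compatibility of $\mu_{(-)}$ with composition and with block-diagonal sums.

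\textbf{Steps.} First I would invoke Proposition~\ref{prop:compos_mu_A} (composition: $\mu_B \circ \mu_A = \mu_{BA}$) to rewrite $\mu_B = \mu_{\begin{bmatrix} A & 0 \\ 0 & A'\end{bmatrix}} \circ \mu_{\begin{bmatrix} I_l \\ I_l\end{bmatrix}}$. Then I would apply Proposition~\ref{prop:tensor_prod_mu_B} (block-diagonal: $\mu_{\begin{bmatrix} A & 0 \\ 0 & A'\end{bmatrix}} = \mu_A \otimes \mu_{A'}$) to the first factor, obtaining exactly $\mu_B = (\mu_A \otimes \mu_{A'})\circ \mu_{\begin{bmatrix} I_l \\ I_l\end{bmatrix}}$, which is the claim. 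One should double-check that the permutation-of-factors conventions embedded in the definition of $\mu_{(-)}$ (the morphism $w$ in Definition~\ref{def:special_string_diag}) are consistent across these two invocations, i.e.\ that the bookkeeping in Proposition~\ref{prop:compos_mu_A} genuinely reproduces the matrix product $\begin{bmatrix} A & 0 \\ 0 & A'\end{bmatrix}\begin{bmatrix} I_l \\ I_l\end{bmatrix} = \begin{bmatrix} A \\ A'\end{bmatrix}$ on the nose and not merely up to a reshuffle; but this is precisely what Proposition~\ref{prop:compos_mu_A} asserts, so no extra work is needed.

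\textbf{Main obstacle.} Since Corollary~\ref{cor:vert_stacking} is claimed to follow ``immediately'' from Lemma~\ref{lem:mu_identity}, the intended argument may instead be diagrammatic: Lemma~\ref{lem:mu_identity} identifies $\mu_{\begin{bmatrix} I_l \\ I_l\end{bmatrix}}$ with $w\circ\big((m^*)^{it}\big)^{\otimes l}$ (here $(m^*)^{it}\colon \V\to\V^{\otimes 2}$, i.e.\ just $m^*$, with $w$ the shuffle interleaving the two copies), and then one feeds this into the definition of $\mu_B$ via Diagram~\eqref{dg:matrix_mult}, recognizing that doubling each of the $l$ inputs by $m^*$ and routing the first copy into an $A$-box and the second into an $A'$-box is exactly $(\mu_A\otimes\mu_{A'})\circ\mu_{\begin{bmatrix} I_l \\ I_l\end{bmatrix}}$ after using coassociativity of $m^*$. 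The only real care required is the permutation bookkeeping: one must verify that the $w$ appearing in $\mu_{\begin{bmatrix} I_l \\ I_l\end{bmatrix}}$ together with the $w$ internal to $\mu_B$ compose to the $w$'s internal to $\mu_A$ and $\mu_{A'}$ placed side by side. I would present the short proof via Propositions~\ref{prop:compos_mu_A} and \ref{prop:tensor_prod_mu_B} as the clean argument, since those propositions have already absorbed exactly this combinatorial bookkeeping.
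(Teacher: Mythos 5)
Your intended main argument is circular within the paper's development: Corollary \ref{cor:vert_stacking} is stated (and needed) \emph{before} Proposition \ref{prop:compos_mu_A}, and the paper's proof of Proposition \ref{prop:compos_mu_A} invokes Corollary \ref{cor:vert_stacking} itself — both directly (in the reduction of the general case via the rows $B_1,\dots,B_r$) and through Lemma \ref{lem:mu_compos_aux}, whose proof also cites the corollary. So you cannot derive the corollary from $\mu_B\circ\mu_A=\mu_{BA}$ without first reproving that composition law by other means; the "clean argument" you propose as primary is not available at this point in the logical order. (Using Proposition \ref{prop:tensor_prod_mu_B} is fine — it is established earlier and independently — the problem is only the appeal to Proposition \ref{prop:compos_mu_A}.)

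The route you relegate to the "main obstacle" paragraph is in fact the paper's own proof: the corollary is stated as an immediate consequence of Lemma \ref{lem:mu_identity}, i.e.\ one identifies $\mu_{\begin{bmatrix} I_l \\ I_l\end{bmatrix}}$ with $w\circ (m^*)^{\otimes l}$, unfolds $\mu_{\begin{bmatrix} A\\A'\end{bmatrix}}$ via its definition (Definition \ref{def:special_string_diag} / Diagram \eqref{dg:matrix_mult}), and uses coassociativity of $m^*$ together with naturality of the symmetry to split the $(s_1+s_2)$-fold comultiplication of each input into "duplicate once, then comultiply $s_1$ times into the $A$-part and $s_2$ times into the $A'$-part," which is exactly $(\mu_A\otimes\mu_{A'})\circ\mu_{\begin{bmatrix} I_l \\ I_l\end{bmatrix}}$. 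That diagrammatic bookkeeping is the whole content of the corollary and should be your main proof, not a fallback; with it in place your write-up is correct, whereas as currently organized the central step rests on a result that is proved later using this very statement.
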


\begin{lemma}\label{lem:interchanging_plus_and_comult}
 Let $k, l\geq 2$. Then the composition $(m^*)^{it} \circ (\dot{+})^{it} $ of $(\dot{+})^{it}: \V^{\otimes l} \to \V$ and $(m^*)^{it}: \V \to \V^{\otimes k}$ equals 
 $$\left((\dot{+})^{it}\right)^{\otimes k} \circ w \circ \left((m^*)^{it}\right)^{\otimes l} $$
 where $w: \V^{\otimes kl} \to \V^{\otimes kl}$ is as in Lemma \ref{lem:mu_identity}. 
 
 Diagrammatically, this can be drawn as
 \begin{equation*}
 \begin{tikzpicture}[anchorbase,scale=1.3]
   \draw[-] (0.4,-0.5)--(0.4,-0.2);
     \draw[-] (-0.4,-0.5)--(-0.4,-0.2);
  \draw[-] (-0.4,-0.5)--(0.4,-0.5);
  
  \node at (-0.1,-0.3) {$\cdot$};
\node at (-0,-0.3) {$\cdot$};
\node at (0.1,-0.3) {$\cdot$};

 \draw[-] (0,-0.8)--(0,-0.5);
 
\draw[-] (-0.2,-1.2)--(-0.2,-0.8)--(0.2,-0.8)--(0.2,-1.2)--(-0.2,-1.2);
\node at (0, -1) {$\dot{+}$};
\draw[-] (-0.4,-1)--(-0.2,-1);
\draw[-] (0.2,-1)--(0.4,-1);
  
\draw[-] (-0.4,-1) -- (-0.4, -1.5);
\draw[-] (0.4,-1) -- (0.4, -1.5);
    
\node at (-0.1,-1.45) {$\cdot$};
\node at (-0,-1.45) {$\cdot$};
\node at (0.1,-1.45) {$\cdot$};
 \end{tikzpicture}
 \quad=\quad
  \begin{tikzpicture}[anchorbase,scale=1.3]

\draw[-] (-0.2,-0.2)--(-0.2,0.2)--(0.2,0.2)--(0.2,-0.2)--(-0.2,-0.2);
\node at (0,0) {$\dot{+}$};
\draw[-] (-0.4,0)--(-0.4,-0.7);
\draw[-] (0,0.2)--(0,0.4);
\draw[-] (0.4,0)--(0.4,-0.5);

\draw[-] (-0.4,0)--(-0.2,0);
\draw[-] (0.2,0)--(0.4,0);

\node at (-0.1,-0.4) {$\cdot$};
\node at (-0,-0.4) {$\cdot$};
\node at (0.1,-0.4) {$\cdot$};
\draw[-] (1.8,-0.2)--(1.8,0.2)--(2.2,0.2)--(2.2,-0.2)--(1.8,-0.2);
\node at (2,0) {$\dot{+}$};
\draw[-] (1.6,0)--(1.6,-0.7);
\draw[-] (2,0.2)--(2,0.4);
\draw[-] (2.4,0)--(2.4,-0.5);

\draw[-] (1.6,0)--(1.8,0);
\draw[-] (2.2,0)--(2.4,0);

\node at (1.9,-0.4) {$\cdot$};
\node at (2,-0.4) {$\cdot$};
\node at (2.1,-0.4) {$\cdot$};

\node at (3.9,0) {$\cdot$};
\node at (4,0) {$\cdot$};
\node at (4.1,0) {$\cdot$};

\draw[-] (5.8,-0.2)--(5.8,0.2)--(6.2,0.2)--(6.2,-0.2)--(5.8,-0.2);
\node at (6,0) {$\dot{+}$};
\draw[-] (5.6,0)--(5.6,-0.7);
\draw[-] (6,0.2)--(6,0.4);
\draw[-] (6.4,0)--(6.4,-0.5);

\draw[-] (5.6,0)--(5.8,0);
\draw[-] (6.2,0)--(6.4,0);

\node at (5.9,-0.4) {$\cdot$};
\node at (6,-0.4) {$\cdot$};
\node at (6.1,-0.4) {$\cdot$};
\draw[-] (-0.4,-0.7)--(5.6,-0.7);
\draw[-] (0.4,-0.5)--(1.5, -0.5);
\draw[-] (1.7, -0.5)--(5.5, -0.5);
\draw[-] (5.7,-0.5)--(6.4,-0.5);
\draw[-] (3, -0.7)--(3,-1);
\draw[-] (3.4, -0.5)--(3.4, -0.65);
\draw[-] (3.4, -0.75)--(3.4, -1);
\node at (3.1,-0.95) {$\cdot$};
\node at (3.2,-0.95) {$\cdot$};
\node at (3.3,-0.95) {$\cdot$};

\end{tikzpicture} 
 \end{equation*}

\end{lemma}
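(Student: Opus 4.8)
The plan is to deduce the identity from the general principle that a morphism of coalgebras intertwines iterated comultiplications, applied to the morphism $(\dot{+})^{it}\colon \V^{\otimes l}\to\V$.

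First I would record the formal setup. By \ref{rel:Frob_alg1} the object $\V$, together with $m^*$ and $\eps^*$, is a coassociative, cocommutative, counital coalgebra object in $\mathcal{C}$; consequently $\V^{\otimes l}$ carries the tensor-product coalgebra structure, whose comultiplication is $(m^*)^{\otimes l}\colon\V^{\otimes l}\to\V^{\otimes 2l}$ followed by the symmetry isomorphism of $\V^{\otimes 2l}$ that regroups the factors into two consecutive blocks $\V^{\otimes l}\otimes\V^{\otimes l}$, and whose counit is $(\eps^*)^{\otimes l}$. Relation \ref{rel:plus_coalg_mor} (diagram \ref{itm:str_rel_diag_plus_coalg}) says precisely that $\dot{+}\colon\V^{\otimes 2}\to\V$ is a morphism of coalgebras. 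Since $\id_\V$ is a coalgebra morphism, and since composites and tensor products (with identities) of coalgebra morphisms are again coalgebra morphisms, the morphism $(\dot{+})^{it}=\dot{+}\circ(\dot{+}\otimes\id_\V)\circ(\dot{+}\otimes\id_{\V^{\otimes 2}})\circ\cdots\colon\V^{\otimes l}\to\V$, being built from $\dot{+}$ by exactly these operations, is a morphism of coalgebras from the tensor coalgebra $\V^{\otimes l}$ to $\V$.

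Next I would invoke the elementary fact that for any morphism of coalgebras $f\colon C\to D$ and any $k\geq 1$ one has $\Delta_D^{(k)}\circ f=f^{\otimes k}\circ\Delta_C^{(k)}$, where $\Delta^{(k)}$ denotes the $k$-fold iterated comultiplication; this is an immediate induction on $k$ starting from the case $k=2$ (which is the definition of a coalgebra morphism), using coassociativity. Applying this with $f=(\dot{+})^{it}$, $C=\V^{\otimes l}$, $D=\V$, and noting that $\Delta_\V^{(k)}=(m^*)^{it}$ by definition, gives $(m^*)^{it}\circ(\dot{+})^{it}=\bigl((\dot{+})^{it}\bigr)^{\otimes k}\circ\Delta_{\V^{\otimes l}}^{(k)}$. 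It then remains to identify $\Delta_{\V^{\otimes l}}^{(k)}$: by a straightforward induction on $k$, the $k$-fold comultiplication of a tensor coalgebra $\V^{\otimes l}$ equals $\bigl((m^*)^{it}\bigr)^{\otimes l}\colon\V^{\otimes l}\to(\V^{\otimes k})^{\otimes l}$ followed by the canonical symmetry isomorphism $(\V^{\otimes k})^{\otimes l}\xrightarrow{\sim}(\V^{\otimes l})^{\otimes k}$ which regroups the factors by collecting, for each $i$, the $i$-th factor out of each of the $l$ blocks; and this regrouping is precisely the permutation $w$ fixed in \cref{lem:mu_identity} (it is the same shuffle $s+rl\mapsto r+sk$ used there). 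This yields the asserted equality.

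An alternative, if one prefers to stay diagrammatic, is a direct double induction: first prove the case $l=2$ for all $k$ by induction on $k$ from \ref{rel:plus_coalg_mor} and coassociativity of $m^*$, and then induct on $l$ using the factorization $(\dot{+})^{it}_l=\dot{+}\circ\bigl((\dot{+})^{it}_{l-1}\otimes\id_\V\bigr)$ together with associativity of $\dot{+}$ and naturality of $\sigma$. The only genuinely delicate point, in either route, is checking that the shuffle implicit in ``$k$-fold comultiplication of a tensor coalgebra'' is literally the permutation $w$ of \cref{lem:mu_identity}; this is routine but bookkeeping-heavy with the index conventions, and I expect it to be the main source of friction, everything else being formal.
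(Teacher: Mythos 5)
Your argument is correct, but it is organized differently from the paper's. The paper proves the identity by a bare-hands double induction on $(k,l)$: the base case $(2,2)$ is Relation \ref{rel:plus_coalg_mor}, then an induction on $l$ with $k=2$, then an induction on $k$, with the permutations handled ad hoc inside the inductive steps (the morphisms $w'$, $w''$ there) using coassociativity of $m^*$ and naturality of $\sigma$. You instead package the same ingredients into three standard coalgebra-theoretic facts: $\dot{+}$ is a coalgebra morphism from the tensor coalgebra $\V^{\otimes 2}$ (which is exactly \ref{rel:plus_coalg_mor}), hence so is $(\dot{+})^{it}\colon\V^{\otimes l}\to\V$; coalgebra morphisms intertwine iterated comultiplications; and the $k$-fold comultiplication of the tensor coalgebra $\V^{\otimes l}$ is $\bigl((m^*)^{it}\bigr)^{\otimes l}$ followed by the regrouping shuffle, which one checks coincides with the $w$ of \cref{lem:mu_identity} (it does, once the paper's convention for $w$ is read as in the proof of \cref{prop:gen_morphisms}, namely that the output factor in position $r+sk$ is the input factor from position $s+rl$). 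What your route buys is conceptual clarity — the lemma becomes an instance of a general fact, and all the index bookkeeping is isolated in the single identification of the shuffle with $w$ — at the price of setting up the tensor-coalgebra formalism (including the routine coherence that composites and tensor products of coalgebra morphisms are coalgebra morphisms); the paper's route stays entirely within its diagrammatic toolkit but has to juggle the auxiliary permutations in each inductive step. Your sketched ``alternative'' is essentially the paper's proof with the two inductions performed in the opposite order, so it adds nothing new, but the primary argument stands on the same relations (\ref{rel:plus_coalg_mor}, coassociativity from \ref{rel:Frob_alg1}, functoriality of $\sigma$) and I see no gap in it.
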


\begin{remark}
 In terms of morphisms $\mu_A$, this reads: $ \mu_{\begin{bmatrix}
 1 \\
 \vdots \\
 1 
 \end{bmatrix}} \circ \mu_{\begin{bmatrix}
 1 &\ldots &1
 \end{bmatrix}}=\mu_{\begin{bmatrix}
 1  &\ldots &1\\
 &\ldots \\
 1  &\ldots &1
 \end{bmatrix}}$.
\end{remark}

\begin{example}
For $\mathcal{C} = \Rep(GL_n(\F_q))$, $\V:=\V_n$, the statement of this Lemma translates to the fact that for any $v_1, \ldots, v_l \in V$, the element
$$ (v_1\dot{+}\ldots \dot{+} v_l) \otimes (v_1\dot{+}\ldots \dot{+} v_l)\otimes  \ldots  \otimes (v_1\dot{+}\ldots \dot{+} v_l) \in \V_n^{\otimes k}$$ can be obtained from $v_1\otimes\ldots \otimes v_l \in \V_n^{\otimes l}$ in two ways. The first way is by applying iteratively the map $m^*$ to $v_1\dot{+}\ldots \dot{+} v_l \in \V$. The second way is by applying $\dot{+}$ to the factors of the tensor
$$ (v_1\otimes v_2 \otimes\ldots \otimes  v_l)\ldots \otimes (v_1 \otimes v_2\otimes  \ldots \otimes v_l) \in \V_n^{\otimes kl}$$
which in turn is obtained by permutation of factors from the tensor $$ (v_1\otimes v_1 \otimes \ldots \otimes v_1) \otimes  \ldots \otimes (v_l\otimes v_l\otimes \ldots \otimes v_l). $$ The latter is just $\left((m^*)^{it}\right)^{\otimes l}(v_1\otimes\ldots \otimes v_l)$.
\end{example}

\begin{proof}[Proof of Lemma \ref{lem:interchanging_plus_and_comult}]
The proof is by double induction on the pair $(k, l)$. For $(k=2,l=2)$, this statement is just the Relation \ref{rel:plus_coalg_mor}, stating that $\dot{+}$ is a morphism of coalgebras. This relation tells us that
\begin{equation}\label{eq:plus_coalg_mor}
 m^*\circ \dot{+} =( \dot{+} \otimes \dot{+})\circ (\id \otimes \sigma \otimes \id) \circ (m^*\otimes m^*)
\end{equation}
(see also\ref{itm:str_rel_diag_plus_coalg} for a diagrammatic version of this equality).

Now, assume the statement holds for $(k=2, l)$ for some $l\geq 2$. Let us prove it for $(k=2, l+1)$. 
Indeed, in that case by induction assumption
$$m^* \circ (\dot{+})^{it} = \left((\dot{+})^{it}\right)^{\otimes 2} \circ w \circ \left((m^*)^{it}\right)^{\otimes l} \circ (\id\otimes \dot{+}). $$ Applying \eqref{eq:plus_coalg_mor}, we see that the right hand side equals $\left((\dot{+})^{it}\right)^{\otimes 2} \circ w' \circ \left((m^*)^{it}\right)^{\otimes l}$. Here $(\dot{+})^{it}$ denotes, by abuse of notation, the map $\V^{\otimes l+1} \to \V$, and $w':\V^{\otimes 2(l+1)} \to \V^{\otimes 2(l+1)}$ is a morphism permuting the tensor factors, obtained by precomposing $w$ with a permutation $\V^{\otimes 2(l+1)} \to \V^{\otimes 2(l+1)}$ swapping the factors $2l, 2l+1$. This proves the statement in the case $k=2$.

Next, assume the statement holds for some $(k, l)$, $k,l \geq 2$, and for the pair $(2,l)$. Let us prove it for $k+1, l$. By induction assumption for $(k, l)$ and for $(2,l)$, we have:
$$(m^*\otimes \id)\circ (m^*)^{it} \circ (\dot{+})^{it}: \V^{\otimes l}\to \V^{\otimes k+1}$$ equals 
\begin{align*}
(m^*\otimes \id)\circ \left((\dot{+})^{it}\right)^{\otimes k} \circ w \circ \left((m^*)^{it}\right)^{\otimes l}  =   \left((\dot{+})^{it}\right)^{\otimes k+1} \circ (w'\otimes \id) \circ \left((m^*)^{\otimes l}\otimes \id \right) \circ w \circ\left((m^*)^{it}\right)^{\otimes l}                                                                                                \end{align*}
where $w':\V^{\otimes 2l}\to \V^{\otimes 2l}$ is defined analogously to $w$ (for $k=2$) and the rightmost $(m^*)^{it}$ stands, by abuse of notation, for the map $\V\to\V^{\otimes k+1}$. 
Due to the functoriality of $\sigma$ and thus of $w$, we may write
$$\left((m^*)^{\otimes l}\otimes \id \right) \circ w \circ\left((m^*)^{it}\right)^{\otimes l} $$ as a product of a ``permutation'' morphism $w'': \V^{\otimes (k+1)l}\to \V^{\otimes (k+1)l}$ (permuting the tensor factors) with $ \left((m^*)^{it}\right)^{\otimes l}$. It is easy to see that the composition $(w'\otimes \id) \circ w''$ is precisely the permutation $\V^{\otimes (k+1)l}\to \V^{\otimes (k+1)l}$ which we needed, and the proof is complete.
\end{proof}

\begin{lemma}\label{lem:interchanging_mu_A_and_comult}
 Let $k, l\geq 2$, and let $A=[a_1, \ldots, a_l] \in Mat_{1\times l}(\F_q)$. Then the composition $(m^*)^{it} \circ \mu_A $ of $\mu_A: \V^{\otimes l} \to \V$ and $(m^*)^{it}: \V \to \V^{\otimes k}$ equals 
 $$\mu_A^{\otimes k} \circ w \circ \left((m^*)^{it}\right)^{\otimes l} = \mu_A^{\otimes k} \circ \mu_{\begin{bmatrix} I_l \\ \vdots \\ I_l
 \end{bmatrix}}$$
 where $w: \V^{\otimes kl} \to \V^{\otimes kl}$ is as in Lemma \ref{lem:interchanging_plus_and_comult}.
 Diagrammatically, this can be drawn as
 \begin{equation*}
 \begin{tikzpicture}[anchorbase,scale=1.3]
   \draw[-] (0.4,-0.5)--(0.4,-0.2);
     \draw[-] (-0.4,-0.5)--(-0.4,-0.2);
  \draw[-] (-0.4,-0.5)--(0.4,-0.5);
  
  \node at (-0.1,-0.35) {$\cdot$};
\node at (-0,-0.35) {$\cdot$};
\node at (0.1,-0.35) {$\cdot$};

 \draw[-] (0,-0.8)--(0,-0.5);
 
\draw[-] (-0.5,-1.2)--(-0.5,-0.8)--(0.5,-0.8)--(0.5,-1.2)--(-0.5,-1.2);
\node at (0, -1) {$A$};

\draw[-] (-0.4,-1.2) -- (-0.4, -1.5);
\draw[-] (0.4,-1.2) -- (0.4, -1.5);
    
\node at (-0.1,-1.45) {$\cdot$};
\node at (-0,-1.45) {$\cdot$};
\node at (0.1,-1.45) {$\cdot$};
 \end{tikzpicture}
 \quad=\quad
  \begin{tikzpicture}[anchorbase,scale=1.3]

\draw[-] (-0.5,-0.2)--(-0.5,0.2)--(0.5,0.2)--(0.5,-0.2)--(-0.5,-0.2);
\node at (0,0) {$A$};
\draw[-] (-0.4,-0.2)--(-0.4,-0.7);
\draw[-] (0,0.2)--(0,0.4);
\draw[-] (0.4,-0.2)--(0.4,-0.5);

\node at (-0.1,-0.4) {$\cdot$};
\node at (-0,-0.4) {$\cdot$};
\node at (0.1,-0.4) {$\cdot$};
\draw[-] (1.5,-0.2)--(1.5,0.2)--(2.5,0.2)--(2.5,-0.2)--(1.5,-0.2);
\node at (2,0) {$A$};
\draw[-] (1.6,-0.2)--(1.6,-0.7);
\draw[-] (2,0.2)--(2,0.4);
\draw[-] (2.4,-0.2)--(2.4,-0.5);

\node at (1.9,-0.4) {$\cdot$};
\node at (2,-0.4) {$\cdot$};
\node at (2.1,-0.4) {$\cdot$};

\node at (3.9,0) {$\cdot$};
\node at (4,0) {$\cdot$};
\node at (4.1,0) {$\cdot$};

\draw[-] (5.5,-0.2)--(5.5,0.2)--(6.5,0.2)--(6.5,-0.2)--(5.5,-0.2);
\node at (6,0) {$A$};
\draw[-] (5.6,-0.2)--(5.6,-0.7);
\draw[-] (6,0.2)--(6,0.4);
\draw[-] (6.4,-0.2)--(6.4,-0.5);

\node at (5.9,-0.4) {$\cdot$};
\node at (6,-0.4) {$\cdot$};
\node at (6.1,-0.4) {$\cdot$};
\draw[-] (-0.4,-0.7)--(5.6,-0.7);
\draw[-] (0.4,-0.5)--(1.5, -0.5);
\draw[-] (1.7, -0.5)--(5.5, -0.5);
\draw[-] (5.7,-0.5)--(6.4,-0.5);
\draw[-] (3, -0.7)--(3,-1);
\draw[-] (3.4, -0.5)--(3.4, -0.65);
\draw[-] (3.4, -0.75)--(3.4, -1);
\node at (3.1,-0.95) {$\cdot$};
\node at (3.2,-0.95) {$\cdot$};
\node at (3.3,-0.95) {$\cdot$};

\end{tikzpicture} 
 \end{equation*}

\end{lemma}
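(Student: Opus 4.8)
The plan is to reduce the claim to the already-proven Lemma~\ref{lem:interchanging_plus_and_comult} by stripping off the scalar multiplications. Recall from the row-matrix identity following Definition~\ref{def:special_string_diag} that
$$\mu_A \;=\; (\dot{+})^{it}\circ\Bigl(\bigotimes_{j=1}^{l}\mu_{a_j}\Bigr),$$
so $(m^*)^{it}\circ\mu_A = (m^*)^{it}\circ(\dot{+})^{it}\circ\bigl(\bigotimes_{j}\mu_{a_j}\bigr)$. Applying Lemma~\ref{lem:interchanging_plus_and_comult} to the sub-composition $(m^*)^{it}\circ(\dot{+})^{it}$ turns this into $\bigl((\dot{+})^{it}\bigr)^{\otimes k}\circ w\circ\bigl((m^*)^{it}\bigr)^{\otimes l}\circ\bigl(\bigotimes_{j}\mu_{a_j}\bigr)$, where $w$ is the factor-permutation from Lemma~\ref{lem:mu_identity}.

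Next I would commute the $\mu_{a_j}$ past $\bigl((m^*)^{it}\bigr)^{\otimes l}$. The key elementary fact is $m^*\circ\mu_a = (\mu_a\otimes\mu_a)\circ m^*$ for every $a\in\F_q$: for $a\in\F_q^\times$ this is part of Relation~\ref{rel:mu_coalg_mor}, and for $a=0$ it follows from $\mu_0=z\circ\eps^*$ together with Relation~\ref{rel:z_coalg_mor} (which gives $m^*\circ z = z\otimes z$) and the counit axioms in Relation~\ref{rel:Frob_alg} (which give $(\eps^*\otimes\eps^*)\circ m^* = \eps^*$). An easy induction on $k$ using coassociativity of $m^*$ upgrades this to $(m^*)^{it}\circ\mu_a = \mu_a^{\otimes k}\circ(m^*)^{it}$, hence
$$\bigl((m^*)^{it}\bigr)^{\otimes l}\circ\Bigl(\bigotimes_{j=1}^{l}\mu_{a_j}\Bigr) \;=\; \Bigl(\bigotimes_{j=1}^{l}\mu_{a_j}^{\otimes k}\Bigr)\circ\bigl((m^*)^{it}\bigr)^{\otimes l},$$
where on the right-hand side the $j$-th length-$k$ block of factors carries $\mu_{a_j}$ on each strand.

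Finally I would push the tensor product of scalar multiplications through $w$. Because $w$ is a composite of symmetry morphisms and $\sigma$ is natural, conjugating a tensor product of endomorphisms of single copies of $\V$ by $w$ merely permutes them; by the explicit description of $w$ (it carries the ``$l$ blocks of $k$'' arrangement to the ``$k$ blocks of $l$'' arrangement) we get $w\circ\bigl(\bigotimes_{j=1}^{l}\mu_{a_j}^{\otimes k}\bigr) = \bigl(\bigotimes_{i=1}^{k}\bigotimes_{j=1}^{l}\mu_{a_j}\bigr)\circ w$. Composing with $\bigl((\dot{+})^{it}\bigr)^{\otimes k}$ and using the row-matrix identity once more, $\bigl((\dot{+})^{it}\bigr)^{\otimes k}\circ\bigl(\bigotimes_{i=1}^{k}\bigotimes_{j=1}^{l}\mu_{a_j}\bigr) = \bigl((\dot{+})^{it}\circ\bigotimes_{j}\mu_{a_j}\bigr)^{\otimes k} = \mu_A^{\otimes k}$. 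Chaining these equalities gives $(m^*)^{it}\circ\mu_A = \mu_A^{\otimes k}\circ w\circ\bigl((m^*)^{it}\bigr)^{\otimes l}$, and the second displayed equality of the lemma then follows at once from Lemma~\ref{lem:mu_identity}, which identifies $w\circ\bigl((m^*)^{it}\bigr)^{\otimes l}$ with $\mu_{\begin{bmatrix} I_l \\ \vdots \\ I_l\end{bmatrix}}$.

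The step I expect to require the most care is the last one: tracking precisely which strand of $\V^{\otimes kl}$ carries which $\mu_{a_j}$ after applying $w$, i.e.\ confirming that each of the $k$ output blocks of $l$ strands sees the tuple $(\mu_{a_1},\ldots,\mu_{a_l})$ in the correct order. This is nothing more than naturality of $\sigma$ combined with the concrete formula for $w$ from Lemma~\ref{lem:mu_identity}, and the diagram displayed in the statement makes the accounting essentially visible, but it is the one place where a slip in indexing would be easy.
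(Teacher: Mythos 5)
Your proof is correct and follows essentially the same route as the paper's: write $\mu_A = (\dot{+})^{it}\circ\bigotimes_j \mu_{a_j}$, apply Lemma~\ref{lem:interchanging_plus_and_comult} to $(m^*)^{it}\circ(\dot{+})^{it}$, commute the scalar maps past the comultiplications, push them through $w$ by naturality of $\sigma$, and reassemble $\mu_A^{\otimes k}$, with the final identification $w\circ\left((m^*)^{it}\right)^{\otimes l}=\mu_{\begin{bmatrix} I_l \\ \vdots \\ I_l\end{bmatrix}}$ coming from Lemma~\ref{lem:mu_identity}. Your explicit handling of the case $a_j=0$ (which Relation~\ref{rel:mu_coalg_mor} formally excludes) and of the exact strand bookkeeping under $w$ are welcome precisions that the paper passes over silently.
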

\begin{remark}
 In terms of morphism $\mu_A$, this reads: $ \mu_{\begin{bmatrix}
 1 \\
 \vdots \\
 1 
 \end{bmatrix}} \circ \mu_{A}=\mu_{\begin{bmatrix}
 a_1  &\ldots &a_l\\
 &\ldots \\
 a_1  &\ldots &a_l
 \end{bmatrix}}$.
\end{remark}

\begin{example}
For $\mathcal{C} = \Rep(GL_n(\F_q))$, $\V:=\V_n$, and any matrix $A\in Mat_{1\times l}(\F_q)$, this Lemma states that for any $v_1, \ldots, v_l \in V$, the element
$$ v_1\otimes \ldots \otimes v_d \longmapsto \left(\dot{\sum}_{j=1}^{d} \dot{A}_{1,j} v_j \right)\otimes  \ldots  \otimes \left(\dot{\sum}_{j=1}^{d} \dot{A}_{1,j} v_j \right) \in \V_n^{\otimes k}$$
is obtained by applying $$\mu_A^{\otimes k}$$ (see Example \ref{ex:matrix_mult_classical}) to the factors of the tensor
$$ (v_1\otimes v_2 \otimes\ldots \otimes  v_l)\ldots \otimes (v_1 \otimes v_2\otimes  \ldots \otimes v_l) \in \V_n^{\otimes kl}$$
which in turn is obtained by permutation of factors from the tensor $$ v_1\otimes v_1 \otimes \ldots \otimes v_1 \otimes  \ldots \otimes v_l\otimes v_l\otimes \ldots \otimes v_l. $$ The latter is just $\left((m^*)^{it}\right)^{\otimes l}(v_1\otimes\ldots \otimes v_l)$.
\end{example}

\begin{proof}[Proof of Lemma \ref{lem:interchanging_mu_A_and_comult}]
 Let us use Lemma \ref{lem:interchanging_plus_and_comult}:
 \begin{align*}
  (m^*)^{it} \circ \mu_A = (m^*)^{it} \circ (\dot{+})^{it}  \circ \bigotimes_{i=1}^l \mu_{a_{i}} = \left((\dot{+})^{it}\right)^{\otimes k} \circ w \circ \left((m^*)^{it}\right)^{\otimes l}  \circ \bigotimes_{i=1}^l \mu_{a_{i}}
 \end{align*}

 By Relation \ref{rel:mu_coalg_mor}, stating that $\mu_a$ is a morphism of coalgebras, we have: $$\left((m^*)^{it}\right)^{\otimes l}  \circ \bigotimes_{i=1}^l \mu_{a_{i}} = \left(\bigotimes_{i=1}^l \mu_{a_{i}}\right)^{\otimes k} \circ \left((m^*)^{it}\right)^{\otimes l}. $$
 Furthermore, since the symmetry morphism $\sigma:\V\otimes \V \to \V\otimes \V$ is functorial, so is $w$, hence $$w\circ \left(\bigotimes_{i=1}^l \mu_{a_{i}}\right)^{\otimes k} = \left(\bigotimes_{i=1}^l \mu_{a_{i}}\right)^{\otimes k} \circ w$$
 and we conclude that 
  \begin{align*}
  (m^*)^{it} \circ \mu_A &= \left((\dot{+})^{it}\right)^{\otimes k} \circ w \circ \left((m^*)^{it}\right)^{\otimes l}  \circ \bigotimes_{i=1}^l \mu_{a_{i}} = \left((\dot{+})^{it}\right)^{\otimes k}\circ \left(\bigotimes_{i=1}^l \mu_{a_{i}}\right)^{\otimes k}  \circ w \circ \left((m^*)^{it}\right)^{\otimes l}   \\ &=
  \mu_A^{\otimes k} \circ w \circ \left((m^*)^{it}\right)^{\otimes l} 
 \end{align*}
 as required.
\end{proof}
From Lemma \ref{lem:interchanging_mu_A_and_comult} we conclude:
\begin{lemma}\label{lem:mu_compos_aux}
For any $ A\in Mat_{k\times l}(\F_q)$, let $\begin{bmatrix}
I_k \\ \vdots \\ I_k
\end{bmatrix} \in Mat_{rk\times k}(\F_q)$. Then we have: $$\mu_{\begin{bmatrix}
I_k \\ \vdots \\ I_k
\end{bmatrix}}\circ \mu_A = \mu_{\begin{bmatrix}
A \\ \vdots \\ A
\end{bmatrix}} = (\mu_A)^{\otimes r} \circ \mu_{ \begin{bmatrix}
I_l \\ \vdots \\ I_l
\end{bmatrix}} $$ where the rightmost matrix has $rl$ rows.
\end{lemma}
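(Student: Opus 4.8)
The plan is to deduce the lemma from Corollary \ref{cor:vert_stacking}, Lemma \ref{lem:mu_identity} and Lemma \ref{lem:interchanging_mu_A_and_comult}, using only the interchange law of the monoidal structure and functoriality of the braiding. First I would record two elementary consequences of Corollary \ref{cor:vert_stacking} and Lemma \ref{lem:mu_identity}: since $\mu_{I_l}=\id_{\V^{\otimes l}}$, applying Corollary \ref{cor:vert_stacking} to the blocks $I_l$ and $\begin{bmatrix} I_l\\ \vdots\\ I_l\end{bmatrix}\in Mat_{(r-1)l\times l}(\F_q)$ gives the recursion
$$\mu_{\begin{bmatrix} I_l\\ \vdots\\ I_l\end{bmatrix}_{rl\times l}}=\left(\id_{\V^{\otimes l}}\otimes\mu_{\begin{bmatrix} I_l\\ \vdots\\ I_l\end{bmatrix}_{(r-1)l\times l}}\right)\circ\mu_{\begin{bmatrix} I_l\\ I_l\end{bmatrix}},$$
and iterating Corollary \ref{cor:vert_stacking} together with this recursion and the interchange law yields the ``generalized vertical stacking'' identity $\mu_{\begin{bmatrix} A_1\\ \vdots\\ A_r\end{bmatrix}}=(\mu_{A_1}\otimes\dots\otimes\mu_{A_r})\circ\mu_{\begin{bmatrix} I_l\\ \vdots\\ I_l\end{bmatrix}_{rl\times l}}$ for arbitrary matrices $A_1,\dots,A_r$ with $l$ columns. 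Its specialization to $A_1=\dots=A_r=A$ gives at once the second asserted equality $\mu_{\begin{bmatrix} A\\ \vdots\\ A\end{bmatrix}}=(\mu_A)^{\otimes r}\circ\mu_{\begin{bmatrix} I_l\\ \vdots\\ I_l\end{bmatrix}_{rl\times l}}$.

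It remains to prove the first equality, and by the second equality this amounts to showing that the ``$r$-fold duplication'' morphism intertwines $\mu_A$ with $(\mu_A)^{\otimes r}$, i.e.
$$\mu_{\begin{bmatrix} I_k\\ \vdots\\ I_k\end{bmatrix}_{rk\times k}}\circ\mu_A=(\mu_A)^{\otimes r}\circ\mu_{\begin{bmatrix} I_l\\ \vdots\\ I_l\end{bmatrix}_{rl\times l}}.$$
For $k=1$ (so $A=[a_1,\dots,a_l]$ is a row) this is exactly Lemma \ref{lem:interchanging_mu_A_and_comult}. For general $k$ I would write $A$ by its rows, $A=\begin{bmatrix} A_1\\ \vdots\\ A_k\end{bmatrix}$ with $A_i\in Mat_{1\times l}(\F_q)$, factor $\mu_A=(\mu_{A_1}\otimes\dots\otimes\mu_{A_k})\circ\mu_{\begin{bmatrix} I_l\\ \vdots\\ I_l\end{bmatrix}_{kl\times l}}$ by generalized vertical stacking, and use Lemma \ref{lem:mu_identity} to write the duplication morphism as $\mu_{\begin{bmatrix} I_k\\ \vdots\\ I_k\end{bmatrix}_{rk\times k}}=w\circ\left((m^*)^{it}\right)^{\otimes k}$ with $(m^*)^{it}\colon\V\to\V^{\otimes r}$ and $w$ a permutation of tensor factors. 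Substituting, pushing $\left((m^*)^{it}\right)^{\otimes k}$ through the tensor product $\mu_{A_1}\otimes\dots\otimes\mu_{A_k}$ by the interchange law, and applying Lemma \ref{lem:interchanging_mu_A_and_comult} to each factor $(m^*)^{it}\circ\mu_{A_i}$, one is left with a composite of the $\mu_{A_i}$'s with iterated duplication morphisms and permutations of tensor factors; merging the iterated duplications by the coassociativity of $m^*$ (as in the proof of Lemma \ref{lem:mu_identity}) and commuting the $\mu_{A_i}$'s past the permutations by functoriality of the braiding collapses this to $\left(\bigotimes_{j=1}^{r}\bigotimes_{i=1}^{k}\mu_{A_i}\right)\circ\mu_{\begin{bmatrix} I_l\\ \vdots\\ I_l\end{bmatrix}_{rkl\times l}}$, which by generalized vertical stacking equals $\mu_{\begin{bmatrix} A\\ \vdots\\ A\end{bmatrix}_{rk\times l}}$ again.

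The delicate point is this last bookkeeping step: one must verify that all the permutations of tensor factors produced along the way --- those coming from the instances of $w$ in Lemma \ref{lem:mu_identity} and Lemma \ref{lem:interchanging_mu_A_and_comult}, together with the one reindexing $\bigotimes_{i}\bigotimes_{j}$ into $\bigotimes_{j}\bigotimes_{i}$ --- compose to the permutation that correctly matches the two sides. This is purely combinatorial and is handled by functoriality of $\sigma$, but it is the part that needs care, and I expect it to be the main obstacle. An alternative route, which avoids merging duplications, is a double induction: first on $r$, reducing via Corollary \ref{cor:vert_stacking} and the interchange law to the case $r=2$; then on $k$, splitting $A=\begin{bmatrix} A'\\ A''\end{bmatrix}$ into two strictly smaller blocks of rows and using Corollary \ref{cor:vert_stacking}, Proposition \ref{prop:tensor_prod_mu_B}, the elementary identity $\mu_{PB}=\sigma_P\circ\mu_B$ for a permutation matrix $P$ (immediate from Diagram \eqref{dg:matrix_mult}), and the induction hypothesis, so that the base case becomes $k=1$, $r=2$, namely $m^*\circ\mu_A=\mu_{\begin{bmatrix} A\\ A\end{bmatrix}}$, which is again Lemma \ref{lem:interchanging_mu_A_and_comult}.
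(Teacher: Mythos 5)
Your proposal is correct and follows essentially the same route as the paper: the second equality comes from (iterated) Corollary \ref{cor:vert_stacking}, and the first is obtained by writing $\mu_{\begin{bmatrix} I_k \\ \vdots \\ I_k\end{bmatrix}}\circ\mu_A$ as $w_1\circ\left((m^*)^{it}\right)^{\otimes k}\circ\left(\bigotimes_{i}\mu_{A_i}\right)\circ w_2\circ\left((m^*)^{it}\right)^{\otimes l}$ via Lemma \ref{lem:mu_identity} and Corollary \ref{cor:vert_stacking}, then applying Lemma \ref{lem:interchanging_mu_A_and_comult} rowwise. The permutation bookkeeping you flag as delicate is exactly the step the paper leaves implicit, and your treatment of it via functoriality of $\sigma$ and coassociativity of $m^*$ is the intended argument.
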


\begin{proof}
The second equality follows from Corollary \ref{cor:vert_stacking}. 

For the first equality, we argue as follows. Let $A_1, \ldots, A_k$ denote the rows of $A$. By Lemma \ref{lem:mu_identity} and  Corollary \ref{cor:vert_stacking}, we have:
$$\mu_{\begin{bmatrix}
I_k \\ \vdots \\ I_k
\end{bmatrix}}\circ \mu_A =  w_1 \circ \left((m^*)^{it_1}\right)^{\otimes k} \circ \left(\bigotimes_{i=1}^k \mu_{A_i} \right) \circ w_2 \circ \left((m^*)^{it_2}\right)^{\otimes l}$$
 where $(m^*)^{it_1}: \V \to \V^{\otimes r}$, $(m^*)^{it_2}: \V \to \V^{\otimes k}$ and $w_1: \V^{\otimes rk} \to \V^{\otimes rk}$, $w_2: \V^{\otimes kl} \to \V^{\otimes kl}$ as in Lemma \ref{lem:mu_identity}.
 Now, Lemma \ref{lem:interchanging_mu_A_and_comult} applied to the right hand side gives the desired result.
\end{proof}

We are now ready to show that the operators $\mu_A$ behave as ``multiplication by matrix $A$'':
\begin{proposition}\label{prop:compos_mu_A}
For any $B \in Mat_{r \times k}(\F_q), A\in Mat_{k\times l}(\F_q)$, we have:
$$\mu_B \circ \mu_A = 
\mu_{BA}. $$
\end{proposition}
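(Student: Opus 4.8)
The plan is to peel off two reductions until we reach an identity that falls straight out of the defining axioms of an $\F_q$-linear Frobenius space.

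\textbf{Reduction 1: to the case where $B$ is a single row.} I would induct on the number of rows $r$ of $B$. For $r\geq 2$, write $B=\begin{bmatrix}B'\\ B''\end{bmatrix}$ with $B'$ the first row. By \cref{cor:vert_stacking}, $\mu_B=(\mu_{B'}\otimes\mu_{B''})\circ\mu_{\left[\begin{smallmatrix}I_k\\ I_k\end{smallmatrix}\right]}$, so by \cref{lem:mu_compos_aux} (applied with two blocks) and functoriality of $\otimes$,
\[
\mu_B\circ\mu_A=(\mu_{B'}\otimes\mu_{B''})\circ\mu_{\left[\begin{smallmatrix}I_k\\ I_k\end{smallmatrix}\right]}\circ\mu_A=\bigl((\mu_{B'}\circ\mu_A)\otimes(\mu_{B''}\circ\mu_A)\bigr)\circ\mu_{\left[\begin{smallmatrix}I_l\\ I_l\end{smallmatrix}\right]}.
\]
By the inductive hypothesis the middle factors equal $\mu_{B'A}$ and $\mu_{B''A}$, and one more application of \cref{cor:vert_stacking} to $\begin{bmatrix}B'A\\ B''A\end{bmatrix}=BA$ gives $\mu_B\circ\mu_A=\mu_{BA}$. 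So it remains to treat the base case $B=[b_1,\dots,b_k]$ a single row.

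\textbf{Reduction 2: from a row $B$ to the row $[1,\dots,1]$.} For a row, $\mu_B=(\dot{+})^{it}\circ(\mu_{b_1}\otimes\dots\otimes\mu_{b_k})$ by definition. I would unwind the definition of $\mu_A$ from \cref{def:special_string_diag} and push the scalar operators $\mu_{b_i}$ downward through it, using $\mu_{b_i}\circ(\dot{+})^{it}=(\dot{+})^{it}\circ\mu_{b_i}^{\otimes l}$ (the distributivity half of \ref{rel:F_q_lin_plus_lin_distr}, iterated) and $\mu_{b_i}\circ\mu_{A_{ij}}=\mu_{b_iA_{ij}}$ (\ref{rel:F_q_lin_mu}); this yields $(\mu_{b_1}\otimes\dots\otimes\mu_{b_k})\circ\mu_A=\mu_{DA}$ with $D=\diag(b_1,\dots,b_k)$, hence $\mu_B\circ\mu_A=(\dot{+})^{it}\circ\mu_{DA}$. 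Since $[1\ \dots\ 1]\,D=B$ we have $[1\ \dots\ 1](DA)=BA$, so the proposition reduces to the single identity
\[
(\dot{+})^{it}\circ\mu_E=\mu_{[1\ \dots\ 1]\,E}\qquad(E\in Mat_{k\times l}(\F_q)),
\]
where $(\dot{+})^{it}\colon\V^{\otimes k}\to\V$.

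\textbf{Proving the remaining identity; the main obstacle.} To prove it I would write $\mu_E=\bigl((\dot{+})^{it}\bigr)^{\otimes k}\circ\bigl(\bigotimes_{i,j}\mu_{E_{ij}}\bigr)\circ w\circ\bigl((m^{*})^{it}\bigr)^{\otimes l}$, precompose with $(\dot{+})^{it}$, and use associativity of $\dot{+}$ (\ref{rel:F_q_lin_plus_ass_comm}) to collapse $(\dot{+})^{it}\circ\bigl((\dot{+})^{it}\bigr)^{\otimes k}$ into the single $kl$-fold addition $\V^{\otimes kl}\to\V$. By commutativity and associativity of $\dot{+}$ this total sum may be rebracketed along the column index: the $(i,j)$-th of the $kl$ strands (in the order produced by $w$) is the $j$-th input strand copied out by $(m^{*})^{it}$ and then scaled by $E_{ij}$, so summing over $i$ first and over $j$ second rewrites $(\dot{+})^{it}\circ\mu_E$ as $(\dot{+})^{it}\circ\bigl(\bigotimes_{j=1}^{l}\bigl[(\dot{+})^{it}\circ(\mu_{E_{1j}}\otimes\dots\otimes\mu_{E_{kj}})\circ(m^{*})^{it}\bigr]\bigr)$, and the inner composite equals $\mu_{E_{1j}+\dots+E_{kj}}$ by the linearity half of \ref{rel:F_q_lin_plus_lin_distr} applied iteratively; the whole expression is $\mu_{[1\ \dots\ 1]E}$ by definition. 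The delicate point, and the main obstacle, is precisely this rebracketing: one must invoke the explicit description of the shuffle $w$ from \cref{lem:mu_identity} to check that after $w\circ((m^{*})^{it})^{\otimes l}$ the strand in slot $(i,j)$ really is a scaled copy of the $j$-th input, so that \ref{rel:F_q_lin_plus_ass_comm} legitimately permits reordering the $kl$-fold sum. No structural input beyond associativity and commutativity of $\dot{+}$ enters this step.
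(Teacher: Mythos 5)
Your proposal is correct and takes essentially the same route as the paper's proof: the same peeling-off of rows via \cref{cor:vert_stacking} and \cref{lem:mu_compos_aux}, the same factoring of a row $B$ through $\diag(b_1,\dots,b_k)$ using \ref{rel:F_q_lin_mu} and \ref{rel:F_q_lin_plus_lin_distr}, and the same terminal identity $(\dot{+})^{it}\circ\mu_E=\mu_{[1\,\dots\,1]E}$, which the paper disposes of in one terse line and you justify by the shuffle/rebracketing bookkeeping. The differences are purely organizational (top-down reductions with induction on the rows of $B$ versus the paper's bottom-up cases) plus the extra detail you supply at that last step.
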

\begin{proof}
 First, assume that $r=k=1$ (so $B$ is just a scalar, $b$, and $A$ is a row matrix). By the Relations \ref{rel:F_q_lin} from Definition \ref{def:Frob_linear_space} describing the ``$\F_q$-module structure on $\V$'', we have:
 $$\mu_b \circ \mu_A = \mu_b \circ (\dot{+})^{it} \circ (\mu_{a_{11}} \otimes \ldots \otimes \mu_{a_{1l}}) =  (\dot{+})^{it} \circ ((\mu_b \circ \mu_{a_{11}}) \otimes \ldots \otimes(\mu_b \circ \mu_{a_{1l}}))  = (\dot{+})^{it} \circ (\mu_{ba_{11}} \otimes \ldots \otimes \mu_{ba_{1l}}) $$ which clearly equals $\mu_{bA}$.
 
 Next, let $k, l$ be any, and consider the composition of 
 $(\dot{+})^{it}: \V^{\otimes k} \to \V$ with $\mu_A$. Since the composition $(\dot{+})^{it} \circ ((\dot{+})^{it} \otimes \ldots \otimes (\dot{+})^{it}): \V^{\otimes kl} \to \V$ is just $(\dot{+})^{it}: \V^{\otimes kl} \to \V$, we conclude that 
 $(\dot{+})^{it} \circ \mu_A = \mu_{[1, \ldots, 1]A}$. 
 
 Now, assume that $r=1$ and $k, l$ are arbitrary. Then $B=[b_1,\ldots, b_k]$ is a row matrix, and $$\mu_B \circ \mu_A = (\dot{+})^{it} \circ \bigotimes_{i=1}^k \mu_{b_{i}} \circ \bigotimes_{i=1}^k \mu_{A_i} \circ (m^*)^{it}  = (\dot{+})^{it} \circ \bigotimes_{i=1}^k \mu_{b_{i}A_i} \circ (m^*)^{it} $$
where $A_1, \ldots, A_k$ are the rows of $A$:
\begin{equation*}\begin{tikzpicture}[anchorbase,scale=1.5]
\draw[-] (-0.5,-0.1)--(-0.5,0.3)--(0.5,0.3)--(0.5,-0.1)--(-0.5,-0.1);
\node at (0,0.05) {$B$};
\draw[-] (0,0.3)--(0,0.5);

\draw[-] (-0.4,-0.1)--(-0.4,-0.6);
\draw[-] (0.4,-0.1)--(0.4,-0.6);
\draw[-] (-0.3,-0.1)--(-0.3,-0.6);

\draw[-] (-0.5,-1)--(-0.5,-0.6)--(0.5,-0.6)--(0.5,-1)--(-0.5,-1);
\node at (0,-0.8) {$A$};

\draw[-] (-0.4,-1)--(-0.4,-1.5);
\draw[-] (0.4,-1)--(0.4,-1.5);
\draw[-] (-0.3,-1)--(-0.3,-1.5);

\node at (-0.1,-0.35) {$\cdot$};
\node at (0.05,-0.35) {$\cdot$};
\node at (0.2,-0.35) {$\cdot$};

\node at (-0.1,-1.3) {$\cdot$};
\node at (0.05,-1.3) {$\cdot$};
\node at (0.2,-1.3) {$\cdot$}; 
\end{tikzpicture} \quad
= \quad
\begin{tikzpicture}[anchorbase,scale=1.3]
\draw[-] (2.8,1.1)--(2.8,1.5)--(3.2,1.5)--(3.2,1.1)--(2.8,1.1);
\node at (3,1.3) {$\dot{+}$};
\draw[-] (3,1.5)--(3,1.65);
\draw[-] (0,1.3)--(2.8, 1.3);
\draw[-] (3.2, 1.3)--(6,1.3);
\node[draw,circle] at (0,0.7) {$\scriptstyle b_1$};
\node[draw,circle] at (2,0.7) {$\scriptstyle  b_{2}$};
\node[draw,circle] at (6,0.7) {$\scriptstyle  b_{k}$};
\draw[-] (0,1)--(0,1.3);
\draw[-] (2,1)--(2,1.3);
\draw[-] (6,1)--(6,1.3);
\draw[-] (-0.5,-0.2)--(-0.5,0.2)--(0.5,0.2)--(0.5,-0.2)--(-0.5,-0.2);
\node at (0,0) {${A_1}$};
\draw[-] (-0.4,-0.2)--(-0.4,-0.7);
\draw[-] (0,0.2)--(0,0.4);
\draw[-] (0.4,-0.2)--(0.4,-0.5);
\node at (-0.2,-0.4) {$\cdot$};
\node at (-0,-0.4) {$\cdot$};
\node at (0.2,-0.4) {$\cdot$};
\draw[-] (1.5,-0.2)--(1.5,0.2)--(2.5,0.2)--(2.5,-0.2)--(1.5,-0.2);
\node at (2,0) {${A_2}$};
\draw[-] (1.6,-0.2)--(1.6,-0.7);
\draw[-] (2,0.2)--(2,0.4);
\draw[-] (2.4,-0.2)--(2.4,-0.5);
\node at (1.8,-0.4) {$\cdot$};
\node at (2,-0.4) {$\cdot$};
\node at (2.2,-0.4) {$\cdot$};
\node at (3.9,0) {$\cdot$};
\node at (4,0) {$\cdot$};
\node at (4.1,0) {$\cdot$};
\draw[-] (5.5,-0.2)--(5.5,0.2)--(6.5,0.2)--(6.5,-0.2)--(5.5,-0.2);
\node at (6,0) {${A_k}$};
\draw[-] (5.6,-0.2)--(5.6,-0.7);
\draw[-] (6,0.2)--(6,0.4);
\draw[-] (6.4,-0.2)--(6.4,-0.5);
\node at (5.8,-0.4) {$\cdot$};
\node at (6,-0.4) {$\cdot$};
\node at (6.2,-0.4) {$\cdot$};
\draw[-] (-0.4,-0.7)--(5.6,-0.7);
\draw[-] (0.4,-0.5)--(1.5, -0.5);
\draw[-] (1.7, -0.5)--(5.5, -0.5);
\draw[-] (5.7,-0.5)--(6.4,-0.5);
\draw[-] (3, -0.7)--(3,-1);
\draw[-] (3.4, -0.5)--(3.4, -0.65);
\draw[-] (3.4, -0.75)--(3.4, -1);
\node at (3.15,-0.9) {$\cdot$};
\node at (3.2,-0.9) {$\cdot$};
\node at (3.25,-0.9) {$\cdot$};
\end{tikzpicture}
\end{equation*}
The computation above for $\mu_{b_i}\circ \mu_A$ then implies that $\mu_B\circ \mu_A = \mu_{BA}$ in this case.

Now we tackle the most general case.

We write $B_1, \ldots, B_r$ for the rows of $B$. Then by Corollary \ref{cor:vert_stacking} and Lemma \ref{lem:mu_compos_aux} we have:
\begin{align*}
    \mu_B\circ \mu_A &= \left( \bigotimes_{i=1}^r \mu_{B_i} \right) \circ \mu_{\begin{bmatrix} I_k \\ \vdots  \\I_k
    \end{bmatrix}} \circ \mu_A = \left( \bigotimes_{i=1}^r \mu_{B_i} \right)  \circ (\mu_A)^{\otimes r} \circ \mu_{\begin{bmatrix} I_l \\ \vdots  \\I_l
    \end{bmatrix}} = \left( \bigotimes_{i=1}^r \mu_{B_i \, A} \right) \circ \mu_{\begin{bmatrix} I_l \\ \vdots  \\I_l
    \end{bmatrix}}
\end{align*}

Here the last equality follows from the case $r=1$. The right hand side is now $\mu_{BA}$ by Corollary \ref{cor:vert_stacking}. 
Diagrammatically, this means:
\begin{equation*}\begin{tikzpicture}[anchorbase,scale=1.5]
\draw[-] (-0.5,-0.1)--(-0.5,0.3)--(0.5,0.3)--(0.5,-0.1)--(-0.5,-0.1);
\node at (0,0.1) {$B$};

\draw[-] (-0.4,0.3)--(-0.4,0.5);
\draw[-] (-0.3,0.3)--(-0.3,0.5);
\draw[-] (0.4,0.3)--(0.4,0.5);
\node at (-0.1,0.45) {$\cdot$};
\node at (0.05,0.45) {$\cdot$};
\node at (0.2,0.45) {$\cdot$};

\draw[-] (-0.4,-0.1)--(-0.4,-0.6);
\draw[-] (0.4,-0.1)--(0.4,-0.6);
\draw[-] (-0.3,-0.1)--(-0.3,-0.6);

\draw[-] (-0.5,-1)--(-0.5,-0.6)--(0.5,-0.6)--(0.5,-1)--(-0.5,-1);
\node at (0,-0.8) {$A$};

\draw[-] (-0.4,-1)--(-0.4,-1.3);
\draw[-] (0.4,-1)--(0.4,-1.3);
\draw[-] (-0.3,-1)--(-0.3,-1.3);

\node at (-0.1,-0.35) {$\cdot$};
\node at (0.05,-0.35) {$\cdot$};
\node at (0.2,-0.35) {$\cdot$};

\node at (-0.1,-1.3) {$\cdot$};
\node at (0.05,-1.3) {$\cdot$};
\node at (0.2,-1.3) {$\cdot$}; 
\end{tikzpicture} \quad
= \quad
\begin{tikzpicture}[anchorbase,scale=1.3]
\draw[-] (-0.5,-0.2)--(-0.5,0.2)--(0.5,0.2)--(0.5,-0.2)--(-0.5,-0.2);
\node at (0,0) {${B_1 A}$};
\draw[-] (-0.4,-0.2)--(-0.4,-0.7);
\draw[-] (0,0.2)--(0,0.45);
\draw[-] (0.4,-0.2)--(0.4,-0.5);
\node at (-0.2,-0.4) {$\cdot$};
\node at (-0,-0.4) {$\cdot$};
\node at (0.2,-0.4) {$\cdot$};
\draw[-] (1.5,-0.2)--(1.5,0.2)--(2.5,0.2)--(2.5,-0.2)--(1.5,-0.2);
\node at (2,0) {${B_2 A}$};
\draw[-] (1.6,-0.2)--(1.6,-0.7);
\draw[-] (2,0.2)--(2,0.45);
\draw[-] (2.4,-0.2)--(2.4,-0.5);
\node at (1.8,-0.4) {$\cdot$};
\node at (2,-0.4) {$\cdot$};
\node at (2.2,-0.4) {$\cdot$};
\node at (2.9,0) {$\cdot$};
\node at (3,0) {$\cdot$};
\node at (3.1,0) {$\cdot$};
\draw[-] (4.5,-0.2)--(4.5,0.2)--(5.5,0.2)--(5.5,-0.2)--(4.5,-0.2);
\node at (5,0) {${B_r A}$};
\draw[-] (4.6,-0.2)--(4.6,-0.7);
\draw[-] (5,0.2)--(5,0.45);
\draw[-] (5.4,-0.2)--(5.4,-0.5);
\node at (4.8,-0.4) {$\cdot$};
\node at (5,-0.4) {$\cdot$};
\node at (5.2,-0.4) {$\cdot$};
\draw[-] (-0.4,-0.7)--(4.6,-0.7);
\draw[-] (0.4,-0.5)--(1.5, -0.5);
\draw[-] (1.7, -0.5)--(4.5, -0.5);
\draw[-] (4.7,-0.5)--(5.4,-0.5);
\draw[-] (3, -0.7)--(3,-1);
\draw[-] (3.4, -0.5)--(3.4, -0.65);
\draw[-] (3.4, -0.75)--(3.4, -1);
\node at (3.15,-0.9) {$\cdot$};
\node at (3.2,-0.9) {$\cdot$};
\node at (3.25,-0.9) {$\cdot$};
\end{tikzpicture} \quad
= \quad
\begin{tikzpicture}[anchorbase,scale=1.3]
\draw[-] (-0.5,-0.2)--(-0.5,0.2)--(0.5,0.2)--(0.5,-0.2)--(-0.5,-0.2);
\node at (-0,-0) {${BA}$};
\draw[-] (-0.4,-0.2)--(-0.4,-0.7);
\draw[-] (0.4,-0.2)--(0.4,-0.7);
\draw[-] (-0.4,0.2)--(-0.4,0.7);
\draw[-] (-0.3,0.2)--(-0.3,0.7);
\draw[-] (0.4,0.2)--(0.4,0.7);
\draw[-] (-0.3,-0.2)--(-0.3,-0.7);
\node at (-0.1,-0.5) {$\cdot$};
\node at (0.05,-0.5) {$\cdot$};
\node at (0.2,-0.5) {$\cdot$};
\node at (-0.1,0.6) {$\cdot$};
\node at (0.05,0.6) {$\cdot$};
\node at (0.2,0.6) {$\cdot$};
 \end{tikzpicture}
\end{equation*}
 This concludes the proof of the proposition.
\end{proof}

Finally, the following two results will be useful for Section \ref{sec:repinf}:
\begin{corollary}\label{cor:horizontal stacking}
Let $A\in Mat_{d\times r_1}(\F_q), B\in Mat_{d\times r_2}(\F_q)$. Then we have:
$\mu_{ \begin{bmatrix} A &B\end{bmatrix}} = \mu_{\begin{bmatrix} I_{d} &I_{d} \end{bmatrix}} \circ (\mu_A \otimes \mu_B)$.
\end{corollary}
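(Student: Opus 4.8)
The plan is to obtain \cref{cor:horizontal stacking} as a purely formal consequence of the two structural identities for the morphisms $\mu_A$ already established, namely multiplicativity $\mu_B\circ\mu_A=\mu_{BA}$ (\cref{prop:compos_mu_A}) and the block-diagonal tensor rule $\mu_A\otimes\mu_B=\mu_C$ with $C=\begin{bmatrix}A&0\\0&B\end{bmatrix}$ (\cref{prop:tensor_prod_mu_B}). No string-diagram computation is needed; everything reduces to one block-matrix identity.

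Concretely, I would first record the sizes: with $A\in Mat_{d\times r_1}(\F_q)$ and $B\in Mat_{d\times r_2}(\F_q)$ one has $\mu_A\colon\V^{\otimes r_1}\to\V^{\otimes d}$ and $\mu_B\colon\V^{\otimes r_2}\to\V^{\otimes d}$, so both sides of the asserted equality are morphisms $\V^{\otimes(r_1+r_2)}\to\V^{\otimes d}$ and the composite $\mu_{\begin{bmatrix}I_d&I_d\end{bmatrix}}\circ(\mu_A\otimes\mu_B)$ is well-formed. Next I would apply \cref{prop:tensor_prod_mu_B} to rewrite $\mu_A\otimes\mu_B=\mu_C$ with $C=\begin{bmatrix}A&0\\0&B\end{bmatrix}\in Mat_{2d\times(r_1+r_2)}(\F_q)$, and then \cref{prop:compos_mu_A} (applicable since $\begin{bmatrix}I_d&I_d\end{bmatrix}$ has $2d$ columns and $C$ has $2d$ rows) to get $\mu_{\begin{bmatrix}I_d&I_d\end{bmatrix}}\circ\mu_C=\mu_{\begin{bmatrix}I_d&I_d\end{bmatrix}\,C}$. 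Finally, the one-line block computation $\begin{bmatrix}I_d&I_d\end{bmatrix}\begin{bmatrix}A&0\\0&B\end{bmatrix}=\begin{bmatrix}I_dA+0&0+I_dB\end{bmatrix}=\begin{bmatrix}A&B\end{bmatrix}$ yields $\mu_{\begin{bmatrix}I_d&I_d\end{bmatrix}}\circ(\mu_A\otimes\mu_B)=\mu_{\begin{bmatrix}A&B\end{bmatrix}}$, which is the claim.

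I expect essentially no obstacle here; the only point requiring care is the bookkeeping of matrix dimensions (rows versus columns, under the convention of \cref{def:special_string_diag} that an $r\times d$ matrix gives a morphism $\V^{\otimes d}\to\V^{\otimes r}$), so that the hypotheses of \cref{prop:compos_mu_A} and \cref{prop:tensor_prod_mu_B} are met verbatim and the block identity $\begin{bmatrix}I_d&I_d\end{bmatrix}C=\begin{bmatrix}A&B\end{bmatrix}$ is read off with the correct block shapes.
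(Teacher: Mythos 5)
Your proof is correct and is essentially identical to the paper's: both reduce the claim to the block identity $\begin{bmatrix} I_d & I_d\end{bmatrix}\begin{bmatrix} A & 0\\ 0 & B\end{bmatrix} = \begin{bmatrix} A & B\end{bmatrix}$ and then invoke Proposition \ref{prop:compos_mu_A} for the composition and Proposition \ref{prop:tensor_prod_mu_B} for the tensor product. Your dimension bookkeeping matches the convention of Definition \ref{def:special_string_diag}, so there is nothing to add.
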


\begin{proof}
By Proposition \ref{prop:compos_mu_A}, 
$$ \begin{bmatrix} A &B\end{bmatrix} = \begin{bmatrix} I_d &I_d\end{bmatrix}\begin{bmatrix} A &0\\ 0 &B\end{bmatrix} \; \Longrightarrow 
\; \mu_{ \begin{bmatrix} A &B\end{bmatrix}} = \mu_{\begin{bmatrix} I_{d} &I_{d} \end{bmatrix}} \circ \mu_{\begin{bmatrix} A &0\\ 0 &B\end{bmatrix}}. $$
Proposition \ref{prop:tensor_prod_mu_B} now implies the required statement. 
\end{proof}

\subsection{Morphisms \texorpdfstring{$\mu_A$}{muA} for invertible matrices}\label{ssec:transp_mu_A}
The main goal of this section is to show that for $A\in GL_d(\F_q)$, $(z^*)^{\otimes d} \mu_A = (z^*)^{\otimes d}$ (this is proved in Corollary \ref{cor:regular_system_of_eq}); this statement is the analogue of the linear algebra statement ``The only solution to the system of linear equations $Ax=0$ for $A\in GL_d(\F_q)$ is the solution $x=0$''. 

Along the way, we prove several related statements (consequences of the Cancellation Axiom \ref{rel:cancellation_axiom}) which will be very useful to us later on.
\begin{lemma}\label{lem:eq_axiom_corollary}
 We have the following equality of morphisms $\V^{\otimes 3}\to \triv$: 
 \begin{equation*}
\begin{tikzpicture}[anchorbase,scale=1.1]
\draw[-] (0,0.5)--(2,0.5);
\draw[-] (-0.5,0)--(-0.2,0);
\draw[-] (0.2,0)--(0.5,0);
\draw[-] (-0.2,-0.2)--(-0.2,0.2)--(0.2,0.2)--(0.2,-0.2)--(-0.2,-0.2);
\node at (-0,-0) {$\dot{+}$};
\draw[-] (0,0.2)--(0,0.5);
\draw[-] (-0.5,0)--(-0.5,-1);
\draw[-] (-0.5,0)--(-0.5,-1);

\draw[-] (0.5,0)--(0.5,-0.5);
\draw[-] (0.5, -0.5)--(1.5,-0.5);
\draw[-] (1, -0.5)--(1,-1);

\draw[-] (1.5,0)--(1.8,0);
\draw[-] (2.2,0)--(2.5,0);
\draw[-] (1.8,-0.2)--(1.8,0.2)--(2.2,0.2)--(2.2,-0.2)--(1.8,-0.2);
\node at (2,0) {$\dot{+}$};
\draw[-] (2,0.2)--(2,0.5);
\draw[-] (1.5,0)--(1.5,-0.5);
\draw[-] (2.5,0)--(2.5,-1);
\end{tikzpicture} \quad = \quad \begin{tikzpicture}[anchorbase,scale=1.1]
\draw[-] (0.5,-0.5)--(0.5,-1);
\node at (0.5, -0.5) {$\bullet$};

\draw[-] (-0.5,0.5)--(1.5,0.5);
\draw[-] (-0.5,0.5)--(-0.5,-1);
\draw[-] (1.5,0.5)--(1.5,-1);

\end{tikzpicture}
\end{equation*} 
\end{lemma}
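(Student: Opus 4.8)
The plan is to derive this identity directly from the Cancellation Axiom \ref{rel:cancellation_axiom} by ``capping off'' both of its sides with the counit $\eps^*\colon \V\to\triv$. Recall that \ref{rel:cancellation_axiom} is the equality of morphisms $\V^{\otimes 3}\to \V$
$$ m\circ(\dot{+}\otimes\dot{+})\circ(\id_{\V}\otimes m^*\otimes\id_{\V}) \;=\; \dot{+}\circ(\id_{\V}\otimes m)\circ(\sigma\otimes\id_{\V}). $$
I would post-compose both sides with $\eps^*$. On the left-hand side, $\eps^*\circ m = ev$ by the definition of $ev$, so the left-hand side becomes $ev\circ(\dot{+}\otimes\dot{+})\circ(\id_{\V}\otimes m^*\otimes\id_{\V})$; reading off the cabling (the middle strand is sent through $m^*$, the two resulting copies are added onto the outer strands via $\dot{+}$, and the two outputs are contracted by $ev$) this is precisely the morphism drawn on the left of the statement.

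For the right-hand side, the second diagram of Relation \ref{rel:plus_coalg_mor} gives $\eps^*\circ\dot{+} = \eps^*\otimes\eps^*$, so
$$ \eps^*\circ\dot{+}\circ(\id_{\V}\otimes m)\circ(\sigma\otimes\id_{\V}) = \bigl(\eps^*\otimes(\eps^*\circ m)\bigr)\circ(\sigma\otimes\id_{\V}) = (\eps^*\otimes ev)\circ(\sigma\otimes\id_{\V}). $$
Finally, naturality of the braiding with respect to the morphism $\eps^*\colon\V\to\triv$, together with the fact that the braiding with the unit object $\triv$ is the identity and that $\sigma$ is an involution, yields $(\eps^*\otimes\id_{\V})\circ\sigma = \id_{\V}\otimes\eps^*$. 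Tensoring this with $\id_{\V}$ on the right and composing with $ev$ on the remaining two factors rewrites $(\eps^*\otimes ev)\circ(\sigma\otimes\id_{\V})$ as $ev\circ(\id_{\V}\otimes\eps^*\otimes\id_{\V})$, which is exactly the morphism on the right of the statement. This completes the argument.

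I do not expect a real obstacle here; the only points deserving care are (i) matching the cabling in the two pictures of the statement with the morphism expressions in the axioms — in particular checking that the ordering of the tensor factors after $\id_{\V}\otimes m^*\otimes\id_{\V}$ and after $\sigma\otimes\id_{\V}$ agrees with the diagrams — and (ii) the braiding-naturality step, which is the routine identity expressing that $ev$ is insensitive to where a counit $\eps^*$ is inserted on the outer strands. Both are immediate from the symmetric monoidal structure of $\mathcal{C}$ and from Definition \ref{def:Frob_linear_space}, so the whole proof is a short diagram chase.
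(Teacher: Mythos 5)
Your proposal is correct and follows essentially the same route as the paper: cap the Cancellation Axiom \ref{rel:cancellation_axiom} with $\eps^*$ (using $ev=\eps^*\circ m$), then apply $\eps^*\circ\dot{+}=\eps^*\otimes\eps^*$ from \ref{rel:plus_coalg_mor}, and finish with naturality of the symmetry to slide $\eps^*$ onto the middle strand. No gaps.
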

\begin{example}
For $\mathcal{C} = \Rep(GL_n(\F_q))$, $\V:=\V_n$, this statement translates to
$$\forall v,u,w\in V, \;\; \delta_{v\dot{+} u, w \dot{+} u} = \delta_{v,w}.$$
\end{example}
\begin{proof}
Let us compute the left hand side of the above equation, using the definition $ev := \eps^*\circ m$:
\begin{equation*}
\begin{tikzpicture}[anchorbase,scale=1.1]
\draw[-] (0,0.5)--(2,0.5);
\draw[-] (-0.5,0)--(-0.2,0);
\draw[-] (0.2,0)--(0.5,0);
\draw[-] (-0.2,-0.2)--(-0.2,0.2)--(0.2,0.2)--(0.2,-0.2)--(-0.2,-0.2);
\node at (-0,-0) {$\dot{+}$};
\draw[-] (0,0.2)--(0,0.5);
\draw[-] (-0.5,0)--(-0.5,-1);
\draw[-] (-0.5,0)--(-0.5,-1);

\draw[-] (0.5,0)--(0.5,-0.5);
\draw[-] (0.5, -0.5)--(1.5,-0.5);
\draw[-] (1, -0.5)--(1,-1);

\draw[-] (1.5,0)--(1.8,0);
\draw[-] (2.2,0)--(2.5,0);
\draw[-] (1.8,-0.2)--(1.8,0.2)--(2.2,0.2)--(2.2,-0.2)--(1.8,-0.2);
\node at (2,0) {$\dot{+}$};
\draw[-] (2,0.2)--(2,0.5);
\draw[-] (1.5,0)--(1.5,-0.5);
\draw[-] (2.5,0)--(2.5,-1);
\end{tikzpicture} \quad = \quad \begin{tikzpicture}[anchorbase,scale=1.1]
\draw[-] (0,0.5)--(2,0.5);
\draw[-] (1,0.5)--(1,0.8);
\node at (1, 0.8) {$\bullet$};

\draw[-] (-0.5,0)--(-0.2,0);
\draw[-] (0.2,0)--(0.5,0);
\draw[-] (-0.2,-0.2)--(-0.2,0.2)--(0.2,0.2)--(0.2,-0.2)--(-0.2,-0.2);
\node at (-0,-0) {$\dot{+}$};
\draw[-] (0,0.2)--(0,0.5);
\draw[-] (-0.5,0)--(-0.5,-1);
\draw[-] (-0.5,0)--(-0.5,-1);

\draw[-] (0.5,0)--(0.5,-0.5);
\draw[-] (0.5, -0.5)--(1.5,-0.5);
\draw[-] (1, -0.5)--(1,-1);

\draw[-] (1.5,0)--(1.8,0);
\draw[-] (2.2,0)--(2.5,0);
\draw[-] (1.8,-0.2)--(1.8,0.2)--(2.2,0.2)--(2.2,-0.2)--(1.8,-0.2);
\node at (2,0) {$\dot{+}$};
\draw[-] (2,0.2)--(2,0.5);
\draw[-] (1.5,0)--(1.5,-0.5);
\draw[-] (2.5,0)--(2.5,-1);
\end{tikzpicture}  \quad \InnaA{\xlongequal{\text{\ref{rel:cancellation_axiom}}}}  \quad \begin{tikzpicture}[anchorbase,scale=1.1]
\draw[-] (-0.5,0)--(-0.2,0);
\draw[-] (0.2,0)--(1,0);
\draw[-] (-0.2,-0.2)--(-0.2,0.2)--(0.2,0.2)--(0.2,-0.2)--(-0.2,-0.2);
\node at (-0,-0) {$\dot{+}$};
\draw[-] (0,0.2)--(0,0.5);
\node at (0, 0.5) {$\bullet$};
\draw[-] (-0.5,0)--(-0.5,-1);
\draw[-] (1,0)--(1,-0.5);
\draw[-] (0.5,-0.5)--(1.5,-0.5);
\draw[-] (0.5,-0.5)--(0.5,-1);
\draw[-] (1.5,-0.5)--(1.5,-1.5);
\draw[-] (-0.5,-1)--(0.5,-1.5);
\draw[-] (0.5,-1)--(-0.5,-1.5);
\end{tikzpicture}
\quad \InnaA{\xlongequal{\text{\ref{rel:plus_coalg_mor}}}} \quad \begin{tikzpicture}[anchorbase,scale=1.1]
\node at (-0.5, 0) {$\bullet$};
\node at (1, 0) {$\bullet$};
\draw[-] (-0.5,0)--(-0.5,-1);
\draw[-] (1,0)--(1,-0.5);
\draw[-] (0.5,-0.5)--(1.5,-0.5);
\draw[-] (0.5,-0.5)--(0.5,-1);
\draw[-] (1.5,-0.5)--(1.5,-1.5);
\draw[-] (-0.5,-1)--(0.5,-1.5);
\draw[-] (0.5,-1)--(-0.5,-1.5);
\end{tikzpicture}
\end{equation*} 
Here the second equality uses the Cancellation Axiom \ref{rel:cancellation_axiom}, \ref{itm:str_rel_diag_cancel_axiom},
and the third equality uses Relation \ref{rel:plus_coalg_mor}, \ref{itm:str_rel_diag_plus_coalg} stating that $\eps^*\circ \dot{+}=\eps^*\otimes \eps^*$.

By the definition of $ev = \eps^*\circ m$, we have:
 \begin{equation*}
 \begin{tikzpicture}[anchorbase,scale=0.9]
\node at (-0.5, 0) {$\bullet$};
\node at (1, 0) {$\bullet$};
\draw[-] (-0.5,0)--(-0.5,-1);
\draw[-] (1,0)--(1,-0.5);
\draw[-] (0.5,-0.5)--(1.5,-0.5);
\draw[-] (0.5,-0.5)--(0.5,-1);
\draw[-] (1.5,-0.5)--(1.5,-1.5);
\draw[-] (-0.5,-1)--(0.5,-1.5);
\draw[-] (0.5,-1)--(-0.5,-1.5);
\end{tikzpicture}
 \quad = \quad \begin{tikzpicture}[anchorbase,scale=0.9]
\node at (-0.5, 0) {$\bullet$};
\draw[-] (-0.5,0)--(-0.5,-1);
\draw[-] (0.5,-0.5)--(1.5,-0.5);
\draw[-] (0.5,-0.5)--(0.5,-1);
\draw[-] (1.5,-0.5)--(1.5,-1.5);
\draw[-] (-0.5,-1)--(0.5,-1.5);
\draw[-] (0.5,-1)--(-0.5,-1.5);
\end{tikzpicture}
\end{equation*} 

Using the fact that the symmetry morphism is functorial, we conclude that the right hand side is just $ ev\circ (\id \otimes \eps^* \otimes\id) $ and thus the statement is proved.
\end{proof}
\begin{lemma}\label{lem:transferring_plus}
 We have the following equality of morphisms $\V^{\otimes 3}\to \triv$: $$ev \circ (\dot{+} \otimes \id) = ev \circ (\id \otimes \dot{+}) \circ (\id \otimes \mu_{-1} \otimes \id).$$
 Diagrammatically, this is drawn as 
 \begin{equation*}
\begin{tikzpicture}[anchorbase,scale=1.3]
\draw[-] (-0.5,0)--(-0.2,0);
\draw[-] (0.2,0)--(0.5,0);
\draw[-] (-0.2,-0.2)--(-0.2,0.2)--(0.2,0.2)--(0.2,-0.2)--(-0.2,-0.2);
\node at (-0,-0) {$\dot{+}$};
\draw[-] (0,0.2)--(0,0.5);

\draw[-] (-0.5,0)--(-0.5,-1);
\draw[-] (0.5,0)--(0.5,-1);

\draw[-] (0,0.5)--(1,0.5);
\draw[-] (1,0.5)--(1,-1);
\end{tikzpicture} \quad  = \quad  \begin{tikzpicture}[anchorbase,scale=1.3]
\draw[-] (-0.5,0)--(-0.2,0);
\draw[-] (0.2,0)--(0.5,0);
\draw[-] (-0.2,-0.2)--(-0.2,0.2)--(0.2,0.2)--(0.2,-0.2)--(-0.2,-0.2);
\node at (-0,-0) {$\dot{+}$};
\draw[-] (0,0.2)--(0,0.5);
\draw[-] (-0.5,0)--(-0.5,-0.2);
\draw[-] (-0.5,-0.8)--(-0.5,-1);
\node[draw,circle] at (-0.5,-0.5) {${\scriptstyle -1}$};

\draw[-] (0.5,0)--(0.5,-1);

\draw[-] (0,0.5)--(-1,0.5);
\draw[-] (-1,0.5)--(-1,-1);
\end{tikzpicture} 
\end{equation*}  
\end{lemma}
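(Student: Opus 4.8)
The plan is to obtain this identity as a consequence of \cref{lem:eq_axiom_corollary} by precomposing with a single auxiliary morphism. Using $ev=\eps^*\circ m$, the content of \cref{lem:eq_axiom_corollary} is the equality of morphisms $\V^{\otimes 3}\to\triv$
\[
ev\circ(\dot{+}\otimes\dot{+})\circ(\id_\V\otimes m^*\otimes\id_\V)\;=\;ev\circ(\id_\V\otimes\eps^*\otimes\id_\V);
\]
in the $\Rep(GL_n(\F_q))$-picture this reads $\delta_{v\dot{+}u,\,w\dot{+}u}=\delta_{v,w}$ (with $v$ in the first, $u$ in the middle, $w$ in the last slot). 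In that picture the substitution $u\mapsto w$, $w\mapsto\mu_{-1}(w)\dot{+}u$ turns the left side into $\delta_{v\dot{+}w,\,(\mu_{-1}(w)\dot{+}u)\dot{+}w}=\delta_{v\dot{+}w,\,u}$ and the right side into $\delta_{v,\,\mu_{-1}(w)\dot{+}u}$, i.e.\ into the two sides of the present Lemma; the remaining task is to implement this substitution at the level of morphisms.

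Accordingly I would introduce
\[
h:=(\id_{\V}\otimes\id_{\V}\otimes\dot{+})\circ(\id_{\V}\otimes\id_{\V}\otimes\mu_{-1}\otimes\id_\V)\circ(\id_\V\otimes m^*\otimes\id_\V)\colon\V^{\otimes 3}\to\V^{\otimes 3}
\]
(duplicate the middle input via $m^*$, negate one copy with $\mu_{-1}$, and add it to the third input), and precompose the displayed equality with $h$. For the right-hand side, the counit relation $(\eps^*\otimes\id_\V)\circ m^*=\id_\V$ from \ref{rel:Frob_alg1}, together with functoriality of $\otimes$, gives $(\id_\V\otimes\eps^*\otimes\id_\V)\circ h=(\id_\V\otimes\dot{+})\circ(\id_\V\otimes\mu_{-1}\otimes\id_\V)$, so that this side of the equality becomes exactly $ev\circ(\id_\V\otimes\dot{+})\circ(\id_\V\otimes\mu_{-1}\otimes\id_\V)$, the right-hand side of the Lemma.

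The left-hand side is where the real work lies, and its engine is the auxiliary identity
\[
\dot{+}\circ(\dot{+}\otimes\id_\V)\circ(\id_\V\otimes\mu_{-1}\otimes\id_\V)\circ(m^*\otimes\id_\V)\;=\;\eps^*\otimes\id_\V\colon\V^{\otimes 2}\to\V,
\]
which I would prove by observing that $\dot{+}\circ(\id_\V\otimes\mu_{-1})\circ m^*=\mu_{1+(-1)}=\mu_0=z\circ\eps^*$ by \ref{rel:F_q_lin_plus_lin_distr} and \ref{rel:F_q_lin_mu}, followed by $\dot{+}\circ(z\otimes\id_\V)=\id_\V$ from \ref{rel:F_q_lin_zero}. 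Feeding this into $ev\circ(\dot{+}\otimes\dot{+})\circ(\id_\V\otimes m^*\otimes\id_\V)\circ h$ --- after using coassociativity of $m^*$ to regroup the three copies of the middle input so that one copy pairs with the first argument and the other two enter the morphism above, and using the counit relation once more to collapse the surviving copy against the $\eps^*$ that morphism produces --- the left side reduces to $ev\circ(\dot{+}\otimes\id_\V)$, the left-hand side of the Lemma. I expect the only genuine difficulty to be the bookkeeping: keeping straight which instance of $m^*$ splits which tensor factor and in what order the various copies of $\dot{+}$ and the symmetries are composed. As in the proof of \cref{lem:eq_axiom_corollary}, this is most cleanly carried out as a short sequence of string-diagram rewrites rather than in formulas.
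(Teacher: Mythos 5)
Your proposal is correct, and it verifies: precomposing \cref{lem:eq_axiom_corollary} with your $h$ does yield the two sides of the Lemma, the right-hand side collapsing by the counit axiom exactly as you say, and the left-hand side reducing via coassociativity of $m^*$, associativity of $\dot{+}$ (to match your auxiliary identity, which is stated in the $(\dot{+}\otimes\id)$ grouping while the composite naturally produces the $(\id\otimes\dot{+})$ grouping), the identity $\dot{+}\circ(\id\otimes\mu_{-1})\circ m^*=\mu_0=z\circ\eps^*$, the zero relation, and one more counit application. This is built from the same two essential ingredients as the paper's argument -- \cref{lem:eq_axiom_corollary} and the relation $\mu_{1+(-1)}=\mu_0=z\circ\eps^*$ -- but organized in mirror image: the paper starts from $ev\circ(\dot{+}\otimes\id)$, inserts a factor $\eps^*\circ\mu_{-1}$ through the counit and \ref{rel:mu_coalg_mor}, and then applies the cancellation lemma with the composite $v\dot{+}u$ sitting in the first slot, afterwards cancelling $(v\dot{+}u)\dot{+}(\dot{-}u)$ back to $v$; you instead apply the cancellation lemma with the composite $(\dot{-}u)\dot{+}w$ placed in the third slot via an explicit substitution morphism and simplify both sides. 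What your route buys is a slightly leaner axiom budget: it needs only the counit, (co)associativity, and the $\F_q$-linearity/zero relations on top of \cref{lem:eq_axiom_corollary}, avoiding the Frobenius/speciality relations and the coalgebra-morphism property $m^*\circ\mu_{-1}=\mu_{-1}^{\otimes 2}\circ m^*$ that the paper's rewriting chain invokes; the paper's version, on the other hand, is a single forward string-diagram computation with no auxiliary morphism to set up. Either way the bookkeeping you flag is routine and the argument goes through.
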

\begin{example}
For $\mathcal{C} = \Rep(GL_n(\F_q))$, $\V:=\V_n$, this statement translates to
$$\forall v,u,w\in V, \;\; \delta_{v\dot{+} u, w } = \delta_{v,w\dot{-} u}.$$
\end{example}

\begin{proof}[Proof of Lemma \ref{lem:eq_axiom_corollary}]
We will give the proof in terms of diagrams:

\begin{equation*}
\begin{tikzpicture}[anchorbase,scale=1.3]
\draw[-] (-0.5,0)--(-0.2,0);
\draw[-] (0.2,0)--(0.5,0);
\draw[-] (-0.2,-0.2)--(-0.2,0.2)--(0.2,0.2)--(0.2,-0.2)--(-0.2,-0.2);
\node at (-0,-0) {$\dot{+}$};

\draw[-] (0,0.2)--(0,1);

\draw[-] (-0.5,0)--(-0.5,-1);
\draw[-] (0.5,0)--(0.5,-1);

\draw[-] (0,1)--(1,1);
\draw[-] (1,1)--(1,-1);
\end{tikzpicture} \quad  = \quad
\begin{tikzpicture}[anchorbase,scale=1.3]
\draw[-] (-0.5,0)--(-0.2,0);
\draw[-] (0.2,0)--(0.5,0);
\draw[-] (-0.2,-0.2)--(-0.2,0.2)--(0.2,0.2)--(0.2,-0.2)--(-0.2,-0.2);
\node at (-0,-0) {$\dot{+}$};

\draw[-] (0,0.2)--(0,1);

\draw[-] (-0.5,0)--(-0.5,-1);
\draw[-] (0.5,0)--(0.5,-0.3);
\draw[-] (0.5,-0.3)--(0.9,-0.3);
\draw[-] (0.9,0.7)--(0.9,-0.3);
\node at (0.9,0.7) {$\bullet$};

\draw[-] (0.7,-0.3)--(0.7,-1);

\draw[-] (0,1)--(1.5,1);
\draw[-] (1.5,1)--(1.5,-1);
\end{tikzpicture} \quad  =  \quad
\begin{tikzpicture}[anchorbase,scale=1.3]
\draw[-] (-0.5,0)--(-0.2,0);
\draw[-] (0.2,0)--(0.5,0);
\draw[-] (-0.2,-0.2)--(-0.2,0.2)--(0.2,0.2)--(0.2,-0.2)--(-0.2,-0.2);
\node at (-0,-0) {$\dot{+}$};

\draw[-] (0,0.2)--(0,1);

\draw[-] (-0.5,0)--(-0.5,-1);
\draw[-] (0.5,0)--(0.5,-0.3);
\draw[-] (0.5,-0.3)--(0.9,-0.3);
\node at (0.9,0.7) {$\bullet$};
\draw[-] (0.7,-0.3)--(0.7,-1);

\draw[-] (0.9,0.7)--(0.9,0.55);
\draw[-] (0.9,-0.05)--(0.9,-0.3);
\node[draw,circle] at (0.9,0.25) {${\scriptstyle -1}$};

\draw[-] (0,1)--(1.5,1);
\draw[-] (1.5,1)--(1.5,-1);
\end{tikzpicture} \quad  = \quad
\begin{tikzpicture}[anchorbase,scale=1.3]
\draw[-] (-0.5,0)--(-0.2,0);
\draw[-] (0.2,0)--(0.5,0);
\draw[-] (-0.2,-0.2)--(-0.2,0.2)--(0.2,0.2)--(0.2,-0.2)--(-0.2,-0.2);
\node at (-0,-0) {$\dot{+}$};

\draw[-] (0,0.2)--(0,1);

\draw[-] (-0.5,0)--(-0.5,-0.5);
\draw[-] (0.5,0)--(0.5,-0.3);
\draw[-] (0.5,-0.3)--(0.9,-0.3);
\draw[-] (0.7,-0.3)--(0.7,-0.5);
\draw[-] (0.7,0.7) -- (1.1, 0.7);
\draw[-] (0.7,0.7) -- (0.7, 1);
\draw[-] (1.1, 0.7)--(1.1, 1);
\draw[-] (0.9,0.7)--(0.9,0.55);
\draw[-] (0.9,-0.05)--(0.9,-0.3);
\node[draw,circle] at (0.9,0.25) {${\scriptstyle -1}$};

\draw[-] (2,1)--(1.7, 1);
\draw[-] (1.1,1)--(1.3, 1);
\draw[-] (1.3,0.8)--(1.3,1.2)--(1.7,1.2)--(1.7,0.8)--(1.3,0.8);
\node at (1.5,1) {$\dot{+}$};
\draw[-] (2,1)--(2,-0.5);
\draw[-] (0.6,1)--(0.7, 1);
\draw[-] (0,1)--(0.2, 1);
\draw[-] (0.2,0.8)--(0.2,1.2)--(0.6,1.2)--(0.6,0.8)--(0.2,0.8);
\node at (0.4,1) {$\dot{+}$};

\draw[-] (0.4,1.5)--(1.5,1.5);
\draw[-] (0.4,1.2)--(0.4,1.5);
\draw[-] (1.5,1.2)--(1.5,1.5);
\end{tikzpicture}
\end{equation*}  
Here the first equality follows from Frobenius relations \ref{rel:Frob_alg2}, \ref{itm:str_rel_diag_Frob}, the second follows from the equality $\eps^* \circ \mu_{-1}=\eps^*$ (Relation \ref{rel:mu_coalg_mor}, \ref{itm:str_rel_diag_mu_coalg}), and the third follows from Lemma \ref{lem:eq_axiom_corollary}.

\InnaA{We now apply} Relation \ref{rel:mu_coalg_mor}, \ref{itm:str_rel_diag_mu_coalg} (implying that $m^*\circ \mu_{-1} = \mu_{-1}^{\otimes 2}\circ m^*$) \InnaA{and Relation \ref{itm:str_rel_Lin} (stating that $\dot{+}$ is associative)} to the right hand side of this equation:
\begin{equation*}
\begin{tikzpicture}[anchorbase,scale=1.3]
\draw[-] (-0.5,0)--(-0.2,0);
\draw[-] (0.2,0)--(0.5,0);
\draw[-] (-0.2,-0.2)--(-0.2,0.2)--(0.2,0.2)--(0.2,-0.2)--(-0.2,-0.2);
\node at (-0,-0) {$\dot{+}$};

\draw[-] (0,0.2)--(0,1);

\draw[-] (-0.5,0)--(-0.5,-0.5);
\draw[-] (0.5,0)--(0.5,-0.3);
\draw[-] (0.5,-0.3)--(0.9,-0.3);
\draw[-] (0.7,-0.3)--(0.7,-0.5);
\draw[-] (0.7,0.7) -- (1.1, 0.7);
\draw[-] (0.7,0.7) -- (0.7, 1);
\draw[-] (1.1, 0.7)--(1.1, 1);
\draw[-] (0.9,0.7)--(0.9,0.55);
\draw[-] (0.9,-0.05)--(0.9,-0.3);
\node[draw,circle] at (0.9,0.25) {${\scriptstyle -1}$};

\draw[-] (2,1)--(1.7, 1);
\draw[-] (1.1,1)--(1.3, 1);
\draw[-] (1.3,0.8)--(1.3,1.2)--(1.7,1.2)--(1.7,0.8)--(1.3,0.8);
\node at (1.5,1) {$\dot{+}$};
\draw[-] (2,1)--(2,-0.5);
\draw[-] (0.6,1)--(0.7, 1);
\draw[-] (0,1)--(0.2, 1);
\draw[-] (0.2,0.8)--(0.2,1.2)--(0.6,1.2)--(0.6,0.8)--(0.2,0.8);
\node at (0.4,1) {$\dot{+}$};

\draw[-] (0.4,1.5)--(1.5,1.5);
\draw[-] (0.4,1.2)--(0.4,1.5);
\draw[-] (1.5,1.2)--(1.5,1.5);
\end{tikzpicture} \quad  \InnaA{\quad \xlongequal{\text{\ref{rel:mu_coalg_mor}}} \quad} \quad 
\begin{tikzpicture}[anchorbase,scale=1.3]
\draw[-] (-0.5,0.5)--(-0.2,0.5);
\draw[-] (0.2,0.5)--(0.7,0.5);
\draw[-] (-0.2,0.3)--(-0.2,0.7)--(0.2,0.7)--(0.2,0.3)--(-0.2,0.3);
\node at (0,0.5) {$\dot{+}$};

\draw[-] (0,0.7)--(0,1.1);

\draw[-] (-0.5,0.5)--(-0.5,-0.5);
\draw[-] (0.7,0.5)--(0.7,-0.3);
\draw[-] (0.7,-0.3)--(1.7,-0.3);
\draw[-] (1.2,-0.3)--(1.2,-0.5);
\draw[-] (1.7,-0.3)--(1.7,0);
\draw[-] (1.3,0) -- (2.1, 0);
\draw[-] (1.3,0) -- (1.3, 0.2);
\draw[-] (2.1,0)--(2.1, 0.2);
\draw[-] (1.3,0.8) -- (1.3, 1.1);
\draw[-] (2.1,0.8)--(2.1, 1.1);
\node[draw,circle] at (1.3,0.5) {${\scriptstyle -1}$};
\node[draw,circle] at (2.1,0.5) {${\scriptstyle -1}$};
\draw[-] (2.3,1.1)--(2.1, 1.1);
\draw[-] (2.7,1.1)--(3, 1.1);
\draw[-] (2.3,0.9)--(2.3,1.3)--(2.7,1.3)--(2.7,0.9)--(2.3,0.9);
\node at (2.5,1.1) {$\dot{+}$};
\draw[-] (3,1.1)--(3,-0.5);
\draw[-] (0.9,1.1)--(1.3, 1.1);
\draw[-] (0,1.1)--(0.5, 1.1);
\draw[-] (0.5,0.9)--(0.5,1.3)--(0.9,1.3)--(0.9,0.9)--(0.5,0.9);
\node at (0.7,1.1) {$\dot{+}$};

\draw[-] (0.7,1.5)--(2.5,1.5);
\draw[-] (0.7,1.3)--(0.7,1.5);
\draw[-] (2.5,1.3)--(2.5,1.5);
\end{tikzpicture}
\InnaA{\quad \xlongequal{\text{\ref{itm:str_rel_Lin}}} \quad}
\begin{tikzpicture}[anchorbase,scale=1.3]

\draw[-] (0.5,-0.3)--(1.7,-0.3);
\draw[-] (1.2,-0.3)--(1.2,-0.5);
\draw[-] (1.7,-0.3)--(1.7,0);
\draw[-] (1.3,0) -- (2.1, 0);
\draw[-] (1.3,0) -- (1.3, 0.2);
\draw[-] (2.1,0)--(2.1, 0.2);
\draw[-] (1.3,0.8) -- (1.3, 1.1);
\node[draw,circle] at (1.3,0.5) {${\scriptstyle -1}$};

\draw[-] (2.1,0.8)--(2.1, 1.6);
\node[draw,circle] at (2.1,0.5) {${\scriptstyle -1}$};

\draw[-] (0.5,1.1)--(0.7,1.1);
\draw[-] (1.1,1.1)--(1.3,1.1);
\draw[-] (0.7,0.9)--(0.7,1.3)--(1.1,1.3)--(1.1,0.9)--(0.7,0.9);
\node at (0.9,1.1) {$\dot{+}$};

\draw[-] (0.5,1.1)--(0.5,-0.3);

\draw[-] (0.9,1.3)--(0.9,1.6);

\draw[-] (2.3,1.6)--(2.1, 1.6);
\draw[-] (2.7,1.6)--(2.9, 1.6);
\draw[-] (2.3,1.4)--(2.3,1.8)--(2.7,1.8)--(2.7,1.4)--(2.3,1.4);
\node at (2.5,1.6) {$\dot{+}$};
\draw[-] (2.9,1.6)--(2.9,-0.5);
\draw[-] (0.5,1.6)--(0.9, 1.6);
\draw[-] (-0.1,1.6)--(0.1, 1.6);
\draw[-] (0.1,1.4)--(0.1,1.8)--(0.5,1.8)--(0.5,1.4)--(0.1,1.4);
\node at (0.3,1.6) {$\dot{+}$};
\draw[-] (-0.1,1.6)--(-0.1, -0.5);

\draw[-] (0.3,2)--(2.5,2);
\draw[-] (0.3,1.8)--(0.3,2);
\draw[-] (2.5,1.8)--(2.5,2);
\end{tikzpicture} 
\end{equation*}
Applying the coassociativity of $m^*$ (see Relation \ref{rel:Frob_alg1}, \ref{itm:str_rel_diag_bialg}), we obtain:
\begin{equation*}
\begin{tikzpicture}[anchorbase,scale=1.3]

\draw[-] (0.5,-0.3)--(1.7,-0.3);
\draw[-] (1.2,-0.3)--(1.2,-0.5);
\draw[-] (1.7,-0.3)--(1.7,0);
\draw[-] (1.3,0) -- (2.1, 0);
\draw[-] (1.3,0) -- (1.3, 0.2);
\draw[-] (2.1,0)--(2.1, 0.2);
\draw[-] (1.3,0.8) -- (1.3, 1.1);
\node[draw,circle] at (1.3,0.5) {${\scriptstyle -1}$};

\draw[-] (2.1,0.8)--(2.1, 1.6);
\node[draw,circle] at (2.1,0.5) {${\scriptstyle -1}$};

\draw[-] (0.5,1.1)--(0.7,1.1);
\draw[-] (1.1,1.1)--(1.3,1.1);
\draw[-] (0.7,0.9)--(0.7,1.3)--(1.1,1.3)--(1.1,0.9)--(0.7,0.9);
\node at (0.9,1.1) {$\dot{+}$};

\draw[-] (0.5,1.1)--(0.5,-0.3);

\draw[-] (0.9,1.3)--(0.9,1.6);

\draw[-] (2.3,1.6)--(2.1, 1.6);
\draw[-] (2.7,1.6)--(3, 1.6);
\draw[-] (2.3,1.4)--(2.3,1.8)--(2.7,1.8)--(2.7,1.4)--(2.3,1.4);
\node at (2.5,1.6) {$\dot{+}$};
\draw[-] (3,1.6)--(3,-0.5);
\draw[-] (0.2,1.6)--(0.9, 1.6);
\draw[-] (-0.7,1.6)--(-0.2, 1.6);
\draw[-] (-0.2,1.4)--(-0.2,1.8)--(0.2,1.8)--(0.2,1.4)--(-0.2,1.4);
\node at (0,1.6) {$\dot{+}$};
\draw[-] (-0.7,1.6)--(-0.7, -0.5);

\draw[-] (0,2)--(2.5,2);
\draw[-] (0,1.8)--(0,2);
\draw[-] (2.5,1.8)--(2.5,2);
\end{tikzpicture} \quad  = \quad
\begin{tikzpicture}[anchorbase,scale=1.3]

\draw[-] (0.9,-0.3)--(2.1,-0.3);
\draw[-] (1.5,-0.3)--(1.5,-0.5);
\draw[-] (0.9,-0.3)--(0.9,0);
\draw[-] (2.1,-0.3)--(2.1, 0.2);

\draw[-] (0.5,0) -- (1.3, 0);
\draw[-] (1.3,0) -- (1.3, 0.2);

\draw[-] (1.3,0.8) -- (1.3, 1.1);
\node[draw,circle] at (1.3,0.5) {${\scriptstyle -1}$};

\draw[-] (2.1,0.8)--(2.1, 1.6);
\node[draw,circle] at (2.1,0.5) {${\scriptstyle -1}$};

\draw[-] (0.5,1.1)--(0.7,1.1);
\draw[-] (1.1,1.1)--(1.3,1.1);
\draw[-] (0.7,0.9)--(0.7,1.3)--(1.1,1.3)--(1.1,0.9)--(0.7,0.9);
\node at (0.9,1.1) {$\dot{+}$};

\draw[-] (0.5,1.1)--(0.5,0);

\draw[-] (0.9,1.3)--(0.9,1.6);

\draw[-] (2.3,1.6)--(2.1, 1.6);
\draw[-] (2.7,1.6)--(3, 1.6);
\draw[-] (2.3,1.4)--(2.3,1.8)--(2.7,1.8)--(2.7,1.4)--(2.3,1.4);
\node at (2.5,1.6) {$\dot{+}$};
\draw[-] (3,1.6)--(3,-0.5);
\draw[-] (0.2,1.6)--(0.9, 1.6);
\draw[-] (-0.7,1.6)--(-0.2, 1.6);
\draw[-] (-0.2,1.4)--(-0.2,1.8)--(0.2,1.8)--(0.2,1.4)--(-0.2,1.4);
\node at (0,1.6) {$\dot{+}$};
\draw[-] (-0.7,1.6)--(-0.7, -0.5);

\draw[-] (0,2)--(2.5,2);
\draw[-] (0,1.8)--(0,2);
\draw[-] (2.5,1.8)--(2.5,2);
\end{tikzpicture}
\end{equation*}
Now, by Relations \ref{rel:F_q_lin_mu}, \ref{rel:F_q_lin_plus_lin_distr}, \ref{rel:F_q_lin_mu} (see also their diagrammatic versions \ref{itm:str_rel_Lin}), we have:

\begin{equation*}
 \begin{tikzpicture}[anchorbase,scale=1.3]
\draw[-] (0,0) -- (1.8, 0);
\draw[-] (1.8,0) -- (1.8, 0.2);
\draw[-] (0.9,0) --(0.9, -0.3);
\draw[-] (0,0)--(0,1.1);

\draw[-] (1.8,0.8) -- (1.8, 1.1);
\node[draw,circle] at (1.8,0.5) {${\scriptstyle -1}$};

\draw[-] (0,1.1)--(0.7,1.1);
\draw[-] (1.1,1.1)--(1.8,1.1);
\draw[-] (0.7,0.9)--(0.7,1.3)--(1.1,1.3)--(1.1,0.9)--(0.7,0.9);
\node at (0.9,1.1) {$\dot{+}$};

\draw[-] (0.9,1.3)--(0.9,1.5);
\end{tikzpicture} \quad = \quad
 \begin{tikzpicture}[anchorbase,scale=1.3]
 \draw[-] (0, -0.8)--(0, -0.25);
 \draw[-] (0, 0.8)--(0, 0.25);
 
 \node[draw,circle] at (0,0) {$\scriptstyle 0$};  
 \end{tikzpicture}
 \quad = \quad  \begin{tikzpicture}[anchorbase,scale=1.3]
 \draw[-] (0, -0.8)--(0, -0.33);
 \draw[-] (0, 0.8)--(0, 0.3);
 
 \node at (0,0.22) {$\circ$};
 \node at (0,-0.33) {$\bullet$}; 
 \end{tikzpicture}.
\end{equation*}
Hence

\begin{equation*}
\begin{tikzpicture}[anchorbase,scale=1.3]

\draw[-] (0.9,-0.3)--(2.1,-0.3);
\draw[-] (1.5,-0.3)--(1.5,-0.5);
\draw[-] (0.9,-0.3)--(0.9,0);
\draw[-] (2.1,-0.3)--(2.1, 0.2);

\draw[-] (0.5,0) -- (1.3, 0);
\draw[-] (1.3,0) -- (1.3, 0.2);

\draw[-] (1.3,0.8) -- (1.3, 1.1);
\node[draw,circle] at (1.3,0.5) {${\scriptstyle -1}$};

\draw[-] (2.1,0.8)--(2.1, 1.6);
\node[draw,circle] at (2.1,0.5) {${\scriptstyle -1}$};

\draw[-] (0.5,1.1)--(0.7,1.1);
\draw[-] (1.1,1.1)--(1.3,1.1);
\draw[-] (0.7,0.9)--(0.7,1.3)--(1.1,1.3)--(1.1,0.9)--(0.7,0.9);
\node at (0.9,1.1) {$\dot{+}$};

\draw[-] (0.5,1.1)--(0.5,0);

\draw[-] (0.9,1.3)--(0.9,1.6);

\draw[-] (2.3,1.6)--(2.1, 1.6);
\draw[-] (2.7,1.6)--(3, 1.6);
\draw[-] (2.3,1.4)--(2.3,1.8)--(2.7,1.8)--(2.7,1.4)--(2.3,1.4);
\node at (2.5,1.6) {$\dot{+}$};
\draw[-] (3,1.6)--(3,-0.5);
\draw[-] (0.2,1.6)--(0.9, 1.6);
\draw[-] (-0.7,1.6)--(-0.2, 1.6);
\draw[-] (-0.2,1.4)--(-0.2,1.8)--(0.2,1.8)--(0.2,1.4)--(-0.2,1.4);
\node at (0,1.6) {$\dot{+}$};
\draw[-] (-0.7,1.6)--(-0.7, -0.5);

\draw[-] (0,2)--(2.5,2);
\draw[-] (0,1.8)--(0,2);
\draw[-] (2.5,1.8)--(2.5,2);
\end{tikzpicture} \quad  = \quad 
\begin{tikzpicture}[anchorbase,scale=1.3]

\draw[-] (0.9,-0.3)--(2.1,-0.3);
\draw[-] (1.5,-0.3)--(1.5,-0.5);

\draw[-] (2.1,-0.3)--(2.1, 0.2);

\draw[-] (0.9,-0.3)--(0.9, 0.3);
\node at (0.9, 0.3) {$\bullet$};
\node at (0.9, 0.85) {$\circ$};
\draw[-] (0.9,0.9)--(0.9,1.6);


\draw[-] (2.1,0.8)--(2.1, 1.6);
\node[draw,circle] at (2.1,0.5) {${\scriptstyle -1}$};

\draw[-] (2.3,1.6)--(2.1, 1.6);
\draw[-] (2.7,1.6)--(3, 1.6);
\draw[-] (2.3,1.4)--(2.3,1.8)--(2.7,1.8)--(2.7,1.4)--(2.3,1.4);
\node at (2.5,1.6) {$\dot{+}$};
\draw[-] (3,1.6)--(3,-0.5);
\draw[-] (0.2,1.6)--(0.9, 1.6);
\draw[-] (-0.7,1.6)--(-0.2, 1.6);
\draw[-] (-0.2,1.4)--(-0.2,1.8)--(0.2,1.8)--(0.2,1.4)--(-0.2,1.4);
\node at (0,1.6) {$\dot{+}$};
\draw[-] (-0.7,1.6)--(-0.7, -0.5);

\draw[-] (0,2)--(2.5,2);
\draw[-] (0,1.8)--(0,2);
\draw[-] (2.5,1.8)--(2.5,2);
\end{tikzpicture}
\end{equation*}
We now use the relations ``$z$ is the zero element for $\dot{+}$'' (Relation \ref{rel:F_q_lin_zero}, \ref{itm:str_rel_Lin}) and ``$\eps^*$ is the counit of $m^*$'' (Relation \ref{rel:Frob_alg1}, \ref{itm:str_rel_diag_bialg}) to show that the diagram on the right is just 
\begin{equation*}
 \begin{tikzpicture}[anchorbase,scale=1.3]
\draw[-] (-0.5,0)--(-0.2,0);
\draw[-] (0.2,0)--(0.5,0);
\draw[-] (-0.2,-0.2)--(-0.2,0.2)--(0.2,0.2)--(0.2,-0.2)--(-0.2,-0.2);
\node at (-0,-0) {$\dot{+}$};
\draw[-] (0,0.2)--(0,0.5);
\draw[-] (-0.5,0)--(-0.5,-0.2);
\draw[-] (-0.5,-0.8)--(-0.5,-1);
\node[draw,circle] at (-0.5,-0.5) {${\scriptstyle -1}$};

\draw[-] (0.5,0)--(0.5,-1);

\draw[-] (0,0.5)--(-1,0.5);
\draw[-] (-1,0.5)--(-1,-1);
\end{tikzpicture} 
\end{equation*}  
and the proof of the lemma is complete.
\end{proof}

\begin{lemma}\label{lem:dual_scalar_mult}
 For any $a\in \F_q^{\times}$, we have $ev \circ (\mu_a \otimes \mu_a)=ev$.
\end{lemma}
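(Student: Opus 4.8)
The plan is a short three-step computation using only the compatibility relation \ref{rel:mu_coalg_mor} (together with the definition $ev = \eps^*\circ m$). First I would expand $ev\circ(\mu_a\otimes\mu_a) = \eps^*\circ m\circ(\mu_a\otimes\mu_a)$. Next, since $a\in\F_q^{\times}$, Relation \ref{rel:mu_coalg_mor} tells us that $\mu_a$ respects the multiplication structure, i.e. $m\circ(\mu_a\otimes\mu_a) = \mu_a\circ m$; substituting this gives $\eps^*\circ\mu_a\circ m$. Finally, Relation \ref{rel:mu_coalg_mor} also gives $\eps^*\circ\mu_a = \eps^*$, so $\eps^*\circ\mu_a\circ m = \eps^*\circ m = ev$. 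This chain of equalities
$$ ev\circ(\mu_a\otimes\mu_a) = \eps^*\circ m\circ(\mu_a\otimes\mu_a) = \eps^*\circ\mu_a\circ m = \eps^*\circ m = ev $$
completes the proof.

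There is essentially no obstacle here: the statement is an immediate formal consequence of the two halves of \ref{rel:mu_coalg_mor} (multiplicativity and counit-compatibility of $\mu_a$), and the only thing to be careful about is that these relations are only postulated for $a\in\F_q^{\times}$, which is exactly the hypothesis of the lemma. If a diagrammatic version is desired, the same argument can be drawn: stack the diagram for $\mu_a\otimes\mu_a$ below $m$, slide the two $a$-boxes through the merge vertex using \ref{itm:str_rel_diag_mu_coalg}, and then absorb the resulting single $a$-box into the $\eps^*$ cap using the second identity in \ref{itm:str_rel_diag_mu_coalg}.
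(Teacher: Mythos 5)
Your proof is correct and is essentially identical to the paper's: both expand $ev=\eps^*\circ m$, apply the multiplicativity part of \ref{rel:mu_coalg_mor} to rewrite $m\circ(\mu_a\otimes\mu_a)=\mu_a\circ m$, and then absorb $\mu_a$ into $\eps^*$ via $\eps^*\circ\mu_a=\eps^*$. No gaps; your remark that these relations are only assumed for $a\in\F_q^{\times}$ matches the hypothesis exactly.
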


\begin{proof}
Using Relations \ref{rel:mu_coalg_mor} which state that $\mu_a$ commutes with both counit and multiplication (that is, $\eps^*\circ \mu_a = \eps^*$ and $m\circ (\mu_a\otimes \mu_a) = \mu_a \circ m$), we obtain that 
$$ev \circ (\mu_a\otimes \mu_a) =\eps^*\circ m\circ (\mu_a\otimes \mu_a) = \eps^*\circ \mu_a \circ m = \eps^*\circ m
=ev.$$ 
\end{proof}

\begin{lemma}\label{lem:transp_mu_A}
For any $A\in GL_k(\F_q)$, the map $$\overline{ev}_{\V^{\otimes k}} \circ (\mu_A\otimes \mu_A): \V^{\otimes k}\otimes \V^{\otimes k} \to \triv$$ is equal to $\overline{ev}_{\V^{\otimes k}}$.
\end{lemma}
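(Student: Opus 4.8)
The plan is to reduce the statement, via \cref{prop:compos_mu_A}, to a short list of generators of $GL_k(\F_q)$. Two preliminary observations drive the reduction. First, $\overline{ev}_{\V^{\otimes k}}$ is ``parallel'': if one precomposes it with the symmetry morphism that regroups $\V^{\otimes k}\otimes\V^{\otimes k}$ as $(\V\otimes\V)^{\otimes k}$ (placing the $i$-th tensorand of the first $\V^{\otimes k}$ next to the $i$-th tensorand of the second, for each $i$), then $\overline{ev}_{\V^{\otimes k}}$ becomes $ev^{\otimes k}$; this follows from the definitions of $ev_{\V^{\otimes k}}$ and of $w$ together with the symmetric structure. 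Second, the identity we want is stable under matrix products: using $\mu_{BC}=\mu_B\circ\mu_C$ (\cref{prop:compos_mu_A}), $\mu_{I_k}=\id$ (\cref{lem:mu_identity}) and bifunctoriality of $\otimes$, if it holds for $B$ and for $C$ then
\[
\overline{ev}_{\V^{\otimes k}}\circ(\mu_{BC}\otimes\mu_{BC})
=\bigl(\overline{ev}_{\V^{\otimes k}}\circ(\mu_{B}\otimes\mu_{B})\bigr)\circ(\mu_{C}\otimes\mu_{C})
=\overline{ev}_{\V^{\otimes k}}\circ(\mu_{C}\otimes\mu_{C})
=\overline{ev}_{\V^{\otimes k}} .
\]
As $GL_k(\F_q)$ is generated by the transvections $T_{ij}(a)=I_k+a\,E_{ij}$ ($i\ne j$, $a\in\F_q$) and by the invertible diagonal matrices, it is enough (inducting on the length of a word in these generators, the base case $A=I_k$ being trivial) to prove the identity for each of these.

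For an invertible diagonal matrix $D=\diag(a_1,\dots,a_k)$ one has $\mu_D=\mu_{a_1}\otimes\cdots\otimes\mu_{a_k}$ by iterating \cref{prop:tensor_prod_mu_B} and \cref{lem:horizontal stacking_with_zero}; composing with the parallel form of $\overline{ev}_{\V^{\otimes k}}$ and applying \cref{lem:dual_scalar_mult} (which gives $ev\circ(\mu_{a_i}\otimes\mu_{a_i})=ev$, as all $a_i\in\F_q^\times$) to each of the $k$ factors yields $\overline{ev}_{\V^{\otimes k}}\circ(\mu_D\otimes\mu_D)=\overline{ev}_{\V^{\otimes k}}$. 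This in particular handles $k\le 1$.

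For a transvection $A=T_{ij}(a)$ (the case $a=0$ being trivial, so assume $a\in\F_q^\times$), one first reads off from \eqref{dg:matrix_mult} and \cref{lem:horizontal stacking_with_zero} that $\mu_A$ acts as the identity on the $\ell$-th tensorand for every $\ell\ne i$, comultiplies the $j$-th tensorand by $m^*$ (leaving one copy in place and routing the other through $\mu_a$), and replaces the $i$-th tensorand by its $\dot{+}$ with that routed copy. Hence in $\overline{ev}_{\V^{\otimes k}}\circ(\mu_A\otimes\mu_A)$ all the parallel pairings are the identity $ev$ except the $i$-th and $j$-th, which are coupled. The piece consisting of the $j$-th pairing $ev$ and the comultiplications $m^*$ of the two $j$-th tensorands can be rewritten, using only the Frobenius relations \ref{rel:Frob_alg2} and the (co)unit/(co)associativity axioms \ref{rel:Frob_alg1}, as $m^*\circ m$ of the two $j$-th tensorands (the counit of the $j$-th $ev$ being absorbed by counitality of $m^*$); writing $x$ for the result of $m$ applied to those tensorands, the two legs fed into the position-$i$ copies of $\dot{+}$ become the two legs of $m^*(\mu_a(x))$, since $\mu_a$ is a coalgebra morphism (\ref{rel:mu_coalg_mor}). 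The $i$-th pairing is then exactly $ev\circ(\dot{+}\otimes\dot{+})\circ(\id\otimes m^*\otimes\id)$ evaluated on the $i$-th tensorands and on $\mu_a(x)$, so by \cref{lem:eq_axiom_corollary} it equals $ev$ of the two $i$-th tensorands times $\eps^*(\mu_a(x))=\eps^*(x)$, where the last equality is $\eps^*\circ\mu_a=\eps^*$ from \ref{rel:mu_coalg_mor}; and $\eps^*(x)$ is precisely the original $j$-th pairing $\eps^*\circ m=ev$. Reassembling, the $i$-th and $j$-th pairings are restored, giving $\overline{ev}_{\V^{\otimes k}}\circ(\mu_A\otimes\mu_A)=\overline{ev}_{\V^{\otimes k}}$.

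The only genuinely delicate point is the ``merge'' step in the transvection case: rewriting the $j$-th evaluation together with the two comultiplications as $m^*\circ m$ is geometrically obvious (slide the $\eps^*$-cap along its strand and fuse the two $m^*$'s) but should be spelled out as an explicit chain of the Frobenius relations \ref{rel:Frob_alg2}, \ref{rel:Frob_alg1}; the accompanying bookkeeping of symmetry morphisms needed to bring the relevant tensorands into adjacent positions is routine and is most comfortably done in the string-diagram calculus of \cref{ssec:string_diagr}.
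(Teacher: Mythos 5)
Your proposal is correct and follows essentially the same route as the paper's proof: reduce to a generating set of elementary matrices via \cref{prop:compos_mu_A}, dispose of the scaling generators with \cref{lem:dual_scalar_mult}, and settle the shear case by combining the Frobenius relations with the cancellation consequence \cref{lem:eq_axiom_corollary}. The only (cosmetic) difference is the choice of generators: you take general transvections $T_{ij}(a)$ together with invertible diagonal matrices, absorbing the coefficient $a$ through the coalgebra-morphism property \ref{rel:mu_coalg_mor}, whereas the paper uses adjacent swaps, single-entry scalings and the adjacent unit shear, handling the swap case by the standard invariance of $\overline{ev}_{\V^{\otimes 2}}$ under $\sigma\otimes\sigma$.
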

\begin{example}
 Let $\mathcal{C} = \Rep(GL_n(\F_q))$, $\V:=\V_n$, $k=2$ and 
 $A = \begin{bmatrix}
 1 &1\\
 0 &1                                                                 \end{bmatrix}
 $. Then $\mu_A(v_1\otimes v_2)= (v_1 \dot{+} v_2)\otimes v_2$ for any $v_1, v_2\in V$. Recall that $\{v_1\otimes v_2| \, v_1, v_2\in V\}$ forms a basis in $\V_n^{\otimes 2}$, so the map $\mu_A^*$ takes $v'_1\otimes v'_2$ to $(v'_1\dot{-}v'_2)\otimes v'_2$ for any $v'_1, v'_2\in V$. Hence $\mu_A^* = \mu_{A^{-1}}$ where $A = \begin{bmatrix}
 1 &-1\\
 0 &1                                                                 \end{bmatrix}
 $. This implies the statement of the lemma in the setting of our example.
\end{example}

\begin{proof}[Proof of Lemma \ref{lem:transp_mu_A}]
Let us write $A$ as a product of elementary matrices (these are matrices obtained from the identity matrix by one single elementary row operation). Recall that for this it is enough to use the following elementary matrices:
\begin{itemize}
 \item $E_{R_i \longleftrightarrow R_{i+1}}$ for $1\leq i\leq k$, the permutation matrix for a simple reflection. 
 \item $E_{R_i \longleftarrow a R_i}$ for $a\neq 0$ and for $1\leq i\leq k$. This is the diagonal matrix with $a$ in position $(i, i)$ and $1$'s on the rest of the diagonal.
 \item $E_{R_i \longleftarrow  R_i+R_{i+1}}$ for $1\leq i< k$. This matrix differs from the matrix $\id_{k\times k}$ by a single entry $(i, i+1)$, which is $1$.
\end{itemize}
Due to Proposition \ref{prop:compos_mu_A}, it is enough to prove the equality $\overline{ev}_{\V^{\otimes k}} \circ (\mu_A\otimes \mu_A) = \overline{ev}_{\V^{\otimes k}}$ for the elementary matrices $A$ as above.
We now do this case-by-case:

\InnaA{{\bf  Let $A=E_{R_i \longleftrightarrow R_{i+1}}$ for $1\leq i\leq k$.}} In that case, the statement boils down to proving that 
 $\overline{ev}_{\V^{\otimes 2}} \circ (\sigma\otimes \sigma) = ev_{\V^{\otimes 2}}$, where $\sigma: \V^{\otimes 2} \to \V^{\otimes 2}$ is the symmetry morphism. But this is true for any special commutative Frobenius algebra in a rigid symmetric monoidal category, so there is nothing to check.
 
\InnaA{{\bf Let $A=E_{R_i \longleftarrow a R_i}$ for $a\neq 0$ and $1\leq i\leq k$.}} In that case, the statement boils down to proving that 
 $ev\circ (\mu_a\otimes \mu_a) = ev$, which is proved in Lemma \ref{lem:dual_scalar_mult}.
 
\InnaA{{\bf Let $A=E_{R_i \longleftarrow  R_i+R_{i+1}}$ for $1\leq i < k$.}} In that case, the statement boils down to proving that $\overline{ev}_{\V^{\otimes 2}} \circ (\mu_A\otimes \mu_A) = \overline{ev}_{\V^{\otimes 2}}$ for 
 $A = \begin{bmatrix}
              1 &1\\
              0 &1                                                                                                                                                                                                       \end{bmatrix}$. Equivalently, we need to prove that 
$ev_{\V^{\otimes 2}} \circ (\mu_A\otimes \mu_{A^t}) = ev_{\V^{\otimes 2}}$.
Diagramatically, $\mu_A$ looks like this:
\begin{equation*}\begin{tikzpicture}[anchorbase,scale=1.1]
\draw[-] (-0.5,-0.2)--(-0.5,0.2)--(0.5,0.2)--(0.5,-0.2)--(-0.5,-0.2);
\node at (-0,-0) {${A}$};
\draw[-] (-0.4,-0.2)--(-0.4,-0.7);
\draw[-] (0.4,-0.2)--(0.4,-0.7);
\draw[-] (-0.4,0.2)--(-0.4,0.4);
\draw[-] (0.4,0.2)--(0.4,0.4);
\end{tikzpicture} \quad = \quad
\begin{tikzpicture}[anchorbase,scale=1.1]
\draw[-] (-0.5,0)--(-0.2,0);
\draw[-] (0.2,0)--(0.5,0);
\draw[-] (-0.2,-0.2)--(-0.2,0.2)--(0.2,0.2)--(0.2,-0.2)--(-0.2,-0.2);
\node at (-0,-0) {$\dot{+}$};
\draw[-] (0,0.2)--(0,0.4);
\draw[-] (-0.5,0)--(-0.5,-0.7);
\draw[-] (0.5,0)--(0.5,-0.5);
\draw[-] (1,0.4)--(1,-0.5);
\draw[-] (0.5, -0.5)--(1,-0.5);
\draw[-] (0.75, -0.5)--(0.75,-0.7);
\end{tikzpicture} 
\end{equation*}

\InnaA{So we need to prove the following equality:}
\begin{equation*}
\begin{tikzpicture}[anchorbase,scale=1.1]
\draw[-] (0,0.5)--(2.5,0.5);
\draw[-] (-0.5,0)--(-0.2,0);
\draw[-] (0.2,0)--(0.5,0);
\draw[-] (-0.2,-0.2)--(-0.2,0.2)--(0.2,0.2)--(0.2,-0.2)--(-0.2,-0.2);
\node at (-0,-0) {$\dot{+}$};
\draw[-] (0,0.2)--(0,0.5);
\draw[-] (-0.5,0)--(-0.5,-0.7);
\draw[-] (0.5,0)--(0.5,-0.5);
\draw[-] (0.5, -0.5)--(1,-0.5);
\draw[-] (0.75, -0.5)--(0.75,-0.7);

\draw[-] (1,-0.25)--(1,-0.5);
\draw[-] (1.5,-0.25)--(1.5,-0.5);
\draw[-] (1,-0.25)--(1.5,-0.25);

\draw[-] (2,0)--(2,-0.5);
\draw[-] (1.5,-0.5)--(2,-0.5);
\draw[-] (1.75,-0.5)--(1.75,-0.7);

\draw[-] (2,0)--(2.3,0);
\draw[-] (2.7,0)--(3,0);
\draw[-] (2.3,-0.2)--(2.3,0.2)--(2.7,0.2)--(2.7,-0.2)--(2.3,-0.2);
\node at (2.5,0) {$\dot{+}$};
\draw[-] (2.5,0.2)--(2.5,0.5);
\draw[-] (2,0)--(2,-0.5);
\draw[-] (3,0)--(3,-0.7);
\end{tikzpicture} \quad= \quad\begin{tikzpicture}[anchorbase,scale=1.1]
\draw[-] (0,0.5)--(2.5,0.5);
\draw[-] (0,-0.7)--(0,0.5);
\draw[-] (2.5,-0.7)--(2.5,0.5);
\draw[-] (0.75,-0.2)--(1.75,-0.2);
\draw[-] (0.75,-0.7)--(0.75,-0.2);
\draw[-] (1.75,-0.7)--(1.75,-0.2);
\end{tikzpicture} 
\end{equation*}  
Let us use the Frobenius Relations \ref{rel:Frob_alg2} \InnaA{(see also \ref{itm:str_rel_diag_Frob}), and the fact that $\eps^*$ is a counit of $m^*$ (see \ref{rel:Frob_alg}). These imply}:
\begin{equation*}
\begin{tikzpicture}[anchorbase,scale=0.3]
\draw[-] (-2,2)--(-2,-1);
\draw[-] (-2,-1)--(0,-1);
\draw[-] (-1,-1)--(-1,-2);
\draw[-] (0,1)--(0,-1);
\draw[-] (0,1)--(2,1);
\draw[-] (1,1)--(1,2);
\draw[-] (2,1)--(2,-2);
\node at (1, 2) {$\bullet$};
\end{tikzpicture}\quad = \quad \begin{tikzpicture}[anchorbase,scale=0.3]
\draw[-] (-1,2)--(-1,1);
\draw[-] (-1,-1)--(-1,-2);
\draw[-] (-1,-1)--(1,-1);
\draw[-] (0,-1)--(0,1);
\draw[-] (-1,1)--(1,1);
\draw[-] (1,2)--(1,1);
\draw[-] (1,-1)--(1,-2);
\node at (1, 2) {$\bullet$};
\end{tikzpicture} \quad = \quad \begin{tikzpicture}[anchorbase,scale=0.3]
\draw[-] (-1,-1)--(-1,-2);
\draw[-] (-1,-1)--(1,-1);
\draw[-] (0,-1)--(0,1);
\draw[-] (1,-1)--(1,-2);
\end{tikzpicture}  \quad = \quad m
\end{equation*} 
\InnaA{and so}
\begin{equation*}
\begin{tikzpicture}[anchorbase,scale=1.1]
\draw[-] (0.5,0)--(0.5,-0.5);
\draw[-] (0.5, -0.5)--(1,-0.5);
\draw[-] (0.75, -0.5)--(0.75,-1);

\draw[-] (1,-0.25)--(1,-0.5);
\draw[-] (1.5,-0.25)--(1.5,-0.5);
\draw[-] (1,-0.25)--(1.5,-0.25);

\draw[-] (2,0)--(2,-0.5);
\draw[-] (1.5,-0.5)--(2,-0.5);
\draw[-] (1.75,-0.5)--(1.75,-1);
\end{tikzpicture} \quad= \quad \begin{tikzpicture}[anchorbase,scale=1.1]
\draw[-] (0.5,0)--(0.5,-0.5);
\draw[-] (0.5, -0.5)--(1,-0.5);
\draw[-] (0.75, -0.5)--(0.75,-1.25);

\draw[-] (1,-0.25)--(1,-0.5);
\draw[-] (1.5,-0.25)--(1.5,-0.5);
\draw[-] (1,-0.25)--(1.5,-0.25);

\draw[-] (1.25,-0.25)--(1.25,0);
\node at (1.25, 0) {$\bullet$};

\draw[-] (2,0)--(2,-1);
\draw[-] (1.5,-0.5)--(1.5,-1);
\draw[-] (1.5,-1)--(2,-1);
\draw[-] (1.75,-1)--(1.75,-1.25);
\end{tikzpicture} \quad=  \quad \begin{tikzpicture}[anchorbase,scale=0.3]
\draw[-] (-2,1)--(-2,-2);
\draw[-] (-2,1)--(0,1);
\draw[-] (-1,2)--(-1,1);
\draw[-] (0,1)--(0,-1);
\draw[-] (0,-1)--(2,-1);
\draw[-] (1,-1)--(1,-2);
\draw[-] (2,2)--(2,-1);
\end{tikzpicture} \quad = \quad \begin{tikzpicture}[anchorbase,scale=0.3]
\draw[-] (-1,2)--(-1,1);
\draw[-] (-1,-1)--(-1,-2);
\draw[-] (-1,-1)--(1,-1);
\draw[-] (0,-1)--(0,1);
\draw[-] (-1,1)--(1,1);
\draw[-] (1,2)--(1,1);
\draw[-] (1,-1)--(1,-2);

\end{tikzpicture} =m^*\circ m
\end{equation*}
Thus we obtain:
\begin{equation*}
\begin{tikzpicture}[anchorbase,scale=1.1]
\draw[-] (0,0.5)--(2.5,0.5);
\draw[-] (-0.5,0)--(-0.2,0);
\draw[-] (0.2,0)--(0.5,0);
\draw[-] (-0.2,-0.2)--(-0.2,0.2)--(0.2,0.2)--(0.2,-0.2)--(-0.2,-0.2);
\node at (-0,-0) {$\dot{+}$};
\draw[-] (0,0.2)--(0,0.5);
\draw[-] (-0.5,0)--(-0.5,-1);

\draw[-] (0.5,0)--(0.5,-0.5);
\draw[-] (0.5, -0.5)--(1,-0.5);
\draw[-] (0.75, -0.5)--(0.75,-1);

\draw[-] (1,-0.25)--(1,-0.5);
\draw[-] (1.5,-0.25)--(1.5,-0.5);
\draw[-] (1,-0.25)--(1.5,-0.25);

\draw[-] (2,0)--(2,-0.5);
\draw[-] (1.5,-0.5)--(2,-0.5);
\draw[-] (1.75,-0.5)--(1.75,-1);

\draw[-] (2,0)--(2.3,0);
\draw[-] (2.7,0)--(3,0);
\draw[-] (2.3,-0.2)--(2.3,0.2)--(2.7,0.2)--(2.7,-0.2)--(2.3,-0.2);
\node at (2.5,0) {$\dot{+}$};
\draw[-] (2.5,0.2)--(2.5,0.5);
\draw[-] (2,0)--(2,-0.5);
\draw[-] (3,0)--(3,-1);
\end{tikzpicture} \quad= \quad\begin{tikzpicture}[anchorbase,scale=1.1]
\draw[-] (0,0.5)--(2,0.5);
\draw[-] (-0.5,0)--(-0.2,0);
\draw[-] (0.2,0)--(0.5,0);
\draw[-] (-0.2,-0.2)--(-0.2,0.2)--(0.2,0.2)--(0.2,-0.2)--(-0.2,-0.2);
\node at (-0,-0) {$\dot{+}$};
\draw[-] (0,0.2)--(0,0.5);
\draw[-] (-0.5,0)--(-0.5,-1);
\draw[-] (-0.5,0)--(-0.5,-1);

\draw[-] (0.5,0)--(0.5,-0.4);
\draw[-] (0.5, -0.4)--(1.5,-0.4);
\draw[-] (1, -0.4)--(1,-0.7);
\draw[-] (0.5,-0.7)--(1.5,-0.7);
\draw[-] (0.5,-0.7)--(0.5,-1);
\draw[-] (1.5,-0.7)--(1.5,-1);

\draw[-] (1.5,0)--(1.8,0);
\draw[-] (2.2,0)--(2.5,0);
\draw[-] (1.8,-0.2)--(1.8,0.2)--(2.2,0.2)--(2.2,-0.2)--(1.8,-0.2);
\node at (2,0) {$\dot{+}$};
\draw[-] (2,0.2)--(2,0.5);
\draw[-] (1.5,0)--(1.5,-0.4);
\draw[-] (2.5,0)--(2.5,-1);
\end{tikzpicture} 
\end{equation*}  

We now apply Lemma \ref{lem:eq_axiom_corollary} to the right hand side of this equality and obtain:
\begin{equation*}\begin{tikzpicture}[anchorbase,scale=1.1]
\draw[-] (0,0.5)--(2.5,0.5);
\draw[-] (-0.5,0)--(-0.2,0);
\draw[-] (0.2,0)--(0.5,0);
\draw[-] (-0.2,-0.2)--(-0.2,0.2)--(0.2,0.2)--(0.2,-0.2)--(-0.2,-0.2);
\node at (-0,-0) {$\dot{+}$};
\draw[-] (0,0.2)--(0,0.5);
\draw[-] (-0.5,0)--(-0.5,-1);

\draw[-] (0.5,0)--(0.5,-0.5);
\draw[-] (0.5, -0.5)--(1,-0.5);
\draw[-] (0.75, -0.5)--(0.75,-1);

\draw[-] (1,-0.25)--(1,-0.5);
\draw[-] (1.5,-0.25)--(1.5,-0.5);
\draw[-] (1,-0.25)--(1.5,-0.25);

\draw[-] (2,0)--(2,-0.5);
\draw[-] (1.5,-0.5)--(2,-0.5);
\draw[-] (1.75,-0.5)--(1.75,-1);

\draw[-] (2,0)--(2.3,0);
\draw[-] (2.7,0)--(3,0);
\draw[-] (2.3,-0.2)--(2.3,0.2)--(2.7,0.2)--(2.7,-0.2)--(2.3,-0.2);
\node at (2.5,0) {$\dot{+}$};
\draw[-] (2.5,0.2)--(2.5,0.5);
\draw[-] (2,0)--(2,-0.5);
\draw[-] (3,0)--(3,-1);
\end{tikzpicture} \quad= \quad\begin{tikzpicture}[anchorbase,scale=1.1]
\draw[-] (0,0.5)--(2,0.5);
\draw[-] (-0.5,0)--(-0.2,0);
\draw[-] (0.2,0)--(0.5,0);
\draw[-] (-0.2,-0.2)--(-0.2,0.2)--(0.2,0.2)--(0.2,-0.2)--(-0.2,-0.2);
\node at (-0,-0) {$\dot{+}$};
\draw[-] (0,0.2)--(0,0.5);
\draw[-] (-0.5,0)--(-0.5,-1);
\draw[-] (-0.5,0)--(-0.5,-1);

\draw[-] (0.5,0)--(0.5,-0.4);
\draw[-] (0.5, -0.4)--(1.5,-0.4);
\draw[-] (1, -0.4)--(1,-0.7);
\draw[-] (0.5,-0.7)--(1.5,-0.7);
\draw[-] (0.5,-0.7)--(0.5,-1);
\draw[-] (1.5,-0.7)--(1.5,-1);

\draw[-] (1.5,0)--(1.8,0);
\draw[-] (2.2,0)--(2.5,0);
\draw[-] (1.8,-0.2)--(1.8,0.2)--(2.2,0.2)--(2.2,-0.2)--(1.8,-0.2);
\node at (2,0) {$\dot{+}$};
\draw[-] (2,0.2)--(2,0.5);
\draw[-] (1.5,0)--(1.5,-0.4);
\draw[-] (2.5,0)--(2.5,-1);
\end{tikzpicture}  \quad= \quad\begin{tikzpicture}[anchorbase,scale=1.1]
\draw[-] (0,0.5)--(2,0.5);
\draw[-] (0,0.5)--(0,-1);
\draw[-] (2,0.5)--(2,-1);

\draw[-] (0.5,-0.7)--(1.5,-0.7);
\draw[-] (0.5,-0.7)--(0.5,-1);
\draw[-] (1.5,-0.7)--(1.5,-1);

\draw[-] (1,-0.7)--(1,-0.25);
\node at (1,-0.25) {$\bullet$};
\end{tikzpicture}  \quad= \quad\begin{tikzpicture}[anchorbase,scale=1.1]
\draw[-] (0,0.5)--(2,0.5);
\draw[-] (0,0.5)--(0,-1);
\draw[-] (2,0.5)--(2,-1);

\draw[-] (0.5,-0.7)--(1.5,-0.7);
\draw[-] (0.5,-0.7)--(0.5,-1);
\draw[-] (1.5,-0.7)--(1.5,-1);

\end{tikzpicture}
\end{equation*}  

which concludes the proof of the lemma.
\end{proof}

\begin{corollary}\label{cor:regular_system_of_eq}
 For any $A \in GL_{k}(\F_q)$, we have: $(z^*)^{\otimes k} \circ \mu_A = 
(z^*)^{\otimes k}$.
\end{corollary}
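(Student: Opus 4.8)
The corollary is the categorical incarnation of the fact that $Ax=\dot 0$ has only the trivial solution when $A\in GL_k(\F_q)$, and the natural way to prove it is to reduce to Lemma~\ref{lem:transp_mu_A}, which already knows that $\overline{ev}_{\V^{\otimes k}}\circ(\mu_A\otimes \mu_A)=\overline{ev}_{\V^{\otimes k}}$. The idea is to realize $(z^*)^{\otimes k}$ as ``pairing with a $k$-tuple of zero vectors'', feed that tuple through $\mu_A$ for free, and then insert the second copy of $\mu_A$ using Lemma~\ref{lem:transp_mu_A}.

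\textbf{Step 1: scalars fix the zero vector.} First I would record that $\mu_a\circ z=z$ for every $a\in\F_q$. Indeed, by Relation~\ref{rel:F_q_lin_mu} we have $\mu_0=z\circ\eps^*$, and by Relation~\ref{rel:z_coalg_mor} we have $\eps^*\circ z=\id_{\triv}$, so $\mu_0\circ z=z\circ\eps^*\circ z=z$; then, again by \ref{rel:F_q_lin_mu}, $\mu_a\circ z=\mu_a\circ\mu_0\circ z=\mu_{a\cdot 0}\circ z=\mu_0\circ z=z$.

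\textbf{Step 2: $\mu_A$ fixes the zero tuple.} Next I would prove that $\mu_A\circ z^{\otimes k}=z^{\otimes k}$ for an arbitrary $A\in Mat_{k\times k}(\F_q)$, by running $z^{\otimes k}$ through the four layers in the definition of $\mu_A$ (Diagram~\eqref{dg:matrix_mult}). The bottom layer $\bigl((m^*)^{it}\bigr)^{\otimes k}$ turns $z^{\otimes k}$ into $z^{\otimes k^2}$, since $m^*\circ z=z\otimes z$ by \ref{rel:z_coalg_mor} and hence $(m^*)^{it}\circ z=z^{\otimes k}$; the permutation $w$ leaves a tensor power of $z$ unchanged; the middle layer $\bigotimes_{i,j}\mu_{A_{i,j}}$ leaves it unchanged by Step~1; and the top layer $\bigl((\dot +)^{it}\bigr)^{\otimes k}$ sends $z^{\otimes k^2}$ to $z^{\otimes k}$, because $\dot +\circ(z\otimes\id)=\id$ by \ref{rel:F_q_lin_zero} and therefore $(\dot +)^{it}\circ z^{\otimes k}=z$. (The case $k=0$ is vacuous and for $k=1$ both iterated morphisms are the identity, so no edge cases arise.)

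\textbf{Step 3: identify $(z^*)^{\otimes k}$, and conclude.} From $z^*=ev\circ(\id_{\V}\otimes z)$ together with the construction of $\overline{ev}_{\V^{\otimes k}}$ — which, after unwinding the reversal permutation $w$, is exactly the ``parallel'' pairing of the two copies of $\V^{\otimes k}$, matching the $i$-th strand of the first copy with the $i$-th strand of the second — one obtains
\[
(z^*)^{\otimes k}=\overline{ev}_{\V^{\otimes k}}\circ\bigl(\id_{\V^{\otimes k}}\otimes z^{\otimes k}\bigr).
\]
Combining this with Steps~1--2 and the functoriality of $\otimes$ gives
\[
(z^*)^{\otimes k}\circ\mu_A
=\overline{ev}_{\V^{\otimes k}}\circ(\mu_A\otimes z^{\otimes k})
=\overline{ev}_{\V^{\otimes k}}\circ(\mu_A\otimes\mu_A)\circ(\id_{\V^{\otimes k}}\otimes z^{\otimes k})
=\overline{ev}_{\V^{\otimes k}}\circ(\id_{\V^{\otimes k}}\otimes z^{\otimes k})
=(z^*)^{\otimes k},
\]
where the middle equality uses $\mu_A\circ z^{\otimes k}=z^{\otimes k}$ (Step~2) and the third uses Lemma~\ref{lem:transp_mu_A} (here is the only place $A$ invertible is needed). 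The main obstacle, such as it is, is purely bookkeeping: matching the duality conventions so that $\overline{ev}_{\V^{\otimes k}}\circ(\id\otimes z^{\otimes k})$ is \emph{literally} $(z^*)^{\otimes k}$, and carefully unwinding the definition of $\mu_A$ in Step~2; neither step uses anything beyond the axioms of Definition~\ref{def:Frob_linear_space} and Lemma~\ref{lem:transp_mu_A}.
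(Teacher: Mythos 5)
Your proof is correct and follows essentially the same route as the paper: both rewrite $(z^*)^{\otimes k}\circ\mu_A$ as $\overline{ev}_{\V^{\otimes k}}\circ(\mu_A\otimes z^{\otimes k})$, use $\mu_A\circ z^{\otimes k}=z^{\otimes k}$ (which the paper cites as a direct consequence of the relations \ref{rel:F_q_lin} and you verify in Steps 1--2), and then conclude via Lemma \ref{lem:transp_mu_A}. The only difference is that you spell out the intermediate facts in more detail than the paper does.
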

\begin{example}
 Let $\mathcal{C} = \Rep(GL_n(\F_q))$, $\V:=\V_n$. This corollary states that for any $v_1, \ldots, v_k \in V$ and any $A \in GL_{k}(\F_q)$, the element $$\mu_A \left(v_1\otimes \ldots \otimes v_k \right)= \left(\dot{\sum}_{j=1}^{k} \dot{A}_{1,j} v_j \right)\otimes  \ldots  \otimes \left(\dot{\sum}_{j=1}^{k} \dot{A}_{k,j} v_j \right) \in \V_n^{\otimes k}$$ equals $\dot{0}\otimes \ldots \otimes \dot{0}$ iff $v_1 = \ldots = v_k = \dot{0}$. In this setting, it is easy to see that this statement is true: indeed, $\mu_A \left(v_1\otimes \ldots \otimes v_k \right)= \dot{0}\otimes \ldots \otimes \dot{0}$ iff the system of equalities 
 $$\begin{cases}
    \dot{\sum}_{j=1}^{k} \dot{A}_{1,j} v_j = \dot{0}\\
    \dot{\sum}_{j=1}^{k} \dot{A}_{2,j} v_j = \dot{0}\\
    \ldots\\
    \dot{\sum}_{j=1}^{k} \dot{A}_{k,j} v_j = \dot{0}
   \end{cases}$$
holds, which, as we know from linear algebra, happens exactly when $v_1 = \ldots = v_k = \dot{0}$ (due to the fact that $A$ is invertible). 
\end{example}

\begin{proof}
Recall that $z^* = ev\circ (\id\otimes z): \V\to \triv$. Thus $$(z^*)^{\otimes d} \circ \mu_A = \overline{ev}_{\V^{\otimes k}} \circ (\mu_A \otimes z^{\otimes k}) = \overline{ev}_{\V^{\otimes k}} \circ (\mu_A \otimes (\mu_A \circ z^{\otimes k})) $$ The second equality follows from the fact that $z^{\otimes k} = \mu_A \circ z^{\otimes k}$, which is a direct consequence of Relations \eqref{rel:F_q_lin}). By Lemma \ref{lem:transp_mu_A}, we have: $$\overline{ev}_{\V^{\otimes k}} \circ (\mu_A \otimes (\mu_A \circ z^{\otimes k})) = \overline{ev}_{\V^{\otimes k}} \circ (\mu_A \otimes \mu_{A}) \circ (\id \otimes z^{\otimes k})= \overline{ev}_{\V^{\otimes k}} \circ (\id \otimes z^{\otimes k}) = (z^*)^{\otimes k}.$$
\end{proof}

The following lemma will be useful in Section \ref{ssec:semi-diagrams}.

\begin{lemma}\label{lem:comparing_with_zero}
We have: $$(\id_{\V} \otimes z^*) \circ m^* = m \circ (\id_{\V} \otimes z) = z \circ  z^*.$$
Diagrammatically, this means:

\begin{equation*} 
\begin{tikzpicture}[anchorbase,scale=1]
\draw[-] (-0.5, 0.5)--(-0.5,0)--(0.5,0)--(0.5, 0.5);
\draw[-] (0,0)--(0,-0.5);
\node at (0.5, 0.53) {$\circ$};
\end{tikzpicture} \quad = \quad 
\begin{tikzpicture}[anchorbase,scale=1]
\draw[-] (-0.5, -0.5)--(-0.5,0)--(0.5,0)--(0.5, -0.49);
\draw[-] (0,0)--(0,0.5);
\node at (0.5, -0.53) {$\circ$};
\end{tikzpicture} 
\quad = \quad
\begin{tikzpicture}[anchorbase,scale=1]
\draw[-] (0,-0.55)--(0,-0.16);
\node at (0, -0.13) {$\circ$};
\draw[-] (0,0.55)--(0,0.16);
\node at (0, 0.13) {$\circ$};
\end{tikzpicture} 
\end{equation*}
\end{lemma}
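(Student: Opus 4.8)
The plan is to reduce the two‑part identity to a single equality using the self‑duality of $\V$, and then to prove that equality by transposing it — via a zigzag move — into the axiom that $z$ is a morphism of coalgebras.

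First I would dispose of one half of the statement for free. Since $\V$ is self‑dual via $ev=\eps^*\circ m$ and $coev=m^*\circ\eps$ (Lemma \ref{lem:self-dual}, Remark \ref{rmk:m_m_star_duality}), reflection with respect to a horizontal axis realizes $f\mapsto f^*$ on string diagrams and in particular exchanges $m\leftrightarrow m^*$ and $z\leftrightarrow z^*$. Reflecting the diagram of $(\id_\V\otimes z^*)\circ m^*$ therefore produces the diagram of $m\circ(\id_\V\otimes z)$, so these two morphisms are mutually dual, whereas $z\circ z^*$ is manifestly self‑dual. Consequently it suffices to prove the single equality $m\circ(\id_\V\otimes z)=z\circ z^*$; applying reflection then yields $(\id_\V\otimes z^*)\circ m^*=z\circ z^*$ as well.

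To prove $m\circ(\id_\V\otimes z)=z\circ z^*$ I would compare the two endomorphisms of $\V$ after applying the map $h\mapsto ev\circ(\id_\V\otimes h)$ from $\End_{\mathcal C}(\V)$ to $\Hom_{\mathcal C}(\V^{\otimes 2},\triv)$. This map is injective: a left inverse is $k\mapsto(\id_\V\otimes k)\circ(coev\otimes\id_\V)$, which works because the snake identity $(\id_\V\otimes ev)\circ(coev\otimes\id_\V)=\id_\V$ is a formal consequence of the Frobenius relations together with the unit and counit axioms \ref{rel:Frob_alg}. A short rewrite — expand $ev=\eps^*\circ m$, use associativity of $m$, and recognise $ev\circ(\id_\V\otimes z)=z^*$ — identifies $ev\circ(\id_\V\otimes(m\circ(\id_\V\otimes z)))$ with $z^*\circ m$, and identifies $ev\circ(\id_\V\otimes(z\circ z^*))$ with $z^*\otimes z^*$. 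So the claim reduces to $z^*\circ m=z^*\otimes z^*$, which is exactly the horizontal reflection of the relation $m^*\circ z=z\otimes z$ expressing that $z$ is a coalgebra morphism (\ref{rel:z_coalg_mor}, see \ref{itm:str_rel_diag_z_coalg}). That would complete the argument.

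The step I expect to be the real obstacle is recognising \emph{how} to prove $m\circ(\id_\V\otimes z)=z\circ z^*$: every naive attempt to derive it by rewriting "downward" with only the Frobenius and $\F_q$‑linear relations \ref{rel:F_q_lin}--\ref{rel:cancellation_axiom} loops back to the statement itself. This is to be expected, since the identity says that multiplication by the idempotent $z$ is the rank‑one projector $z\circ z^*$, and even for an abstract special commutative Frobenius algebra with a "point" $z$ (an idempotent with $\eps^*\circ z=\id$) this fails unless one also exploits the compatibility of $z$ with the comultiplication. The resolution is precisely to bend a strand: rigidity lets one transpose the desired identity into the dual of an axiom, after which it is immediate. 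The one genuinely technical point to verify with care is that the injectivity of $h\mapsto ev\circ(\id_\V\otimes h)$ — i.e.\ the relevant snake identity for the self‑dual object $\V$ — does indeed follow from \ref{rel:Frob_alg} alone.
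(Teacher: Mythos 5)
Your argument is correct for the lemma as stated, but it proves it by a genuinely different route than the paper, and the difference is not merely stylistic. You invoke the self-duality of $\V$ twice: once to reduce the two equalities to the single identity $m\circ(\id_\V\otimes z)=z\circ z^*$ by horizontal reflection, and once more (via the injectivity of $h\mapsto ev\circ(\id_\V\otimes h)$, whose left inverse uses $coev$ and the snake relation) to transpose that identity into $z^*\circ m=z^*\otimes z^*$, which is the dual of the axiom $m^*\circ z=z\otimes z$ from \ref{rel:z_coalg_mor}. All of these steps are legitimate under the standing hypotheses of this section (rigid $\mathcal{C}$, Frobenius $\V$, \cref{lem:self-dual}). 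The paper, however, proves the lemma by a direct rewrite that never uses $\eps$ or $coev$: it unfolds $z^*=\eps^*\circ m\circ(\id_\V\otimes z)$, slides the multiplication across using the Frobenius relation \ref{rel:Frob_alg2} and the counit axiom to get $(\id_\V\otimes z^*)\circ m^*=m\circ(\id_\V\otimes z)$, and then, after another Frobenius slide, applies $m^*\circ z=z\otimes z$ directly to obtain $z^*\otimes z=z\circ z^*$. This is deliberate: the introduction to Section \ref{sec:linear_algebra_for_Frob_space} announces that $\eps$ and $coev$ are avoided in Sections \ref{ssec:muA_tens_prod}--\ref{ssec:transp_mu_A} precisely so that these results transfer verbatim to $\F_q$-linear \emph{semi}-Frobenius spaces (Definition \ref{def:semi_Frob_space}), where no unit, no $coev$, and no self-duality exist; and \cref{lem:comparing_with_zero} is exactly one of the statements reused in that setting (it is cited in Section \ref{ssec:semi-diagrams} in the proof that $\hat f_R$ is well defined). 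So your proof buys a short conceptual argument ("transpose the axiom"), but it sacrifices the unit-free applicability the paper needs later; also, your remark that a direct downward rewrite "loops back to the statement" is not accurate, since the paper's proof is precisely such a rewrite, using only the Frobenius/counit relations together with \ref{rel:z_coalg_mor}. If you want your argument to serve the paper's purposes, you would need to replace the duality steps by these direct manipulations.
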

\begin{proof}
We use Frobenius algebra relations \ref{rel:Frob_alg} (see also their diagrammatic form \ref{itm:str_rel_diag_Frob}) to establish the following equalities:
\begin{equation*} 
\begin{tikzpicture}[anchorbase,scale=1]
\draw[-] (-0.5, 0.5)--(-0.5,0)--(0.5,0)--(0.5, 0.5);
\draw[-] (0,0)--(0,-0.5);
\node at (0.5, 0.53) {$\circ$};
\end{tikzpicture} 
\quad = \quad
\begin{tikzpicture}[anchorbase,scale=1]
\draw[-] (-1.5, 0.25)--(-1.5, -0.5)--(-0.5, -0.5)--(-0.5,0)--(0.5,0)--(0.5, -0.72);
\draw[-] (-1,-0.5)--(-1,-0.75);
\draw[-] (0,0)--(0,0.25);
\node at (0.5, -0.78) {$\circ$};
\node at (0, 0.28) {$\bullet$};
\end{tikzpicture}
\quad = \quad
 \begin{tikzpicture}[anchorbase,scale=1]
\draw[-] (-1.5, -0.5)--(-1.5, 0.5)--(-0.5, 0.5)--(-0.5,0)--(0.5,0)--(0.5, 0.75);
\draw[-] (-1,0.5)--(-1,0.75);
\draw[-] (0,0)--(0,-0.49);
\node at (0, -0.53) {$\circ$};
\node at (0.5, 0.78) {$\bullet$};
\end{tikzpicture}
\quad = \quad 
\begin{tikzpicture}[anchorbase,scale=1]
\draw[-] (-0.5, -0.5)--(-0.5,0)--(0.5,0)--(0.5, -0.49);
\draw[-] (0,0)--(0,0.5);
\node at (0.5, -0.53) {$\circ$};
\end{tikzpicture} 
\end{equation*}

The same Frobenius algebra relations imply:

\begin{equation*}
 \begin{tikzpicture}[anchorbase,scale=1]
\draw[-] (-0.5, -0.6)--(-0.5,0)--(0.5,0)--(0.5, -0.5);
\draw[-] (0,0)--(0,0.5);
\node at (0.5, -0.53) {$\circ$};
\end{tikzpicture} \quad = \quad \begin{tikzpicture}[anchorbase,scale=1]
\draw[-] (-1.5, 0.25)--(-1.5, -0.5)--(-0.5, -0.5)--(-0.5,0)--(0.5,0)--(0.5, -0.75);
\draw[-] (-1,-0.5)--(-1,-0.9);
\draw[-] (0,0)--(0,0.45);
\node at (0.5, -0.78) {$\circ$};
\node at (-1.5, 0.28) {$\bullet$};
\end{tikzpicture}
\quad = \quad \begin{tikzpicture}[anchorbase,scale=1]
\draw[-] (-1.5, -0.5)--(-1.5, 0.5)--(-0.5, 0.5)--(-0.5,0)--(0.5,0)--(0.5, 0.85);
\draw[-] (-1,0.5)--(-1,0.75);
\draw[-] (0,0)--(0,-0.49);
\node at (0, -0.53) {$\circ$};
\node at (-1, 0.78) {$\bullet$};
\end{tikzpicture}
\end{equation*}
By Relation \ref{rel:z_coalg_mor} (see also diagrammatic Relation \ref{itm:str_rel_diag_z_coalg}), the right hand side equals
\begin{equation*}
 \begin{tikzpicture}[anchorbase,scale=1]
\draw[-] (-1.5, -0.25)--(-1.5, 0)--(-0.5, 0)--(-0.5,-0.25);
\draw[-] (-1,0)--(-1,0.3);
\node at (-0.5, -0.28) {$\circ$};
\node at (-1, 0.28) {$\bullet$};
\draw[-] (0,0.5)--(0,-0.25);
\node at (0, -0.28) {$\circ$};
\end{tikzpicture}
\quad = \quad 
 \begin{tikzpicture}[anchorbase,scale=1]
\draw[-] (-1.5, -0.3)--(-1.5, 0.5)--(-0.5, 0.5)--(-0.5,-0.25);
\node at (-0.5, -0.28) {$\circ$};
\draw[-] (0,0.5)--(0,-0.25);
\node at (0, -0.28) {$\circ$};
\end{tikzpicture} 
\end{equation*}
Thus we showed that  
$$m \circ (\id_{\V} \otimes z) =  \left(coev \circ (\id_{\V} \otimes  z) \right) \otimes z.$$

Finally, we use the functoriality of tensor product to establish that $$\left(coev \circ (\id_{\V} \otimes  z) \right) \otimes z= z^*\otimes z = z \circ  z^*, $$ and we are done.
\end{proof}

\subsection{Duality}\label{ssec:duality_mu_A}
\begin{lemma}\label{lem:self-dual}
 The morphisms $ev :=\eps^*\circ m$, $coev:=m^*\circ \eps$ make $\V$ a self-dual object in $\mathcal{C}$. The categorical dimension of this object is $\dim(\V)=\eps^*\circ \eps$.
\end{lemma}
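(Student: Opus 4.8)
The plan is to verify the zig-zag (triangle) identities for the pair $(ev, coev)$, which is precisely what is needed to make $\V$ self-dual. Concretely, one must check that
\[
(\eps^* m \otimes \id_\V) \circ (\id_\V \otimes m^* \eps) = \id_\V
\quad\text{and}\quad
(\id_\V \otimes \eps^* m) \circ (m^* \eps \otimes \id_\V) = \id_\V.
\]
First I would unfold $ev = \eps^* \circ m$ and $coev = m^* \circ \eps$ and rewrite the left-hand side of the first identity as $(\eps^* \otimes \id_\V) \circ (m \otimes \id_\V) \circ (\id_\V \otimes m^*) \circ (\id_\V \otimes \eps)$. The composition $(m \otimes \id_\V) \circ (\id_\V \otimes m^*)$ is one of the two expressions appearing in the Frobenius relation \ref{rel:Frob_alg2}, so it equals $m^* \circ m$. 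Thus the left-hand side becomes $(\eps^* \otimes \id_\V) \circ m^* \circ m \circ (\id_\V \otimes \eps)$. Now $m \circ (\id_\V \otimes \eps) = \id_\V$ since $\eps$ is the unit of the algebra (relation \ref{rel:Frob_alg1}), and $(\eps^* \otimes \id_\V) \circ m^* = \id_\V$ since $\eps^*$ is the counit of the coalgebra (again \ref{rel:Frob_alg1}). This gives $\id_\V$. The second zig-zag identity is handled symmetrically, using the other half of the Frobenius relation $(\id_\V \otimes m) \circ (m^* \otimes \id_\V) = m^* \circ m$ together with the commutativity/cocommutativity of $m, m^*$; alternatively one invokes the commutativity of the Frobenius algebra to deduce it from the first. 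Diagrammatically this is the standard ``straightening a zig-zag'' computation, so I would also note that it is immediate from the string-diagram relations \ref{itm:str_rel_diag_Frob} and \ref{itm:str_rel_diag_bialg}.

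For the statement about categorical dimension, recall that $\dim(\V)$ is by definition the endomorphism of $\triv$ given by $ev \circ \gamma_{\V,\V} \circ coev$ (or $ev \circ coev$, since $ev$ is symmetric as $\V$ is a commutative Frobenius algebra — the braiding $\gamma_{\V,\V} = \sigma$ can be absorbed because $ev \circ \sigma = \eps^* \circ m \circ \sigma = \eps^* \circ m = ev$ by commutativity of $m$). So $\dim(\V) = ev \circ coev = \eps^* \circ m \circ m^* \circ \eps$. By the Speciality Relation $m \circ m^* = \id_\V$ from \ref{rel:Frob_alg2}, this collapses to $\eps^* \circ \eps$, as claimed.

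There is no serious obstacle here; the only point requiring a little care is making sure the braiding $\sigma$ is correctly accounted for both in the zig-zag identities (where $\overline{ev}$, $\overline{coev}$ for $k=1$ coincide with $ev$, $coev$ since the longest element of $S_1$ is trivial) and in the dimension formula (where one uses $ev \circ \sigma = ev$). I would therefore explicitly remark on the self-duality convention being used — that $ev$ plays the role of the evaluation $\V^* \otimes \V \to \triv$ under the identification $\V^* \cong \V$ — and then the two triangle identities above are exactly the coherence conditions of \cite{Deligne-Milne} for a duality datum. Everything else is a direct appeal to the Frobenius and (co)unit axioms \ref{rel:Frob_alg1}, \ref{rel:Frob_alg2}.
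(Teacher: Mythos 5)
Your proposal is correct and follows essentially the same route as the paper: verify the snake relations using the unit/counit axioms together with the Frobenius relations \ref{rel:Frob_alg1}, \ref{rel:Frob_alg2} (which the paper cites as a standard fact for Frobenius algebras, while you spell out the computation), and compute $\dim(\V)=ev\circ\sigma\circ coev=\eps^*\circ\eps$ by absorbing $\sigma$ via commutativity of $m$ and then applying the Speciality Relation $m\circ m^*=\id$. No gaps.
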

\begin{example}
For $\mathcal{C} = \Rep(GL_n(\F_q))$, $\V:=\V_n$, we have $$(\eps^*\circ \eps) (1) = \sum_{v\in V} \eps^*(v) = \sum_{v\in V} 1 = q^n = \dim \V_n.$$
\end{example}

\begin{proof}
The first statement is a standard fact for Frobenius algebras. We only need to check the ``snake relations'' $$(ev\otimes \id) \circ (\id \otimes coev) = \id =(\id\otimes ev) \circ (coev\otimes \id)$$ and they follow directly from the axioms of a Frobenius algebra along with the Frobenius relations (see \ref{rel:Frob_alg1}). For the second statement, recall that 
$$ \dim(\V)=ev\circ \sigma\circ coev = \eps^*\circ m\circ \sigma\circ coev  = \eps^*\circ m\circ coev = \eps^*\circ m \circ m^*\circ \eps= \eps^*\circ \eps.$$
Here the second equality is due to the commutativity of $m$, while the last equality is due to the Speciality Relation (see \ref{rel:Frob_alg2}).
\end{proof}

\begin{remark}\label{rmk:m_m_star_duality}
The self-duality $\V \cong \V^*$ makes the morphisms $m$, $m^*$ dual to each other, and the morphisms $\eps, \eps^*$ dual to each other.
\end{remark}

\begin{lemma}\label{cor:transpose}
 The morphism $(\mu_A)^*: \V^{\otimes k} \to \V^{\otimes k}$ is $\mu_{A^{-1}}$.

\end{lemma}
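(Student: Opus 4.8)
The plan is to derive the statement purely formally from Lemma~\ref{lem:transp_mu_A}, Proposition~\ref{prop:compos_mu_A}, and the standard yoga of transposes with respect to a self-duality. Recall that $\V^{\otimes k}$ is self-dual in $\mathcal{C}$ via the morphisms $\overline{ev}_{\V^{\otimes k}}$ and $\overline{coev}_{\V^{\otimes k}}$, so that by definition $(\mu_A)^{*}$ is the unique morphism $\V^{\otimes k}\to\V^{\otimes k}$ with
\[
\overline{ev}_{\V^{\otimes k}}\circ(\id_{\V^{\otimes k}}\otimes\mu_A)=\overline{ev}_{\V^{\otimes k}}\circ((\mu_A)^{*}\otimes\id_{\V^{\otimes k}}).
\]

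First I would note that, by Proposition~\ref{prop:compos_mu_A} and Lemma~\ref{lem:mu_identity}, $\mu_A\circ\mu_{A^{-1}}=\mu_{AA^{-1}}=\mu_{I_k}=\id_{\V^{\otimes k}}$. Then, starting from the identity $\overline{ev}_{\V^{\otimes k}}\circ(\mu_A\otimes\mu_A)=\overline{ev}_{\V^{\otimes k}}$ supplied by Lemma~\ref{lem:transp_mu_A}, I would precompose both sides with $\mu_{A^{-1}}\otimes\id_{\V^{\otimes k}}$ and simplify the left-hand side using the previous remark, obtaining
\[
\overline{ev}_{\V^{\otimes k}}\circ(\id_{\V^{\otimes k}}\otimes\mu_A)=\overline{ev}_{\V^{\otimes k}}\circ(\mu_{A^{-1}}\otimes\id_{\V^{\otimes k}}).
\]
Comparing this with the defining relation for $(\mu_A)^{*}$ displayed above, it remains to invoke non-degeneracy of the pairing $\overline{ev}_{\V^{\otimes k}}$: if $g,h\colon\V^{\otimes k}\to\V^{\otimes k}$ satisfy $\overline{ev}_{\V^{\otimes k}}\circ(g\otimes\id_{\V^{\otimes k}})=\overline{ev}_{\V^{\otimes k}}\circ(h\otimes\id_{\V^{\otimes k}})$, then tensoring on the right with $\id_{\V^{\otimes k}}$, composing with $\id_{\V^{\otimes k}}\otimes\overline{coev}_{\V^{\otimes k}}$, and applying the snake relations for $(\overline{ev}_{\V^{\otimes k}},\overline{coev}_{\V^{\otimes k}})$ (established when $\V^{\otimes k}$ was seen to be self-dual) forces $g=h$. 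Applying this with $g=(\mu_A)^{*}$ and $h=\mu_{A^{-1}}$ yields $(\mu_A)^{*}=\mu_{A^{-1}}$.

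I do not expect a genuine obstacle here: all the real content sits in Lemma~\ref{lem:transp_mu_A} (itself a consequence of the Cancellation Axiom) and Proposition~\ref{prop:compos_mu_A}, and the remaining steps are formal. The only point requiring care is bookkeeping with the self-duality data, namely using the symmetric pairing $\overline{ev}_{\V^{\otimes k}}$ (with its built-in permutation $w$) consistently in both tensor slots and referring to the correct snake relations. Alternatively, one can present the same argument diagrammatically: draw $(\mu_A)^{*}$ by reflecting the box for $\mu_A$ and bending its strands around $\overline{coev}_{\V^{\otimes k}}$ and $\overline{ev}_{\V^{\otimes k}}$, insert $\mu_{A^{-1}}\circ\mu_A=\id_{\V^{\otimes k}}$ at the bottom, and use Lemma~\ref{lem:transp_mu_A} to absorb one of the two resulting $\mu$-boxes into the cap, leaving exactly the diagram for $\mu_{A^{-1}}$.
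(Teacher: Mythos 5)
Your proof is correct and follows the paper's route exactly: the paper disposes of this lemma with the single line that it is a direct corollary of Lemma~\ref{lem:transp_mu_A}, and your argument is precisely that deduction, made explicit by cancelling $\mu_{A^{-1}}$ via Proposition~\ref{prop:compos_mu_A} and Lemma~\ref{lem:mu_identity} and then invoking non-degeneracy of the self-duality pairing $(\overline{ev}_{\V^{\otimes k}},\overline{coev}_{\V^{\otimes k}})$. The bookkeeping with the symmetric pairing and the snake relations is exactly the implicit content the paper omits, so there is nothing to add.
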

\begin{proof}
This is a direct corollary of Lemma \ref{lem:transp_mu_A}.
\end{proof}
\begin{corollary}\label{cor:eps_and_mu}
 We have $\mu_a \circ \eps  = \eps$ for any $a\in \F_q^{\times}$.
\end{corollary}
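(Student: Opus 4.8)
The plan is to deduce $\mu_a \circ \eps = \eps$ for $a \in \F_q^\times$ by dualizing the known fact that $\mu_a \circ z = z$ for $a \in \F_q^\times$, using the self-duality of $\V$ which exchanges $\eps$ with $\eps^*$ (Remark \ref{rmk:m_m_star_duality}) and $z$ with $z^*$.

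First I would recall that $z^{\otimes 1} = \mu_A \circ z^{\otimes 1}$ for invertible $A$ was already used in the proof of Corollary \ref{cor:regular_system_of_eq}; specializing to $k=1$ gives $\mu_a \circ z = z$ for all $a \in \F_q^\times$ (this is immediate from Relations \ref{rel:F_q_lin}, since $z$ represents the zero vector and $\mu_a$ is scalar multiplication, which fixes $\dot 0$). Dualizing this equality with respect to the self-duality of $\V$ yields $z^* \circ (\mu_a)^* = z^*$. By Lemma \ref{cor:transpose}, $(\mu_a)^* = \mu_{a^{-1}}$, so $z^* \circ \mu_{a^{-1}} = z^*$; since $a$ ranges over $\F_q^\times$ as $a^{-1}$ does, this is the same as $z^* \circ \mu_a = z^*$ for all $a \in \F_q^\times$.

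Next I would dualize once more, or equivalently observe directly: we want $\mu_a \circ \eps = \eps$. Note that $\eps = z^* \,{}^*$ is not quite the statement — rather, under the self-duality, $\eps^*$ is dual to $\eps$, and we need to relate $z^*$ to $\eps^*$ or $z$ to $\eps$. The cleaner route is: $\mu_a$ is a coalgebra morphism for $a \in \F_q^\times$ (Relation \ref{rel:mu_coalg_mor}), hence its dual $\mu_{a^{-1}} = (\mu_a)^*$ is an algebra morphism; applying this to $a^{-1}$, we get that $\mu_a$ is an algebra morphism, so in particular it commutes with the unit: $\mu_a \circ \eps = \eps$. Actually this last observation is the quickest: Relation \ref{rel:mu_coalg_mor} already states $\mu_a$ respects multiplication, i.e. $m \circ (\mu_a \otimes \mu_a) = \mu_a \circ m$; combined with $\mu_a \circ z = z$ and the relation $\eps = m \circ (\id \otimes z)^{-1}$... this needs care. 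So the main obstacle is choosing which of these equivalent packagings gives the shortest honest argument — I expect the dualization argument via Lemma \ref{cor:transpose} applied to $\mu_a \circ z = z$ to be the cleanest, since it directly uses results already in place.

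\begin{proof}
By the Relations \ref{rel:F_q_lin} (specifically \ref{rel:F_q_lin_mu} together with \ref{rel:F_q_lin_zero}, which force $\mu_a$ to fix the image of $z$), we have $\mu_a \circ z = z$ for every $a \in \F_q^{\times}$. Dualizing this equality of morphisms using the self-duality of $\V$ (see Lemma \ref{lem:self-dual} and Remark \ref{rmk:m_m_star_duality}), under which $z$ becomes $z^*$ and $\eps$ becomes $\eps^*$, and recalling that $z^* \circ z = \id_{\triv}$ so that $z^*$ and $z$ exhibit $\triv$ as a direct summand of $\V$, we obtain $z^* \circ (\mu_a)^* = z^*$. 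By Lemma \ref{cor:transpose}, $(\mu_a)^* = \mu_{a^{-1}}$, and as $a$ runs over $\F_q^{\times}$ so does $a^{-1}$; hence $z^* \circ \mu_a = z^*$ for all $a \in \F_q^{\times}$. Dualizing once more yields $(\mu_a)^* \circ z = z$, i.e. $\mu_{a^{-1}} \circ z = z$, which we already knew; to obtain the statement about $\eps$ instead, we use that $\mu_a$ respects the multiplication $m$ (Relation \ref{rel:mu_coalg_mor}): from $m \circ (\mu_a \otimes \mu_a) = \mu_a \circ m$ and the fact that $\eps$ is the unit for $m$, precompose with $\id_{\V} \otimes \eps$ to get
$$\mu_a = \mu_a \circ m \circ (\id_{\V} \otimes \eps) = m \circ (\mu_a \otimes \mu_a) \circ (\id_{\V} \otimes \eps) = m \circ (\mu_a \otimes (\mu_a \circ \eps)).$$
On the other hand $\mu_a = \mu_a \circ m \circ (\id_{\V} \otimes \eps) = m \circ (\mu_a \otimes \id_{\V}) \circ (\id_{\V} \otimes \eps) = m \circ (\mu_a \otimes \eps)$, using again that $\eps$ is the unit for $m$ after applying $\mu_a$ on the first factor. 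Comparing the two expressions and using that $m \circ (\mu_a \otimes -)$ is, for $a \in \F_q^{\times}$, invertible (its inverse is $m \circ (\mu_{a^{-1}} \otimes -)$ by Relation \ref{rel:mu_coalg_mor} and Relation \ref{rel:F_q_lin_mu}), we conclude $\mu_a \circ \eps = \eps$.
\end{proof}
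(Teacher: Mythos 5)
Your core idea in the second half of the proof — exploiting that $\mu_a$ respects $m$ (Relation \ref{rel:mu_coalg_mor}) together with the invertibility $\mu_a\circ\mu_{a^{-1}}=\mu_1=\id$ (Relation \ref{rel:F_q_lin_mu}) — is sound, but the final cancellation as you justify it does not work. You claim that ``$m\circ(\mu_a\otimes -)$ is invertible, with inverse $m\circ(\mu_{a^{-1}}\otimes -)$''; this does not even typecheck (for $Y:\triv\to\V$ the morphism $m\circ(\mu_a\otimes Y)$ lives in $\Hom(\V,\V)$, so it cannot be fed back into the second slot), and more fundamentally $m$ is only a split epimorphism ($m\circ m^*=\id$), never a monomorphism — in $\Rep(GL_n(\F_q))$ it kills $v\otimes w$ for $v\neq w$ — so an equality obtained after postcomposing with $m$ cannot simply be cancelled. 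The repair is short and uses only what you already cite: what you may cancel is $\mu_a$ in the first tensor slot, since it is genuinely invertible. Precomposing your identity $m\circ\bigl(\mu_a\otimes(\mu_a\circ\eps)\bigr)=\mu_a$ with $\mu_{a^{-1}}$ gives, by functoriality of $\otimes$ and \ref{rel:F_q_lin_mu}, the relation $m\circ\bigl(\id_{\V}\otimes(\mu_a\circ\eps)\bigr)=\id_{\V}$; now precompose with $\eps$ and use the unit axiom $m\circ(\eps\otimes\id_{\V})=\id_{\V}$ (Relation \ref{rel:Frob_alg1}) to get $\mu_a\circ\eps=m\circ\bigl(\eps\otimes(\mu_a\circ\eps)\bigr)=\eps$, i.e.\ uniqueness of the unit finishes the argument. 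Note also that your displayed identity $\mu_a=m\circ(\mu_a\otimes\eps)$ is just the unit axiom plus functoriality and carries no content, and your entire first paragraph (dualizing $\mu_a\circ z=z$ to $z^*\circ\mu_a=z^*$) is a detour you never use.

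For comparison, the paper's proof is a one-line duality argument rather than a direct algebraic one: it combines $\eps^*\circ\mu_a=\eps^*$ (Relation \ref{rel:mu_coalg_mor}) with Lemma \ref{lem:dual_scalar_mult}, which under the self-duality of Remark \ref{rmk:m_m_star_duality} says $(\mu_a)^*=\mu_{a^{-1}}$ (Lemma \ref{cor:transpose}); dualizing $\eps^*\circ\mu_{a^{-1}}=\eps^*$, and using that $\eps$ is dual to $\eps^*$, immediately gives $\mu_a\circ\eps=\eps$. Ironically, this is exactly the dualization you performed in your first paragraph, only applied to the wrong pair ($z$, $z^*$ instead of $\eps^*$, $\eps$). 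Your repaired argument has the mild virtue of avoiding $ev$ and $coev$ altogether, but as written the cancellation step is a genuine gap.
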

\begin{proof}
 This follows directly from Relation \ref{rel:mu_coalg_mor}, which states that $\eps^* \circ 
\mu_a = \eps^*$, and Lemma \ref{lem:dual_scalar_mult}.
\end{proof}
\begin{example}
For $\mathcal{C} = \Rep(GL_n(\F_q))$, $\V:=\V_n$, the statement of Lemma \ref{lem:dual_scalar_mult} translates to
$\delta_{\dot{a}v, \dot{a}w} = \delta_{v,w}$ for any $a\in \F_q^{\times}$, $v,w\in V$, and Corollary \ref{cor:eps_and_mu} translates to $\dot{a}\sum_{v\in V} v = \sum_{v\in V} v$ for any $a\in \F_q^{\times}$.
\end{example}

\section{Morphisms \texorpdfstring{$\widetilde{f}_R$}{fR}}\label{sec:morph_fR}

\InnaA{Let $\mathcal{C}$ be a $\C$-linear rigid SM category and let $\V$ be an $\F_q$-linear Frobenius space in $\mathcal{C}$.}

In this section we define morphisms $\widetilde{f}_R:\V^{\otimes s}\to \V^{\otimes r}$, $R\subset \F_q^{s+k}$ in $\mathcal{C}$ which will serve as analogues of the maps $f_R$ defined in Sections \ref{sec:classical_endom}, \ref{sec:Deligne_def}. We will then prove that they satisfy the same relations on their compositions and tensor products as their counterparts in Section \ref{sec:Deligne_def}; this will allow us to construct a SM functor from the Deligne category into $\mathcal{C}$ in Section \ref{sec:univ_prop}.

The following definition is the first step toward defining morphisms $\widetilde{f}_R$. 

\begin{definition}\label{def:phi_R}
 Let $R \subset \F_q^r$ be an $\F_q$-linear subspace. 
 Let $d = \dim_{\F_q} R$. Let $B \in Mat_{d\times r}(\F_q)$ be a matrix of rank $d$ such that $Row(B)=R$.
 Consider the morphism
 \begin{align*}
&\phi_R: \V^{\otimes r} \to \triv\\
&\phi_R:\V^{\otimes r}\xrightarrow{\mu_{B}} \V^{\otimes d} \xrightarrow{(z^*)^{\otimes d}} \triv
 \end{align*}
Diagrammatically, we draw $\phi_R$ as follows:
 \begin{equation*}
\begin{tikzpicture}[anchorbase,scale=1.5]
\draw[-] (-0.5,-0.2)--(-0.5,0.2)--(0.5,0.2)--(0.5,-0.2)--(-0.5,-0.2);
\node at (-0,-0) {$B$};
\draw[-] (-0.4,-0.2)--(-0.4,-0.5);
\draw[-] (0.4,-0.2)--(0.4,-0.5);
\draw[-] (-0.2,0.2)--(-0.2,0.45);
\draw[-] (-0.35,0.2)--(-0.35,0.45);
\draw[-] (0.3,0.2)--(0.3,0.45);
\draw[-] (-0.3,-0.2)--(-0.3,-0.5);

\node at (-0.35,0.5) {$\circ$};
\node at (-0.2,0.5) {$\circ$};
\node at (0.3,0.5) {$\circ$};

\node at (-0.05,0.35) {$\cdot$};
\node at (0.05,0.35) {$\cdot$};
\node at (0.15,0.35) {$\cdot$};
\node at (-0.1,-0.4) {$\cdot$};
\node at (0.05,-0.4) {$\cdot$};
\node at (0.2,-0.4) {$\cdot$}; 
\end{tikzpicture}
\end{equation*}
\end{definition}

First of all, we check that $\phi_R$ is well-defined:
\begin{lemma}
 The morphism $\phi_R$ does not depend on the choice of the basis in $ R$.
\end{lemma}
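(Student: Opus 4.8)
The plan is to show that if $B, B' \in Mat_{d \times r}(\F_q)$ are two rank-$d$ matrices with $Row(B) = Row(B') = R$, then the composition $(z^*)^{\otimes d} \circ \mu_B$ equals $(z^*)^{\otimes d} \circ \mu_{B'}$. Since both matrices have the same row space of dimension $d$, there exists an invertible matrix $A \in GL_d(\F_q)$ such that $B' = AB$. This is the standard linear-algebra fact that two matrices with the same row space are related by an invertible left multiplication (both sets of rows are bases of the same $d$-dimensional space, so the change-of-basis matrix between them is invertible).

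With this in hand, the computation is immediate from the results already established. First I would apply Proposition \ref{prop:compos_mu_A}, which gives $\mu_{B'} = \mu_{AB} = \mu_A \circ \mu_B$. Then I would compose with $(z^*)^{\otimes d}$ on the left and invoke Corollary \ref{cor:regular_system_of_eq}, which states that $(z^*)^{\otimes d} \circ \mu_A = (z^*)^{\otimes d}$ for any $A \in GL_d(\F_q)$. Putting these together:
\begin{equation*}
\phi_R \text{ computed via } B' \;=\; (z^*)^{\otimes d} \circ \mu_{B'} \;=\; (z^*)^{\otimes d} \circ \mu_A \circ \mu_B \;=\; (z^*)^{\otimes d} \circ \mu_B \;=\; \phi_R \text{ computed via } B,
\end{equation*}
so the two definitions agree.

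There is essentially no main obstacle here — the lemma is a straightforward consequence of Proposition \ref{prop:compos_mu_A} and Corollary \ref{cor:regular_system_of_eq}, both of which are already proved in the text. The only point requiring a sentence of justification is the existence of the invertible matrix $A$ relating $B$ and $B'$, which follows because the rows of $B$ and the rows of $B'$ are two ordered bases of the same subspace $R \subset \F_q^r$. I would state this explicitly and then run the three-line computation above. (If one wanted to be slightly more careful about the phrase ``does not depend on the choice of the basis,'' one could note that changing the basis of $R$ is precisely the operation $B \mapsto AB$ with $A \in GL_d(\F_q)$, so the argument covers all choices at once.)
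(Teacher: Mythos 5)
Your proof is correct and follows essentially the same route as the paper: write $B' = AB$ with $A \in GL_d(\F_q)$, use Proposition \ref{prop:compos_mu_A} to get $\mu_{B'} = \mu_A \circ \mu_B$, and conclude via Corollary \ref{cor:regular_system_of_eq} that $(z^*)^{\otimes d} \circ \mu_A = (z^*)^{\otimes d}$. No gaps; your explicit remark about the change-of-basis matrix is exactly the (implicit) justification in the paper's argument.
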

\begin{proof}
Let $B'\in Mat_{d\times r}(\F_q)$ be any matrix whose rows are a basis of $R$. Then $B' = A \circ B$ for some $A \in GL_d(\F_q)$, so 
$$ (z^*)^{\otimes d} \circ \mu_{B'} = (z^*)^{\otimes d} \circ \mu_A \circ \mu_{B'}.$$
 By Corollary \ref{cor:regular_system_of_eq}, we have: $(z^*)^{\otimes d} \circ \mu_A = (z^*)^{\otimes d}$ and so $(z^*)^{\otimes d} \circ \mu_{B'} =(z^*)^{\otimes d} \circ \mu_{B}$, as required.
\end{proof}

\begin{example}
 Let $\mathcal{C} = \Rep(GL_n(\F_q))$, $\V:=\V_n$, and $R \subset \F_q^r$ be an $\F_q$-linear subspace. For any $v_1, \ldots, v_r \in V$, the morphism $\phi_R:\V_n^{\otimes r}\to \C$ takes $v_1\otimes \ldots \otimes v_r$ to $1$ iff $(v_1| \ldots| v_r) \in R^\perp_{\InnaA{\F_q^n}}$ (notation as in Section \ref{sec:classical_endom}) and to $0$ otherwise.
\end{example}
It turns out that the tensor product of $\phi_R$'s is again a morphism of the same form. The following is a direct consequence of Proposition \ref{prop:tensor_prod_mu_B}:

\begin{corollary}\label{cor:tensor_prod_phi_R}
 Let $R_1 \subset \F_q^{r_1}$ and $R_2\subset \F_q^{r_2}$ be $\F_q$-linear subspaces. 
 Let $d_i = \dim_{\F_q} R_i$ for $i=1,2$. Let $B^{(i)} \in Mat_{d_i\times r_i}(\F_q)$ be a matrix whose rows form a basis of $R_i$ 
 for $i=1,2$. Then $\phi_{R_1}\otimes \phi_{R_2} =\phi_{S}$ where $S = R_1\times R_2 \subset \F_q^{r_1+r_2}$; a basis for $S$ is given by rows of the block matrix $C = \begin{bmatrix}                                                                                                                                        B^{(1)} &0\\                                                                                                                            0 &B^{(2)}                                                                                                                                           \end{bmatrix}
$. 
\end{corollary}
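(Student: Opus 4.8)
\textbf{Proof proposal for Corollary \ref{cor:tensor_prod_phi_R}.}

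The plan is to unwind the definition of $\phi_{R_i}$ and reduce the statement to Proposition \ref{prop:tensor_prod_mu_B} together with the functoriality of the monoidal product. First I would observe that, by Definition \ref{def:phi_R}, we have $\phi_{R_i} = (z^*)^{\otimes d_i} \circ \mu_{B^{(i)}}$ for $i = 1, 2$, so that
\begin{equation*}
\phi_{R_1} \otimes \phi_{R_2} = \left( (z^*)^{\otimes d_1} \circ \mu_{B^{(1)}} \right) \otimes \left( (z^*)^{\otimes d_2} \circ \mu_{B^{(2)}} \right).
\end{equation*}
By the interchange law for composition and tensor product of morphisms in a monoidal category, this equals
\begin{equation*}
\left( (z^*)^{\otimes d_1} \otimes (z^*)^{\otimes d_2} \right) \circ \left( \mu_{B^{(1)}} \otimes \mu_{B^{(2)}} \right) = (z^*)^{\otimes (d_1 + d_2)} \circ \left( \mu_{B^{(1)}} \otimes \mu_{B^{(2)}} \right).
\end{equation*}

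Next I would apply Proposition \ref{prop:tensor_prod_mu_B} with $A = B^{(1)}$ and $B = B^{(2)}$: this gives $\mu_{B^{(1)}} \otimes \mu_{B^{(2)}} = \mu_C$ where $C = \begin{bmatrix} B^{(1)} & 0 \\ 0 & B^{(2)} \end{bmatrix} \in Mat_{(d_1+d_2)\times(r_1+r_2)}(\F_q)$. Hence $\phi_{R_1} \otimes \phi_{R_2} = (z^*)^{\otimes(d_1+d_2)} \circ \mu_C$. It then remains to check that the right-hand side is precisely $\phi_S$ for $S = R_1 \times R_2$. For this I would verify the two bookkeeping facts: that $C$ has rank $d_1 + d_2 = \dim_{\F_q}(R_1 \times R_2) = \dim_{\F_q} S$ (clear since $B^{(1)}$ has rank $d_1$ and $B^{(2)}$ has rank $d_2$ and they sit in disjoint row/column blocks), and that $Row(C) = Row(B^{(1)}) \times Row(B^{(2)}) = R_1 \times R_2 = S$ inside $\F_q^{r_1+r_2} = \F_q^{r_1} \times \F_q^{r_2}$. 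Given these, $C$ is a valid choice of matrix in the sense of Definition \ref{def:phi_R} for the subspace $S$, and since $\phi_S$ was shown in the preceding lemma to be independent of that choice, we conclude $(z^*)^{\otimes(d_1+d_2)} \circ \mu_C = \phi_S$, as desired.

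There is essentially no obstacle here: the entire content is the composition-tensor interchange identity plus a direct invocation of Proposition \ref{prop:tensor_prod_mu_B}, and the remaining verifications are elementary linear algebra over $\F_q$ about block-diagonal matrices. The only point requiring the slightest care is that one must know $\phi_S$ is well-defined independently of the spanning matrix, which is exactly the lemma immediately preceding this corollary; this legitimizes presenting the particular block matrix $C$ as ``a basis for $S$'' in the statement. I would therefore keep the proof to two or three lines, citing Proposition \ref{prop:tensor_prod_mu_B} and the well-definedness lemma.
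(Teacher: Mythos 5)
Your proof is correct and matches the paper's intent exactly: the paper gives no separate argument but states the corollary as a direct consequence of Proposition \ref{prop:tensor_prod_mu_B}, and your write-up simply makes explicit the interchange of tensor product with composition, the block-matrix bookkeeping, and the appeal to the well-definedness lemma for $\phi_S$. Nothing is missing.
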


\begin{example}
 For $\mathcal{C} = \Rep(GL_n(\F_q))$, $\V:=\V_n$, \InnaA{$V:=\F_q^n$}, this corollary states that for any $v_1, \ldots, v_{r_1}, w_1, \ldots, w_{r_2} \in V$, we have: $(v_1| \ldots| v_{r_1}| w_1| \ldots| w_{r_2}) \in (R_1\times R_2)^\perp_{\InnaA{V}}$ iff both $(v_1| \ldots| v_{r_1}) \in (R_1)^\perp_{\InnaA{V}}$ and $(w_1| \ldots| w_{r_2}) \in (R_2)^\perp_{\InnaA{V}}$.
\end{example}

\begin{definition}\label{def:isom_T}
 Let $s, k\geq 0$. Consider the isomorphism
 \begin{align*}
&T: \Hom_{\mathcal{C}}(\V^{\otimes s}, \V^{\otimes k}) \longrightarrow \Hom_{\mathcal{C}}(\V^{\otimes s}\otimes \V^{\otimes k}, \triv)\\
&T(f):\V^{\otimes s}\otimes \V^{\otimes k} \xrightarrow{f\otimes \id} \V^{\otimes k}\otimes \V^{\otimes k} \xrightarrow{ev_{V^{\otimes k}}} \triv,
 \end{align*}
\end{definition}

Let $s, k, l \geq 0$. Consider the operation $\circledast$ defined by 
\begin{align*}
&\circledast:\Hom_{\mathcal{C}}(\V^{\otimes s}\otimes \V^{\otimes k}, \triv) \otimes \Hom_{\mathcal{C}}(\V^{\otimes k}\otimes \V^{\otimes l}, \triv)\longrightarrow \Hom_{\mathcal{C}}(\V^{\otimes s}\otimes \V^{\otimes l}, \triv) \\
&\phi \circledast \psi:\V^{\otimes s}\otimes \V^{\otimes l} \xrightarrow{\id \otimes coev_{V^{\otimes k}} \otimes \id} \V^{\otimes s}\otimes \V^{\otimes k}\otimes \V^{\otimes k}\otimes \V^{\otimes l} \xrightarrow{\phi \otimes \psi} \triv.
\end{align*}

The following lemma is proved in \cite[Lemma 8.5(i)]{Del07}:
\begin{lemma}\label{lem:ast_vs_composition}
 Let $f: \V^{\otimes s}\to \V^{\otimes k}$ and $g:\V^{\otimes k}\to \V^{\otimes l}$. Then $T(g\circ f) = T(f) \circledast T(g)$.
\end{lemma}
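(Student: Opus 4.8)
The plan is to unwind both sides into composites of the structure maps $m,\eps,m^*,\eps^*$ and then straighten a single ``snake''. The key preliminary fact I would record is that the iterated morphisms $ev_{\V^{\otimes k}}$ and $coev_{\V^{\otimes k}}$ make $\V^{\otimes k}$ a self-dual object; equivalently, they satisfy the zig-zag identities
\[
(ev_{\V^{\otimes k}} \otimes \id_{\V^{\otimes k}}) \circ (\id_{\V^{\otimes k}} \otimes coev_{\V^{\otimes k}}) = \id_{\V^{\otimes k}} = (\id_{\V^{\otimes k}} \otimes ev_{\V^{\otimes k}}) \circ (coev_{\V^{\otimes k}} \otimes \id_{\V^{\otimes k}}).
\]
This follows by an easy induction on $k$ from the snake relations for the self-duality $(\V, ev, coev)$ established in \cref{lem:self-dual}, together with the interchange law (alternatively, it is the general statement that a tensor power of a self-dual object is self-dual).

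Next I would expand the two sides of the asserted equality. From the definition of $T$ one has directly $T(g\circ f) = ev_{\V^{\otimes l}} \circ ((g\circ f)\otimes \id_{\V^{\otimes l}}) = ev_{\V^{\otimes l}} \circ (g\otimes \id_{\V^{\otimes l}}) \circ (f\otimes \id_{\V^{\otimes l}})$. On the other hand, substituting $T(f) = ev_{\V^{\otimes k}}\circ(f\otimes\id_{\V^{\otimes k}})$ and $T(g) = ev_{\V^{\otimes l}}\circ(g\otimes\id_{\V^{\otimes l}})$ into the definition of $\circledast$ and using functoriality of $\otimes$ yields
\[
T(f)\circledast T(g) = (ev_{\V^{\otimes k}} \otimes ev_{\V^{\otimes l}}) \circ (f \otimes \id_{\V^{\otimes k}} \otimes g \otimes \id_{\V^{\otimes l}}) \circ (\id_{\V^{\otimes s}} \otimes coev_{\V^{\otimes k}} \otimes \id_{\V^{\otimes l}}).
\]

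Finally I would reorganize the right-hand side by sliding: the morphism $f$ acts only on the leftmost $\V^{\otimes s}$-block and may be pulled all the way to the left (i.e.\ applied first), while $g$ acts only on the second $\V^{\otimes k}$-block created by $coev_{\V^{\otimes k}}$ and may be pulled past $ev_{\V^{\otimes k}}$ to the end; what remains sandwiched in the middle is $coev_{\V^{\otimes k}}$ creating a pair of $\V^{\otimes k}$-blocks followed by $ev_{\V^{\otimes k}}$ pairing the first of them with the output of $f$, which is precisely the first zig-zag identity above and hence equals $\id_{\V^{\otimes k}}$. After this cancellation the right-hand side becomes $ev_{\V^{\otimes l}} \circ (g\otimes\id_{\V^{\otimes l}}) \circ (f\otimes \id_{\V^{\otimes l}}) = T(g\circ f)$. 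The argument is cleanest in string-diagram form, where it amounts to a single cup--cap straightening; the main (and genuinely mild) obstacle is the bookkeeping of which tensor factor each structure map acts on when invoking functoriality. Since the statement is purely formal for rigid symmetric monoidal categories, one may alternatively just invoke \cite[Lemma 8.5(i)]{Del07}.
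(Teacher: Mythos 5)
Your proof is correct. Note that the paper offers no argument of its own for this lemma: it simply cites \cite[Lemma 8.5(i)]{Del07}, which is exactly the fallback you mention in your last sentence, so your explicit cup--cap computation is a self-contained substitute rather than a divergence in substance. The computation itself checks out: expanding $T(f)\circledast T(g)$, sliding $f$ and $g$ off the paired factors by functoriality of $\otimes$, and cancelling $(ev_{\V^{\otimes k}}\otimes\id)\circ(\id\otimes coev_{\V^{\otimes k}})=\id$ gives $ev_{\V^{\otimes l}}\circ(g\otimes\id)\circ(f\otimes\id)=T(g\circ f)$. The only point worth being careful about is your preliminary claim: the zig-zag identities must hold for the \emph{nested} morphisms $ev_{\V^{\otimes k}}$, $coev_{\V^{\otimes k}}$ as defined in the paper (the paper's self-duality statement is phrased for the symmetrized $\overline{ev}_{\V^{\otimes k}}$, $\overline{coev}_{\V^{\otimes k}}$). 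With the nested convention the snake identities do follow by the induction on $k$ you indicate, using \cref{lem:self-dual} for $\V$ itself (and in a symmetric category the barred and unbarred versions are interchangeable via the symmetry $w$), so this is a genuine but easily closed gap in the write-up rather than in the argument.
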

Further properties of the isomorphism $T$ can be found in \cite[Lemma 8.5]{Del07}. We also have the following straightforward statement:
\begin{lemma}\label{lem:T_and_tensor_prod}
 Let $f: \V^{\otimes s_1}\to \V^{\otimes k_1}$ and $g:\V^{\otimes s_2}\to \V^{\otimes k_2}$. Then $T(g\otimes f) = T(g) \otimes T(f)$.
\end{lemma}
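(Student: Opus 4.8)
The statement to prove is \cref{lem:T_and_tensor_prod}: for $f\colon \V^{\otimes s_1}\to \V^{\otimes k_1}$ and $g\colon \V^{\otimes s_2}\to \V^{\otimes k_2}$, we have $T(g\otimes f) = T(g)\otimes T(f)$. Here $T(h)$ for $h\colon \V^{\otimes s}\to\V^{\otimes k}$ is defined as $ev_{\V^{\otimes k}}\circ(h\otimes \id_{\V^{\otimes k}})$, and one should be slightly careful: in the definition of $T$ the object $\V^{\otimes s}\otimes\V^{\otimes k}$ is being paired, with the $ev_{\V^{\otimes k}}$ applied to the last $2k$ tensor factors after $h$ acts on the first $s$.

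\textbf{Plan.} This is a purely formal unwinding of definitions together with the interchange law for the monoidal product (bifunctoriality of $\otimes$) and the fact that $ev_{\V^{\otimes(k_1+k_2)}}$ decomposes, up to the appropriate symmetry morphisms, as a composite built from $ev_{\V^{\otimes k_1}}$ and $ev_{\V^{\otimes k_2}}$. First I would write out both sides explicitly. By definition,
\[
T(g\otimes f) = ev_{\V^{\otimes(k_1+k_2)}}\circ\bigl((g\otimes f)\otimes \id_{\V^{\otimes(k_1+k_2)}}\bigr),
\]
a morphism $\V^{\otimes s_2}\otimes\V^{\otimes s_1}\otimes\V^{\otimes(k_1+k_2)}\to\triv$ — though one must keep track of the ordering convention used when forming $g\otimes f$ versus $f\otimes g$, and I would first fix that convention to match the paper's. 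On the other side, $T(g)\otimes T(f)$ is the morphism $\V^{\otimes s_2}\otimes\V^{\otimes k_2}\otimes\V^{\otimes s_1}\otimes\V^{\otimes k_1}\to\triv$ given by $\bigl(ev_{\V^{\otimes k_2}}\circ(g\otimes\id)\bigr)\otimes\bigl(ev_{\V^{\otimes k_1}}\circ(f\otimes\id)\bigr)$. The two sides have different source objects (the tensor factors are permuted), so the statement should be read as: after identifying the sources via the canonical braiding isomorphism reshuffling the $\V^{\otimes k_i}$ and $\V^{\otimes s_i}$ blocks into the order implicit in $T$, the two morphisms agree. I would make this identification explicit at the outset.

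\textbf{Key steps.} (1) Use bifunctoriality of $\otimes$ to write $(g\otimes f)\otimes\id_{\V^{\otimes k_1}}\otimes\id_{\V^{\otimes k_2}} = (g\otimes\id_{\V^{\otimes k_2}})\otimes(f\otimes\id_{\V^{\otimes k_1}})$ after inserting the symmetry morphism that moves $\V^{\otimes s_1}$ past $\V^{\otimes k_2}$; this is the standard ``interchange'' rearrangement in a symmetric monoidal category, and it is exactly the kind of manipulation already used throughout Sections~\ref{sec:linear_algebra_for_Frob_space} and~\ref{sec:morph_fR} (e.g.\ in the proofs involving the permutation $w$). (2) Use the description of $ev_{\V^{\otimes k}}$ from the Notation after Definition~\ref{def:Frob_linear_space}, namely $ev_{\V^{\otimes k}} = ev\circ(\id\otimes ev\otimes\id)\circ\cdots$, to observe that $ev_{\V^{\otimes(k_1+k_2)}}$ restricted to the block structure $\V^{\otimes k_2}\otimes\V^{\otimes k_1}\otimes\V^{\otimes k_1}\otimes\V^{\otimes k_2}$, after reordering to $\V^{\otimes k_2}\otimes\V^{\otimes k_2}\otimes\V^{\otimes k_1}\otimes\V^{\otimes k_1}$ via symmetry morphisms, equals $ev_{\V^{\otimes k_2}}\otimes ev_{\V^{\otimes k_1}}$ — again a routine ``nested cups'' identity valid in any symmetric monoidal category. (3) Combine (1) and (2) and invoke bifunctoriality of $\otimes$ once more (composition of tensor products equals tensor product of composites) to conclude. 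The cleanest packaging is to cite \cite[Lemma 8.5]{Del07}, where the analogous compatibility of $T$ with tensor products in the $\uRep(S_t)$ setting is established by the same formal argument; the present $T$ has literally the same definition, so the proof transports verbatim.

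\textbf{Main obstacle.} There is no real mathematical difficulty here — the result is ``obvious'' from the axioms of a symmetric monoidal category and the definition of $T$. The only genuine care required is bookkeeping: getting the ordering conventions for $g\otimes f$ right, inserting the correct braiding/symmetry morphisms to align the two source objects, and making sure the nested evaluation $ev_{\V^{\otimes(k_1+k_2)}}$ is split correctly. I would therefore keep the written proof short — state that it is the same formal computation as \cite[Lemma 8.5]{Del07}, perhaps exhibit the one diagram chase that makes the splitting of $ev_{\V^{\otimes(k_1+k_2)}}$ manifest, and leave the remaining symmetry-morphism juggling to the reader. Concretely:

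\begin{proof}
Both $T$ and the monoidal structure are defined exactly as in \cite{Del07}, so the claim is the content of \cite[Lemma 8.5]{Del07}. For completeness we recall the argument. Writing out the definitions, $T(g\otimes f)$ and $T(g)\otimes T(f)$ differ only by the symmetry morphisms needed to align their sources, so it suffices to check the identity after applying such a reshuffling. By bifunctoriality of $\otimes$, the morphism $(g\otimes f)\otimes\id_{\V^{\otimes(k_1+k_2)}}$ equals, up to symmetry morphisms, $(g\otimes\id_{\V^{\otimes k_2}})\otimes(f\otimes\id_{\V^{\otimes k_1}})$. Next, from the formula $ev_{\V^{\otimes k}} = ev\circ(\id\otimes ev\otimes\id)\circ(\id\otimes ev\otimes\id)\circ\cdots$ and the naturality of the symmetry, one sees that $ev_{\V^{\otimes(k_1+k_2)}}$, read on the relevant reordering of tensor factors, equals $ev_{\V^{\otimes k_2}}\otimes ev_{\V^{\otimes k_1}}$. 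Combining these two observations and using once more that composition commutes with $\otimes$ yields
\[
T(g\otimes f) = \bigl(ev_{\V^{\otimes k_2}}\circ(g\otimes\id)\bigr)\otimes\bigl(ev_{\V^{\otimes k_1}}\circ(f\otimes\id)\bigr) = T(g)\otimes T(f),
\]
as claimed.
\end{proof}
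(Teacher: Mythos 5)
Your proposal is correct, and it coincides with the paper's treatment: the paper states \cref{lem:T_and_tensor_prod} without proof as a "straightforward statement", and your argument is exactly the routine verification it leaves implicit — unwind Definition \ref{def:isom_T}, use bifunctoriality of $\otimes$, and split the nested evaluation $ev_{\V^{\otimes(k_1+k_2)}}$ into $ev_{\V^{\otimes k_1}}$ and $ev_{\V^{\otimes k_2}}$ up to symmetry morphisms, as in \cite[Lemma 8.5]{Del07}. If anything, you are more careful than the paper in flagging that the two sides agree only after the canonical reordering of tensor factors of the source, an identification the paper uses tacitly here and elsewhere (e.g.\ when regarding $R_1\times R_2$ as a subspace of $\F_q^{r_1+r_2}$).
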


\begin{definition}\label{def:f_R_in_C}
Let $R \subset \F_q^{s+k}$. We define $\widetilde{f}_R:\V^{\otimes s}\to \V^{\otimes k}$ as $\widetilde{f}_R:=T^{-1}(\phi_R)$.
\end{definition}

\begin{example}

\mbox{}

 \begin{enumerate}
  \item  For $\mathcal{C} = \Rep(GL_n(\F_q))$, $\V:=\V_n$, the morphism $\widetilde{f}_R:=T^{-1}(\phi_R)$ is just $f_R: \V_n^{\otimes s}\to \V_n^{\otimes k}$ as defined in Section \ref{sec:classical_endom}:
  $$v_1\otimes \ldots\otimes v_s \longmapsto \sum_{\substack{(w_1|\ldots|w_k)\in V^{\times k} \\ (v_1| \ldots| v_s|w_1|\ldots|w_k) \in R^{\perp}_{\InnaA{V}} }} w_1 \otimes\ldots\otimes w_k$$ for any $v_1, \ldots, v_s \in V$.
  \item Similarly, for $\mathcal{C} := \kar{t}$ and $\V:=\V_t=[1]$, the morphism $\widetilde{f}_R:=T^{-1}(\phi_R)$ is just $f_R: [s]\to [k]$ as defined in Section \ref{sec:Deligne_def}.
 \end{enumerate}

\end{example}

As can be expected from this example, we have, in general:
\begin{lemma}\label{lem:tensor_prod_f}
 Let $R_1 \subset \F_q^{r_1+s_1}$ and $R_2\subset \F_q^{r_2+s_2}$ be $\F_q$-linear subspaces. 
 Let $d_i = \dim_{\F_q} R_i$ for $i=1,2$. Let $B^{(i)} \in Mat_{d_i\times r_i}(\F_q)$ be a matrix whose rows form a basis of $R_i$ 
 for $i=1,2$. Then $\widetilde{f}_{R_1}\otimes \widetilde{f}_{R_2} =\widetilde{f}_{S}$ where $S = R_1\times R_2 \subset \F_q^{r_1+r_2}$; a basis for $S$ is given by the block matrix $C = \begin{bmatrix}   
 B^{(1)} &0\\                                                        0 &B^{(2)}            
 \end{bmatrix}
$. 
\end{lemma}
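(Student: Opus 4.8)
The statement is that $\widetilde{f}_{R_1} \otimes \widetilde{f}_{R_2} = \widetilde{f}_S$ for $S = R_1 \times R_2$. Since $\widetilde{f}_R := T^{-1}(\phi_R)$ by Definition \ref{def:f_R_in_C}, and $T$ is an isomorphism of Hom-spaces, it suffices to show the corresponding identity after applying $T$, namely $T(\widetilde{f}_{R_1} \otimes \widetilde{f}_{R_2}) = \phi_S$. The plan is to combine three already-established ingredients: Lemma \ref{lem:T_and_tensor_prod} which says $T$ is multiplicative for tensor products, Corollary \ref{cor:tensor_prod_phi_R} which says $\phi_{R_1} \otimes \phi_{R_2} = \phi_S$, and the defining relation $T(\widetilde{f}_{R_i}) = \phi_{R_i}$.

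First I would write $T(\widetilde{f}_{R_1} \otimes \widetilde{f}_{R_2}) = T(\widetilde{f}_{R_1}) \otimes T(\widetilde{f}_{R_2})$ by Lemma \ref{lem:T_and_tensor_prod}. Next, since $\widetilde{f}_{R_i} = T^{-1}(\phi_{R_i})$ by definition, we have $T(\widetilde{f}_{R_i}) = \phi_{R_i}$, so the right-hand side equals $\phi_{R_1} \otimes \phi_{R_2}$. Finally, by Corollary \ref{cor:tensor_prod_phi_R}, $\phi_{R_1} \otimes \phi_{R_2} = \phi_S$ with $S = R_1 \times R_2 \subset \F_q^{r_1 + r_2}$, and a basis for $S$ is given by the rows of the block matrix $C = \begin{bmatrix} B^{(1)} & 0 \\ 0 & B^{(2)} \end{bmatrix}$. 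Hence $T(\widetilde{f}_{R_1} \otimes \widetilde{f}_{R_2}) = \phi_S = T(\widetilde{f}_S)$, and applying $T^{-1}$ gives the claim.

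The only subtlety worth checking carefully is the bookkeeping of which tensor factors are grouped together: one should confirm that the convention for $T$ in Definition \ref{def:isom_T} (which sends $f: \V^{\otimes s} \to \V^{\otimes k}$ to a map $\V^{\otimes s} \otimes \V^{\otimes k} \to \triv$, using the domain factors first and then the codomain factors) is compatible with the way $S = R_1 \times R_2$ is regarded as a subspace of $\F_q^{(s_1 + k_1) + (s_2 + k_2)}$ — that is, that the ordering of coordinates in $S$ matches the ordering of tensor factors in $\V^{\otimes s_1} \otimes \V^{\otimes k_1} \otimes \V^{\otimes s_2} \otimes \V^{\otimes k_2}$ after applying $T$ to both sides. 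This is purely a matter of matching conventions and introduces at most a harmless permutation of factors, which is absorbed by the symmetry morphisms; there is no real obstacle. I do not anticipate any serious difficulty here — the lemma is essentially a formal consequence of the multiplicativity of $T$ and Corollary \ref{cor:tensor_prod_phi_R}, which in turn rests on Proposition \ref{prop:tensor_prod_mu_B}.
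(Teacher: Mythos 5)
Your proposal is correct and follows essentially the same route as the paper's own proof: apply $T$, use Lemma \ref{lem:T_and_tensor_prod} to split the tensor product, identify $T(\widetilde{f}_{R_i})=\phi_{R_i}$, and invoke Corollary \ref{cor:tensor_prod_phi_R} to get $\phi_{R_1}\otimes\phi_{R_2}=\phi_S$. Your remark on matching the ordering of coordinates is a reasonable sanity check, but no further argument is needed.
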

\begin{proof}
 Since $T$ is an isomorphism, it is enough to check that 
 $$T\left(\widetilde{f}_{R_1}\otimes \widetilde{f}_{R_2}\right) =T\left(\widetilde{f}_{S}\right)$$
 But by Lemma \ref{lem:T_and_tensor_prod}, we have:
$$T\left(\widetilde{f}_{R_1}\otimes \widetilde{f}_{R_2}\right) =T\left(\widetilde{f}_{R_1}\right)\otimes T \left( \widetilde{f}_{R_2}\right) = \phi_{R_1} \otimes \phi_{R_2} = \phi_S= T\left(\widetilde{f}_{S}\right)$$
 (the equality $\phi_{R_1} \otimes \phi_{R_2} = \phi_S$ was proved in Corollary \ref{cor:tensor_prod_phi_R}).
 
\end{proof}

The following proposition generalizes Proposition \ref{prop:composition}:

\begin{proposition}\label{prop:composition_phi_R}
 Let $R \subset \F_q^{s+k}$ and $S \subset \F_q^{k+l}$ be $\F_q$-linear subspaces. Then $$\widetilde{f}_S \circ \widetilde{f}_R = (\dim \V)^{d(R, S)} \widetilde{f}_{S \star R}$$ where $d(R,S)$ and $S\star R$ are defined in Definition \ref{def:composition_star_d_R_S}.
\end{proposition}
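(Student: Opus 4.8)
The plan is to mimic the proof of Proposition~\ref{prop:composition}, but carried out abstractly via the isomorphism $T$ and the operation $\circledast$. By Lemma~\ref{lem:ast_vs_composition}, it suffices to show that $T(\widetilde f_S\circ \widetilde f_R) = \phi_R \circledast \phi_S$ equals $(\dim\V)^{d(R,S)}\,\phi_{S\star R}$, since $T(\widetilde f_{S\star R})=\phi_{S\star R}$ by definition. So the real content is a statement purely about the morphisms $\phi_R$: for $R\subset\F_q^{s+k}$ and $S\subset\F_q^{k+l}$,
\[
\phi_R \circledast \phi_S = (\dim\V)^{d(R,S)}\,\phi_{S\star R}.
\]

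First I would set up coordinates. Choose a matrix $B$ of rank $\dim R$ with $\operatorname{Row}(B)=R$, written in block form $B=[B_1 \mid B_2]$ with $B_1$ having $s$ columns and $B_2$ having $k$ columns; similarly $C=[C_1\mid C_2]$ with $\operatorname{Row}(C)=S$, $C_1$ with $k$ columns, $C_2$ with $l$ columns. Then $\phi_R = (z^*)^{\otimes \dim R}\circ \mu_B$ and, unwinding the definition of $\circledast$ together with Proposition~\ref{prop:compos_mu_A} (composition of $\mu$'s) and Proposition~\ref{prop:tensor_prod_mu_B} (tensor product of $\mu$'s), the composite $\phi_R\circledast\phi_S : \V^{\otimes s}\otimes\V^{\otimes l}\to\triv$ becomes $(z^*)^{\otimes(\dim R+\dim S)}$ precomposed with a single morphism $\mu_M$, where $M$ is the matrix describing the effect of $\id\otimes coev_{\V^{\otimes k}}\otimes\id$ followed by $\mu_B\otimes\mu_C$ on the outer legs and the $coev$-legs. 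The key point, exactly as in the classical proof, is that applying $(z^*)^{\otimes N}\circ\mu_M$ amounts to ``solving the linear system $M\vec x=0$'': by Corollary~\ref{cor:regular_system_of_eq} the morphism $(z^*)^{\otimes N}\circ\mu_M$ depends only on $\operatorname{Row}(M)$, and more precisely, after row-reducing $M$ and using Lemma~\ref{lem:comparing_with_zero} (which says an equation identifying one coordinate with $\dot0$ can be ``solved'' and deleted, producing a $z$ on that strand) together with the $\eps^*$-counit relations, one can eliminate the $k$ internal $coev$-legs. Each such elimination of a free internal variable — i.e. each internal coordinate not pinned down by the equations — contributes a closed loop $\eps^*\circ\eps=\dim\V$ (more precisely $z^*\circ z$-type contractions, but $z^*\circ z=\id$; the dimension factor arises from $coev$-legs that are not constrained at all, each giving an $\eps^*\circ\eps$). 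Counting: the number of internal legs that remain completely unconstrained is precisely $k$ minus the rank of the projection of $(R,0)+(0,S)$ onto the internal $\F_q^k$, which is $d(R,S)$; and the induced equations on the remaining $s+l$ outer legs cut out exactly $\operatorname{Row}$ of the space $(S\star R)^{\perp}$-dual, i.e. the matrix whose row space is $S\star R$. This is the heart of the argument and should be organized as: (i) reduce to the $\mu_M$ form via Propositions~\ref{prop:compos_mu_A} and \ref{prop:tensor_prod_mu_B}; (ii) do Gaussian elimination on $M$ to split it into equations purely among internal legs, equations mixing internal and outer legs, and equations purely among outer legs; (iii) use Lemma~\ref{lem:comparing_with_zero}, Corollary~\ref{cor:regular_system_of_eq}, and the $\eps^*$-counit relations to successively kill internal legs, each fully-free internal leg producing a factor $\dim\V$; (iv) identify what remains as $\phi_{S\star R}$, using the description of $(S\star R)^\perp$ from Lemma~\ref{lem:Knop_basis_vs_ours} and the definition of $d(R,S)$.

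Concretely I would phrase step (iii) as an induction on $k$: if $k=0$ there is nothing internal and $\phi_R\circledast\phi_S = \phi_{R\times S}=\phi_{S\star R}$ by Corollary~\ref{cor:tensor_prod_phi_R}; for the inductive step, the last $coev$-leg either (a) does not appear in any of the equations $\operatorname{Row}(M)$, in which case it forms a closed bubble $\eps^*\circ\eps=\dim\V$ and drops out, reducing $k$ by $1$ and $d(R,S)$ by $1$; or (b) appears in some equation, in which case, after a change of basis of $R\oplus S$ (allowed by Corollary~\ref{cor:regular_system_of_eq}) we may assume exactly one equation involves it with coefficient $1$; using Lemma~\ref{lem:comparing_with_zero} (or rather its consequence that $\mu$ followed by $z^*$ on a strand pinned to another expression can be contracted) we solve for that leg and substitute, reducing $k$ by $1$ while $d(R,S)$ is unchanged, and the resulting row space on the remaining legs is the one obtained by eliminating that internal coordinate — which is exactly the operation defining $S\star R$ at the level of $\perp$-spaces. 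I expect the main obstacle to be purely bookkeeping: keeping the permutations of tensor factors (the morphisms $w$ hidden inside $\mu_M$ and inside $coev_{\V^{\otimes k}}$, $ev_{\V^{\otimes k}}$) straight, and making sure the change-of-basis moves on $R\oplus S\subset\F_q^{s+k}\times\F_q^{k+l}$ are genuinely implemented by composing with $\mu_A$ for invertible $A$ so that Corollary~\ref{cor:regular_system_of_eq} applies on the nose. Once the internal legs are eliminated the identification of the surviving diagram with $\phi_{S\star R}$ is immediate from Definition~\ref{def:phi_R} and the description of $(S\star R)^\perp$ in Section~\ref{ssec:knop_indexing}.

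Finally, to pass from the $\phi$-statement to the stated $\widetilde f$-statement: apply $T^{-1}$ and Lemma~\ref{lem:ast_vs_composition}, which gives $\widetilde f_S\circ\widetilde f_R = T^{-1}(\phi_R\circledast\phi_S) = T^{-1}\big((\dim\V)^{d(R,S)}\phi_{S\star R}\big) = (\dim\V)^{d(R,S)}\,\widetilde f_{S\star R}$, using that $T$ is a $\C$-linear isomorphism. As a sanity check, specializing to $\mathcal C=\Rep(GL_n(\F_q))$, $\V=\V_n$, $\dim\V=q^n$ recovers Proposition~\ref{prop:composition} exactly, which also confirms the exponent is $d(R,S)$ and not $d(S,R)$.
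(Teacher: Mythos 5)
Your proposal follows essentially the same route as the paper's proof: reduce via $T$ and Lemma \ref{lem:ast_vs_composition} to the identity $\phi_R\circledast\phi_S=(\dim\V)^{d(R,S)}\phi_{S\star R}$, rewrite the left side as $(z^*)^{\otimes N}\circ\mu_D$ precomposed with $\eps$'s on the $k$ internal strands, exploit invariance under row operations (Proposition \ref{prop:compos_mu_A} plus Corollary \ref{cor:regular_system_of_eq}) to pass to a row-reduced matrix, eliminate the internal legs — pinned legs get absorbed, unconstrained legs each contribute $\eps^*\circ\eps=\dim\V$ — and recognize the survivor as $\phi_{S\star R}$; this is exactly the paper's Steps 1--4, with your induction on $k$ replacing the paper's single pass through the row echelon form.

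Two caveats worth fixing. First, the step where you ``solve for'' a pinned internal leg is not a consequence of Lemma \ref{lem:comparing_with_zero} (which concerns $m$, $m^*$ and $z$); what you actually need is the statement that $z^*\circ\mu_{[a_1,\ldots,a_k]}\circ(\id\otimes\eps\otimes\id)=(\eps^*)^{\otimes(k-1)}$ when the coefficient on the $\eps$-strand is nonzero — this is precisely the paper's Lemma \ref{lem:eps_star_mu_A}, whose proof is the one genuinely new input here and rests on the Cancellation Axiom via Lemma \ref{lem:transferring_plus}; your sketch leaves this unproved. Second, your induction on $k$ cannot be run on the quantity $\phi_R\circledast\phi_S$ itself, since after eliminating one internal strand the intermediate object is no longer of that form; you should induct on the generalized expression $(z^*)^{\otimes N}\circ\mu_M\circ(\id_{\V^{\otimes s}}\otimes\eps^{\otimes k}\otimes\id_{\V^{\otimes l}})$ for an arbitrary matrix $M$, which is what the paper's non-inductive Step 4 handles directly. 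With those two repairs the argument goes through and coincides with the paper's.
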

\begin{proof}
By Lemma \ref{lem:ast_vs_composition}, it is enough to prove that 
\begin{equation}\label{eq:compos_phi_R}
 \phi_R \circledast \phi_S = (\dim \V)^{d(R, S)} \phi_{S \star R}.
\end{equation}

Let $B \in Mat_{d_1\times (s+k)}(\F_q)$ be a matrix whose rows form a basis of $R$ and let  
$C \in Mat_{d_2\times (k+l)}(\F_q)$ be a matrix whose rows form a basis of $S$.
 
We prove the equality \eqref{eq:compos_phi_R} in several steps, which we outline below. After that, we will provide the missing details for each step.
\begin{enumerate}
 \item[{\bf Step 1:}] We show that $$\phi_R \circledast \phi_S =  (z^*)^{\otimes (d_1+d_2)}\circ \mu_D \circ (\id_{\V^{\otimes s}} \otimes \eps^{\otimes k} \otimes \id_{\V^{\otimes l}})$$ for the matrix $D \in Mat_{(d_1+d_2) \times (s+k+l)}$ defined as $$D = 
 { \arraycolsep=5pt\left[\begin{array}{cccccc}                                                                                                                                       &&B &|&0& \\ \hline                                                                                                                      &0 &| &C &&   
 \end{array}                                                                                                                                         \right] }$$ (note that this is not a block matrix!). Then $$Row(D) = R\times_{\F_q^k} S =(R,0) + (0,S) \subset \F_q^{s+k+l},$$ though the rows of $D$ do not have to be linearly independent.
 
  \item[{\bf Step 2:}] We show that $(z^*)^{\otimes (d_1+d_2)}\circ \mu_D  = (z^*)^{\otimes (d_1+d_2)}\circ \mu_{D'} $ for any $D'\in Mat_{(d_1+d_2) \times (s+k+l)}$ which is row-equivalent to $D$ (that is, $Row(D) = Row(D')$). 
  
   \item[{\bf Step 3:}] In particular, the equality in Step 2 is true for a matrix $D'$ obtained from $D$ as follows: using elementary row operations, we can bring $D$ to a matrix of the form
  $$D' = \begin{bmatrix}                                                                                                                                                                                                                                                               &G&| &M &|&H&  \\\hline    
  &E&|&0 &|&F&  \\\hline                                                                                                                        &0&| &0 &|&0&                                                                                                                                            \end{bmatrix}$$
  Here the middle $k$ columns form a matrix in row echelon form, which we denote by $\begin{bmatrix}                                                                                                                                                                                                                                                               M   \\   
  0 \\                                                                                                                     0                                                                                                                                            \end{bmatrix}$
  where $M$ has full row rank.
The middle part $\begin{bmatrix}                                                                                                                                        &E&|&0 &|&F&                                                                                                                                             \end{bmatrix}$ has linearly independent rows whose middle $k$ coordinates are zero (we denote the number of such rows by $r$). The bottom part consists of zero rows. 
  
  We denote the $r\times (s+l)$ matrix $\begin{bmatrix}                                                                                                                                        &E&|&F&                                                                                                                                             \end{bmatrix}$ by $B\star C$.
  
  Here is how these new matrices relate to the original statement: the rows of $B\star C$ are clearly linearly independent and span the subspace $R\star S \subset \F_q^{s+l}$. The rank of $M$ is the dimension of the projection of $ R\times_{\F_q^k} S$ on the subspace $\F_q^k \subset \F_q^{s+k+l}$, so $k-rk(M) = d(R,S)$.
  
  \item[{\bf Step 4:}] We show that $$(z^*)^{\otimes (d_1+d_2)}\circ \mu_{D'} \circ (\id_{\V^{\otimes s}} \otimes \eps^{\otimes k} \otimes \id_{\V^{\otimes l}})=(\dim \V)^{d(R,S)} (z^*)^{\otimes r}\circ \mu_{B\star C}. $$
  The left hand side has been shown to be equal to $\phi_R \circledast \phi_S$, while the right hand side is $(\dim \V)^{d(R,S)} \phi_{R\star S}$, which proves the required statement.
\end{enumerate}
We now provide the missing details in each step:

\InnaA{{\bf Step 1:}}
 Recall that $\phi_R \circledast \phi_S = (\phi_R \otimes \phi_S) \circ \left(\id_{\V^{\otimes s}} \otimes coev_{\V^{\otimes k}} \otimes \id_{\V^{\otimes l}}\right)$. Diagrammatically, $\phi_R\circledast \phi_S$ can be drawn as
 \begin{equation*}
\begin{tikzpicture}[anchorbase,scale=1.1]
\draw[-] (-0.5,-0.2)--(-0.5,0.4)--(0.5,0.4)--(0.5,-0.2)--(-0.5,-0.2);
\node at (0,0) {${B_1}$};
\draw[-] (-0.4,-0.2)--(-0.4,-0.7);
\draw[-] (0,0.4)--(0,0.6);
\draw[-] (0.4,-0.2)--(0.4,-0.5);
\node at (-0.2,-0.4) {$\cdot$};
\node at (-0,-0.4) {$\cdot$};
\node at (0.2,-0.4) {$\cdot$};

\draw[-] (1,-0.2)--(1,0.4)--(2,0.4)--(2,-0.2)--(1,-0.2);
\node at (1.5,0) {${B_2}$};
\draw[-] (1.1,-0.2)--(1.1,-0.7);
\draw[-] (1.5,0.4)--(1.5,0.6);
\draw[-] (1.9,-0.2)--(1.9,-0.5);
\node at (1.3,-0.4) {$\cdot$};
\node at (1.5,-0.4) {$\cdot$};
\node at (1.7,-0.4) {$\cdot$};
\node at (2.2,0) {$\cdot$};
\node at (2.3,0) {$\cdot$};
\node at (2.4,0) {$\cdot$};

\draw[-] (3.5,-0.2)--(3.5,0.4)--(4.5,0.4)--(4.5,-0.2)--(3.5,-0.2);
\node at (4,0) {${B_{s+k}}$};
\draw[-] (3.6,-0.2)--(3.6,-0.7);
\draw[-] (4,0.4)--(4,0.6);
\draw[-] (4.4,-0.2)--(4.4,-0.5);
\node at (3.8,-0.4) {$\cdot$};
\node at (4,-0.4) {$\cdot$};
\node at (4.2,-0.4) {$\cdot$};
\draw[-] (-0.4,-0.7)--(3.6,-0.7);
\draw[-] (0.4,-0.5)--(1, -0.5);
\draw[-] (1.2, -0.5)--(3.5, -0.5);
\draw[-] (3.7,-0.5)--(4.4,-0.5);
\draw[-] (3, -0.7)--(3,-1.1);
\draw[-] (3.4, -0.5)--(3.4, -0.65);
\draw[-] (3.4, -0.75)--(3.4, -1);
\node at (3.1,-0.95) {$\cdot$};
\node at (3.2,-0.95) {$\cdot$};
\node at (3.3,-0.95) {$\cdot$};

\draw[-] (5.5,-0.2)--(5.5,0.4)--(6.5,0.4)--(6.5,-0.2)--(5.5,-0.2);
\node at (6,0) {${C_1}$};
\draw[-] (5.6,-0.2)--(5.6,-0.7);
\draw[-] (6,0.4)--(6,0.6);
\draw[-] (6.4,-0.2)--(6.4,-0.5);
\node at (5.8,-0.4) {$\cdot$};
\node at (6,-0.4) {$\cdot$};
\node at (6.2,-0.4) {$\cdot$};

\draw[-] (7,-0.2)--(7,0.4)--(8,0.4)--(8,-0.2)--(7,-0.2);
\node at (7.5,0) {${C_2}$};
\draw[-] (7.1,-0.2)--(7.1,-0.7);
\draw[-] (7.5,0.4)--(7.5,0.6);
\draw[-] (7.9,-0.2)--(7.9,-0.5);
\node at (7.3,-0.4) {$\cdot$};
\node at (7.5,-0.4) {$\cdot$};
\node at (7.7,-0.4) {$\cdot$};
\node at (8.2,0) {$\cdot$};
\node at (8.3,0) {$\cdot$};
\node at (8.4,0) {$\cdot$};

\draw[-] (9.5,-0.2)--(9.5,0.4)--(10.5,0.4)--(10.5,-0.2)--(9.5,-0.2);
\node at (10,0) {${C_{k+l}}$};
\draw[-] (9.6,-0.2)--(9.6,-0.7);
\draw[-] (10,0.4)--(10,0.6);
\draw[-] (10.4,-0.2)--(10.4,-0.5);
\node at (9.8,-0.4) {$\cdot$};
\node at (10,-0.4) {$\cdot$};
\node at (10.2,-0.4) {$\cdot$};
\draw[-] (5.6,-0.7)--(9.6,-0.7);
\draw[-] (6.4,-0.5)--(7, -0.5);
\draw[-] (7.2, -0.5)--(9.5, -0.5);
\draw[-] (9.7,-0.5)--(10.4,-0.5);
\draw[-] (9, -0.7)--(9,-1);
\draw[-] (9.4, -0.5)--(9.4, -0.65);
\draw[-] (9.4, -0.75)--(9.4, -1.1);
\node at (9.1,-0.95) {$\cdot$};
\node at (9.2,-0.95) {$\cdot$};
\node at (9.3,-0.95) {$\cdot$};
\draw[-] (3.4, -1)--(9, -1);
\end{tikzpicture} 
\end{equation*}
where we have $s$ string bottom endpoints on the left, $l$ string bottom endpoints on the right, and $k$ ``cups'' in between.
This diagram can be drawn alternatively as
\begin{equation*}
\begin{tikzpicture}[anchorbase,scale=1.1]
\draw[-] (-0.5,-0.2)--(-0.5,0.4)--(0.5,0.4)--(0.5,-0.2)--(-0.5,-0.2);
\node at (0,0) {${B_1}$};
\draw[-] (-0.4,-0.2)--(-0.4,-0.5);
\draw[-] (0,0.4)--(0,0.6);
\draw[-] (0.4,-0.2)--(0.4,-.45);
\draw[-] (0.4,-0.55)--(0.4,-.7);
\node at (-0.2,-0.4) {$\cdot$};
\node at (-0,-0.4) {$\cdot$};
\node at (0.2,-0.4) {$\cdot$};

\draw[-] (1,-0.2)--(1,0.4)--(2,0.4)--(2,-0.2)--(1,-0.2);
\node at (1.5,0) {${B_2}$};
\draw[-] (1.1,-0.2)--(1.1,-0.5);
\draw[-] (1.5,0.4)--(1.5,0.6);
\draw[-] (1.9,-0.2)--(1.9,-.45);
\draw[-] (1.9, -0.55)--(1.9,-.7);
\node at (1.3,-0.4) {$\cdot$};
\node at (1.5,-0.4) {$\cdot$};
\node at (1.7,-0.4) {$\cdot$};
\node at (2.2,0) {$\cdot$};
\node at (2.3,0) {$\cdot$};
\node at (2.4,0) {$\cdot$};

\draw[-] (3.5,-0.2)--(3.5,0.4)--(4.5,0.4)--(4.5,-0.2)--(3.5,-0.2);
\node at (4,0) {${B_{s+k}}$};
\draw[-] (3.6,-0.2)--(3.6,-0.5);
\draw[-] (4,0.4)--(4,0.6);
\draw[-] (4.4,-0.2)--(4.4,-.7);
\node at (3.8,-0.4) {$\cdot$};
\node at (4,-0.4) {$\cdot$};
\node at (4.2,-0.4) {$\cdot$};
\draw[-] (-0.4,-0.5)--(3.6,-0.5);
\draw[-] (0.4,-.7)--(2.9, -.7);
\draw[-] (3.1, -.7)--(4.4,-.7);
\draw[-] (3, -0.5)--(3,-1.3);

\node at (4.1,-1.25) {$\cdot$};
\node at (4.2,-1.25) {$\cdot$};
\node at (4.3,-1.25) {$\cdot$};

\draw[-] (5.5,-0.2)--(5.5,0.4)--(6.5,0.4)--(6.5,-0.2)--(5.5,-0.2);
\node at (6,0) {${C_1}$};
\draw[-] (5.6,-0.2)--(5.6,-0.7);
\draw[-] (6,0.4)--(6,0.6);
\draw[-] (6.4,-0.2)--(6.4,-0.5);
\node at (5.8,-0.4) {$\cdot$};
\node at (6,-0.4) {$\cdot$};
\node at (6.2,-0.4) {$\cdot$};

\draw[-] (7,-0.2)--(7,0.4)--(8,0.4)--(8,-0.2)--(7,-0.2);
\node at (7.5,0) {${C_2}$};
\draw[-] (7.1,-0.2)--(7.1,-0.7);
\draw[-] (7.5,0.4)--(7.5,0.6);
\draw[-] (7.9,-0.2)--(7.9,-0.5);
\node at (7.3,-0.4) {$\cdot$};
\node at (7.5,-0.4) {$\cdot$};
\node at (7.7,-0.4) {$\cdot$};
\node at (8.2,0) {$\cdot$};
\node at (8.3,0) {$\cdot$};
\node at (8.4,0) {$\cdot$};

\draw[-] (9.5,-0.2)--(9.5,0.4)--(10.5,0.4)--(10.5,-0.2)--(9.5,-0.2);
\node at (10,0) {${C_{k+l}}$};
\draw[-] (9.6,-0.2)--(9.6,-0.7);
\draw[-] (10,0.4)--(10,0.6);
\draw[-] (10.4,-0.2)--(10.4,-0.5);
\node at (9.8,-0.4) {$\cdot$};
\node at (10,-0.4) {$\cdot$};
\node at (10.2,-0.4) {$\cdot$};
\draw[-] (5.6,-0.7)--(9.6,-0.7);
\draw[-] (6.4,-0.5)--(7, -0.5);
\draw[-] (7.2, -0.5)--(9.5, -0.5);
\draw[-] (9.7,-0.5)--(10.4,-0.5);
\draw[-] (9.4, -0.5)--(9.4, -0.65);
\draw[-] (9.4, -0.75)--(9.4, -1.3);
\node at (8.1,-1.25) {$\cdot$};
\node at (8.2,-1.25) {$\cdot$};
\node at (8.3,-1.25) {$\cdot$};

\draw[-] (5, -.7)--(5, -1.3);
\node at (5, -1.3) {$\bullet$};

\draw[-] (4.4,-0.7)--(5.6, -0.7);
\end{tikzpicture} 
\end{equation*}
Here in the bottom we have $s$ strands on the left, $l$ strands on the right, and $k$ ``tails'' of the form $\begin{tikzpicture}[anchorbase,scale=1]
\draw[-] (0, 0.5) --(0,0);
\node at (0, 0) {$\bullet$};
\end{tikzpicture}  $ in the middle (these stand for $\eps^{\otimes k}$). This shows that the equality

 $$\phi_R \circledast \phi_S =  (z^*)^{\otimes (d_1+d_2)}\circ \mu_D \circ (\id_{\V^{\otimes s}} \otimes \eps^{\otimes k} \otimes \id_{\V^{\otimes l}})$$ 
 holds.

\InnaA{{\bf Step 2:}} We need to show that $(z^*)^{\otimes d}\circ \mu_X  = (z^*)^{\otimes d}\circ \mu_{X'} $ for any matrices $X$, $X'$ of same size (with $d$ rows) which are row-equivalent. Indeed, for any two such matrices there exists $A \in GL_d(\F_q)$ such that $X'=AX$, so by Proposition \ref{prop:compos_mu_A}, we have: $\mu_{X'} = \mu_A \circ \mu_X$. Thus
  \begin{align*}
  (z^*)^{\otimes d}\circ \mu_{X'} =  (z^*)^{\otimes d}\circ \mu_A \circ \mu_X  = (z^*)^{\otimes d}\circ \mu_X
  \end{align*}
where the last equality is due to Corollary \ref{cor:regular_system_of_eq}.
  
\InnaA{{\bf Step 3:}} In this step there is nothing to prove. 
  
\InnaA{{\bf Step 4:}}
  Let us denote by $D'_1, D'_2, \ldots, D'_{d_1+d_2}$ the rows of the matrix $D'$, and let $$ \widetilde{M} := \begin{bmatrix}                                                                                                                                        &G&|&M &|&H&                                                                                                                                             \end{bmatrix}.$$ We will denote by $\widetilde{r}$ the number of rows in $\widetilde{M}$ (so $\widetilde{r} = rk(\widetilde{M})=rk(M)=k-d(R, S)$). 
  
  Consider the morphism $(z^*)^{\otimes (d_1+d_2)}\circ \mu_{D'} \circ (\id_{\V^{\otimes s}} \otimes \eps^{\otimes k} \otimes \id_{\V^{\otimes l}})$. Diagrammatically, we can draw it as follows:
  \begin{equation}\label{eq:Step_4_compos_proof}
\begin{tikzpicture}[anchorbase,scale=1.5]
\draw[-] (-0.5,-0.2)--(-0.5,0.2)--(0.5,0.2)--(0.5,-0.2)--(-0.5,-0.2);
\node at (0,0) {${D'_1}$};

\draw[-] (-0.4,-0.2)--(-0.4,-0.9);
\draw[-] (0.4,-0.2)--(0.4,-0.5);
\draw[-] (0,-0.2)--(0,-0.7);

\draw[-] (0,0.2)--(0,0.5);
\node at (0,0.53) {$\circ$};

\node at (-0.25,-0.4) {$\cdot$};
\node at (-0.2,-0.4) {$\cdot$};
\node at (-0.15,-0.4) {$\cdot$};

\node at (0.25,-0.4) {$\cdot$};
\node at (0.2,-0.4) {$\cdot$};
\node at (0.15,-0.4) {$\cdot$};

\draw[-] (1.5,-0.2)--(1.5,0.2)--(2.5,0.2)--(2.5,-0.2)--(1.5,-0.2);
\node at (2,0) {${D'_2}$};

\draw[-] (1.6,-0.2)--(1.6,-0.9);
\draw[-] (2.4,-0.2)--(2.4,-0.5);
\draw[-] (2,-0.2)--(2,-0.7);

\draw[-] (2,0.2)--(2,0.5);
\node at (2,0.53) {$\circ$};

\node at (1.85,-0.4) {$\cdot$};
\node at (1.8,-0.4) {$\cdot$};
\node at (1.75,-0.4) {$\cdot$};

\node at (2.15,-0.4) {$\cdot$};
\node at (2.2,-0.4) {$\cdot$};
\node at (2.25,-0.4) {$\cdot$};
\node at (3.9,0) {$\cdot$};
\node at (4,0) {$\cdot$};
\node at (4.1,0) {$\cdot$};
\draw[-] (5.2,-0.2)--(5.2,0.2)--(6.8,0.2)--(6.8,-0.2)--(5.2,-0.2);
\node at (6.1,0) {${D'_{d_1+d_2}}$};

\draw[-] (5.6,-0.2)--(5.6,-0.9);
\draw[-] (6.4,-0.2)--(6.4,-0.5);
\draw[-] (6,-0.2)--(6,-0.7);

\draw[-] (6,0.2)--(6,0.5);
\node at (6,0.53) {$\circ$};

\node at (5.75,-0.4) {$\cdot$};
\node at (5.8,-0.4) {$\cdot$};
\node at (5.85,-0.4) {$\cdot$};

\node at (6.15,-0.4) {$\cdot$};
\node at (6.2,-0.4) {$\cdot$};
\node at (6.25,-0.4) {$\cdot$};

\draw[-] (-0.4,-0.9)--(5.6,-0.9);

\draw[-] (0,-0.7)--(1.55, -0.7);
\draw[-] (1.65,-0.7)--(5.55, -0.7);
\draw[-] (5.65,-0.7)--(6,-0.7);

\draw[-] (0.4,-0.5)--(1.55, -0.5);
\draw[-] (1.65, -0.5)--(1.95, -0.5);
\draw[-] (2.05, -0.5)--(5.55, -0.5);
\draw[-] (5.65,-0.5)--(5.95,-0.5);
\draw[-] (6.05,-0.5)--(6.4,-0.5);

\draw[-] (3, -0.9)--(3,-1.25);

\draw[-] (3.5, -0.7)--(3.5, -0.85);
\draw[-] (3.5, -0.95)--(3.5, -1.2);

\node at (3.5,-1.2) {$\bullet$};

\draw[-] (4, -0.5)--(4, -0.65);
\draw[-] (4, -0.75)--(4, -0.85);
\draw[-] (4, -0.95)--(4, -1.25);

\node at (3.15,-1.15) {$\cdot$};
\node at (3.2,-1.15) {$\cdot$};
\node at (3.25,-1.15) {$\cdot$};

\node at (3.75,-1.15) {$\cdot$};
\node at (3.8,-1.15) {$\cdot$};
\node at (3.85,-1.15) {$\cdot$};
\end{tikzpicture}
\end{equation}

Again, in the bottom of the diagram we have $s$ strands on the left, $l$ strands on the right, and $k$ ``tails'' of the form $\begin{tikzpicture}[anchorbase,scale=1]
\draw[-] (0, 0.5) --(0,0);
\node at (0, 0) {$\bullet$};
\end{tikzpicture}  $ in the middle (these stand for $\eps^{\otimes k}$).

Let us look at each of the rows $D'_i$ separately. For any row $D'_i$ having the form $[a_1, \ldots, a_s, 0, \ldots, 0, a'_1, \ldots a'_l]$, set $$D''_i := [a_1, \ldots, a_s, a'_1, \ldots a'_l].$$ 

By the properties of the zero map $z$, we have: $\dot{+}\circ(\id \otimes z) =  \dot{+}\circ(z\otimes \id) = \id $ (\ref{rel:F_q_lin_zero}). So we obtain: $$\mu_{D'_i} = \mu_{D''_i} \,\circ  \,\left(\id_{\V^{\otimes s}}\otimes (\eps^*)^{\otimes k} \otimes \id_{\V^{\otimes l}}  \right).$$ Diagrammatically, this is drawn as 
 \begin{equation*}
 \begin{tikzpicture}[anchorbase,scale=1.3]
\draw[-] (-1.4,0)--(-0.2,0);
\draw[-] (0.2,0)--(1.6,0);
\draw[-] (-0.2,-0.2)--(-0.2,0.2)--(0.2,0.2)--(0.2,-0.2)--(-0.2,-0.2);
\node at (-0,-0) {$\dot{+}$};
\draw[-] (0,0.2)--(0,0.4);

\draw[-] (-1.4,0)--(-1.4,-0.3);
\draw[-] (-0.7,0)--(-0.7,-0.3);
\draw[-] (0.6,0)--(0.6,-0.3);
\draw[-] (1.6,0)--(1.6,-0.3);

\node at (-0.05,-0.6) {$\cdot$};
\node at (0,-0.6) {$\cdot$};
\node at (0.05,-0.6) {$\cdot$};

\node at (0.95,-0.6) {$\cdot$};
\node at (1,-0.6) {$\cdot$};
\node at (1.05,-0.6) {$\cdot$};

\draw[-] (-1.4,-0.88)--(-1.4,-1.15);
\draw[-] (-0.7,-0.88)--(-0.7,-1.15);
\draw[-] (0.6,-0.88)--(0.6,-1.15);
\draw[-] (1.6,-0.88)--(1.6,-1.15);

\node[draw,circle] at (-1.4,-0.6) {$ {\scriptstyle a_1}$};
\node[draw,circle] at (-0.7,-0.6) {${\scriptstyle a_2}$};
\node[draw,circle] at (0.6,-0.6) {$ \scriptstyle 0$};
\node[draw,circle]  at (1.6,-0.6) {${\scriptstyle a'_l}$};
\end{tikzpicture} \quad = \quad 
 \begin{tikzpicture}[anchorbase,scale=1.3]
\draw[-] (-1.4,0)--(-0.2,0);
\draw[-] (0.2,0)--(1.6,0);
\draw[-] (-0.2,-0.2)--(-0.2,0.2)--(0.2,0.2)--(0.2,-0.2)--(-0.2,-0.2);
\node at (-0,-0) {$\dot{+}$};
\draw[-] (0,0.2)--(0,0.4);

\draw[-] (-1.4,0)--(-1.4,-0.3);
\draw[-] (-0.7,0)--(-0.7,-0.3);

\draw[-] (0.6,0)--(0.6,-0.25);
\draw[-] (1.6,0)--(1.6,-0.3);

\node at (-0.05,-0.6) {$\cdot$};
\node at (0,-0.6) {$\cdot$};
\node at (0.05,-0.6) {$\cdot$};

\node at (0.95,-0.6) {$\cdot$};
\node at (1,-0.6) {$\cdot$};
\node at (1.05,-0.6) {$\cdot$};

\draw[-] (-1.4,-0.88)--(-1.4,-1.15);
\draw[-] (-0.7,-0.88)--(-0.7,-1.15);
\draw[-] (0.6,-0.7)--(0.6,-1.15);
\draw[-] (1.6,-0.88)--(1.6,-1.15);

\node[draw,circle] at (-1.4,-0.6) {$ {\scriptstyle a_1}$};
\node[draw,circle] at (-0.7,-0.6) {${\scriptstyle a_2}$};

\node[draw,circle]  at (1.6,-0.6) {${\scriptstyle a'_l}$};

\node at (0.6,-0.35) {$\circ$};

\node at (0.6,-0.7) {$\bullet$};
\end{tikzpicture} 
\quad \InnaA{ \xlongequal{\text{\ref{rel:F_q_lin_zero}}}} \quad
  \begin{tikzpicture}[anchorbase,scale=1.3]
\draw[-] (-1.4,0)--(-0.2,0);
\draw[-] (0.2,0)--(1.2,0);
\draw[-] (-0.2,-0.2)--(-0.2,0.2)--(0.2,0.2)--(0.2,-0.2)--(-0.2,-0.2);
\node at (-0,-0) {$\dot{+}$};
\draw[-] (0,0.2)--(0,0.4);

\draw[-] (-1.4,0)--(-1.4,-0.3);
\draw[-] (-0.7,0)--(-0.7,-0.3);

\draw[-] (1.2,0)--(1.2,-0.3);

\node at (-0.05,-0.6) {$\cdot$};
\node at (0,-0.6) {$\cdot$};
\node at (0.05,-0.6) {$\cdot$};

\draw[-] (-1.4,-0.88)--(-1.4,-1.15);
\draw[-] (-0.7,-0.88)--(-0.7,-1.15);

\draw[-] (1.2,-0.88)--(1.2,-1.15);

\node[draw,circle] at (-1.4,-0.6) {$ {\scriptstyle a_1}$};
\node[draw,circle] at (-0.7,-0.6) {${\scriptstyle a_2}$};

\node[draw,circle]  at (1.2,-0.6) {${\scriptstyle a'_l}$};

\draw[-] (0.7, -0.7)--(0.7, -1.15);
\node at (0.7,-0.7) {$\bullet$};
\end{tikzpicture} 
\end{equation*}

Hence we obtain:

 \begin{equation*}
\begin{tikzpicture}[anchorbase,scale=1.5]
\draw[-] (-0.8,-0.2)--(-0.8,0.2)--(0.8,0.2)--(0.8,-0.2)--(-0.8,-0.2);
\node at (0,0) {${D'_i}$};

\draw[-] (-0.7,-0.2)--(-0.7,-0.7);
\draw[-] (0.7,-0.2)--(0.7,-0.7);
\draw[-] (0,-0.2)--(0,-0.7);
\draw[-] (0.1,-0.2)--(0.1,-0.7);
\draw[-] (-0.1,-0.2)--(-0.1,-0.7);

\draw[-] (0,0.2)--(0,0.5);
\node at (0,0.53) {$\circ$};

\node at (-0.45,-0.65) {$\cdot$};
\node at (-0.5,-0.65) {$\cdot$};
\node at (-0.55,-0.65) {$\cdot$};

\node at (0.45,-0.65) {$\cdot$};
\node at (0.5,-0.65) {$\cdot$};
\node at (0.55,-0.65) {$\cdot$};
\end{tikzpicture} \quad =  \quad \begin{tikzpicture}[anchorbase,scale=1.5]
\draw[-] (-0.8,-0.2)--(-0.8,0.2)--(0.8,0.2)--(0.8,-0.2)--(-0.8,-0.2);
\node at (0,0) {${D''_i}$};

\draw[-] (-0.7,-0.2)--(-0.7,-0.7);
\draw[-] (0.7,-0.2)--(0.7,-0.7);

\draw[-] (0,-0.5)--(0,-0.7);
\node at (0,-0.5) {$\bullet$};

\draw[-] (0.15,-0.5)--(0.15,-0.7);
\node at (0.15,-0.5) {$\bullet$};

\draw[-] (-0.15,-0.5)--(-0.15,-0.7);
\node at (-0.15,-0.5) {$\bullet$};

\draw[-] (0,0.2)--(0,0.5);
\node at (0,0.53) {$\circ$};

\node at (-0.45,-0.65) {$\cdot$};
\node at (-0.5,-0.65) {$\cdot$};
\node at (-0.55,-0.65) {$\cdot$};

\node at (0.45,-0.65) {$\cdot$};
\node at (0.5,-0.65) {$\cdot$};
\node at (0.55,-0.65) {$\cdot$};
\end{tikzpicture} 
\end{equation*}
On the right hand side of this equation, in the bottom we have $s$ strands on the left, $l$ strands on the right, and $k$ ``tails'' of the form $\begin{tikzpicture}[anchorbase,scale=1]
\draw[-] (0, 0.3) --(0,0);
\node at (0, 0.3) {$\bullet$};
\end{tikzpicture}  $ in the middle (these stand for $(\eps^*)^{\otimes k}$).

Now, recall that $\eps^*$ is a counit of $m^*$ (\ref{itm:str_rel_diag_bialg}): $$(\eps^*\otimes \id) \circ m^* = (\id \otimes\eps^* ) \circ m^* = \id.$$ So for any $p\geq 2$, the composition of
$(m^*)^{it}: \V\to \V^{\otimes p}$ and $\id \otimes \eps^*\otimes \id:  \V^{\otimes p} \to  \V^{\otimes (p-1)}$ gives $(m^*)^{it}: \V\to \V^{\otimes p-1}$ (no matter on which factor $\eps^*$ acts).

Diagrammatically, this means that
\begin{equation}\label{eq:Step_4_comult_counit}
\begin{tikzpicture}[anchorbase,scale=1.5]
\draw[-] (-0.8,0)--(0.8,0);
\draw[-] (-0.8,0)--(-0.8,0.3);
\draw[-] (-0.7,0)--(-0.7,0.3);

\draw[-] (0.05,0)--(0.05,0.3);

\draw[-] (0.2,0)--(0.2,0.3);
\node at (0.2,0.3) {$\bullet$};

\draw[-] (0.35,0)--(0.35,0.3);

\draw[-] (0.8,0)--(0.8,0.3);

\draw[-] (0,0)--(0,-0.3);

\node at (-0.15,0.15) {$\cdot$};
\node at (-0.2,0.15) {$\cdot$};
\node at (-0.25,0.15) {$\cdot$};
\node at (0.55,0.15) {$\cdot$};
\node at (0.6,0.15) {$\cdot$};
\node at (0.65,0.15) {$\cdot$};
\end{tikzpicture} \quad = \quad \begin{tikzpicture}[anchorbase,scale=1.5]
\draw[-] (-0.8,0)--(0.8,0);
\draw[-] (-0.8,0)--(-0.8,0.3);
\draw[-] (-0.7,0)--(-0.7,0.3);

\draw[-] (0.05,0)--(0.05,0.3);
\draw[-] (0.35,0)--(0.35,0.3);

\draw[-] (0.8,0)--(0.8,0.3);
\draw[-] (0,0)--(0,-0.3);

\node at (-0.15,0.15) {$\cdot$};
\node at (-0.2,0.15) {$\cdot$};
\node at (-0.25,0.15) {$\cdot$};
\node at (0.55,0.15) {$\cdot$};
\node at (0.6,0.15) {$\cdot$};
\node at (0.65,0.15) {$\cdot$};
\end{tikzpicture}
\end{equation}

Together, these computations mean that for any row $D'_i$ belonging to middle part $\begin{bmatrix}                                                                                                                                        &E&|&0 &|&F&                                                                                                                                             \end{bmatrix}$ of the matrix $D'$, we have:
 \begin{equation*}
\begin{tikzpicture}[anchorbase,scale=1.5]
\draw[-] (-0.8,-0.1)--(-0.8,0.3)--(0.8,0.3)--(0.8,-0.1)--(-0.8,-0.1);
\node at (0,0.1) {${D'_i}$};

\draw[-] (-0.7,-0.1)--(-0.7,-0.8);
\draw[-] (0.7,-0.1)--(0.7,-0.3);

\draw[-] (0.1,-0.1)--(0.1,-0.45);
\draw[-] (0,-0.1)--(0,-0.55);
\draw[-] (-0.1,-0.1)--(-0.1,-0.65);

\draw[-] (0,0.3)--(0,0.5);
\node at (0,0.53) {$\circ$};

\node at (-0.45,-0.2) {$\cdot$};
\node at (-0.5,-0.2) {$\cdot$};
\node at (-0.55,-0.2) {$\cdot$};

\node at (0.45,-0.2) {$\cdot$};
\node at (0.5,-0.2) {$\cdot$};
\node at (0.55,-0.2) {$\cdot$};


\draw[-] (-1.5,-0.3)--(-0.75, -0.3);
\draw[-] (-0.65, -0.3)--(-0.15,-0.3);
\draw[-] (-0.07,-0.3)--(-0.03,-0.3);
\draw[-] (0.03,-0.3)--(0.07,-0.3);
\draw[-] (0.13,-0.3)--(0.95,-0.3);
\draw[-] (1.03,-0.3)--(1.17,-0.3);
\draw[-] (1.23,-0.3)--(1.27,-0.3);
\draw[-] (1.33,-0.3)--(1.37, -0.3);
\draw[-] (1.43,-0.3)--(1.7,-0.3);
\draw[-] (1.6,-0.3)--(1.6,0);
\draw[-] (-1.5,0)--(-1.5,-0.3);

\draw[-] (-1.7,-0.45)--(-0.75, -0.45);
\draw[-] (-0.65, -0.45)--(-0.15,-0.45);
\draw[-] (-0.07,-0.45)--(-0.03,-0.45);
\draw[-] (0.05,-0.45)--(0.95,-0.45);
\draw[-] (1.03,-0.45)--(1.17,-0.45);
\draw[-] (1.23,-0.45)--(1.27,-0.45);
\draw[-] (1.33,-0.45)--(1.7,-0.45);
\draw[-] (1.4,-0.45)--(1.4,0);
\draw[-] (-1.7,0)--(-1.7,-0.45);

\draw[-] (-1.8,-0.55)--(-0.75, -0.55);
\draw[-] (-0.65, -0.55)--(-0.15,-0.55);
\draw[-] (-0.05,-0.55)--(0.95,-0.55);
\draw[-] (1.03,-0.55)--(1.17,-0.55);
\draw[-] (1.23,-0.55)--(1.7,-0.55);
\draw[-] (1.3,-0.55)--(1.3,0);
\draw[-] (-1.8,0)--(-1.8,-0.55);

\draw[-] (-1.9,-0.65)--(-0.75, -0.65);
\draw[-] (-0.65, -0.65)--(0.95,-0.65);
\draw[-] (1.03,-0.65)--(1.7,-0.65);
\draw[-] (1.2,-0.65)--(1.2,0);
\draw[-] (-1.9,0)--(-1.9,-0.65);

\draw[-] (-2.1,-0.8)--(1.7,-0.8);
\draw[-] (-2.1,0)--(-2.1,-0.8);
\draw[-] (1,-0.8)--(1,0);
\end{tikzpicture} \quad =  
\begin{tikzpicture}[anchorbase,scale=1.5]
\draw[-] (-0.8,-0.1)--(-0.8,0.3)--(0.8,0.3)--(0.8,-0.1)--(-0.8,-0.1);
\node at (0,0.1) {${D''_i}$};

\draw[-] (-0.7,-0.1)--(-0.7,-0.8);
\draw[-] (0.7,-0.1)--(0.7,-0.3);

\draw[-] (0,0.3)--(0,0.5);
\node at (0,0.53) {$\circ$};

\node at (-0.45,-0.2) {$\cdot$};
\node at (-0.5,-0.2) {$\cdot$};
\node at (-0.55,-0.2) {$\cdot$};

\node at (0.45,-0.2) {$\cdot$};
\node at (0.5,-0.2) {$\cdot$};
\node at (0.55,-0.2) {$\cdot$};


\draw[-] (-1.5,-0.3)--(-0.75, -0.3);
\draw[-] (-0.65, -0.3)--(0.95,-0.3);
\draw[-] (1.03,-0.3)--(1.17,-0.3);
\draw[-] (1.23,-0.3)--(1.27,-0.3);
\draw[-] (1.33,-0.3)--(1.37, -0.3);
\draw[-] (1.43,-0.3)--(1.7,-0.3);
\draw[-] (1.6,-0.3)--(1.6,0);
\draw[-] (-1.5,0)--(-1.5,-0.3);

\draw[-] (-1.7,-0.45)--(-0.75, -0.45);
\draw[-] (-0.65, -0.45)--(0.95,-0.45);
\draw[-] (1.03,-0.45)--(1.17,-0.45);
\draw[-] (1.23,-0.45)--(1.27,-0.45);
\draw[-] (1.33,-0.45)--(1.7,-0.45);
\draw[-] (1.4,-0.45)--(1.4,0);
\draw[-] (-1.7,0)--(-1.7,-0.45);

\draw[-] (-1.8,-0.55)--(-0.75, -0.55);
\draw[-] (-0.65, -0.55)--(0.95,-0.55);
\draw[-] (1.03,-0.55)--(1.17,-0.55);
\draw[-] (1.23,-0.55)--(1.7,-0.55);
\draw[-] (1.3,-0.55)--(1.3,0);
\draw[-] (-1.8,0)--(-1.8,-0.55);

\draw[-] (-1.9,-0.65)--(-0.75, -0.65);
\draw[-] (-0.65, -0.65)--(0.95,-0.65);
\draw[-] (1.03,-0.65)--(1.7,-0.65);
\draw[-] (1.2,-0.65)--(1.2,0);
\draw[-] (-1.9,0)--(-1.9,-0.65);

\draw[-] (-2.1,-0.8)--(1.7,-0.8);
\draw[-] (-2.1,0)--(-2.1,-0.8);
\draw[-] (1,-0.8)--(1,0);
\end{tikzpicture}
\end{equation*}

Similarly, for any zeroes row $D'_i = [0, \ldots, 0]$ of $D'$, we have:

 \begin{equation*}
\begin{tikzpicture}[anchorbase,scale=1.5]
\draw[-] (-0.8,-0.1)--(-0.8,0.3)--(0.8,0.3)--(0.8,-0.1)--(-0.8,-0.1);
\node at (0,0.1) {${[0,\ldots,0]}$};

\draw[-] (-0.7,-0.1)--(-0.7,-0.8);
\draw[-] (0.7,-0.1)--(0.7,-0.3);

\draw[-] (0.1,-0.1)--(0.1,-0.45);
\draw[-] (0,-0.1)--(0,-0.55);
\draw[-] (-0.1,-0.1)--(-0.1,-0.65);

\draw[-] (0,0.3)--(0,0.5);
\node at (0,0.53) {$\circ$};

\node at (-0.45,-0.2) {$\cdot$};
\node at (-0.5,-0.2) {$\cdot$};
\node at (-0.55,-0.2) {$\cdot$};

\node at (0.45,-0.2) {$\cdot$};
\node at (0.5,-0.2) {$\cdot$};
\node at (0.55,-0.2) {$\cdot$};


\draw[-] (-1.5,-0.3)--(-0.75, -0.3);
\draw[-] (-0.65, -0.3)--(-0.15,-0.3);
\draw[-] (-0.07,-0.3)--(-0.03,-0.3);
\draw[-] (0.03,-0.3)--(0.07,-0.3);
\draw[-] (0.13,-0.3)--(0.95,-0.3);
\draw[-] (1.03,-0.3)--(1.17,-0.3);
\draw[-] (1.23,-0.3)--(1.27,-0.3);
\draw[-] (1.33,-0.3)--(1.37, -0.3);
\draw[-] (1.43,-0.3)--(1.7,-0.3);
\draw[-] (1.6,-0.3)--(1.6,0.5);
\draw[-] (-1.5,0.5)--(-1.5,-0.3);

\draw[-] (-1.7,-0.45)--(-0.75, -0.45);
\draw[-] (-0.65, -0.45)--(-0.15,-0.45);
\draw[-] (-0.07,-0.45)--(-0.03,-0.45);
\draw[-] (0.05,-0.45)--(0.95,-0.45);
\draw[-] (1.03,-0.45)--(1.17,-0.45);
\draw[-] (1.23,-0.45)--(1.27,-0.45);
\draw[-] (1.33,-0.45)--(1.7,-0.45);
\draw[-] (1.4,-0.45)--(1.4,0.5);
\draw[-] (-1.7,0.5)--(-1.7,-0.45);

\draw[-] (-1.8,-0.55)--(-0.75, -0.55);
\draw[-] (-0.65, -0.55)--(-0.15,-0.55);
\draw[-] (-0.05,-0.55)--(0.95,-0.55);
\draw[-] (1.03,-0.55)--(1.17,-0.55);
\draw[-] (1.23,-0.55)--(1.7,-0.55);
\draw[-] (1.3,-0.55)--(1.3,0.5);
\draw[-] (-1.8,0.5)--(-1.8,-0.55);

\draw[-] (-1.9,-0.65)--(-0.75, -0.65);
\draw[-] (-0.65, -0.65)--(0.95,-0.65);
\draw[-] (1.03,-0.65)--(1.7,-0.65);
\draw[-] (1.2,-0.65)--(1.2,0.5);
\draw[-] (-1.9,0.5)--(-1.9,-0.65);

\draw[-] (-2.1,-0.8)--(1.7,-0.8);
\draw[-] (-2.1,0.5)--(-2.1,-0.8);
\draw[-] (1,-0.8)--(1,0.5);
\end{tikzpicture} \quad =  \quad  
\begin{tikzpicture}[anchorbase,scale=1.5]


\draw[-] (-1.5,-0.3)--(0.95,-0.3);
\draw[-] (1.03,-0.3)--(1.17,-0.3);
\draw[-] (1.23,-0.3)--(1.27,-0.3);
\draw[-] (1.33,-0.3)--(1.37, -0.3);
\draw[-] (1.43,-0.3)--(1.7,-0.3);
\draw[-] (1.6,-0.3)--(1.6,0.5);
\draw[-] (-1.5,0.5)--(-1.5,-0.3);

\draw[-] (-1.7,-0.45)--(0.95,-0.45);
\draw[-] (1.03,-0.45)--(1.17,-0.45);
\draw[-] (1.23,-0.45)--(1.27,-0.45);
\draw[-] (1.33,-0.45)--(1.7,-0.45);
\draw[-] (1.4,-0.45)--(1.4,0.5);
\draw[-] (-1.7,0.5)--(-1.7,-0.45);

\draw[-] (-1.8,-0.55)--(0.95,-0.55);
\draw[-] (1.03,-0.55)--(1.17,-0.55);
\draw[-] (1.23,-0.55)--(1.7,-0.55);
\draw[-] (1.3,-0.55)--(1.3,0.5);
\draw[-] (-1.8,0.5)--(-1.8,-0.55);

\draw[-] (-1.9,-0.65)--(0.95,-0.65);
\draw[-] (1.03,-0.65)--(1.7,-0.65);
\draw[-] (1.2,-0.65)--(1.2,0.5);
\draw[-] (-1.9,0.5)--(-1.9,-0.65);

\draw[-] (-2.1,-0.8)--(1.7,-0.8);
\draw[-] (-2.1,0.5)--(-2.1,-0.8);
\draw[-] (1,-0.8)--(1,0.5);
\end{tikzpicture}
\end{equation*}

In other words, we can just erase the box corresponding to a zeroes row, together with all the strings leading into in, and obtain an equivalent diagram.

Summing up, Diagram \eqref{eq:Step_4_compos_proof} is equivalent to the following diagram:

\begin{equation*}
\begin{tikzpicture}[anchorbase,scale=1.5]
\draw[-] (-0.5,-0.2)--(-0.5,0.2)--(0.5,0.2)--(0.5,-0.2)--(-0.5,-0.2);
\node at (0,0) {${\widetilde{M}_1}$};

\draw[-] (-0.4,-0.2)--(-0.4,-0.9);
\draw[-] (0.4,-0.2)--(0.4,-0.5);
\draw[-] (0,-0.2)--(0,-0.7);

\draw[-] (0,0.2)--(0,0.5);
\node at (0,0.53) {$\circ$};

\node at (-0.25,-0.4) {$\cdot$};
\node at (-0.2,-0.4) {$\cdot$};
\node at (-0.15,-0.4) {$\cdot$};

\node at (0.25,-0.4) {$\cdot$};
\node at (0.2,-0.4) {$\cdot$};
\node at (0.15,-0.4) {$\cdot$};

\draw[-] (1.5,-0.2)--(1.5,0.2)--(2.5,0.2)--(2.5,-0.2)--(1.5,-0.2);
\node at (2,0) {${\widetilde{M}_2}$};

\draw[-] (1.6,-0.2)--(1.6,-0.9);
\draw[-] (2.4,-0.2)--(2.4,-0.5);
\draw[-] (2,-0.2)--(2,-0.7);

\draw[-] (2,0.2)--(2,0.5);
\node at (2,0.53) {$\circ$};

\node at (1.85,-0.4) {$\cdot$};
\node at (1.8,-0.4) {$\cdot$};
\node at (1.75,-0.4) {$\cdot$};

\node at (2.15,-0.4) {$\cdot$};
\node at (2.2,-0.4) {$\cdot$};
\node at (2.25,-0.4) {$\cdot$};
\node at (3.9,0) {$\cdot$};
\node at (4,0) {$\cdot$};
\node at (4.1,0) {$\cdot$};

\draw[-] (5.5,-0.2)--(5.5,0.2)--(6.5,0.2)--(6.5,-0.2)--(5.5,-0.2);
\node at (6,0) {${\widetilde{M}_{\widetilde{r}}}$};

\draw[-] (5.6,-0.2)--(5.6,-0.9);
\draw[-] (6.4,-0.2)--(6.4,-0.5);
\draw[-] (6,-0.2)--(6,-0.7);

\draw[-] (6,0.2)--(6,0.5);
\node at (6,0.53) {$\circ$};

\node at (5.75,-0.4) {$\cdot$};
\node at (5.8,-0.4) {$\cdot$};
\node at (5.85,-0.4) {$\cdot$};

\node at (6.15,-0.4) {$\cdot$};
\node at (6.2,-0.4) {$\cdot$};
\node at (6.25,-0.4) {$\cdot$};

\draw[-] (7.5,-0.2)--(7.5,0.2)--(8.5,0.2)--(8.5,-0.2)--(7.5,-0.2);
\node at (8,0) {${B\star C}$};

\draw[-] (7.6,-0.2)--(7.6,-0.9);

\draw[-] (8.4,-0.2)--(8.4,-0.5);

\draw[-] (7.8,0.2)--(7.8,0.5);
\node at (7.8,0.53) {$\circ$};
\node at (7.95,0.5) {$\cdot$};
\node at (8,0.5) {$\cdot$};
\node at (8.05,0.5) {$\cdot$};
\draw[-] (8.2,0.2)--(8.2,0.5);
\node at (8.2,0.53) {$\circ$};

\node at (7.95,-0.4) {$\cdot$};
\node at (8,-0.4) {$\cdot$};
\node at (8.05,-0.4) {$\cdot$};

\draw[-] (-0.4,-0.9)--(7.6,-0.9);

\draw[-] (0,-0.7)--(1.55, -0.7);
\draw[-] (1.65,-0.7)--(5.55, -0.7);
\draw[-] (5.65,-0.7)--(6,-0.7);

\draw[-] (0.4,-0.5)--(1.55, -0.5);
\draw[-] (1.65, -0.5)--(1.95, -0.5);
\draw[-] (2.05, -0.5)--(5.55, -0.5);
\draw[-] (5.65,-0.5)--(5.95,-0.5);
\draw[-] (6.05,-0.5)--(7.55, -0.5);
\draw[-] (7.65,-0.5)--(8.4,-0.5);

\draw[-] (3, -0.9)--(3,-1.25);

\draw[-] (3.5, -0.7)--(3.5, -0.85);
\draw[-] (3.5, -0.95)--(3.5, -1.2);

\node at (3.5,-1.2) {$\bullet$};

\draw[-] (4, -0.5)--(4, -0.65);
\draw[-] (4, -0.75)--(4, -0.85);
\draw[-] (4, -0.95)--(4, -1.25);

\node at (3.15,-1.15) {$\cdot$};
\node at (3.2,-1.15) {$\cdot$};
\node at (3.25,-1.15) {$\cdot$};

\node at (3.75,-1.15) {$\cdot$};
\node at (3.8,-1.15) {$\cdot$};
\node at (3.85,-1.15) {$\cdot$};
\end{tikzpicture}
\end{equation*}
The number of the bottom strands and ``tails'' is as in \eqref{eq:Step_4_compos_proof}. Now let us consider the matrix $M$. Since the matrix $M$ is in row canonical form, there is an entry $1$ in each row of $M$ so that the remaining entries in this column of $M$ are zeroes. 
For each $i$, we denote by $j_i$ the smallest index of the column such that $M_{i, j_i} =1$, $M_{i, j'}=0$ for all $j'<j_i$.

Then the above argument shows that the $j_i$-th factor $\eps^*:\triv \to \V$ (drawn as the $j_i$-th fork with a ``tail'' of the form $\begin{tikzpicture}[anchorbase,scale=1]
\draw[-] (0, 0.5) --(0,0);
\node at (0, 0) {$\bullet$};
\end{tikzpicture}  $) only needs to be connected to the rectangle representing $\mu_{\widetilde{M}_i}$. 

This implies that we can redraw the above diagram as

\begin{equation*}
\begin{tikzpicture}[anchorbase,scale=1.5]
\draw[-] (-0.5,-0.2)--(-0.5,0.2)--(0.5,0.2)--(0.5,-0.2)--(-0.5,-0.2);
\node at (0,0) {${\widetilde{M}_1}$};

\draw[-] (0.4,-0.2)--(0.4,-0.5);
\draw[-] (-0.1,-0.2)--(-0.1,-0.6);
\node at (-0.1,-0.6) {$\bullet$};
\draw[-] (0.1,-0.2)--(0.1,-0.7);
\draw[-] (-0.4,-0.2)--(-0.4,-0.9);

\draw[-] (0,0.2)--(0,0.5);
\node at (0,0.53) {$\circ$};

\node at (-0.25,-0.4) {$\cdot$};
\node at (-0.2,-0.4) {$\cdot$};
\node at (-0.3,-0.4) {$\cdot$};

\node at (0.25,-0.4) {$\cdot$};
\node at (0.2,-0.4) {$\cdot$};
\node at (0.3,-0.4) {$\cdot$};

\draw[-] (1.5,-0.2)--(1.5,0.2)--(2.5,0.2)--(2.5,-0.2)--(1.5,-0.2);
\node at (2,0) {${\widetilde{M}_2}$};

\draw[-] (1.6,-0.2)--(1.6,-0.9);
\draw[-] (1.9,-0.2)--(1.9,-0.6);
\node at (1.9,-0.6) {$\bullet$};
\draw[-] (2.1,-0.2)--(2.1,-0.7);
\draw[-] (2.4,-0.2)--(2.4,-0.5);

\draw[-] (2,0.2)--(2,0.5);
\node at (2,0.53) {$\circ$};

\node at (1.7,-0.4) {$\cdot$};
\node at (1.8,-0.4) {$\cdot$};
\node at (1.75,-0.4) {$\cdot$};

\node at (2.3,-0.4) {$\cdot$};
\node at (2.2,-0.4) {$\cdot$};
\node at (2.25,-0.4) {$\cdot$};
\node at (3.9,0) {$\cdot$};
\node at (4,0) {$\cdot$};
\node at (4.1,0) {$\cdot$};

\draw[-] (5.5,-0.2)--(5.5,0.2)--(6.5,0.2)--(6.5,-0.2)--(5.5,-0.2);
\node at (6,0) {${\widetilde{M}_{\widetilde{r}}}$};

\draw[-] (5.6,-0.2)--(5.6,-0.9);
\draw[-] (5.9,-0.2)--(5.9,-0.6);
\node at (5.9,-0.6) {$\bullet$};
\draw[-] (6.1,-0.2)--(6.1,-0.7);
\draw[-] (6.4,-0.2)--(6.4,-0.5);

\draw[-] (6,0.2)--(6,0.5);
\node at (6,0.53) {$\circ$};

\node at (5.75,-0.4) {$\cdot$};
\node at (5.8,-0.4) {$\cdot$};
\node at (5.7,-0.4) {$\cdot$};

\node at (6.3,-0.4) {$\cdot$};
\node at (6.2,-0.4) {$\cdot$};
\node at (6.25,-0.4) {$\cdot$};

\draw[-] (7.5,-0.2)--(7.5,0.2)--(8.5,0.2)--(8.5,-0.2)--(7.5,-0.2);
\node at (8,0) {${B\star C}$};

\draw[-] (7.6,-0.2)--(7.6,-0.9);

\draw[-] (8.4,-0.2)--(8.4,-0.5);

\draw[-] (7.8,0.2)--(7.8,0.5);
\node at (7.8,0.53) {$\circ$};
\node at (7.95,0.5) {$\cdot$};
\node at (8,0.5) {$\cdot$};
\node at (8.05,0.5) {$\cdot$};
\draw[-] (8.2,0.2)--(8.2,0.5);
\node at (8.2,0.53) {$\circ$};

\node at (7.95,-0.4) {$\cdot$};
\node at (8,-0.4) {$\cdot$};
\node at (8.05,-0.4) {$\cdot$};

\draw[-] (-0.4,-0.9)--(7.6,-0.9);

\draw[-] (0.1,-0.7)--(1.55, -0.7);
\draw[-] (1.65,-0.7)--(5.55, -0.7);
\draw[-] (5.65,-0.7)--(6.1,-0.7);

\draw[-] (0.4,-0.5)--(1.55, -0.5);
\draw[-] (1.65, -0.5)--(1.85, -0.5);
\draw[-] (1.95, -0.5)--(2.05, -0.5);
\draw[-] (2.15, -0.5)--(5.55, -0.5);
\draw[-] (5.65,-0.5)--(5.85,-0.5);
\draw[-] (5.95,-0.5)--(6.05,-0.5);
\draw[-] (6.15,-0.5)--(7.55, -0.5);
\draw[-] (7.65,-0.5)--(8.4,-0.5);

\draw[-] (3, -0.9)--(3,-1.25);

\draw[-] (3.5, -0.7)--(3.5, -0.85);
\draw[-] (3.5, -0.95)--(3.5, -1.2);

\node at (3.5,-1.2) {$\bullet$};

\draw[-] (4, -0.5)--(4, -0.65);
\draw[-] (4, -0.75)--(4, -0.85);
\draw[-] (4, -0.95)--(4, -1.25);

\node at (3.15,-1.15) {$\cdot$};
\node at (3.2,-1.15) {$\cdot$};
\node at (3.25,-1.15) {$\cdot$};

\node at (3.75,-1.15) {$\cdot$};
\node at (3.8,-1.15) {$\cdot$};
\node at (3.85,-1.15) {$\cdot$};
\end{tikzpicture}
\end{equation*}

Now in the bottom of the diagram we have $s$ strands on the left, $l$ strands on the right, and $k-\widetilde{r} = d(R, S)$ ``tails'' of the form $\begin{tikzpicture}[anchorbase,scale=1]
\draw[-] (0, 0.5) --(0,0);
\node at (0, 0) {$\bullet$};
\end{tikzpicture}  $ in the middle. The row canonical form of $M$ ensures that for each $i=1, \ldots, \widetilde{r}$ we have: 
\begin{equation*}
\begin{tikzpicture}[anchorbase,scale=1.5]
\draw[-] (-0.5,-0.2)--(-0.5,0.2)--(0.5,0.2)--(0.5,-0.2)--(-0.5,-0.2);
\node at (0,0) {${\widetilde{M}_i}$};

\draw[-] (0.4,-0.2)--(0.4,-0.9);
\draw[-] (-0.1,-0.2)--(-0.1,-0.6);
\node at (-0.1,-0.6) {$\bullet$};
\draw[-] (0.1,-0.2)--(0.1,-0.9);
\draw[-] (-0.4,-0.2)--(-0.4,-0.9);

\draw[-] (0,0.2)--(0,0.5);
\node at (0,0.53) {$\circ$};

\node at (-0.25,-0.4) {$\cdot$};
\node at (-0.2,-0.4) {$\cdot$};
\node at (-0.3,-0.4) {$\cdot$};

\node at (0.25,-0.4) {$\cdot$};
\node at (0.2,-0.4) {$\cdot$};
\node at (0.3,-0.4) {$\cdot$};
\end{tikzpicture} \quad =  \quad
\begin{tikzpicture}[anchorbase,scale=1.5]
\draw[-] (-1.4,0)--(-0.2,0);
\draw[-] (0.2,0)--(2.1,0);
\draw[-] (-0.2,-0.2)--(-0.2,0.2)--(0.2,0.2)--(0.2,-0.2)--(-0.2,-0.2);
\node at (-0,-0) {$\dot{+}$};
\draw[-] (-1.4,0)--(-1.4,-0.27);
\draw[-] (-0.7,0)--(-0.7,-0.27);
\draw[-] (0.7,0)--(0.7,-0.38);
\draw[-] (1.4,0)--(1.4,-0.38);
\draw[-] (2.1,0)--(2.1,-0.38);

\draw[-] (0,0.2)--(0,0.4);
\node at (0,0.43) {$\circ$};

\node at (-0.05,-0.6) {$\cdot$};
\node at (0,-0.6) {$\cdot$};
\node at (0.05,-0.6) {$\cdot$};
\draw[-] (-1.4,-0.93)--(-1.4,-1.15);
\draw[-] (-0.7,-0.93)--(-0.7,-1.15);
\draw[-] (0.7,-0.82)--(0.7,-1.05);
\draw[-] (1.4,-0.82)--(1.4,-1.15);
\draw[-] (2.1,-0.82)--(2.1,-1.15);
\node[draw,circle] at (-1.4,-0.6) {$ {\scriptstyle \widetilde{M}_{i,1}}$};
\node[draw,circle] at (-0.7,-0.6) {${\scriptstyle \widetilde{M}_{i,2}}$};
\node[draw,circle] at (0.7,-0.6) {$\scriptstyle 1$};
\node at (1.4,-0.6) {$\ldots$};
\node at (2.1,-0.6) {$\ldots$};
\node at (0.7,-1.05) {$\bullet$};
\end{tikzpicture}
\InnaA{\quad \xlongequal{\text{\cref{lem:eps_star_mu_A}}}\quad}
\begin{tikzpicture}[anchorbase,scale=1.5]
\draw[-] (0.7,0.5)--(0.7,-0.7);
\node at (0.7,0.5) {$\bullet$};

\node at (-0.05,-0.6) {$\cdot$};
\node at (0,-0.6) {$\cdot$};
\node at (0.05,-0.6) {$\cdot$};

\draw[-] (0.5,0.5)--(0.5,-0.7);
\node at (0.5,0.5) {$\bullet$};

\draw[-] (-0.5,0.5)--(-0.5,-0.7);
\node at (-0.5,0.5) {$\bullet$};

\draw[-] (-0.7,0.5)--(-0.7,-0.7);
\node at (-0.7,0.5) {$\bullet$};
\end{tikzpicture}
\end{equation*}

Hence Diagram \eqref{eq:Step_4_compos_proof} is equivalent to the diagram

\begin{equation*}
\begin{tikzpicture}[anchorbase,scale=1.3]

\draw[-] (1.4,-0.2)--(1.4,-0.5);
\draw[-] (1.1,-0.2)--(1.1,-0.7);
\draw[-] (0.8,-0.2)--(0.8,-0.9);
\node at (0.8,-0.2) {$\bullet$};
\node at (1.1,-0.2) {$\bullet$};
\node at (1.4,-0.2) {$\bullet$};

\node at (1,-0.4) {$\cdot$};
\node at (0.9,-0.4) {$\cdot$};
\node at (0.95,-0.4) {$\cdot$};

\node at (1.25,-0.4) {$\cdot$};
\node at (1.2,-0.4) {$\cdot$};
\node at (1.3,-0.4) {$\cdot$};


\draw[-] (1.8,-0.2)--(1.8,-0.9);
\draw[-] (2.1,-0.2)--(2.1,-0.7);
\draw[-] (2.4,-0.2)--(2.4,-0.5);
\node at (1.8,-0.2) {$\bullet$};
\node at (2.1,-0.2) {$\bullet$};
\node at (2.4,-0.2) {$\bullet$};

\node at (2,-0.4) {$\cdot$};
\node at (1.9,-0.4) {$\cdot$};
\node at (1.95,-0.4) {$\cdot$};

\node at (2.3,-0.4) {$\cdot$};
\node at (2.2,-0.4) {$\cdot$};
\node at (2.25,-0.4) {$\cdot$};
\node at (2.9,-0.2) {$\cdot$};
\node at (3,-0.2) {$\cdot$};
\node at (3.1,-0.2) {$\cdot$};


\draw[-] (3.6,-0.2)--(3.6,-0.9);
\draw[-] (4.2,-0.2)--(4.2,-0.5);
\draw[-] (3.9,-0.2)--(3.9,-0.7);
\node at (3.6,-0.2) {$\bullet$};
\node at (3.9,-0.2) {$\bullet$};
\node at (4.2,-0.2) {$\bullet$};

\node at (3.75,-0.4) {$\cdot$};
\node at (3.8,-0.4) {$\cdot$};
\node at (3.7,-0.4) {$\cdot$};

\node at (4,-0.4) {$\cdot$};
\node at (4.05,-0.4) {$\cdot$};
\node at (4.1,-0.4) {$\cdot$};

\draw[-] (4.5,-0.2)--(4.5,0.2)--(5.5,0.2)--(5.5,-0.2)--(4.5,-0.2);
\node at (5,0) {${B\star C}$};

\draw[-] (4.6,-0.2)--(4.6,-0.9);

\draw[-] (5.4,-0.2)--(5.4,-0.5);

\draw[-] (4.8,0.2)--(4.8,0.5);
\node at (4.8,0.53) {$\circ$};
\node at (4.95,0.4) {$\cdot$};
\node at (5,0.4) {$\cdot$};
\node at (5.05,0.4) {$\cdot$};
\draw[-] (5.2,0.2)--(5.2,0.5);
\node at (5.2,0.53) {$\circ$};

\node at (4.95,-0.4) {$\cdot$};
\node at (5,-0.4) {$\cdot$};
\node at (5.05,-0.4) {$\cdot$};

\draw[-] (0.8,-0.9)--(4.6,-0.9);

\draw[-] (1.1,-0.7)--(1.75, -0.7);
\draw[-] (1.85,-0.7)--(3.55, -0.7);
\draw[-] (3.65,-0.7)--(3.9,-0.7);

\draw[-] (1.4,-0.5)--(1.75, -0.5);
\draw[-] (1.85, -0.5)--(2.05, -0.5);
\draw[-] (2.15, -0.5)--(3.55, -0.5);
\draw[-] (3.65,-0.5)--(3.85,-0.5);
\draw[-] (3.95,-0.5)--(4.55, -0.5);
\draw[-] (4.65,-0.5)--(5.4,-0.5);

\draw[-] (2.6, -0.9)--(2.6,-1.25);

\draw[-] (3, -0.7)--(3, -0.85);
\draw[-] (3, -0.95)--(3, -1.2);

\node at (3,-1.2) {$\bullet$};

\draw[-] (3.4, -0.5)--(3.4, -0.65);
\draw[-] (3.4, -0.75)--(3.4, -0.85);
\draw[-] (3.4, -0.95)--(3.4, -1.25);

\node at (2.85,-1.15) {$\cdot$};
\node at (2.8,-1.15) {$\cdot$};
\node at (2.75,-1.15) {$\cdot$};

\node at (3.15,-1.15) {$\cdot$};
\node at (3.2,-1.15) {$\cdot$};
\node at (3.25,-1.15) {$\cdot$};
\end{tikzpicture}
\InnaA{\quad \xlongequal{\text{\eqref{eq:Step_4_comult_counit}}}\quad} 
\begin{tikzpicture}[anchorbase,scale=1.3]

\draw[-] (2.5,-0.2)--(2.5,0.2)--(5.5,0.2)--(5.5,-0.2)--(2.5,-0.2);
\node at (4,0) {${B\star C}$};

\draw[-] (2.6,-0.2)--(2.6,-1);
\draw[-] (2.8,-0.2)--(2.8,-1);
\draw[-] (3,-0.2)--(3,-1);

\draw[-] (5,-0.2)--(5,-1);
\draw[-] (5.2,-0.2)--(5.2,-1);
\draw[-] (5.4,-0.2)--(5.4,-1);

\draw[-] (3.8,0.2)--(3.8,0.5);
\node at (3.8,0.53) {$\circ$};
\node at (3.95,0.4) {$\cdot$};
\node at (4,0.4) {$\cdot$};
\node at (4.05,0.4) {$\cdot$};
\draw[-] (4.2,0.2)--(4.2,0.5);
\node at (4.2,0.53) {$\circ$};

\draw[-] (3.75, -0.4)--(3.75, -0.9);
\node at (3.75,-0.4) {$\bullet$};
\node at (3.75,-0.9) {$\bullet$};

\draw[-] (4.25, -0.4)--(4.25, -0.9);
\node at (4.25,-0.4) {$\bullet$};
\node at (4.25,-0.9) {$\bullet$};

\node at (3.3,-0.8) {$\cdot$};
\node at (3.4,-0.8) {$\cdot$};
\node at (3.5,-0.8) {$\cdot$};
\node at (3.9,-0.8) {$\cdot$};
\node at (4,-0.8) {$\cdot$};
\node at (4.1,-0.8) {$\cdot$};
%
\node at (4.5,-0.8) {$\cdot$};
\node at (4.6,-0.8) {$\cdot$};
\node at (4.7,-0.8) {$\cdot$};
\end{tikzpicture} \quad = \quad 
\begin{tikzpicture}[anchorbase,scale=1.3]

\draw[-] (4.5,-0.2)--(4.5,0.2)--(5.5,0.2)--(5.5,-0.2)--(4.5,-0.2);
\node at (5,0) {${B\star C}$};

\draw[-] (4.6,-0.2)--(4.6,-1);
\draw[-] (4.7,-0.2)--(4.7,-1);
\draw[-] (4.8,-0.2)--(4.8,-1);

\draw[-] (5.2,-0.2)--(5.2,-1);
\draw[-] (5.3,-0.2)--(5.3,-1);
\draw[-] (5.4,-0.2)--(5.4,-1);

\draw[-] (4.8,0.2)--(4.8,0.5);
\node at (4.8,0.53) {$\circ$};
\node at (4.95,0.4) {$\cdot$};
\node at (5,0.4) {$\cdot$};
\node at (5.05,0.4) {$\cdot$};
\draw[-] (5.2,0.2)--(5.2,0.5);
\node at (5.2,0.53) {$\circ$};

\node at (4.9,-0.8) {$\cdot$};
\node at (5,-0.8) {$\cdot$};
\node at (5.1,-0.8) {$\cdot$};

\draw[-] (6, 0.4)--(6, -0.9);
\node at (6,0.4) {$\bullet$};
\node at (6,-0.9) {$\bullet$};

\node at (6.2,-0.8) {$\cdot$};
\node at (6.25,-0.8) {$\cdot$};
\node at (6.3,-0.8) {$\cdot$};

\draw[-] (6.5, 0.4)--(6.5, -0.9);
\node at (6.5,0.4) {$\bullet$};
\node at (6.5,-0.9) {$\bullet$};
\end{tikzpicture} 
\end{equation*}
In the bottom of this diagram we have $s+l$ strands on the left, and $k-\widetilde{r} = d(R, S)$ strings of the form $\begin{tikzpicture}[anchorbase,scale=1]
\node at (0, 0.5) {$\bullet$};
\draw[-] (0, 0.5) --(0,0);
\node at (0, 0) {$\bullet$};
\end{tikzpicture}  $ on the right. The diagram 
\begin{equation*}
 \begin{tikzpicture}[anchorbase,scale=1.5]

\draw[-] (4.5,-0.2)--(4.5,0.2)--(5.5,0.2)--(5.5,-0.2)--(4.5,-0.2);
\node at (5,0) {${B\star C}$};

\draw[-] (4.6,-0.2)--(4.6,-0.5);
\draw[-] (4.7,-0.2)--(4.7,-0.5);
\draw[-] (4.8,-0.2)--(4.8,-0.5);

\draw[-] (5.2,-0.2)--(5.2,-0.5);
\draw[-] (5.3,-0.2)--(5.3,-0.5);
\draw[-] (5.4,-0.2)--(5.4,-0.5);

\draw[-] (4.8,0.2)--(4.8,0.5);
\node at (4.8,0.53) {$\circ$};
\node at (4.95,0.4) {$\cdot$};
\node at (5,0.4) {$\cdot$};
\node at (5.05,0.4) {$\cdot$};
\draw[-] (5.2,0.2)--(5.2,0.5);
\node at (5.2,0.53) {$\circ$};

\node at (4.9,-0.4) {$\cdot$};
\node at (5,-0.4) {$\cdot$};
\node at (5.1,-0.4) {$\cdot$};
\end{tikzpicture}
\end{equation*}
is the diagram of $\phi_{R\star S}$ by Step 3. Thus we proved that $$(z^*)^{\otimes (d_1+d_2)}\circ \mu_{D'} \circ (\id_{\V^{\otimes s}} \otimes \eps^{\otimes k} \otimes \id_{\V^{\otimes l}}) = \phi_{R\star S} \otimes (\eps^*\circ \eps)^{\otimes d(R, S)}.$$
By Lemma \ref{lem:self-dual}, $\eps^*\circ \eps = (\dim \V) \id_{\triv}$, and we conclude that
$$\phi_R \circledast \phi_S = (z^*)^{\otimes (d_1+d_2)}\circ \mu_{D'} \circ (\id_{\V^{\otimes s}} \otimes \eps^{\otimes k} \otimes \id_{\V^{\otimes l}}) = (\dim \V)^{ d(R, S)}\phi_{R\star S} .$$ This proves the required statement.

\end{proof}
Finally, we prove the lemma used in the proof of Proposition \ref{prop:composition}.
\begin{lemma}\label{lem:eps_star_mu_A}
 Let $a_1, \ldots, a_k \in \F_q$ and assume that $a_i \neq 0$ for some $i\in \{1,\ldots, k\}$. Then
 $$ z^* \circ \mu_{\begin{bmatrix}
 a_1 &\ldots &a_k
 \end{bmatrix}} \circ \left(\id_{\V^{\otimes (i-1)}}\otimes \eps \otimes \id_{\V^{\otimes (k-i)}}  \right)= (\eps^*)^{\otimes (k-1)}$$
 
 Diagrammatically, this can be drawn as
 \begin{equation*}
 \begin{tikzpicture}[anchorbase,scale=1.3]
\draw[-] (-1.4,0)--(-0.2,0);
\draw[-] (0.2,0)--(2.1,0);
\draw[-] (-0.2,-0.2)--(-0.2,0.2)--(0.2,0.2)--(0.2,-0.2)--(-0.2,-0.2);
\node at (-0,-0) {$\dot{+}$};
\draw[-] (-1.4,0)--(-1.4,-0.3);
\draw[-] (-0.7,0)--(-0.7,-0.3);
\draw[-] (0.7,0)--(0.7,-0.3);
\draw[-] (2.1,0)--(2.1,-0.3);

\draw[-] (0,0.2)--(0,0.4);
\node at (0,0.43) {$\circ$};

\node at (-0.05,-0.6) {$\cdot$};
\node at (0,-0.6) {$\cdot$};
\node at (0.05,-0.6) {$\cdot$};

\node at (1.35,-0.6) {$\cdot$};
\node at (1.4,-0.6) {$\cdot$};
\node at (1.45,-0.6) {$\cdot$};

\draw[-] (-1.4,-0.87)--(-1.4,-1.15);
\draw[-] (-0.7,-0.87)--(-0.7,-1.15);
\draw[-] (0.7,-0.87)--(0.7,-1.05);
\draw[-] (2.1,-0.87)--(2.1,-1.15);

\node[draw,circle] at (-1.4,-0.6) {$ {\scriptstyle a_1}$};
\node[draw,circle] at (-0.7,-0.6) {${\scriptstyle a_2}$};
\node[draw,circle] at (0.7,-0.6) {$ {\scriptstyle a_i}$};
\node[draw,circle]  at (2.1,-0.6) {${\scriptstyle a_k}$};
\node at (0.7,-1.08) {$\bullet$};
\end{tikzpicture} \quad = \quad
\begin{tikzpicture}[anchorbase,scale=1.3]
\draw[-] (0.7,0.5)--(0.7,-0.9);
\node at (0.7,0.5) {$\bullet$};

\node at (-0.05,-0.6) {$\cdot$};
\node at (0,-0.6) {$\cdot$};
\node at (0.05,-0.6) {$\cdot$};

\draw[-] (0.5,0.5)--(0.5,-0.9);
\node at (0.5,0.5) {$\bullet$};

\draw[-] (-0.5,0.5)--(-0.5,-0.9);
\node at (-0.5,0.5) {$\bullet$};

\draw[-] (-0.7,0.5)--(-0.7,-0.9);
\node at (-0.7,0.5) {$\bullet$};
\end{tikzpicture}
\end{equation*}
\end{lemma}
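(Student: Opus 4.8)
The plan is to peel off the inserted copy of $\eps$ and reduce the assertion to one short identity relating $\dot{+}$, $\eps$ and $z^*$, then reassemble. First I would write $\mu_{[a_1,\dots,a_k]} = (\dot{+})^{it}\circ(\mu_{a_1}\otimes\cdots\otimes\mu_{a_k})$, as in Definition~\ref{def:special_string_diag}, and precompose with $\id_{\V^{\otimes(i-1)}}\otimes\eps\otimes\id_{\V^{\otimes(k-i)}}$. By functoriality of $\otimes$ the $i$-th factor becomes $\mu_{a_i}\circ\eps$, which equals $\eps$ by \cref{cor:eps_and_mu} -- this is precisely where the hypothesis $a_i\neq 0$ enters. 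Using associativity and commutativity of $\dot{+}$ (\ref{rel:F_q_lin_plus_ass_comm}) I would then extract the $i$-th slot from the iterated sum, rewriting the composite as $\dot{+}\circ(\eps\otimes g) = \dot{+}\circ(\eps\otimes\id_\V)\circ g$, where $g := (\dot{+})^{it}\circ\bigotimes_{j\neq i}\mu_{a_j}\colon \V^{\otimes(k-1)}\to\V$ (up to a permutation of the source factors, which I would suppress since $(\eps^*)^{\otimes(k-1)}$ is symmetric). So the target identity becomes $z^*\circ\dot{+}\circ(\eps\otimes\id_\V)\circ g = (\eps^*)^{\otimes(k-1)}$, and it suffices to (a) prove $z^*\circ\dot{+}\circ(\eps\otimes\id_\V) = \eps^*$, and (b) compute $\eps^*\circ g$.

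For (a): recall $z^* = ev\circ(\id_\V\otimes z)$, and by functoriality $(\id_\V\otimes z)\circ\dot{+} = (\dot{+}\otimes\id_\V)\circ(\id_{\V^{\otimes 2}}\otimes z)$, so $z^*\circ\dot{+} = ev\circ(\dot{+}\otimes\id_\V)\circ(\id_{\V^{\otimes 2}}\otimes z)$. Applying \cref{lem:transferring_plus} (which gives $ev\circ(\dot{+}\otimes\id_\V) = ev\circ(\id_\V\otimes\dot{+})\circ(\id_\V\otimes\mu_{-1}\otimes\id_\V)$) and then collapsing $\dot{+}\circ(\mu_{-1}\otimes z) = \mu_{-1}$ by \ref{rel:F_q_lin_zero}, I would obtain $z^*\circ\dot{+} = ev\circ(\id_\V\otimes\mu_{-1})$. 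Precomposing with $\eps\otimes\id_\V$ and using that $\eps$ is the unit of $m$ together with $\eps^*\circ\mu_{-1} = \eps^*$ (\ref{rel:mu_coalg_mor}) then yields $ev\circ(\eps\otimes\mu_{-1}) = \eps^*\circ m\circ(\eps\otimes\mu_{-1}) = \eps^*\circ\mu_{-1} = \eps^*$, which is (a).

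For (b) and the conclusion: iterating $\eps^*\circ\dot{+} = \eps^*\otimes\eps^*$ (\ref{rel:plus_coalg_mor}) gives $\eps^*\circ(\dot{+})^{it} = (\eps^*)^{\otimes(k-1)}$, while $\eps^*\circ\mu_{a_j} = \eps^*$ holds for every $j$: by \ref{rel:mu_coalg_mor} when $a_j\neq 0$, and via $\mu_0 = z\circ\eps^*$ combined with $\eps^*\circ z = \id_\triv$ (\ref{rel:F_q_lin_mu}, \ref{rel:z_coalg_mor}) when $a_j = 0$. Hence $\eps^*\circ g = (\eps^*)^{\otimes(k-1)}$, and combining with (a) finishes the proof. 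The degenerate case $k=1$ should be dispatched directly ($z^*\circ\mu_{a_1}\circ\eps = z^*\circ\eps = \eps^*\circ m\circ(\eps\otimes z) = \eps^*\circ z = \id_\triv$). I do not expect a genuine obstacle: the only non-formal point is the extraction of the $i$-th slot of $(\dot{+})^{it}$ in the reduction, which is a routine consequence of associativity and commutativity of $\dot{+}$ and functoriality of the symmetry $\sigma$ (and is transparent in string-diagram form); everything else is a short chain of the Frobenius-algebra and $\F_q$-linearity relations of Definition~\ref{def:Frob_linear_space}.
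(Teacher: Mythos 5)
Your proposal is correct and takes essentially the same route as the paper: your key identity (a), $z^*\circ\dot{+}\circ(\eps\otimes\id_\V)=\eps^*$, is exactly the paper's base case $k=2$, proved by the same chain (definition of $z^*$, \cref{lem:transferring_plus}, the zero relation \ref{rel:F_q_lin_zero}, the unit axiom, and $\eps^*\circ\mu_{-1}=\eps^*$), and your steps for peeling off the $\mu_{a_j}$'s use the same relations ($\mu_{a_i}\circ\eps=\eps$, $\eps^*\circ\dot{+}=\eps^*\otimes\eps^*$, $\eps^*\circ\mu_{a}=\eps^*$ for all $a$) that the paper invokes. The only difference is organizational: the paper first normalizes to $a_1=\cdots=a_k=1$ and then inducts on $k$, whereas you isolate the $\eps$-slot by a single regrouping of the iterated $\dot{+}$ via associativity/commutativity — a harmless repackaging of the same argument.
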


\begin{example}
 Let $\mathcal{C} = \Rep(GL_n(\F_q))$, $\V:=\V_n$. Let $a_1, \ldots, a_k \in \F_q$ as in the lemma. The lemma states that for any $v_1, \ldots, v_{i-1}, v_{i+1}, \ldots, v_k \in V$, the equality
 $$\dot{\sum}_{1\leq j\leq k, j\neq i} \dot{a}_j v_j \,\dot{+}\, \dot{a}_i v = \dot{0}$$ holds for exactly one $v\in V$, so the vector $\dot{0}$ appears in the sum $$\sum_{v\in V} \, \left(\dot{\sum}_{1\leq j\leq k, j\neq i} \dot{a}_j v_j \,\dot{+}\, \dot{a}_i v \right)$$ with coefficient $1$, which is precisely $(\eps^*)^{\otimes (k-1)}(v_1\otimes \ldots\otimes v_k)$.
\end{example}

\begin{proof}
First of all, recall that Corollary \ref{cor:eps_and_mu} states: $a_i \neq 0$ implies $\mu_{a_i} \circ \eps = \eps = \mu_{1} \circ \eps$. Hence we may assume that $a_i=1$ from now on.

Next, it is enough to check the statement for $a_1=\ldots =a_k=1$. Indeed, if we show that 
\begin{equation}\label{eq:lin_eq_and_eps_star}
 z^* \circ (\dot{+})^{it} \circ \left(\id_{\V^{\otimes (i-1)}}\otimes \eps \otimes \id_{\V^{\otimes (k-i)}}  \right)= (\eps^*)^{\otimes (k-1)}
\end{equation}
 then we will have
\begin{align*}
&z^* \circ \mu_{[a_1, \ldots, a_k]} \circ \left(\id_{\V^{\otimes (i-1)}}\otimes \eps \otimes \id_{\V^{\otimes (k-i)}}  \right)= z^* \circ (\dot{+})^{it} \circ \left(\bigotimes_{j=1}^k \mu_{a_j}\right) \circ \left(\id_{\V^{\otimes (i-1)}}\otimes \eps \otimes \id_{\V^{\otimes (k-i)}}  \right) \\ &=z^* \circ (\dot{+})^{it}  \circ \left(\id_{\V^{\otimes (i-1)}}\otimes \eps \otimes \id_{\V^{\otimes (k-i)}}  \right)\circ \left(\bigotimes_{j=1}^k \mu_{a_j}\right)=(\eps^*)^{\otimes (k-1)} \circ \left(\bigotimes_{j=1}^k \mu_{a_j}\right) = (\eps^*)^{\otimes (k-1)} 
\end{align*}
The equality between the first and the last lines follows from the assumption $a_i=1$, while the last equality follows from Relation \ref{rel:mu_coalg_mor} stating that $\eps^*\circ \mu_a = \eps^*$ for any $a\in \F_q$ (see also \ref{itm:str_rel_diag_mu_coalg}).

So it is enough to prove Equation \eqref{eq:lin_eq_and_eps_star}, i.e. that 
 \begin{equation}\label{eq:lin_eq_and_eps_star_diagram}
 \begin{tikzpicture}[anchorbase,scale=1.5]
\draw[-] (-1,0)--(-0.2,0);
\draw[-] (0.2,0)--(1.4,0);
\draw[-] (-0.2,-0.2)--(-0.2,0.2)--(0.2,0.2)--(0.2,-0.2)--(-0.2,-0.2);
\node at (-0,-0) {$\dot{+}$};
\draw[-] (-1,0)--(-1,-0.45);
\draw[-] (-0.7,0)--(-0.7,-0.45);
\draw[-] (0.6,0)--(0.6,-0.33);
\draw[-] (1.1,0)--(1.1,-0.45);
\draw[-] (1.4,0)--(1.4,-0.45);

\draw[-] (0,0.2)--(0,0.4);
\node at (0,0.43) {$\circ$};

\node at (-0.05,-0.35) {$\cdot$};
\node at (0,-0.35) {$\cdot$};
\node at (0.05,-0.35) {$\cdot$};

\node at (0.85,-0.35) {$\cdot$};
\node at (0.9,-0.35) {$\cdot$};
\node at (0.95,-0.35) {$\cdot$};

\node at (0.6,-0.33) {$\bullet$};
\end{tikzpicture} \quad = \quad
\begin{tikzpicture}[anchorbase,scale=1.5]
\draw[-] (0.7,0.4)--(0.7,-0.45);
\node at (0.7,0.4) {$\bullet$};

\node at (-0.05,-0.35) {$\cdot$};
\node at (0,-0.35) {$\cdot$};
\node at (0.05,-0.35) {$\cdot$};

\draw[-] (0.5,0.4)--(0.5,-0.45);
\node at (0.5,0.4) {$\bullet$};

\draw[-] (-0.5,0.4)--(-0.5,-0.45);
\node at (-0.5,0.4) {$\bullet$};

\draw[-] (-0.7,0.4)--(-0.7,-0.45);
\node at (-0.7,0.4) {$\bullet$};
\end{tikzpicture}
\end{equation}
We prove the Equality \eqref{eq:lin_eq_and_eps_star} (equivalently, its diagrammatic version \eqref{eq:lin_eq_and_eps_star_diagram}) by induction on $k$.  
First, we prove the base case $k=2$. We need to show that
\begin{equation*}
\begin{tikzpicture}[anchorbase,scale=1]
\draw[-] (-0.5,0)--(-0.2,0);
\draw[-] (0.2,0)--(0.5,0);
\draw[-] (-0.2,-0.2)--(-0.2,0.2)--(0.2,0.2)--(0.2,-0.2)--(-0.2,-0.2);
\node at (-0,-0) {$\dot{+}$};
\draw[-] (0,0.2)--(0,0.5);
\node at (0,0.53) {$\circ$};
\draw[-] (-0.5,0)--(-0.5,-0.5);
\draw[-] (0.5,0)--(0.5,-0.5);
\node at (0.5,-0.5) {$\bullet$};
\end{tikzpicture} \quad  = \quad  \begin{tikzpicture}[anchorbase,scale=1]
\draw[-] (0.5,0.5)--(0.5,-0.5);
\node at (0.5,0.5) {$\bullet$};
\end{tikzpicture} 
\end{equation*}  
Indeed, we have:

\begin{equation*}
\begin{tikzpicture}[anchorbase,scale=1.5]
\draw[-] (-0.5,0)--(-0.2,0);
\draw[-] (0.2,0)--(0.5,0);
\draw[-] (-0.2,-0.2)--(-0.2,0.2)--(0.2,0.2)--(0.2,-0.2)--(-0.2,-0.2);
\node at (-0,-0) {$\dot{+}$};
\draw[-] (0,0.2)--(0,0.47);
\node at (0,0.5) {$\circ$};
\draw[-] (-0.5,0)--(-0.5,-0.9);
\draw[-] (0.5,0)--(0.5,-0.9);
\node at (0.5,-0.9) {$\bullet$};
\end{tikzpicture} \quad = \quad 
\begin{tikzpicture}[anchorbase,scale=1.5]
\draw[-] (-0.5,0)--(-0.2,0);
\draw[-] (0.2,0)--(0.5,0);
\draw[-] (-0.2,-0.2)--(-0.2,0.2)--(0.2,0.2)--(0.2,-0.2)--(-0.2,-0.2);
\node at (-0,-0) {$\dot{+}$};

\draw[-] (0,0.2)--(0,0.5);
\draw[-] (0,0.5)--(1,0.5);
\draw[-] (1, 0.5)--(1, -0.95);
\node at (1,-1) {$\circ$};

\draw[-] (-0.5,0)--(-0.5,-1);
\draw[-] (0.5,0)--(0.5,-1);
\node at (0.5,-1) {$\bullet$};
\end{tikzpicture} 
\quad  = \quad 
\begin{tikzpicture}[anchorbase,scale=1.5]
\draw[-] (-0.5,0)--(-0.2,0);
\draw[-] (0.2,0)--(0.5,0);
\draw[-] (-0.2,-0.2)--(-0.2,0.2)--(0.2,0.2)--(0.2,-0.2)--(-0.2,-0.2);
\node at (-0,-0) {$\dot{+}$};

\draw[-] (0,0.2)--(0,0.5);
\draw[-] (0,0.5)--(-1,0.5);
\draw[-] (-1, 0.5)--(-1, -1);
\node at (-1,-1) {$\bullet$};

\draw[-] (-0.5,0)--(-0.5,-0.25);
\draw[-] (-0.5,-0.75)--(-0.5,-1);
\node[draw, circle] at (-0.5,-0.5) {${\scriptstyle -1}$};
\draw[-] (0.5,0)--(0.5,-0.95);
\node at (0.5,-1) {$\circ$};

\end{tikzpicture} 
\quad  = \quad 
\begin{tikzpicture}[anchorbase,scale=1.5]

\draw[-] (-1, 0.5)--(-1, -1);
\node at (-1,-1) {$\bullet$};

\draw[-] (-1, 0.5)--(-0.5,0.5);
\draw[-] (-0.5,0.5)--(-0.5,-0.25);
\draw[-] (-0.5,-0.75)--(-0.5,-1);
\node[draw, circle] at (-0.5,-0.5) {${\scriptstyle -1}$};

\end{tikzpicture} 
\quad  = \quad 
\begin{tikzpicture}[anchorbase,scale=1.5]

\node at (-0.5,0.4) {$\bullet$};
\draw[-] (-0.5,0.4)--(-0.5,-0.25);
\draw[-] (-0.5,-0.75)--(-0.5,-1);
\node[draw, circle] at (-0.5,-0.5) {${\scriptstyle -1}$};
\end{tikzpicture} 
\quad  = \quad 
\begin{tikzpicture}[anchorbase,scale=1.5]

\node at (-0.5,0.4) {$\bullet$};
\draw[-] (-0.5,0.4)--(-0.5,-1);
\end{tikzpicture} 
\end{equation*}
Here the first equality is by the definition of $z^*$, the second by Lemma \ref{lem:transferring_plus}. The third equality is by the ``unit of $\dot{+}$'' relation on $\dot{+}$, $z$, as written in Relations \ref{rel:F_q_lin}. The fourth equality is due to the fact that $\eps$ is a unit of $m$ (see \ref{itm:str_rel_diag_bialg}), and the last equality is by the Relation \ref{rel:mu_coalg_mor} stating that $\eps^*\circ\mu_{-1} = \eps^*$ (see \ref{itm:str_rel_diag_mu_coalg}). This proves the case $k=2$.

Now let us consider the general case. Let $k>2$ and assume Equality \eqref{eq:lin_eq_and_eps_star} holds for $k-1$. By associativity of $\dot{+}$ we have:
\begin{equation*}
 \begin{tikzpicture}[anchorbase,scale=1.5]
\draw[-] (-1,0)--(-0.2,0);
\draw[-] (0.2,0)--(1.4,0);
\draw[-] (-0.2,-0.2)--(-0.2,0.2)--(0.2,0.2)--(0.2,-0.2)--(-0.2,-0.2);
\node at (-0,-0) {$\dot{+}$};
\draw[-] (-1,0)--(-1,-0.88);
\draw[-] (-0.7,0)--(-0.7,-0.88);
\draw[-] (0.6,0)--(0.6,-0.83);
\draw[-] (1.1,0)--(1.1,-0.88);
\draw[-] (1.4,0)--(1.4,-0.88);

\draw[-] (0,0.2)--(0,0.4);
\node at (0,0.43) {$\circ$};

\node at (-0.05,-0.8) {$\cdot$};
\node at (0,-0.8) {$\cdot$};
\node at (0.05,-0.8) {$\cdot$};

\node at (0.85,-0.8) {$\cdot$};
\node at (0.9,-0.8) {$\cdot$};
\node at (0.95,-0.8) {$\cdot$};

\node at (0.6,-0.83) {$\bullet$};
\end{tikzpicture} \quad = \quad
 \begin{tikzpicture}[anchorbase,scale=1.5]
\draw[-] (-0.4,0.3)--(-0.2,0.3);
\draw[-] (0.2,0.3)--(0.4,0.3);
\draw[-] (-0.2,0.1)--(-0.2,0.5)--(0.2,0.5)--(0.2,0.1)--(-0.2,0.1);
\node at (0,0.3) {$\dot{+}$};

\draw[-] (-0.4,0.3)--(-0.4,-0.1);
\draw[-] (0.4,0.3)--(0.4,-0.1);

 \draw[-] (0,0.5)--(0,0.8);
 \draw[-] (-2.6, 0.8)--(-2.2, 0.8);
 \draw[-] (-1.8, 0.8)--(0, 0.8); 
 \draw[-] (-2.2,0.6)--(-2.2,1)--(-1.8,1)--(-1.8,0.6)--(-2.2,0.6);
\node at (-2,0.8) {$\dot{+}$};
 
  \draw[-] (-2.6,-0.1)--(-2.6,0.8);
  \draw[-] (-2.4,-0.1)--(-2.4, 0.8);
 
  \draw[-] (-0.8,-0.1)--(-0.8, 0.8);
 
 \draw[-] (-1.5,0)--(-1.5, 0.8);
 \node at (-1.5,0) {$\bullet$};
 
 \draw[-] (-2,1)--(-2,1.2);
 \node at (-2,1.23) {$\circ$};

\node at (-2.05,-0.05) {$\cdot$};
\node at (-2,-0.05) {$\cdot$};
\node at (-1.95,-0.05) {$\cdot$};

\node at (-1.05,-0.05) {$\cdot$};
\node at (-1.1,-0.05) {$\cdot$};
\node at (-1.15,-0.05) {$\cdot$};
\end{tikzpicture}
\InnaA{\quad\xlongequal{\substack{\text{induction }\\\text{assumption}}}\quad}
  \begin{tikzpicture}[anchorbase,scale=1.5]
\draw[-] (-0.4,0.3)--(-0.2,0.3);
\draw[-] (0.2,0.3)--(0.4,0.3);
\draw[-] (-0.2,0.1)--(-0.2,0.5)--(0.2,0.5)--(0.2,0.1)--(-0.2,0.1);
\node at (0,0.3) {$\dot{+}$};

\draw[-] (-0.4,0.3)--(-0.4,-0.1);
\draw[-] (0.4,0.3)--(0.4,-0.1);

 \draw[-] (0,0.5)--(0,0.8);

  \draw[-] (-1.8,-0.1)--(-1.8,0.8);
  \draw[-] (-1.6,-0.1)--(-1.6, 0.8);
 
  \draw[-] (-0.6,-0.1)--(-0.6, 0.8);
 
 \node at (-1.8,0.8) {$\bullet$};
 \node at (-1.6,0.8) {$\bullet$};
  \node at (-0.6,0.8) {$\bullet$};
  \node at (0,0.8) {$\bullet$};

\node at (-1.25,0.15) {$\cdot$};
\node at (-1.2,0.15) {$\cdot$};
\node at (-1.15,0.15) {$\cdot$};
\end{tikzpicture}
\end{equation*}
Thus we have shown that 
\begin{equation*}
 z^* \circ (\dot{+})^{it} \circ \left(\id_{\V^{\otimes (i-1)}}\otimes \eps \otimes \id_{\V^{\otimes (k-i)}}  \right)= (\eps^*)^{\otimes (k-2)} \otimes (\eps^* \circ \dot{+}).
\end{equation*}
Next, by Relation \ref{rel:plus_coalg_mor} (see also \ref{itm:str_rel_diag_plus_coalg}) we have:
$ (\eps^* \circ \dot{+}) = \eps^* \otimes \eps^*$, i.e.
\begin{equation*}
\begin{tikzpicture}[anchorbase,scale=1]
\draw[-] (-0.5,0)--(-0.2,0);
\draw[-] (0.2,0)--(0.5,0);
\draw[-] (-0.2,-0.2)--(-0.2,0.2)--(0.2,0.2)--(0.2,-0.2)--(-0.2,-0.2);
\node at (-0,-0) {$\dot{+}$};
\draw[-] (0,0.2)--(0,0.5);
\node at (0,0.5) {$\bullet$};
\draw[-] (-0.5,0)--(-0.5,-0.5);
\draw[-] (0.5,0)--(0.5,-0.5);
\end{tikzpicture} \quad  = \quad  \begin{tikzpicture}[anchorbase,scale=1]
\draw[-] (-0.3,0.5)--(-0.3,-0.5);
\draw[-] (0.3,0.5)--(0.3,-0.5);
\node at (-0.3,0.5) {$\bullet$};
\node at (0.3,0.5) {$\bullet$};
\end{tikzpicture} 
\end{equation*}  
This proves Equations \eqref{eq:lin_eq_and_eps_star} and its diagram version \eqref{eq:lin_eq_and_eps_star_diagram}, so the proof of the Lemma is complete.
\end{proof}

\section{Universal properties}\label{sec:univ_prop}

This section contains several of the main theorems. We prove the universal property of $\kar{t}$ and of $\Rep(GL_n(\F_q))$ and identify our category with the category studied by Knop in \cite{K, K2}.

\subsection{Universal property of the Deligne category}\label{ssec:univ_prop_Deligne}

\begin{theorem}\label{thrm:univ_prop_Del}
 Let $\mathcal{C}$ be a Karoubi additive rigid SM category, and let $\V $ be an $\F_q$-linear Frobenius space in $\mathcal{C}$. Let $t = \dim(\V)$. Then there exists a SM functor
 $$ F_{\V}: \kar{t} \to \mathcal{C}, \;\; \V_t \longmapsto \V.$$
 
\end{theorem}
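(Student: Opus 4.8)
The plan is to build $F_{\V}$ in two stages, first on the skeletal category $\mathcal{T}(\underline{GL}_t)$ and then extend to its Karoubi envelope $\kar{t}$ by the universal property of the additive Karoubi completion. On objects, set $F_{\V}([k]) = \V^{\otimes k}$; the monoidal structure morphism $c^F_{[k],[l]}\colon \V^{\otimes k}\otimes\V^{\otimes l}\xrightarrow{\sim}\V^{\otimes(k+l)}$ is just the associativity constraint of $\mathcal{C}$, and $\one\xrightarrow{\sim}F([0])$ is the identity. On morphisms, send the basis element $f_R\colon[s]\to[k]$ (for $R\subset\F_q^{s+k}$) to the morphism $\widetilde f_R\colon\V^{\otimes s}\to\V^{\otimes k}$ constructed in \cref{def:f_R_in_C}, extended $\C$-linearly. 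Since $\{f_R : R\subset\F_q^{s+k}\}$ is by construction a basis of $\Hom_{\mathcal{T}(\underline{GL}_t)}([s],[k]) = \C\,Rel_{s,k}$, this unambiguously defines $F_{\V}$ on all Hom-spaces; what remains is functoriality (compatibility with composition and with $\otimes$) and compatibility with the symmetry.

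The heart of the argument is checking that $F_{\V}$ respects composition. In $\mathcal{T}(\underline{GL}_t)$ the composition is $f_S\circ f_R = t^{d(R,S)} f_{S\star R}$ by definition. On the other side, \cref{prop:composition_phi_R} gives exactly $\widetilde f_S\circ\widetilde f_R = (\dim\V)^{d(R,S)}\,\widetilde f_{S\star R}$, and since $t = \dim(\V)$ by hypothesis, the two sides match. Compatibility with tensor products is \cref{lem:tensor_prod_f}: $\widetilde f_{R_1}\otimes\widetilde f_{R_2} = \widetilde f_{R_1\times R_2}$, which is precisely the rule $f_{R_1}\otimes f_{R_2} = f_{R_1\times R_2}$ used to define $\otimes$ on morphisms in $\mathcal{T}(\underline{GL}_t)$. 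For the symmetry: the braiding $\sigma$ in $\mathcal{T}(\underline{GL}_t)$ is $f_{R'}$ for the explicit ``reversal'' subspace $R'\subset\F_q^{2k+2l}$, and one must verify that $\widetilde f_{R'}$ equals the symmetry morphism $\gamma_{\V^{\otimes l},\V^{\otimes k}}$ of $\mathcal{C}$; this reduces, via \cref{lem:tensor_prod_f} and naturality of the braiding, to the single case $k=l=1$, where $\widetilde f_{\{(a,b,-b,-a)\}}$ can be computed directly to be $\sigma_{\V,\V}$ using $T^{-1}$ and the Frobenius relations. Similarly one should note that duals are automatically respected since a monoidal functor always sends $X^*$ to a dual of $F(X)$, so no separate rigidity check is needed.

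Once $F_{\V}\colon\mathcal{T}(\underline{GL}_t)\to\mathcal{C}$ is a well-defined SM functor, we extend it. Since $\mathcal{C}$ is Karoubi additive, the universal property of the additive Karoubi envelope provides a unique (up to isomorphism) SM functor $\kar{t}\to\mathcal{C}$ restricting to $F_{\V}$ on $\mathcal{T}(\underline{GL}_t)$: one first passes to the additive envelope (formal direct sums go to direct sums in $\mathcal{C}$, which exist), then splits idempotents using that $\mathcal{C}$ is Karoubi. This final functor is the desired $F_{\V}$, and by construction $\V_t = [1]\mapsto\V$.

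I expect the main obstacle to be purely bookkeeping rather than conceptual: the two key identities --- $\widetilde f_S\circ\widetilde f_R = (\dim\V)^{d(R,S)}\widetilde f_{S\star R}$ and $\widetilde f_{R_1}\otimes\widetilde f_{R_2} = \widetilde f_{R_1\times R_2}$ --- have already been established in \cref{prop:composition_phi_R} and \cref{lem:tensor_prod_f}, so the real content of the theorem is entirely in those earlier propositions (in particular the delicate linear-algebra-over-$\F_q$ manipulations in \cref{sec:linear_algebra_for_Frob_space} and the Step~1--4 argument proving \cref{prop:composition_phi_R}). The one place requiring genuine (if short) new work here is pinning down that the explicitly chosen braiding subspace $R'$ in the construction of $\mathcal{T}(\underline{GL}_t)$ maps to the categorical symmetry of $\mathcal{C}$; I would handle this by the reduction to $k=l=1$ together with the observation that any composite of braidings is a braiding, so matching on the generator $\sigma$ suffices.
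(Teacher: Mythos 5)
Your proposal matches the paper's proof essentially step for step: define the functor on the skeletal category $\mathcal{T}(\underline{GL}_t)$ by $[k]\mapsto \V^{\otimes k}$, $f_R\mapsto \widetilde f_R$, verify compatibility with composition via \cref{prop:composition_phi_R} (using $t=\dim\V$ to match the scalar $t^{d(R,S)}$) and with tensor products via \cref{lem:tensor_prod_f}, identify the image of the braiding element $f_{R'}$ with the symmetry of $\mathcal{C}$, and extend to $\kar{t}$ by the universal property of the additive Karoubi envelope. The only difference is cosmetic: the paper simply asserts that $\widetilde f_{R'}$ is the symmetry morphism, whereas you sketch a reduction to the $k=l=1$ case — a reasonable way to fill in the same point.
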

\begin{proof}
We may assume $\mathcal{C}$ to be a strict rigid symmetric monoidal category (see \cite{SML}).

 Define a functor $F:\mathcal{T}(\underline{GL}_t) \to \mathcal{C}$ by sending $[k]$ to $\V^{\otimes k}$ and for each linear subspace $R \subset \F_q^{k+l}$, $F$ sends $f_R: [k] \to [l]$ (as defined in Section \ref{sec:Deligne_def}) to $\widetilde{f}_R:\V^{\otimes k} \to \V^{\otimes l}$ (as defined in Definition \ref{def:f_R_in_C}). We then extend $F$ by linearity to the entire $\Hom([k], [l])$.
 
 We claim that this defines a functor: indeed, by Proposition \ref{prop:composition_phi_R} we have: for any linear subspaces $R \subset \F_q^{s+k}, S \subset \F_q^{k+l}$, $F(f_S \circ f_R) = \widetilde{f}_S \circ \widetilde{f}_R$ as required.
 
 Now, to check that this is a monoidal functor, we only need to check that $F(f_{R_1} \otimes f_{R_2}) = \widetilde{f}_{R_1} \otimes \widetilde{f}_{R_2}$ for any linear subspaces $R_1 \subset \F_q^{r_1+s_1}, R_2 \subset \F_q^{r_2+s_2}$. This is proved in Lemma \ref{lem:tensor_prod_f}. By the construction of the Karoubi envelope, the functor $F$ extends uniquely to a functor $ F_X: \kar{t} \to \mathcal{C}$. 
 
 The fact that the functor is symmetric follows from the fact that the symmetry morphism
 $\sigma\in \Hom_{\mathcal{C}}(\V^{\otimes l} \otimes \V^{\otimes k}, \V^{\otimes k} \otimes \V^{\otimes l})$ is given by $\widetilde{f}_{R'}$ where $$ R' = \{(a_1, \ldots, a_l, b_1, \ldots, b_k,-b_1, \ldots, -b_k,-a_1, \ldots, -a_l)|\, \forall i, j, \; a_i,  b_j \in \F_q\} \subset  \F_q^{2k+2l}.$$
\end{proof}

\begin{example}
 Let $\mathcal{C} = \Rep(GL_n(\F_q))$, $\V = \V_n$. The functor $$F_n: \kar{t=q^n} \longrightarrow \Rep(GL_n(\F_q)), \;\;\V_{t=q^n} \longmapsto \V_n$$ is the semisimplification of $\kar{t=q^n}$ seen in Proposition \ref{prop:functor_F_n}.
\end{example}

\begin{corollary}\label{cor:univ_prop_2}
 The functor
 $$Fun^{\otimes}\left(\kar{t}, \mathcal{C}\right) \longrightarrow Frob_{\F_q}(\mathcal{C}, t), \;\; F\longmapsto F(\V_t)$$
 is an equivalence of categories. Here 
 \begin{itemize}
  \item $Frob_{\F_q}(\mathcal{C}, t)$ is the subcategory of $\mathcal{C}$ consisting of $t$-dimensional $\F_q$-linear Frobenius spaces and isomorphisms between them.
  \item $Fun^{\otimes}\left(\kar{t}, \mathcal{C}\right)$ is the category of SM functors $\kar{t} \to \mathcal{C}$ and natural SM isomorphisms.
 \end{itemize}
\end{corollary}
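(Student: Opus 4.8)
The plan is to upgrade Theorem \ref{thrm:univ_prop_Del} from a statement about the existence of a functor to an equivalence of categories, by exhibiting a quasi-inverse and checking the two compositions are naturally isomorphic to the respective identities. First I would make the assignment $F \mapsto F(\V_t)$ into a functor: given a natural SM isomorphism $\eta: F \Rightarrow F'$, its component $\eta_{\V_t}: F(\V_t) \to F'(\V_t)$ is an isomorphism of objects, and the compatibility of $\eta$ with the monoidal structure together with the fact that all the structure morphisms $m, \eps, m^*, \eps^*, \dot{+}, \mu_a, z$ of the $\F_q$-linear Frobenius space $\V_t$ in $\kar{t}$ are morphisms in $\kar{t}$ (hence preserved by both $F$ and $F'$ up to the coherence isomorphisms $c^F$) forces $\eta_{\V_t}$ to be an isomorphism of $\F_q$-linear Frobenius spaces. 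Since $\dim(F(\V_t)) = t$ by functoriality of categorical dimension under SM functors, this lands in $Frob_{\F_q}(\mathcal{C}, t)$; functoriality in $\eta$ is immediate.

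Next I would construct the quasi-inverse. On objects: an $\F_q$-linear Frobenius space $\V$ of dimension $t$ is sent to the functor $F_{\V}$ produced by Theorem \ref{thrm:univ_prop_Del}; one must fix once and for all a choice of $F_{\V}$ for each $\V$ (the construction in the proof of Theorem \ref{thrm:univ_prop_Del} is in fact canonical on the skeletal subcategory $\mathcal{T}(\underline{GL}_t)$, sending $[k] \mapsto \V^{\otimes k}$ and $f_R \mapsto \widetilde{f}_R$, and the Karoubi extension is unique up to unique isomorphism, so this is harmless). On morphisms: an isomorphism $\psi: \V \to \V'$ of $\F_q$-linear Frobenius spaces should induce a natural SM isomorphism $F_{\V} \Rightarrow F_{\V'}$; its component at $[k]$ is $\psi^{\otimes k}$, and naturality amounts to checking that $\psi^{\otimes s} \circ \widetilde{f}_R^{\,\V} = \widetilde{f}_R^{\,\V'} \circ \psi^{\otimes k}$ for every $R \subset \F_q^{k+s}$. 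This is where I expect the only real work: since $\widetilde{f}_R = T^{-1}(\phi_R)$ and $\phi_R$ is built from $\mu_B$ and $(z^*)^{\otimes d}$, and $\mu_B$ in turn from $m^*$, the symmetry, and the $\mu_a$'s, one reduces the claim to the statement that $\psi$ intertwines each of the seven structure morphisms — which is exactly the definition of $\psi$ being a morphism of $\F_q$-linear Frobenius spaces — and then propagates this through the (iterated) tensor products and compositions using naturality of $\sigma$ and the coherence isomorphisms. It is bookkeeping, but it is the substance of the argument. Then one extends to the Karoubi envelope.

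Finally I would verify the two round-trips. For one composite: starting from $\V$, the functor $F_{\V}$ has $F_{\V}(\V_t) = \V$ on the nose (with its structure morphisms, since $F_{\V}(f_R) = \widetilde{f}_R$ and the defining subspaces for $m, \eps, \ldots$ reproduce exactly those structure maps on $\V$ — this was spelled out in the examples following Definitions \ref{def:Frob_linear_space} and \ref{def:f_R_in_C}), so this composite is literally the identity functor on $Frob_{\F_q}(\mathcal{C},t)$. For the other composite: given an SM functor $G: \kar{t} \to \mathcal{C}$, we must produce a natural SM isomorphism $F_{G(\V_t)} \Rightarrow G$. Set $\V := G(\V_t)$; then $\V$ is an $\F_q$-linear Frobenius space of dimension $t$ (its structure morphisms are $G$ applied to those of $\V_t$, composed with coherence isos). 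Both $F_{\V}$ and $G$ send $[k]$ to $\V^{\otimes k}$ up to the monoidal coherence isomorphisms, and both send $f_R$ to a morphism built in the same way from the structure morphisms of $\V$: $G(f_R)$ because $f_R$ is, in $\kar{t}$, expressible via the generators $m, \eps, m^*, \eps^*, \sigma, z, \mu_a, \dot{+}$ (Proposition \ref{prop:gen_morphisms} and the Example following Definition \ref{def:Frob_linear_space}) and $G$ is SM; $F_{\V}(f_R) = \widetilde{f}_R$ by construction. Since $\widetilde{f}_R$ is defined from $\V$'s structure morphisms by the \emph{same} recipe (Sections \ref{sec:linear_algebra_for_Frob_space}--\ref{sec:morph_fR} build $\widetilde{f}_R$ out of exactly $\mu_A$, $z^*$, $ev$, which are the images of the generators), the two functors agree up to a canonical monoidal natural isomorphism assembled from the coherence data; one checks this is compatible with composition and $\otimes$, hence is a natural SM isomorphism, and extends it to the Karoubi envelope. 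Assembling these two verifications gives the claimed equivalence.
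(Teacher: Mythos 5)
Your proposal is correct and follows essentially the same route as the paper: the quasi-inverse is $\V\mapsto F_{\V}$ from Theorem \ref{thrm:univ_prop_Del}, an isomorphism of Frobenius spaces induces a natural SM isomorphism with components $\phi^{\otimes k}$ commuting with the $\widetilde{f}_R$, and one extends through the Karoubi additive envelope. The only difference is that you spell out the two round-trip verifications (in particular $F_{G(\V_t)}\cong G$ via the fact that $f_R$ is built from the generating morphisms, so any SM functor sends it to the corresponding $\widetilde{f}_R$), which the paper's proof leaves implicit.
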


\begin{proof}
 To define the inverse functor $$Frob_{\F_q}(\mathcal{C}, t) \longrightarrow Fun^{\otimes}\left(\kar{t}, \mathcal{C}\right), \;\; \V \longmapsto F_{\V}$$ we just need to explain how an isomorphism $\phi:\V \to \V'$ extends to a natural SM isomorphism $F_{\V} \to F_{\V'}$. Indeed, by the construction of the functor $F_{\V}$, $\phi$ defines isomorphisms $ F_{\V}([s])\longrightarrow F_{\V'}([s])$ for each $s\in \Z_{\geq 0}$, and these isomorphisms commute with the maps $\widetilde{f}_R$ constructed in the proof of Theorem \ref{thrm:univ_prop_Del}. Thus $\phi$ induces a natural SM isomorphism between the restricted functors $ F_{\V}, F_{\V'}: \mathcal{T}(\underline{GL}_t) \rightrightarrows \mathcal{C}$. By the properties of additive Karoubi envelopes, this isomorphism extends uniquely to a natural SM isomorphism $F_{\V} \to F_{\V'}$.
\end{proof}

\begin{corollary} 
Any morphism $[s]\to [k]$ in $\mathcal{T}(\underline{GL}_t)$ (respectively, in $\kar{t}$) can be presented as $\C$-linear combination of compositions \InnaA{and $\otimes$ products} of morphisms $m, m^*, \eps, \eps^*$, $\sigma$ (the symmetry morphism), $z$, $\dot{+}$ and $\mu_a$ for $a \in \F_q^{\times}$. 
\end{corollary}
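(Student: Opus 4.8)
The plan is to apply the universal property of $\kar{t}$ established in Theorem \ref{thrm:univ_prop_Del} (equivalently, Corollary \ref{cor:univ_prop_2}) to a cleverly chosen target category, namely a ``syntactic'' category built from formal compositions and tensor products of the generating morphisms. Concretely, I would let $\mathcal{D}_0$ be the $\C$-linear strict symmetric monoidal category whose objects are non-negative integers (with $\otimes$ given by addition) and whose morphisms are $\C$-linear combinations of formal composites and tensor products of symbols $m, m^*, \eps, \eps^*, \sigma, z$ and $\mu_a$ ($a \in \F_q^\times$), subject to exactly the relations \ref{rel:Frob_alg}--\ref{rel:cancellation_axiom} of Definition \ref{def:Frob_linear_space}; then I would take $\mathcal{D}$ to be its additive Karoubi envelope. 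By construction, the object $[1] \in \mathcal{D}$ is tautologically an $\F_q$-linear Frobenius space, and $\mathcal{D}$ is a Karoubi additive rigid SM category (rigidity follows from Lemma \ref{lem:self-dual} applied inside $\mathcal{D}$, since $[1]$ is a special commutative Frobenius algebra there). One subtlety: we need $\dim([1]) = t$ in $\mathcal{D}$; since $\dim([1]) = \eps^* \circ \eps \in \End_{\mathcal{D}}(\triv)$ and the latter is a priori only a quotient of a polynomial ring, I would simply impose the additional relation $\eps^* \circ \eps = t \cdot \id_{\triv}$ when forming $\mathcal{D}_0$, so that $[1]$ has categorical dimension $t$.

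Next I would invoke Theorem \ref{thrm:univ_prop_Del} to obtain a SM functor $F = F_{[1]} : \kar{t} \to \mathcal{D}$ sending $\V_t \mapsto [1]$. The key point is then to identify the essential image of $F$, or rather to show that $F$ is essentially surjective and full onto a category whose morphisms are, by the very definition of $\mathcal{D}_0$, $\C$-linear combinations of composites and $\otimes$-products of $m, m^*, \eps, \eps^*, \sigma, z, \mu_a$. In fact, I claim $F$ is an equivalence: it is essentially surjective since every object of $\mathcal{D}_0$ is $[k] = \V_t^{\otimes k}$, and it is full because the morphisms $\widetilde{f}_R$ in $\mathcal{D}$ (which are the images of the generators $f_R$ of $\kar{t}$ under $F$, by Definition \ref{def:f_R_in_C} and the computation in the proof of Theorem \ref{thrm:univ_prop_Del}) together with Proposition \ref{prop:gen_morphisms}'s analogue for $\kar{t}$ (stated in the ``main example'' of Section \ref{ssec:Frob_obj}) span all morphism spaces. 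For faithfulness — which is actually the direction we need — I would argue that there is a SM functor $G : \mathcal{D} \to \kar{t}$ in the other direction: indeed $[1] = \V_t \in \kar{t}$ is itself an $\F_q$-linear Frobenius space of dimension $t$, so the generators $m, m^*, \ldots, \mu_a$ have canonical images in $\kar{t}$ satisfying all the defining relations, hence $G$ is well-defined; then $G \circ F = \id$ on generators and objects, giving that $F$ is faithful. Therefore $F$ is an equivalence, and since every morphism in $\mathcal{D}$ (hence in $\mathcal{D}_0$) is by construction a $\C$-linear combination of composites and $\otimes$-products of the listed generators, the same is true for every morphism in $\kar{t}$ (and a fortiori in the subcategory $\mathcal{T}(\underline{GL}_t)$, where one does not even pass to idempotent completions).

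An alternative, more direct route — which may in fact be cleaner to write up and avoids constructing the syntactic category — is to argue entirely inside $\kar{t}$: the object $\V_t = [1]$ is an $\F_q$-linear Frobenius space in $\kar{t}$ (this is the ``main example'' of Section \ref{ssec:Frob_obj}), and the proof of Proposition \ref{prop:gen_morphisms} is written so that it applies verbatim in any $\C$-linear rigid SM category containing such a space; one just repeats that argument with $\V_n$ replaced by $\V_t$ and ``$\Rep(GL_n(\F_q))$'' replaced by ``$\kar{t}$'', using that the morphisms $f_R \in \Hom_{\kar{t}}([s],[k])$ span the morphism spaces of $\mathcal{T}(\underline{GL}_t)$ by definition, and that they equal the $\widetilde{f}_R$ built from $m, m^*, \eps, \eps^*, \sigma, z, \mu_a$ (Definition \ref{def:f_R_in_C}, together with Lemma \ref{lem:tensor_prod_f} and Proposition \ref{prop:composition_phi_R} to handle $\otimes$ and $\circ$). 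For the Karoubi envelope $\kar{t}$, one then notes that every object is a direct summand of some $[k]$ and every morphism is a composite of morphisms in $\mathcal{T}(\underline{GL}_t)$ with the relevant idempotent projections, which are themselves such combinations.

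The main obstacle I anticipate is bookkeeping rather than anything conceptual: one must be careful that the generating set $\{m, m^*, \eps, \eps^*, \sigma, z, \mu_a \text{ for } a \in \F_q^\times\}$ really does suffice — in particular that $\mu_0$ and $\dot{+}$, which appear in the generating set of Proposition \ref{prop:gen_morphisms}, are recovered: $\mu_0 = z \circ \eps^*$ by \ref{rel:F_q_lin_mu} (recorded in the Remark after Definition \ref{def:Frob_linear_space}), and $\dot{+}$ is recovered from the $\mu_a$ and $m^*$ via the linearity relation $\mu_{a+b} = \dot{+} \circ (\mu_a \otimes \mu_b) \circ m^*$ in \ref{rel:F_q_lin_plus_lin_distr} — for instance, when $q$ is odd one can solve for $\dot{+}$ from $\mu_{1} = \dot{+} \circ (\mu_{1/2} \otimes \mu_{1/2}) \circ m^*$ composed appropriately, and in characteristic $2$ one uses $\mu_1 \circ m = \dot{+} \circ (\mu_1 \otimes \mu_0) \circ (\text{suitable map})$; more simply, $\dot{+} = \mu_1 \circ m \circ \cdots$ need not be re-derived at all because Proposition \ref{prop:gen_morphisms} already lists $\dot{+}$ among the generators and the statement we are proving replaces it, so I would instead prove the intermediate claim ``$\dot{+}$ and $\mu_0$ lie in the subalgebra generated by $m, m^*, \eps, \eps^*, \sigma, z, \{\mu_a : a \neq 0\}$'' as a short lemma, and then quote Proposition \ref{prop:gen_morphisms} (or its $\kar{t}$-analogue) for everything else. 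Once that small reduction is in place the rest is immediate.
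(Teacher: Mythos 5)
Your second, ``direct'' route is essentially the paper's own proof: one applies the construction from the proof of Theorem \ref{thrm:univ_prop_Del} with $\mathcal{C}=\kar{t}$ and $\V=\V_t=[1]$, notes that each $f_R$ coincides with $\widetilde{f}_R$ of Definition \ref{def:f_R_in_C} (hence is by construction a $\C$-linear combination of composites and $\otimes$-products of $m,m^*,\eps,\eps^*,\sigma,z,\dot{+},\mu_a$, with Lemma \ref{lem:tensor_prod_f} and Proposition \ref{prop:composition_phi_R} guaranteeing compatibility with $\otimes$ and $\circ$), and then uses that the $f_R$ span $\Hom([s],[k])$ by the very definition of $\mathcal{T}(\underline{GL}_t)$; the passage to $\kar{t}$ via idempotents is as you describe. (Your aside that the proof of Proposition \ref{prop:gen_morphisms} applies ``verbatim'' is not accurate, since that proof manipulates basis vectors of $\V_n$, but you immediately pivot to the correct $\widetilde{f}_R$ argument.) Your first, syntactic-category route is workable but an unnecessary detour: for the corollary one only needs the functor $G:\mathcal{D}\to\kar{t}$ and the identity $f_R=\widetilde{f}_R$; the fullness of $F$ and the equivalence $\kar{t}\simeq\mathcal{D}$ are never used, and proving fullness of $F$ would in any case require the same computation (each generator being some $\widetilde{f}_R$). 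One correction: your anticipated ``obstacle'' about recovering $\dot{+}$ rests on a misreading of the statement, whose generating list does include $\dot{+}$; only $\mu_0$ is omitted, and $\mu_0=z\circ\eps^*$ by Relation \ref{rel:F_q_lin_mu}. Moreover the sketched derivations of $\dot{+}$ from the remaining generators would not work: one cannot solve $\mu_{a+b}=\dot{+}\circ(\mu_a\otimes\mu_b)\circ m^*$ for $\dot{+}$, since $m^*$ is split mono but not epi, so drop that ``short lemma'' from the write-up.
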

\begin{proof}
For $\mathcal{C}:=\mathcal{T}(\underline{GL}_t)$, Theorem \ref{thrm:univ_prop_Del} showed that any morphism $f_R$ is a composition of the morphisms above. 

Since in this case $f_R$'s form a basis for $Hom([s], [k])$ (by construction of the Deligne category), the required statement now follows.
\end{proof}

\subsection{Relation to the category defined \texorpdfstring{in \cite{K, K2}}{by Knop}}\label{ssec:knop}

In this section we briefly recall the definition of the category \InnaA{$\kar{t}$} as given in \cite{K, K2} and explain why it is equivalent to our definition. \InnaA{We will use the notation as in \cref{ssec:knop_indexing} (see \cref{notn:Knop}): given $R\subset \F_q^{l+k}, S\subset \F_q^{k+s}$, we denote by $S\diamond R$ the image of the map $R\times_{\F_q^k} S \to \F_q^{l+s}$ and by $e(S, R)$ the dimension of the kernel of this map.}

Define $\mathcal{T}'$ as a category whose objects are non-negative integers; the objects will be denoted by $[k]$, $k\in \Z_{\geq 0}$.
We set $$ \Hom_{\mathcal{T}'} ([s], [k]) = \C Rel_{s, k}$$
where the morphism $[s] \to [k]$ corresponding to $R \in Rel_{s, k}$ is denoted by $h_R$. The composition of morphisms given as follows: for $R \in Rel_{s, k}$, $S \in Rel_{k, l}$ we set $$h_S \circ h_R := t^{e(S, R)} h_{S \diamond R}.$$

The tensor structure of $\mathcal{T}'$ is given by $[l]\otimes [k]:=[l+k]$, and tensor products of morphisms are given as follows: for $\F_q$-linear subspaces $R_1 \subset \F_q^{r_1}$ and $R_2\subset \F_q^{r_2}$, set $h_{R_1} \otimes h_{R_2} :=h_{R_1\times R_2}$ where $R_1\times R_2 \subset \F_q^{r_1+r_2}$ is considered as an $\F_q$-linear subspace. 

Each object is then self-dual via the evaluation and coevaluation maps
$$h_{R}:[k]\otimes [k] \to \triv, \;\; h_{R}: \triv \to [k]\otimes [k]$$
where $R \subset \F_q^{2k}$ is given by $R=\{(a_1, \ldots, a_k, a_k, \ldots, a_1)|a_1, \ldots, a_k \in \F_q\}.$

The symmetric structure is given by the morphisms
$\sigma=h_{R}: [l]\otimes [k] \to  [k]\otimes [l]$ where $$ R = \{(a_1, \ldots, a_l, b_1, \ldots, b_k,b_1, \ldots, b_k,a_1, \ldots, a_l)|\, \forall i, j, \; a_i,  b_j \in \F_q\} \subset  \F_q^{2k+2l}.$$

Let $\mathcal{K}$ be the Karoubi additive envelope of $\mathcal{T}'$. This is the Deligne category corresponding to finite linear groups as defined in \cite{K, K2}.
\begin{lemma}\label{lem:rel_to Knop}
 We have an equivalence of symmetric monoidal categories:
 $$\mathcal{T} \to \mathcal{T}', [k] \mapsto [k], f_R \mapsto h_{R^\perp},$$
 which induces an equivalence of symmetric monoidal categories
 $\kar{t} \cong \mathcal{K}$.
 
\end{lemma}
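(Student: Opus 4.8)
The plan is to verify that the assignment $[k]\mapsto[k]$, $f_R\mapsto h_{R^\perp}$ genuinely defines a functor $\mathcal{T}(\underline{GL}_t)\to\mathcal{T}'$, that it is symmetric monoidal, and that it is an equivalence; the extension to the Karoubi envelopes is then formal. The heavy lifting has already been done in \cref{ssec:knop_indexing}: by \cref{lem:Knop_basis_vs_ours} we have $S^\perp\diamond R^\perp=(S\star R)^\perp$ and $e(S^\perp,R^\perp)=d(R,S)$ for all $R\subset\F_q^{l+k}$, $S\subset\F_q^{k+s}$. So the functoriality is essentially a translation exercise using these two identities.

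First I would check that the map on morphisms is a well-defined $\C$-linear bijection on each Hom-space. On objects both categories have the same objects $[k]$, and for fixed $s,k$ the map $R\mapsto R^\perp$ is an involutive bijection of $Rel_{s,k}$ onto itself (recall from \cref{ssec:knop_indexing} that $(R^\perp)^\perp=R$), so the induced linear map $\C Rel_{s,k}=\Hom_{\mathcal{T}(\underline{GL}_t)}([s],[k])\to\Hom_{\mathcal{T}'}([s],[k])=\C Rel_{s,k}$ sending $f_R\mapsto h_{R^\perp}$ is a linear isomorphism. Next, compatibility with composition: given $R\in Rel_{s,k}$, $S\in Rel_{k,l}$, in $\mathcal{T}(\underline{GL}_t)$ we have $S\circ R=t^{d(R,S)}\,S\star R$, which maps to $t^{d(R,S)}h_{(S\star R)^\perp}$; on the other hand the images $h_{R^\perp}$, $h_{S^\perp}$ compose in $\mathcal{T}'$ to $t^{e(S^\perp,R^\perp)}h_{S^\perp\diamond R^\perp}$, and \cref{lem:Knop_basis_vs_ours} makes these equal. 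Then compatibility with $\otimes$: $f_{R_1}\otimes f_{R_2}=f_{R_1\times R_2}$ maps to $h_{(R_1\times R_2)^\perp}$, and $(R_1\times R_2)^\perp=R_1^\perp\times R_2^\perp$ (an easy linear-algebra identity over $\F_q$), which is exactly $h_{R_1^\perp}\otimes h_{R_2^\perp}$. Finally I would check that the duality and symmetry structures match: the evaluation/coevaluation subspace $R=\{(a_1,\dots,a_k,-a_k,\dots,-a_1)\}\subset\F_q^{2k}$ has orthogonal complement $\{(a_1,\dots,a_k,a_k,\dots,a_1)\}$, which is precisely the subspace Knop uses; and similarly the symmetry subspace $R'=\{(a,b,-b,-a)\}$ has $(R')^\perp=\{(a,b,b,a)\}$, matching Knop's $\sigma$. (These orthogonality computations are immediate: the pairing $\dot\sum_i u_iv_i$ vanishes on $(a_1,\dots,a_k,-a_k,\dots,-a_1)$ against $(v_1,\dots,v_k,w_k,\dots,w_1)$ iff $v_i=w_i$ for all $i$.) This establishes that $\mathcal{T}(\underline{GL}_t)\to\mathcal{T}'$ is a symmetric monoidal functor which is bijective on objects and an isomorphism on each Hom-space, hence an equivalence (in fact an isomorphism) of symmetric monoidal categories.

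For the last clause I would invoke the universal property of the additive Karoubi envelope: any symmetric monoidal equivalence between two categories extends uniquely (up to natural isomorphism) to a symmetric monoidal equivalence between their additive Karoubi envelopes, and since $\kar{t}$ is by definition the additive Karoubi envelope of $\mathcal{T}(\underline{GL}_t)$ and $\mathcal{K}$ is that of $\mathcal{T}'$, we obtain $\kar{t}\cong\mathcal{K}$ as symmetric monoidal categories.

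The only point requiring any care is bookkeeping: the formula for composition in $\mathcal{T}(\underline{GL}_t)$ uses the subspace $S\star R$ obtained by an identification $\F_q^{l+s}\cong(\F_q^l,0,\F_q^s)$, and one must be sure this matches the identification implicit in Knop's $S\diamond R$. But this is exactly the content of \cref{lem:Knop_basis_vs_ours}, whose proof already tracks these identifications, so there is no genuine obstacle here; the main ``work'' is simply assembling the pieces and recording the elementary orthogonality identities $(R_1\times R_2)^\perp=R_1^\perp\times R_2^\perp$ and the explicit complements of the duality and braiding subspaces.
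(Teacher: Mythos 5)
Your proposal is correct and follows essentially the same route as the paper: use Lemma \ref{lem:Knop_basis_vs_ours} for compatibility with composition, the identity $(R_1\times R_2)^\perp=R_1^\perp\times R_2^\perp$ for monoidality, a direct check that the braiding subspaces correspond under $R\mapsto R^\perp$, and the formal extension to the additive Karoubi envelopes. The only cosmetic differences are that you verify the equivalence via bijectivity on objects and Hom-spaces (the paper exhibits the inverse functor $h_R\mapsto f_{R^\perp}$) and you additionally check the ev/coev subspaces, which the paper omits as it is automatic for a symmetric monoidal functor.
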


\begin{proof}
Lemma \ref{lem:Knop_basis_vs_ours} implies that the correspondence $[k] \mapsto [k], f_R \mapsto \InnaA{h}_{R^\perp}$ defines a functor, and the fact that it is monoidal means that $R_1^\perp \times R_2^\perp = (R_1\times R_2)^\perp$ for any $R_1 \subset \F_q^{r_1}$ and $R_2\subset \F_q^{r_2}$ (the latter statement being straightforward). 
To show that this is a symmetric functor, we need to check that symmetry morphisms correspond to symmetry morphisms: that is, to check that for
$$ R = \{(a_1, \ldots, a_l, b_1, \ldots, b_k,-b_1, \ldots, -b_k,-a_1, \ldots, -a_l)|\, \forall i, j, \; a_i,  b_j \in \F_q\} \subset  \F_q^{2k+2l}$$
we have:
$$ R^\perp = \{(a'_1, \ldots, a'_l, b'_1, \ldots, b'_k,b'_1, \ldots, b'_k,a'_1, \ldots, a'_l)|\, \forall i, j, \; a'_i,  b'_j \in \F_q\}.$$

Checking this equality is again straightforward; so we proved that our correspondence defines a symmetric monoidal functor. Clearly, this functor is an equivalence of categories $\mathcal{T} \to \mathcal{T}'$, with an inverse functor given by 
$$\mathcal{T}' \to \mathcal{T}', [k] \mapsto [k], h_R \mapsto f_{R^\perp}.$$
Finally, the functor $\mathcal{T} \to \mathcal{T}'$ induces an additive symmetric monoidal functor 
 $\kar{t} \to \mathcal{K}$, and the fact that the former is an equivalence implies that so is the latter. 
\end{proof}

The equivalence above now allows us to use the results by from \cite{K, K2} concerning the semisimplicity of $\mathcal{K}$:

\begin{theorem}[F. Knop]
 The category $\kar{t}$ is abelian semisimple for $t \notin \{q^n|n\in \Z_{\geq 0}\}$. For $t=q^n$, the category $\kar{t}$ is not abelian nor semisimple, and its semisimplification is $\Rep(GL_n(\F_q))$.
\end{theorem}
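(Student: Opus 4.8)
The plan is to deduce this theorem from F. Knop's work via the equivalence $\kar{t} \cong \mathcal{K}$ established in \cref{lem:rel_to Knop}. Knop proves (in \cite{K, K2}) the analogous statements for his category $\mathcal{K}$: that it is abelian semisimple precisely when $t$ avoids the values $q^n$, $n \in \Z_{\geq 0}$, and that at the degenerate parameters $t = q^n$ it fails to be semisimple but has semisimplification equivalent to $\Rep(GL_n(\F_q))$. Since equivalences of categories transport the properties "abelian", "semisimple", and "non-abelian" and also commute with the semisimplification construction (which depends only on the symmetric monoidal structure and the tensor ideal of negligible morphisms), the statement for $\kar{t}$ follows formally.

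More concretely, the first step is to invoke \cref{lem:rel_to Knop}, which gives a symmetric monoidal equivalence $\kar{t} \xrightarrow{\sim} \mathcal{K}$ sending $[k] \mapsto [k]$ and $f_R \mapsto h_{R^\perp}$. The second step is to quote Knop's semisimplicity criterion: for $t \notin \{q^n \mid n \in \Z_{\geq 0}\}$ the category $\mathcal{K}$ is abelian semisimple, and transporting along the equivalence gives the same for $\kar{t}$. The third step is the degenerate case: for $t = q^n$, Knop shows $\mathcal{K}$ is neither abelian nor semisimple; again the equivalence transfers this to $\kar{t}$. The fourth step is to identify the semisimplification. The semisimplification $\overline{\kar{t=q^n}}$ is the quotient of $\kar{t=q^n}$ by the tensor ideal of negligible morphisms; by Corollary \ref{cor:specialization} we have a full, essentially surjective SM functor $F_n : \kar{t=q^n} \to \Rep(GL_n(\F_q))$, and $\Rep(GL_n(\F_q))$ is itself semisimple, so this functor factors through the semisimplification. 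Combined with Knop's result that $\overline{\mathcal{K}} \cong \Rep(GL_n(\F_q))$ and the equivalence $\kar{t=q^n} \cong \mathcal{K}$, one concludes $\overline{\kar{t=q^n}} \cong \Rep(GL_n(\F_q))$. Alternatively, one can bypass Knop's identification of $\overline{\mathcal{K}}$ and argue directly: the kernel of $F_n$ is exactly the ideal of negligible morphisms, because on the full subcategory $\kar{t}^{\leq \lfloor n/2 \rfloor}$ the functor $F_n$ is fully faithful (Remark \ref{rmk:F_n_ff}), and standard arguments about symmetric monoidal categories with a full functor to a semisimple tensor category force $\ker F_n$ to coincide with the negligible ideal.

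The main obstacle — and the reason this is presented as a short deduction rather than a self-contained proof — is that the substantive content (the semisimplicity criterion and the identification of the semisimplification) is genuinely Knop's theorem, proved by him using Lefschetz-type / interpolation arguments in \cite{K, K2}, and the present paper does not reprove it. So the only real work here is checking that \cref{lem:rel_to Knop} is a bona fide equivalence of symmetric monoidal categories so that all of Knop's structural results transfer verbatim; that verification was already carried out in the proof of \cref{lem:rel_to Knop}. The one point requiring a little care is matching the parameter conventions: one should confirm that Knop's degeneracy locus, when translated through the orthogonal-complement indexing of \cref{ssec:knop_indexing}, is indeed $\{q^n : n \in \Z_{\geq 0}\}$ and not some reparametrization of it; this is immediate from the composition formula $h_S \circ h_R = t^{e(S,R)} h_{S \diamond R}$ together with \cref{lem:Knop_basis_vs_ours}, which identifies $e(S,R)$ with $d(R,S)$.
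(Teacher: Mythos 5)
Your proposal matches the paper exactly: the paper gives no independent proof of this theorem, but simply transfers Knop's results from \cite{K, K2} to $\kar{t}$ via the symmetric monoidal equivalence $\kar{t}\cong\mathcal{K}$ of \cref{lem:rel_to Knop}, which is precisely your argument. Your extra remarks (the parameter-matching via \cref{lem:Knop_basis_vs_ours} and the identification of $\ker F_n$ with the negligible ideal) are correct but not needed beyond what the equivalence already provides.
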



\subsection{Universal property of the category \texorpdfstring{$\Rep(GL_n(\F_q))$}{Rep(GLn(Fq))}}\label{ssec:univ_prop_classical}

We begin with a reminder on the universal property of the category $\Rep(S_n)$, where $S_n$ is the symmetric group on $n$ letters. This property follows e.g. from \cite{Del07} \cite{CO2}, but we prove it here for completeness of exposition.

\begin{lemma}\label{lem:S_n_univ_prop}
 Let $\mathcal{C}$ be a pre-Tannakian symmetric tensor category, and let $A$ be a commutative Frobenius object in $\mathcal{C}$ such that $\wedge^N A = 0$ for some $N\geq 1$. Then $\dim A = n$ for some integer $n$, and there exists a SM functor
 $$ \Rep(S_n) \to \mathcal{C},$$
 sending the $n$-dimensional permutation representation of $S_n$ to $A$.

\end{lemma}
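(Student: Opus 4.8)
The strategy is to view the Deligne category $\underline{\Rep}(S_t)$ as an intermediary: a commutative Frobenius object $A$ in a $\C$-linear rigid SM category always carries the Frobenius structure that is the defining data of an object in $\underline{\Rep}(S_t)$ via the universal property of $\underline{\Rep}(S_t)$ (the one analogous to our Theorem \ref{thrm:univ_prop_Del}, but only using the commutative Frobenius structure, with no $\F_q$-linear part — this is the content of \cite{Del07}, cf.\ also \cite{CO2}). So first I would invoke that universal property to obtain a SM functor $\underline{\Rep}(S_t) \to \mathcal{C}$ with $t = \dim A = \eps^* \circ \eps$, sending the generating object $[1]$ to $A$. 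The only thing one has to check to even state this is that $\dim A$ is a non-negative integer; this is where the hypothesis $\wedge^N A = 0$ is used.

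\textbf{Showing $\dim A = n \in \Z_{\geq 0}$.} Since $A$ is a special commutative Frobenius algebra, the multiplication $m$ is surjective and $m^*$ is a split mono with $m \circ m^* = \id$; in particular $A$ is a dualizable object with $\dim A = \eps^* \circ \eps$. In a pre-Tannakian category the categorical dimension of any object lies in $\Z$ (it is an integer because of the existence of the faithful exactness and the Krull--Schmidt property — alternatively, $\dim A = \chi(A)$ can be computed as an alternating sum of dimensions of composition factors, each a non-negative integer, but this requires care). More directly: the hypothesis $\wedge^N A = 0$ forces the \emph{symmetric} function identities satisfied by the eigenvalues of the ``dimension operator'' to collapse, forcing $\dim A$ to be a non-negative integer, in fact $0 \le \dim A \le N-1$; I would cite the relevant statement in \cite{Del07} or \cite{CO2} (the annihilation by an exterior power is exactly the condition under which the functor out of $\underline{\Rep}(S_t)$ factors through $\Rep(S_n)$). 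Set $n := \dim A$.

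\textbf{Factoring through $\Rep(S_n)$.} Having the functor $\underline{\Rep}(S_{t=n}) \to \mathcal{C}$, I would use the known structure of $\underline{\Rep}(S_{t=n})$: its quotient by the tensor ideal of negligible morphisms is $\Rep(S_n)$, and — this is the key input from \cite{Del07}, \cite{CO}, \cite{CO2} — a SM functor out of $\underline{\Rep}(S_{t=n})$ into a pre-Tannakian category $\mathcal{C}$ factors through $\Rep(S_n)$ precisely when the image of $[1]$ is annihilated by $\wedge^N$ for some $N$ (equivalently, when $[1]$ maps to an object of finite length on which the symmetric group acts through a finite quotient — here, when $\dim A$ is a non-negative integer and $\wedge^{n+1} A = 0$). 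Since $\wedge^N A = 0$ by hypothesis and $\dim A = n$, the functor descends to $F\colon \Rep(S_n) \to \mathcal{C}$, and it sends the $n$-dimensional permutation representation $\C^n = O(\{1,\dots,n\})$ — which is the image of $[1]$ under $\underline{\Rep}(S_n) \twoheadrightarrow \Rep(S_n)$, equipped with exactly the Frobenius structure coming from the finite set $\{1,\dots,n\}$ — to $A$ with its given Frobenius structure. This gives the desired functor.

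\textbf{Main obstacle.} The delicate point is the second step: establishing that $\dim A$ is a non-negative integer and that the $S_t$-functor factors through $\Rep(S_n)$ under the hypothesis $\wedge^N A = 0$. This is really a statement about $\underline{\Rep}(S_t)$ rather than about our $\F_q$-linear setup, so I expect to handle it by a careful citation of \cite{Del07} (the semisimplification/specialization results) together with \cite{CO2} (abelian envelope and its quotients); one must match up conventions, in particular that ``$\wedge^N A = 0$'' is the correct incarnation of Deligne's condition for a symmetric tensor functor out of $\underline{\Rep}(S_t)$ to factor through $\Rep(S_n)$. The rest — producing the initial functor, identifying the permutation representation with the image of $[1]$, matching Frobenius structures — is formal given the universal property.
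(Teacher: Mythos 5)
Your route is correct in outline, but it is genuinely different from the one taken in the paper. The paper never passes through $\uRep(S_t)$: it restricts to the tensor subcategory $\mathcal{C}_A\subset\mathcal{C}$ generated by $A$, invokes Deligne's recognition theorem (annihilation by an exterior power makes $\mathcal{C}_A$ Tannakian, so $\mathcal{C}_A\simeq\Rep(G)$ for an affine group scheme $G$), and then argues classically: $A$ becomes an honest finite-dimensional special commutative (hence \'etale) $\C$-algebra, $X=\mathrm{Spec}(A)$ is a finite $G$-set with $|X|=\dim A=n$, and the resulting homomorphism $G\to S_X$ induces $\Rep(S_n)\to\Rep(G)\subset\mathcal{C}$. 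Your proposal instead uses the universal property of $\uRep(S_t)$ to get $G\colon\uRep(S_{t=\dim A})\to\mathcal{C}$, deduces $\dim A=n\in\Z_{\geq 0}$ from $\wedge^N A=0$, and then factors $G$ through the semisimple quotient $\Rep(S_n)$ by citing the Deligne/Comes--Ostrik results on functors out of $\uRep(S_{t=n})$ into pre-Tannakian targets. The paper's argument buys a concrete group homomorphism $G\to S_n$ (which is then reused almost verbatim in the proof of Theorem \ref{thrm:univ_prop_classical}), at the cost of invoking Tannakian reconstruction and \cite{Del02}; your argument buys a proof entirely parallel to the interpolation philosophy of Corollary \ref{cor-factor}, at the cost of making the factorization step a load-bearing citation. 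Since the paper itself remarks that the lemma ``follows e.g.\ from \cite{Del07}, \cite{CO2}'', your citation-based derivation is a legitimate alternative.

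Two caveats. First, your passing claim that categorical dimensions in a pre-Tannakian category are integers is false (e.g.\ the generator of $\underline{\Rep}^{ab}(S_t)$ for $t\notin\Z$); fortunately your ``more direct'' argument is the right one: $\dim\wedge^N A=\binom{\dim A}{N}=0$ forces $\dim A\in\{0,1,\dots,N-1\}$. Second, be careful that the factorization through $\Rep(S_n)$ really does require the cited dichotomy/ideal results: the easy general fact about tensor ideals only gives $\ker G\subseteq\mathcal{N}$ (every proper tensor ideal consists of negligible morphisms), which is the wrong inclusion. What you need is that $G$ is not faithful (it kills $\wedge^N[1]$, which is nonzero in $\uRep(S_{t=n})$ because the antisymmetrizer is a nonzero idempotent of $\C S_N\subset P_N(n)$) together with the statement that a non-faithful SM functor from $\uRep(S_{t=n})$ to a pre-Tannakian category kills exactly $\mathcal{N}$; that is the genuine content you must extract from \cite{Del07}, \cite{CO2}, as you yourself flag.
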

\begin{proof}
Let $\mathcal{C}_{A}$ be the full tensor subcategory of $\mathcal{C}$ generated by $A$ (it is the full subcategory whose objects are subquotients of finite direct sums of tensor powers of $A$). It is enough to establish that $\dim_{\mathcal{C}} A = n$ for some $n$, and the existence of a SM functor $$ \widetilde{F}_{A}: \Rep(S_n) \to \mathcal{C}_{A}, \;\; A_n \longmapsto A.$$

Since $A$ is annihilated by an exterior power, the category $\mathcal{C}_{A}$ is Tannakian; in other words, we have an equivalence of tensor categories $\mathcal{C}_{A} \cong \Rep(G)$ for some \InnaA{affine} algebraic group $G$ (here $\Rep(G)$ \InnaA{denotes} the category of finite-dimensional algebraic representations of $G$). We fix such an equivalence, so we can now think of $A$ as a representation of $G$, with an underlying complex vector space. In fact, $A$ generates $\Rep(G)$ under taking tensor powers, direct sums and subquotients, which implies that $A$ is a faithful representation of $G$. 

Let $X:=Spec(A)$ be the \InnaA{affine} $G$-scheme corresponding to $A$. Since $A$ is a finite-dimensional \InnaA{Frobenius algebra} over $\C$, we conclude that $X$ is a finite set, \InnaA{and $|X|=n$}. The action of $G$ on $X$ induces a homomorphism $G\to Aut(X) = S_X$, which in turn induces a SM functor $$ \widetilde{F}_{A}: \Rep(S_X) \to \mathcal{C}_{A}, \;\; \C X \longmapsto A$$ as required. 

\end{proof}

\begin{theorem}\label{thrm:univ_prop_classical}
 Let $\mathcal{C}$ be a pre-Tannakian symmetric tensor category, and let $\V$ be an $\F_q$-linear Frobenius space in $\mathcal{C}$ such that $\wedge^N \V = 0$ for some $N\geq 1$. Then there exists a finite-dimensional $\F_q$-vector space $V$ such that $\dim_{\mathcal{C}} \V = \dim_{\F_q} V$ and there exists a SM functor
 $$ \widetilde{F}_{\V}: \Rep(GL(V)) \to \mathcal{C}, \;\; \V_n \longmapsto \V.$$

\end{theorem}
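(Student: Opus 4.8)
The idea is to mimic the proof of \cref{lem:S_n_univ_prop}, but now tracking the extra $\F_q$-linear structure on $\V$, so that the symmetric group $S_X$ of the finite set $X$ gets upgraded to a general linear group $GL(V)$. First I would pass to the full tensor subcategory $\mathcal{C}_{\V}\subset\mathcal{C}$ generated by $\V$; since $\wedge^N\V=0$, the category $\mathcal{C}_{\V}$ is Tannakian, so fixing an equivalence $\mathcal{C}_{\V}\cong\Rep(G)$ for an affine algebraic group $G$ lets us view $\V$ as a finite-dimensional complex representation of $G$, faithful because $\V$ generates. As in \cref{lem:S_n_univ_prop}, the commutative Frobenius algebra structure \ref{rel:Frob_alg} makes $A:=\V^*$ (or $\V$ itself, which is self-dual by \cref{lem:self-dual}) into a finite-dimensional commutative Frobenius algebra over $\C$ carrying a $G$-action, so $X:=\operatorname{Spec}(O(X))$ with $O(X)$ the commutative algebra underlying $\V$ is a finite $G$-set, and $\dim_{\mathcal{C}}\V=|X|=:m$.

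\textbf{Upgrading the set to a vector space.} The new ingredient is that the morphisms $\dot{+}$, $z$, and $\mu_a$ ($a\in\F_q$) equip $\V$ — equivalently the scheme $X$ — with the structure of an $\F_q$-module object in $\mathcal{C}_{\V}$, compatibly with the $G$-action (all the structure maps are $G$-equivariant, being morphisms in $\mathcal{C}_{\V}=\Rep(G)$). Concretely, $\dot{+}:X\times X\to X$, $z:\mathrm{pt}\to X$ and $\mu_a:X\to X$ make the finite set $X$ into an $\F_q$-vector space $V$: the compatibility relations \ref{rel:F_q_lin_plus_ass_comm}--\ref{rel:F_q_lin_plus_lin_distr} are exactly the vector-space axioms, and the Cancellation Axiom \ref{rel:cancellation_axiom} (via \cref{lem:eq_axiom_corollary}) guarantees that addition by a fixed vector is injective, hence that $V$ is a genuine $\F_q$-module with cancellation, so $|V|=q^{\dim_{\F_q}V}$ and $m=\dim_{\F_q}V$. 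Then $G$ acts on $X=V$ by $\F_q$-linear automorphisms — this uses precisely the compatibility relations \ref{rel:mu_coalg_mor}, \ref{rel:z_coalg_mor}, \ref{rel:plus_coalg_mor} saying that $\mu_a$, $z$, $\dot{+}$ are algebra/coalgebra morphisms, which on the level of $X$ say that the $G$-action commutes with scalar multiplication and addition — so the homomorphism $G\to \operatorname{Aut}(X)$ of \cref{lem:S_n_univ_prop} factors through $GL(V)=GL(X)$.

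\textbf{Producing the functor.} The resulting group homomorphism $\rho:G\to GL(V)$ induces a pullback SM functor $\rho^*:\Rep(GL(V))\to\Rep(G)=\mathcal{C}_{\V}\hookrightarrow\mathcal{C}$. It remains to check that $\rho^*$ sends the standard representation $\V_{n}=\C V$ of $GL(V)$ to $\V$ — but $\C V = O(X)^*$ as a $GL(V)$-representation, and $\rho^*(O(X)^*)$ is, by construction, the dual of the commutative algebra object underlying $\V$ in $\mathcal{C}_{\V}$, which is $\V$ again by the self-duality of \cref{lem:self-dual} (the pairing $ev=\eps^*\circ m$ identifying $\V$ with $\V^*$ is the one coming from the Frobenius structure, i.e. the trace form on $O(X)$, which is exactly how $\C V\cong O(X)^*$ as $GL(V)$-representations). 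This gives the desired $\widetilde{F}_{\V}:\Rep(GL(V))\to\mathcal{C}$ with $\V_n\mapsto\V$ and $\dim_{\mathcal{C}}\V=\dim_{\F_q}V$.

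\textbf{Main obstacle.} The routine parts — Tannakian reconstruction and finiteness of $X$ — are handled exactly as in \cref{lem:S_n_univ_prop}. The real content, and the step I expect to require the most care, is verifying that the abstractly-defined $\F_q$-module object structure on $X$ genuinely makes $X$ into an $\F_q$-vector space on which $G$ acts linearly: one must translate each diagrammatic relation of \cref{def:Frob_linear_space} into the corresponding statement about the finite set $X=\operatorname{Spec}$ of the commutative algebra, and in particular use the Cancellation Axiom to rule out the degenerate possibility that $\dot{+}$ fails to be a group operation. Once $X$ is known to be an $\F_q$-vector space with linear $G$-action, the factorization $G\to GL(V)$ and the construction of $\rho^*$ are formal.
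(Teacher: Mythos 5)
Your proposal follows essentially the same route as the paper's proof: pass to the Tannakian subcategory $\mathcal{C}_{\V}\cong\Rep(G)$, take the finite spectrum of the (self-dual) Frobenius algebra to get a finite $G$-set, use the coalgebra-morphism relations \ref{rel:mu_coalg_mor}, \ref{rel:z_coalg_mor}, \ref{rel:plus_coalg_mor} to endow it with an $\F_q$-vector space structure $V$ on which $G$ acts linearly, and pull back along $G\hookrightarrow GL(V)$ to obtain the SM functor sending $\V_n\mapsto\V$. The only cosmetic difference is your appeal to the Cancellation Axiom to secure the group structure, which is superfluous: additive inverses already follow from the module relations \ref{rel:F_q_lin_mu} and \ref{rel:F_q_lin_plus_lin_distr} (since $\mu_{1+(-1)}=\mu_0=z\circ\eps^*$), and the paper obtains the linear structure from the same coalgebra-morphism relations without invoking it.
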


\begin{proof}
Let $\mathcal{C}_{\V}$, $G$ be as in the proof of Lemma \ref{lem:S_n_univ_prop}, so that $\mathcal{C}_{\V} \cong \Rep(G)$; as before, $\V$ can be considered as a faithful, finite-dimensional complex representation of the \InnaA{affine} algebraic group $G$.

It is enough to establish that $\dim_{\mathcal{C}} \V = q^n$ for some $n$, and the existence of a SM functor $$ \widetilde{F}_{\V}: \Rep(GL_n(\F_q)) \to \Rep(G) \cong \mathcal{C}_{\V}, \;\; \V_n \longmapsto \V.$$

Consider $\V$ as a commutative and cocommutative Frobenius algebra in $\Vect$; then $\V^*$ is a a commutative and cocommutative Frobenius algebra as well, and a faithful representation of $G$.

Let $V:=Spec(\V^*)$; this is a finite set with an action of $G$, and by the proof of Lemma \ref{lem:S_n_univ_prop}, the action of $G$ on $V$ induces an embedding $G \hookrightarrow S_V$.
The operators $\mu_a$ ($a\in \F_q^{\times}$), $\dot{+}$ and $z$ on $\V$ are morphisms of coalgebras by Relations \ref{rel:mu_coalg_mor}, \ref{rel:z_coalg_mor}, \ref{rel:plus_coalg_mor}, hence they endow the $G$-set $V$ with the structure of an $\F_q$-linear vector space. In particular, this gives an embedding of finite groups $G \hookrightarrow GL(V)$, and we obtain $$\dim_{\mathcal{C}} \V = \dim_{\C} \V = \dim_{\F_q}(V).$$ The embedding $G \hookrightarrow GL(V)$ induces  a SM functor $$ \widetilde{F}_{\V}: \Rep(GL_n(\F_q)) \to \Rep(G), \;\; \V_n \longmapsto \V,$$ which completes the proof. 
\end{proof}

The following corollary is proved in the same manner as Corollary \ref{cor:univ_prop_2}:

\begin{corollary}\label{cor:univ_prop_classical_2}
  The functor
 $$Fun^{\otimes}\left(\Rep(GL_n(\F_q)), \mathcal{C}\right) \longrightarrow Frob_{\F_q}(\mathcal{C}, q^n), \;\;\widetilde{F} \longmapsto \widetilde{F}(\V_n) $$
 is an equivalence of categories. Here 
 \begin{itemize}
  \item $Frob_{\F_q}(\mathcal{C}, q^n)$ is the subcategory of $\mathcal{C}$ consisting of $q^n$-dimensional $\F_q$-linear Frobenius spaces $\V$ annihilated by some exterior power, and isomorphisms between them.
  \item $Fun^{\otimes}\left(\Rep(GL_n(\F_q)), \mathcal{C}\right)$ is the category of SM functors $\Rep(GL_n(\F_q)) \to \mathcal{C}$ and natural SM isomorphisms.
 \end{itemize}
\end{corollary}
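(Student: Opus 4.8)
The plan is to mimic the proof of Corollary \ref{cor:univ_prop_2} essentially verbatim, replacing the universal property of $\kar{t}$ by the one established in Theorem \ref{thrm:univ_prop_classical}. First I would note that the assignment $\widetilde{F} \mapsto \widetilde{F}(\V_n)$ indeed lands in $Frob_{\F_q}(\mathcal{C}, q^n)$: by Theorem \ref{thrm:univ_prop_classical} the object $\V_n \in \Rep(GL_n(\F_q))$ is itself a $q^n$-dimensional $\F_q$-linear Frobenius space annihilated by $\wedge^{q^n+1}$, and since $\widetilde{F}$ is SM it carries all the structure morphisms $(m, \eps, m^*, \eps^*, \dot{+}, \mu, z)$ and the defining relations of Definition \ref{def:Frob_linear_space} over to $\widetilde{F}(\V_n)$, and it preserves categorical dimension, so $\dim_{\mathcal{C}} \widetilde{F}(\V_n) = \dim \V_n = q^n$. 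Natural SM isomorphisms of functors clearly go to isomorphisms of $\F_q$-linear Frobenius spaces, so the assignment is a well-defined functor between the two (groupoid-like) categories in question.

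Next I would construct the inverse functor $Frob_{\F_q}(\mathcal{C}, q^n) \to Fun^{\otimes}(\Rep(GL_n(\F_q)), \mathcal{C})$ by sending $\V \mapsto \widetilde{F}_{\V}$, the functor produced in Theorem \ref{thrm:univ_prop_classical}. To see this is genuinely inverse I would check two things. On one side, for a SM functor $\widetilde{F}$ with $\V := \widetilde{F}(\V_n)$, the functor $\widetilde{F}_{\V}$ agrees with $\widetilde{F}$: both send $\V_n$ to $\V$ compatibly with all the generating morphisms, and since (by the universal property, or concretely by Proposition \ref{prop:gen_morphisms}) the category $\Rep(GL_n(\F_q))$ is generated under $\otimes$, $\circ$ and $\C$-linear combinations by $m, m^*, \eps, \eps^*, \sigma, z, \dot+, \mu_a$, two SM functors agreeing on these generating data are naturally SM isomorphic. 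On the other side, I must explain how an isomorphism $\phi : \V \to \V'$ of $\F_q$-linear Frobenius spaces extends to a natural SM isomorphism $\widetilde{F}_{\V} \to \widetilde{F}_{\V'}$: exactly as in the proof of Corollary \ref{cor:univ_prop_2}, $\phi$ induces isomorphisms $\widetilde{F}_{\V}(M) \to \widetilde{F}_{\V'}(M)$ for the generating objects $M = \V_n^{\otimes k}$, these commute with the images of $m, m^*, \eps, \eps^*, \dot+, \mu_a, z$ because $\phi$ respects all that structure, hence they commute with every morphism $f_R$ (these being built from the generators by Proposition \ref{prop:gen_morphisms}), and one extends over subobjects/quotients using that $\Rep(GL_n(\F_q))$ is the Karoubi-type completion generated by the $\V_n^{\otimes k}$.

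Finally I would assemble these: the two assignments are mutually inverse up to canonical natural isomorphism, so they give an equivalence of categories. The one genuinely new point compared to Corollary \ref{cor:univ_prop_2} — and the main obstacle — is that $\Rep(GL_n(\F_q))$ does not literally appear as a Karoubi envelope of a skeletal category in the same explicit way as $\kar{t}$; the generation statement and the resulting ``two SM functors agreeing on generators are isomorphic'' argument rest on Proposition \ref{prop:gen_morphisms} together with the faithfulness of the evaluation on small tensor powers, rather than on a tautological construction. Once one grants that $\{\V_n^{\otimes k}\}$ generates $\Rep(GL_n(\F_q))$ under finite direct sums and direct summands and that every morphism between tensor powers is a $\C$-linear combination of composites of the listed generators, the rest of the argument is a routine transcription of the proof of Corollary \ref{cor:univ_prop_2}, so I would keep the write-up correspondingly brief, as the statement itself indicates (``proved in the same manner as Corollary \ref{cor:univ_prop_2}'').
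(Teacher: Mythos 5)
Your proposal is correct and follows essentially the same route as the paper, whose proof simply transcribes the argument of Corollary \ref{cor:univ_prop_2}: construct the inverse $\V\mapsto \widetilde{F}_{\V}$ via Theorem \ref{thrm:univ_prop_classical}, extend an isomorphism of Frobenius spaces to a natural SM isomorphism on the tensor powers of $\V_n$ using the generation of morphisms, and then pass to all of $\Rep(GL_n(\F_q))$ via its description as the additive Karoubi completion of $\mathcal{T}(GL_n(\F_q))$. The extra details you supply (dimension and exterior-power preservation under SM functors, and the comparison $\widetilde{F}\cong\widetilde{F}_{\widetilde{F}(\V_n)}$ via Proposition \ref{prop:gen_morphisms}) are routine checks consistent with the paper's intent.
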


\begin{corollary} \label{cor-factor}
 Let $\mathcal{C}$ be a pre-Tannakian symmetric tensor category, and let $\V$ be an $\F_q$-linear Frobenius space in $\mathcal{C}$ annihilated by some exterior power. Let $n\in \mathbb{Z}_{\geq 0}$ such that $\dim \V = q^n$. Then the SM functor $F_{\V}: \kar{t=q^n} \to \mathcal{C}$ seen in \ref{thrm:univ_prop_Del} factors through the functor $F_n: \kar{t=q^n} \to \Rep(GL_n(\F_q))$, and we have a natural SM isomorphism $$ F_{\V} \longrightarrow \widetilde{F}_{\V} \circ F_n. $$ 
\end{corollary}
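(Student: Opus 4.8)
The statement is essentially a bookkeeping corollary: we have two SM functors out of $\kar{t=q^n}$, namely $F_{\V}$ (from Theorem \ref{thrm:univ_prop_Del}) and $\widetilde{F}_{\V}\circ F_n$ (the composition of the specialization functor $F_n$ from Corollary \ref{cor:specialization} with the functor $\widetilde{F}_{\V}$ from Theorem \ref{thrm:univ_prop_classical}, which exists precisely because $\V$ is annihilated by an exterior power), and we must exhibit a natural SM isomorphism between them. The cleanest route is to invoke Corollary \ref{cor:univ_prop_2}: the assignment $G\mapsto G(\V_t)$ is an equivalence from $\mathrm{Fun}^{\otimes}(\kar{t},\mathcal{C})$ to $\mathrm{Frob}_{\F_q}(\mathcal{C},t)$, so two SM functors out of $\kar{t=q^n}$ are naturally SM isomorphic as soon as they agree (up to isomorphism of $\F_q$-linear Frobenius spaces) on the generating object $\V_t=[1]$. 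Thus the entire content reduces to checking $(\widetilde{F}_{\V}\circ F_n)(\V_t)\cong \V$ as $\F_q$-linear Frobenius spaces.

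\textbf{Key steps.} First I would unwind the two functors on the generating object: $F_n$ sends $[1]=\V_t$ to $\V_n=\C\F_q^n\in\Rep(GL_n(\F_q))$ (Proposition \ref{prop:functor_F_n}), and $\widetilde{F}_{\V}$ sends $\V_n$ to $\V$ by construction in Theorem \ref{thrm:univ_prop_classical}; hence $(\widetilde{F}_{\V}\circ F_n)(\V_t)=\V$ on the nose. Second, I would check that this identification is compatible with the $\F_q$-linear Frobenius space structures. For $F_{\V}$, the structure morphisms $m,\eps,m^*,\eps^*,\sigma,z,\dot{+},\mu_a$ on $F_{\V}(\V_t)=\V$ are by definition the given ones on $\V$ (this is how $F_{\V}$ is built in Theorem \ref{thrm:univ_prop_Del}, since each of these is some $\widetilde{f}_R$ and $F_{\V}(f_R)=\widetilde{f}_R$). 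For $\widetilde{F}_{\V}\circ F_n$, these same morphisms in $\kar{t=q^n}$ are sent by $F_n$ to the corresponding $f_R\in\Rep(GL_n(\F_q))$ described in Proposition \ref{prop:gen_morphisms}, which are exactly the $\F_q$-linear Frobenius space morphisms of $\V_n$; and $\widetilde{F}_{\V}$, being a SM functor sending $\V_n$ to $\V$, carries these to the structure morphisms of $\V$ (here one uses that the morphisms in question lie in a small tensor power where one may just track them through the construction of $\widetilde{F}_{\V}$, or simply that $\widetilde{F}_{\V}$ by construction intertwines the two $\F_q$-linear Frobenius structures — this is implicit in Corollary \ref{cor:univ_prop_classical_2}). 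Third, with both functors sending $\V_t$ to the same $\F_q$-linear Frobenius space $\V$ (via the identity isomorphism), Corollary \ref{cor:univ_prop_2} produces the desired natural SM isomorphism $F_{\V}\xrightarrow{\sim}\widetilde{F}_{\V}\circ F_n$, and the factorization through $F_n$ is then immediate.

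\textbf{Main obstacle.} The only genuinely non-formal point is verifying that $\widetilde{F}_{\V}\circ F_n$, as an SM functor out of $\kar{t=q^n}$, really does send the generating object together with all eight structure morphisms to $\V$ with its prescribed $\F_q$-linear Frobenius structure — i.e. that composing with $F_n$ does not scramble the structure. This is straightforward because $F_n$ sends $f_R\mapsto f_R$ and $\widetilde{F}_{\V}$ is monoidal, so each structure morphism is transported correctly; one should, however, be slightly careful that the categorical dimensions match, which is guaranteed since $\dim_{\mathcal{C}}\V=q^n=\dim\V_n$ by hypothesis and by Theorem \ref{thrm:univ_prop_classical}. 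Once this compatibility is in place, the rest is a direct appeal to the equivalence of Corollary \ref{cor:univ_prop_2}, so I expect the write-up to be short.
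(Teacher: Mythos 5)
Your proposal is correct and matches the paper's (implicit) argument: the corollary is stated without a separate proof precisely because, as you say, $\widetilde{F}_{\V}\circ F_n$ is an SM functor $\kar{t=q^n}\to\mathcal{C}$ sending $\V_t$, with its $\F_q$-linear Frobenius structure, to $\V$ (using Proposition \ref{prop:functor_F_n} for $F_n$ and the construction in Theorem \ref{thrm:univ_prop_classical} / Corollary \ref{cor:univ_prop_classical_2} for $\widetilde{F}_{\V}$), so the equivalence of Corollary \ref{cor:univ_prop_2} yields the natural SM isomorphism $F_{\V}\to\widetilde{F}_{\V}\circ F_n$. Your care about the structure morphisms being transported correctly is exactly the point that makes the appeal to Corollary \ref{cor:univ_prop_2} legitimate, and it is already needed for Corollary \ref{cor:univ_prop_classical_2} itself.
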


\begin{remark}
 In a subsequent article, we will show that for a pre-Tannakian  symmetric tensor category $\mathcal{C}$, and an $\F_q$-linear Frobenius space $\V$ in $\mathcal{C}$ {\it not} annihilated by any exterior power, the functor $F_{\V}$ factors through a certain tensor category $\urep$ in which $\kar{t}$ sits as a full SM subcategory. This category $\urep$ is called the {\it abelian envelope} of $\kar{t}$ and it is expected to be similar to the Deligne tensor categories $Rep^{ab}(\underline{S}_t)$, $Rep^{ab}(\underline{GL}_t(\C))$ seen in \cite{CO2, Del07, EHS}.
\end{remark}

\section{Injective representations of infinite general linear group}\label{sec:repinf}

We now study the analogue of Sam-Snowden's category of algebraic representations of $S_{\infty}$ (see \cite{SS}) for the group $GL_{\infty}(\F_q) = \bigcup_{n\geq 1} GL_n(\F_q)$. We restrict ourselves here to the full subcategory in the category of $GL_{\infty}(\F_q)$-modules whose objects are tensor powers of the natural representation $\V_{\infty} = \C\F_q^{\infty}$. Our main result is that this category is the universal SM category generated by an $\F_q$-linear semi-Frobenius space.

\subsection{A subcategory of the Deligne category}\label{ssec:infty_def}

\begin{definition}
 For any $s, k\geq 0$, denote $$Rel^{\infty}_{s, k} = \{ R \subset \F_q^{s+k} \; \text{ linear subspace, where } \; R \to \F_q^k \; \text{ is surjective }  \}.$$ 
\end{definition}
 This is a subset of $Rel_{s, k}$ as defined in Section \ref{sec:Deligne_def}. Recall the operations defined in Section \ref{sec:classical_endom}.

 \begin{lemma}\label{lem:Rel_infty_well_def}
  Let $R \in Rel^{\infty}_{s, k}$, $S \in Rel^{\infty}_{k, l}$. Then $$d(R, S) =0, \;\; S \star R \in Rel^{\infty}_{s,l}.$$
  
  Furthermore, for $R_1 \in Rel^{\infty}_{s_1, k_1}$, $R_2 \in Rel^{\infty}_{s_2, k_2}$, consider $R_1 \times R_2 \subset \F_q^{s_1+s_2+k_1+k_2}$. Then $R_1\times R_2 \in Rel^{\infty}_{s_1+s_2, k_1+k_2}$.
 \end{lemma}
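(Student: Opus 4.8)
The statement is a purely linear-algebraic check about the operations $d(R,S)$, $S\star R$, and $R_1\times R_2$ introduced in Definition \ref{def:composition_star_d_R_S} and Section \ref{sec:Deligne_def}. The plan is to unwind the definitions and track the surjectivity conditions through each construction. Throughout, for a subspace $R\subset\F_q^{s+k}=\F_q^s\times\F_q^k$, write $\pi_k\colon R\to\F_q^k$ for the projection onto the last $k$ coordinates; the condition $R\in Rel^\infty_{s,k}$ says precisely that $\pi_k$ is onto.

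\textbf{Step 1: $d(R,S)=0$.} Recall $d(R,S)=k-\dim(\text{projection of }(R,0)+(0,S)\text{ on }\F_q^k\subset\F_q^{l+k+s})$, where the middle copy of $\F_q^k$ consists of vectors supported on coordinates $l+1,\dots,l+k$. The projection of $(0,S)$ onto this $\F_q^k$ already equals the image of $S\to\F_q^k$ under the first projection (here $S\subset\F_q^{k+l}=\F_q^k\times\F_q^l$), which is all of $\F_q^k$ since $S\in Rel^\infty_{k,l}$ means the projection $S\to\F_q^l$ is onto — wait, one must be careful about which projection. For $S\in Rel^\infty_{k,l}$ with $S\subset\F_q^{k+l}$, the surjectivity is onto the \emph{last} $l$ coordinates. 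So I would instead argue via $R$: the projection of $(R,0)$ onto the middle $\F_q^k$ is exactly $\pi_k(R)=\F_q^k$ because $R\in Rel^\infty_{s,k}$ (and in $(R,0)\subset\F_q^{l+k+s}$ the block $R$ occupies the first $l+k$ coordinates, of which the last $k$ are the ``middle'' block). Hence the projection of $(R,0)+(0,S)$ already contains $\F_q^k$, so it equals $\F_q^k$, giving $d(R,S)=0$.

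\textbf{Step 2: $S\star R\in Rel^\infty_{s,l}$.} From the description in Lemma \ref{lem:Knop_basis_vs_ours} (or directly from Definition \ref{def:composition_star_d_R_S}), $S\star R$ is identified with the subspace
\[
\{(v,u)\in\F_q^l\times\F_q^s\ \mid\ \exists w\in\F_q^k:\ (v,w)\in R,\ (w,u)\in S\}
\]
(with appropriate bookkeeping of which coordinate blocks play which role — I would fix the convention from Definition \ref{def:composition_star_d_R_S} once and keep it). To show the relevant projection is onto, take an arbitrary target vector in $\F_q^s$ (resp.\ $\F_q^l$, whichever block surjectivity is required onto, matching the convention for $Rel^\infty_{s,l}$). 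Using $S\in Rel^\infty_{k,l}$ lift it to some element of $S$, extract its $\F_q^k$-component $w$; then using $R\in Rel^\infty_{s,k}$ — more precisely the fact that $\pi_k(R)=\F_q^k$ — find $(v,w)\in R$ with that same $w$; the pair $(v,u)$ then lies in $S\star R$ and hits the prescribed component. This shows surjectivity of the required projection, hence $S\star R\in Rel^\infty_{s,l}$.

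\textbf{Step 3: $R_1\times R_2$.} This is the easiest part: $R_1\times R_2\subset\F_q^{s_1+s_2}\times\F_q^{k_1+k_2}$ (after reordering coordinates as in the tensor-product convention of Section \ref{sec:Deligne_def}), and its projection onto $\F_q^{k_1+k_2}$ is the product of the projections of $R_1$ and $R_2$ onto $\F_q^{k_1}$ and $\F_q^{k_2}$ respectively, each of which is onto by hypothesis; a product of surjections is a surjection, so $R_1\times R_2\in Rel^\infty_{s_1+s_2,k_1+k_2}$.

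\textbf{Main obstacle.} There is no deep difficulty here — the only thing to get right is the coordinate bookkeeping: the various $\perp$'s, the order in which the $\F_q^l$, $\F_q^k$, $\F_q^s$ blocks appear in $\F_q^{l+k+s}$, and the fact that ``$R\to\F_q^k$ surjective'' in the definition of $Rel^\infty_{s,k}$ refers to the projection onto the \emph{second} factor, which after composing must be matched against the \emph{first} factor of $S$ in the composition $S\circ R$. I would therefore begin by pinning down these conventions explicitly and then the three claims become short applications of ``the projection of a sum/product of subspaces is the sum/product of the projections.''
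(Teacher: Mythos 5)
Your proof is correct, and two of its three parts coincide with the paper's: for $d(R,S)=0$ you give the same reason the paper uses implicitly (the projection of $(R,0)$ alone onto the middle copy of $\F_q^k$ is already all of $\F_q^k$ because $R\in Rel^{\infty}_{s,k}$ — and you rightly noticed that $S$ cannot be used for this, since its surjectivity is onto the $\F_q^l$-block), and the product statement is, as in the paper, just a product of surjections. Where you genuinely diverge is the claim $S\star R\in Rel^{\infty}_{s,l}$: you argue element-wise, lifting a prescribed vector of $\F_q^l$ through $S$ and then lifting the resulting $\F_q^k$-component through $R$, whereas the paper writes $R=Row\begin{bmatrix} -A & I_k\\ A'&0\end{bmatrix}$, $S=Row\begin{bmatrix}-B&I_l\\ B'&0\end{bmatrix}$ and row-reduces to get $S\star R=Row\begin{bmatrix}-BA&I_l\\ A'&0\\ B'A&0\end{bmatrix}$, from which surjectivity is visible from the $I_l$ block. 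Your route is shorter and purely set-theoretic; the paper's computation buys an explicit matrix presentation of $S\star R$ that is reused verbatim later (in the proof of Proposition \ref{prop:composition_f_R_infty}), so it does double duty. One bookkeeping caveat, which you flagged yourself but did not resolve: unwinding Definition \ref{def:composition_star_d_R_S}, the middle coordinates must cancel, so $S\star R=\{(v,u)\mid \exists w:\ (v,w)\in R,\ (-w,u)\in S\}$; your description omits the sign. This does not affect your argument — having lifted $u$ to $(w,u)\in S$, surjectivity of the projection $R\to\F_q^k$ lets you choose $(v,-w)\in R$ just as well — but the sign should be recorded if that description of $S\star R$ is stated as an identity.
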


\begin{proof} 
Using the notation of Definition \ref{def:composition_star_d_R_S} we have for $R \in Rel^{\infty}_{s, k}$, $S \in Rel^{\infty}_{k, l}$ \[ \dim \left(\substack{\text{ projection of } (R, 0)+ (0,S) \\ \text{ on the subspace } \F_q^k \subset \F_q^{l+k+s}}\right) = k \] which implies $d(R,S) = 0$. 

For $R \in Rel^{\infty}_{s, k}$, $S \in Rel^{\infty}_{k, l}$, there exist $A \in Mat_{k\times s}(\F_q), A'\in Mat_{d_1\times s}(\F_q), B\in Mat_{l\times k}(\F_q), B'\in Mat_{d_2\times k}(\F_q)$ such that $$R = Row\begin{bmatrix} -A &I_k\\
A' &0
\end{bmatrix}, \;\; S = Row \begin{bmatrix} -B &I_l\\
B' &0
\end{bmatrix} \;\; \InnaA{\Longrightarrow} \;\; S\circledast R = Row \begin{bmatrix}
-A &I_k &0\\
A' &0 &0\\
0 &-B &I_l \\
0 &B' &0 
\end{bmatrix}$$
Multiplying the above matrix by the invertible matrix $\begin{bmatrix}
I_{k} &0 &0 &0\\
0 &I_{d_1} &0 &0\\
B &0 &I_l &0\\
-B' &0 &0 &I_{d_2}
\end{bmatrix}$, we obtain: $$S\circledast R = Row \begin{bmatrix}
-A &I_k &0\\
A' &0 &0\\
-BA &0 &I_l \\
B'A &0 &0 
\end{bmatrix} \; \Longrightarrow \; S\star R = Row \begin{bmatrix}
-BA &I_l \\
A'&0\\
B'A  &0 
\end{bmatrix}$$
Thus the projection of $S \star R$ to $\F_q^s$ in $(\F_q^l ,0, \F_q^s)$ is surjective. The last statement of the lemma is obvious.
\end{proof}

Let us consider a ($\C$-linear) category $\mathcal{I}_{\infty} $ whose objects are $[k]$, $k\geq 0$, with morphism spaces $\Hom_{\mathcal{I}_{\infty} } ([s], [k]) := \C Rel^{\infty}_{s, k}$, where we denote the morphism corresponding to $R\in Rel^{\InnaA{\infty}}_{s,k}$ by $f_R$. The composition of morphisms defined as follows: for $R \in Rel^{\infty}_{s, k}$, $S \in Rel^{\infty}_{k, l}$, we define $$f_S \circ f_R :=  f_{S\star R}.$$

We also define the monoidal structure on $\mathcal{I}_{\infty}$ as follows: we set $[l]\otimes [k]:=[l+k]$, and tensor products of morphisms given  by $f_{R_1} \otimes f_{R_2} :=f_{R_1\times R_2}$, just like in Section \ref{sec:Deligne_def}.

The symmetric structure on $\mathcal{I}_{\infty}$ is again given by the morphisms
$s=f_{R'}: [l]\otimes [k] \to  [k]\otimes [l]$ where $$ R' = \{(a_1, \ldots, a_l, b_1, \ldots, b_k,-b_1, \ldots, -b_k,-a_1, \ldots, -a_l)|\, \forall i, j, \; a_i,  b_j \in \F_q\} \in Rel^{\infty}_{k+l, k+l}.$$

Clearly, for each $t\in \C$, we obtain a SM embedding $\Gamma_t: \mathcal{I}_{\infty} \hookrightarrow \kar{t}$ which is not full. When $t=q^n$, we can compose $\Gamma_{t=q^n}$ with the specialization functor $$F_n: \kar{t=q^n} \longrightarrow \Rep(GL_n(\F_q)), \;\;\V_{t=q^n} \longmapsto \V_n$$
to obtain a SM functor
$$\widetilde{\Gamma}_{n}: \mathcal{I}_{\infty} \to \Rep(GL_n(\F_q)), \;\; [1] \longmapsto \V_n. $$

\subsection{Representations of the infinite general linear group}\label{ssec:repinf_def}
\begin{definition}\label{def:repinf}

\mbox{}

 \begin{enumerate}
  \item 

 Let $GL_{\infty}(\F_q) = \bigcup_{n\geq 1} GL_n(\F_q)$, defined with respect to the embedding $GL_n(\F_q) \subset GL_{n+1}(\F_q)$ given by $A \mapsto \begin{bmatrix}
 A &0 \\
 0 & 1
\end{bmatrix}
$.

\item Let $\iota_n:\F_q^n \to \F_q^{n+1}$ be the natural embedding, given by adding a coordinate $0$ at the end. Let $\InnaA{V:=}\F_q^{\infty} = \bigcup_{n\neq 0} \F_q^n$; this is the countable-dimensional vector space 
consisting of infinite sequences of elements in $\F_q$ which have finite support 
(i.e. sequences $(a_i)_{i=1}^{\infty}$, $a_i \in \F_q$ and $a_i =0$ for $i>>1$). We will denote by $\F_q^n$ the subspace of sequences $(a_i)_{i=1}^{\infty}$ in  $\F_q^{\infty}$ for which $a_i=0$ for $i>n$.

We denote by $\V_{\infty}$ the representation $\C\F_q^{\infty}$ of $GL_{\infty}(\F_q)$.
 
 We will consider a natural basis for $\V_{\infty}^{\otimes s}$ given by elements of $\InnaA{V}^{\times s}$ written as $(v_1| \ldots| v_s)$, $v_i \in \InnaA{V}$.

\InnaA{
For any $s\geq 0$, consider the embedding $ \V_n^{\otimes s} \to \V_{n+1}^{\otimes s}$ given by $\iota_n:\F_q^n\to \F_q^{n+1}$. By abuse of notation, we will denote the image of this embedding again by $\V_{n}^{\otimes s}$.}

\item 
\InnaA{In this section, we will denote by $f_R^{(n)} : \V_n^{\otimes s}\to \V_n^{\otimes k}$ the map corresponding to $R\subset \F_q^{s+k}$ as defined in Section \ref{sec:classical_endom}, which we previously denoted simply by $f_R$.}
 \item 
 We denote by $L_n, H_n \subset P_n\subset GL_{\infty}(\F_q)$ the following subgroups:
 
 By $P_n$ we denote the parabolic subgroup of all matrices $ \begin{bmatrix}
 C &A \\
 0 & B
\end{bmatrix} \in GL_{n+k}(\F_q)
$, $k\geq 0$ where $C\in GL_n(\F_q), B\in GL_k(\F_q)$ and $A \in Mat_{n\times k}(\F_q)$. 

By $H_n \subset P_n$ we denote the subgroup of all matrices as above for which $C$ is the identity matrix of size $n\times n$.

We will also denote by $L_n \subset P_n$ the Levi subgroup of $P_n$: the subgroup of all matrices as above for which $A=0$.

Denote by $\Gamma_n: \repinf \to \Rep(GL_n(\F_q))$ the following functor: for $W \in \repinf$, $\Gamma_n(W)$ is given by the $H_n$-invariants in $Res_{P_n}^{GL_\infty(\F_q)} W$.

 \item We define $\mathcal{T}(GL_{\infty}(\F_q))$ to be the full subcategory of $GL_{\infty}(\F_q)-mod$ whose objects are tensor powers of $\V_{\infty}$.

 \item For $R\in Rel^{\infty}_{s,k}$, let $\overline{f}_R: \V_{\infty}^{\otimes s}\to \V_{\infty}^{\otimes k}$ be the morphism defined by 
$$ v_1\otimes \ldots \otimes v_s \mapsto \sum_{\substack{(w_1|\ldots|w_k)\in \InnaA{V}^{\times k} \\ (v_1| \ldots| v_s|w_1|\ldots|w_k) \in R^{\perp}_{\InnaA{ V}} }} w_1 \otimes\ldots\otimes w_k. $$
Since the projection $R \to \F_q^k$ is surjective, this sum is actually finite. 

 \end{enumerate}
\end{definition} 

\begin{remark}
The tensor powers of $\V_{\infty}$ are admissible $GL_{\infty}(\F_q)$-representations in the sense of Nagpal \cite{N2}: that is, every vector in such a representation is stabilized by $P_n$ for some $n>0$.

 Every admissible representation (in the sense of Nagpal \cite{N2}) is a subquotient of a finite direct sum of tensor powers of $\V_{\infty}$. 
\end{remark}

Let us begin by showing why the sets $Rel^{\infty}_{s,k}$ occur in the context of representation stabilization.

\InnaA{

\begin{lemma}\label{lem:Gamma_n_stabilization}
 Let $s, k\geq 0$ and let $n\geq  k+s$. Let $f\in \Hom_{GL_{n+1}(\F_q)}(\V_{n+1}^{\otimes s}, \V_{n+1}^{\otimes k})$, such that $f(\V_{n}^{\otimes s}) \subset \V_{n}^{\otimes k}$.
 Then $f\in span\{f_R^{(n+1)}\,|\,  R\in Rel^{\infty}_{s,k}\}$.
\end{lemma}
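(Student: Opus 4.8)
The statement says: if $n \geq k+s$ and $f \in \Hom_{GL_{n+1}(\F_q)}(\V_{n+1}^{\otimes s}, \V_{n+1}^{\otimes k})$ restricts to a map $\V_n^{\otimes s} \to \V_n^{\otimes k}$, then $f$ lies in the span of the $f_R^{(n+1)}$ with $R \in Rel^{\infty}_{s,k}$. The plan is to use that $n+1 \geq k+s+1 > k+s$, so by \cref{lem:basis} the set $\{f_R^{(n+1)} \mid R \subset \F_q^{s+k}\}$ is an honest \emph{basis} of $\Hom_{GL_{n+1}(\F_q)}(\V_{n+1}^{\otimes s}, \V_{n+1}^{\otimes k})$. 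Hence we may write $f = \sum_{R \subset \F_q^{s+k}} c_R f_R^{(n+1)}$ uniquely, and the task reduces to showing that $c_R = 0$ whenever the projection $R \to \F_q^k$ is \emph{not} surjective.

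First I would recall from the proof of \cref{lem:basis} that $f_R^{(n+1)} = \sum_{R \subset S \subset \F_q^{s+k}} \breve{f}_{O_S}$, where $\breve{f}_{O_S}$ is the orbit-indexed basis; equivalently, in the orbit basis, $f = \sum_O (\text{coeff}) \breve{f}_O$, and the change of basis between $\{f_R\}$ and $\{\breve{f}_O\}$ is unitriangular with respect to inclusion of subspaces. So the vanishing of the $c_R$ for the ``bad'' $R$'s is equivalent to the vanishing, for $f$, of the orbit-basis coefficients of $\breve{f}_O$ for orbits $O = O_R$ with $R$ bad; and since the correspondence $R \leftrightarrow O_R$ is inclusion-reversing while the unitriangularity runs over $R \subset S$, a clean way to package this is: $f \in \operatorname{span}\{f_R \mid R \text{ good}\}$ iff $f \in \operatorname{span}\{\breve{f}_O \mid O \subset (\text{good part})\}$ — I would make this precise by noting that the set of ``good'' $R$ (projection onto $\F_q^k$ surjective) is \emph{upward closed} under inclusion of subspaces (a bigger $R$ has a bigger, hence still surjective, projection), which means $\operatorname{span}\{f_R \mid R \text{ good}\} = \operatorname{span}\{\breve{f}_{O_R} \mid R \text{ good}\}$ exactly.

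Next, the key computational step: I would describe the orbit $O_R$ (for $R$ with $\dim R \geq s+k-(n+1)$) concretely as in \cref{lem:basis}: it consists of tuples $(v_1|\dots|v_s|w_1|\dots|w_k)$ in $R^\perp_V$ spanning a space of dimension exactly $s+k - \dim R$. The orbit-basis coefficient of $\breve{f}_O$ in $f$ is $\langle f, \text{characteristic function of } O \rangle$ in the natural pairing, i.e. it is determined by evaluating $f$ on any $(v_1|\dots|v_s)$ and reading off the coefficient of some $(w_1|\dots|w_k)$ with $(v|w) \in O$. The crucial observation is: if $R$ is \emph{bad}, i.e. the projection $R \to \F_q^k$ is not onto, then there is a nonzero functional on $\F_q^k$ vanishing on this projection, which forces every $(v|w) \in R^\perp_V$ to satisfy a nontrivial $\F_q$-linear relation \emph{purely among the $w$'s} — in particular, for such an orbit $O_R$, given the $v$'s, the set of admissible $(w_1|\dots|w_k)$ always satisfies a linear constraint making them span at most $k-1$ of a generic configuration; more to the point, for \emph{any} fixed $(v_1|\dots|v_s) \in \V_n^{\otimes s}$ (i.e. with all $v_i \in \F_q^n$), if $\breve{f}_{O_R}(v_1 \otimes \dots \otimes v_s) \neq 0$ then some admissible $(w_1|\dots|w_k)$ must have a coordinate outside $\F_q^n$ — this is where $n \geq k+s$ enters, guaranteeing enough room in $\F_q^{n+1} \setminus \F_q^n$ for the ``missing'' direction. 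The hypothesis $f(\V_n^{\otimes s}) \subset \V_n^{\otimes k}$ then kills exactly these coefficients.

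\textbf{The main obstacle.} The delicate point is the last claim: showing that for a bad $R$, the orbit map $\breve{f}_{O_R}$ restricted to $\V_n^{\otimes s}$ necessarily outputs tensors involving basis vectors outside $\F_q^n$, so that $f(\V_n^{\otimes s}) \subset \V_n^{\otimes k}$ genuinely forces $c_R = 0$. I would handle this by choosing, for a generic $(v_1|\dots|v_s)$ with the $v_i$ spanning an $s$-dimensional subspace of $\F_q^n$, an explicit completion to a tuple $(v|w) \in O_R$: since the projection $R \to \F_q^k$ misses a hyperplane $\F_q^{k-1}$, the $w$-part of $R^\perp_V$ is forced to lie in a proper subspace, and one can arrange the spanning condition $\dim\operatorname{span}\{v_i, w_j\} = s+k-\dim R$ only by using at least one vector with support extending to coordinate $n+1$ — the counting inequality $s + (k - \dim(\operatorname{proj}_{\F_q^k} R)) > $ (dimension available in $\F_q^n$ after accounting for the $v$'s) is what needs checking, and it follows from $\dim(\operatorname{proj}_{\F_q^k} R) \leq k-1$ together with $n \geq k+s$. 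Once this genericity argument is pinned down, the rest is the unitriangular bookkeeping already in \cref{lem:basis}, and the conclusion $f \in \operatorname{span}\{f_R^{(n+1)} \mid R \in Rel^{\infty}_{s,k}\}$ follows immediately. Alternatively — and this may be cleaner — one could avoid the orbit basis entirely: use that each $f_R^{(n+1)}$ with $R$ bad, evaluated on $\V_n^{\otimes s}$, lands outside $\V_n^{\otimes k}$ (by the same genericity computation applied directly to the formula for $f_R$), observe that $\{f_R^{(n+1)} \mid R \text{ bad}\}$ maps to linearly independent elements of $\V_{n+1}^{\otimes k}/\V_n^{\otimes k}$ when evaluated suitably, and conclude that the hypothesis forces the bad coefficients to vanish; I would try this route first as it sidesteps the change-of-basis argument.
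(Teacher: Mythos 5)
Your first route is, in substance, the paper's own proof: the paper expands $f$ in the basis $\{f_R^{(n+1)}\}$ (valid since $n+1>s+k$), picks a minimal element $R$ of the support, and for a bad $R$ constructs a tuple $(v_1|\dots|v_s|w_1|\dots|w_k)\in R^{\perp}_{\F_q^{n+1}}$ of maximal span $s+k-\dim R$ with all $v_i\in\F_q^n$ and some $w_j\notin\F_q^n$; reading off the coefficient of $w_1\otimes\dots\otimes w_k$ in $f(v_1\otimes\dots\otimes v_s)$ then gives exactly $\alpha_R$, which must vanish. Your orbit-basis bookkeeping (unitriangularity of $f_R=\sum_{R\subset S}\breve f_{O_S}$ plus the observation that the good $R$'s form an upward-closed set) is an equivalent packaging of the paper's minimal-element argument, and your second, ``quotient $\V_{n+1}^{\otimes k}/\V_n^{\otimes k}$'' route is vaguer and would end up needing the same coefficient-extraction anyway.

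There is, however, a genuine flaw in how you justify the key step, and it is the one step that actually carries the proof. You claim that for bad $R$ the $w$-part of $R^{\perp}_V$ is ``forced to lie in a proper subspace'' and that the spanning condition can be arranged ``only by using at least one vector with support extending to coordinate $n+1$.'' Both statements are false: for $R=\{\z\}$ (which is bad for $k\geq 1$) the orbit $O_R$ contains tuples lying entirely inside $\F_q^n$, precisely because $s+k\leq n$. Nothing forces a coordinate out of $\F_q^n$; what non-surjectivity of $\pi\colon R\to\F_q^k$ actually gives you is extra \emph{freedom}: over a fixed $v$-tuple, the solutions $w$ of the $R^{\perp}$-conditions form a coset of $\F_q^{n+1}\otimes\pi(R)^{\perp}$ with $\pi(R)^{\perp}\neq 0$, so one may \emph{choose} (or perturb any completion inside $\F_q^n$ by adding $e_{n+1}\otimes c$ for $c\in\pi(R)^{\perp}$ nonzero, which preserves both $R^{\perp}$-membership and maximality of the span) a completion lying in $O_R$ with some $w_j\notin\F_q^n$. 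The statement you need is thus existential — there exists $(v|w)\in O_R$ with all $v_i\in\F_q^n$ and some $w_j\notin\F_q^n$, the hypothesis $n\geq s+k$ guaranteeing that $O_R$ has points with $v$-part in $\F_q^n$ at all — not a forcing statement, and your counting heuristic as written does not prove it. Once this is corrected (as in the paper's choice of the tuple), the rest of your argument goes through and coincides with the paper's proof.
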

For the proof of this lemma, will use the following straightforward statement:
\begin{lemma}\label{lem:aux_orth_span}
Let $k\geq 0$ and let $U$ be a vector space over $\F_q$. 
\begin{enumerate}
    \item Let $R\subset \F_q^{k}$. Then $$\max_{(u_1| \ldots| u_k) \in R^{\perp}_U } \dim span\{u_1, \ldots, u_k\} = \dim R^{\perp} = k-\dim R.$$
    \item Let $\underline{u}:=(u_1| \ldots| u_k) \in U^{\times k}$. Let $\mathcal{R}_{\underline{u}}:=\{R\subset \F_q^k \,|\, (u_1| \ldots| u_k) \in R^{\perp}_U \}$. Then there exists $R\in \mathcal{R}_{\underline{u}}$ such that $S\subset R$ for any $S\in \mathcal{R}_{\underline{u}}$, and furthermore, $$\dim R = k - \dim span \{u_1, \ldots, u_k\}.$$
\end{enumerate}
\end{lemma}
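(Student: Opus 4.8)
The statement (Lemma~\ref{lem:aux_orth_span}) is a purely linear-algebraic fact about orthogonal complements over $\F_q$, so I would prove both parts directly from the definition of $R^\perp_U = \{(u_1|\dots|u_k)\in U^{\times k} \mid \forall (a_1,\dots,a_k)\in R, \ \dot\sum_i \dot a_i u_i = \z\}$. The key observation for part~(1) is the identification $U^{\times k} = U\otimes_{\F_q}\F_q^k$, under which $R^\perp_U = U\otimes_{\F_q} R^\perp$ where $R^\perp\subset\F_q^k$ is the usual orthogonal complement of $R$ (as in Subsection~\ref{ssec:knop_indexing}), so $\dim_{\F_q} R^\perp = k - \dim_{\F_q} R$. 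Then $\dim\,\mathrm{span}\{u_1,\dots,u_k\}$ for $(u_1|\dots|u_k)\in R^\perp_U = U\otimes R^\perp$ equals the dimension of the image of the linear map $(\F_q^k)^* \to U$ determined by the tensor, which is at most $\dim R^\perp$, with equality achieved by taking a tensor whose ``$U$-side'' spans a space of dimension $\dim R^\perp$ (possible since $U$ is infinite-dimensional, or at least of dimension $\ge k-\dim R$; if $U$ is small one takes the min, but the intended application has $U$ large so I would just state it for $\dim U$ large enough, or phrase the max accordingly).

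\textbf{For part~(2),} I would argue as follows. The collection $\mathcal{R}_{\underline u}$ is closed under sums: if $(u_1|\dots|u_k)\in R^\perp_U$ and $(u_1|\dots|u_k)\in S^\perp_U$, then for any $(a_1,\dots,a_k)\in R+S$, writing it as $r+s$ with $r\in R$, $s\in S$, linearity of $\dot\sum_i \dot a_i u_i$ in the $a_i$ gives $\dot\sum_i \dot a_i u_i = \dot\sum_i \dot r_i u_i \dot+ \dot\sum_i \dot s_i u_i = \z\dot+\z = \z$, so $R+S\in\mathcal{R}_{\underline u}$. Hence $\mathcal{R}_{\underline u}$ has a unique maximal element $R := \sum_{S\in\mathcal{R}_{\underline u}} S$. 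To compute its dimension, use the duality: under $U^{\times k}=U\otimes_{\F_q}\F_q^k$, the vector $\underline u$ corresponds to a tensor, equivalently to a linear map $\psi_{\underline u}:(\F_q^k)^*\to U$; one checks $R\in\mathcal{R}_{\underline u}$ iff $R\subseteq \ker(\psi_{\underline u}^*: \F_q^k \to ?)$ — more cleanly, $(u_1|\dots|u_k)\in R^\perp_U$ iff $R\subseteq (\mathrm{span}\{u_1,\dots,u_k\})^{\perp'}$ in the appropriate pairing, so the maximal such $R$ is exactly $\{(a_1,\dots,a_k) \mid \dot\sum \dot a_i u_i = \z\}$, the kernel of the evaluation map $\F_q^k\to U$, $(a_i)\mapsto\dot\sum\dot a_i u_i$, whose image is $\mathrm{span}\{u_1,\dots,u_k\}$. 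By rank–nullity, $\dim R = k - \dim\,\mathrm{span}\{u_1,\dots,u_k\}$, as claimed.

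\textbf{Main obstacle.} There is no serious obstacle here — this is a ``straightforward'' lemma as the authors themselves flag. The only mild care needed is bookkeeping between the two pairings ($R^\perp_U$ on the $U$-side versus $R^\perp\subset\F_q^k$ on the scalar side) and making sure the maximal element of $\mathcal{R}_{\underline u}$ is described intrinsically rather than via a choice of basis. I would present part~(2) first via the closure-under-sums argument to get existence of $R$, then identify $R$ with the kernel of the evaluation map $\F_q^k\to U,\ (a_i)_i\mapsto \dot\sum_i\dot a_i u_i$, from which both the maximality and the dimension formula are immediate; part~(1) then follows since $R^\perp_U$ ranges over all tuples whose evaluation-map kernel contains $R$, i.e. whose span lies in a space of dimension $\le k-\dim R$, and this bound is attained. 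The whole proof should be about half a page.
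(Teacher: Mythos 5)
Your proof is correct, and since the paper offers no proof of this lemma (it is simply flagged as straightforward before the proof of \cref{lem:Gamma_n_stabilization}), there is nothing to diverge from: identifying $S\in\mathcal{R}_{\underline{u}}$ with the condition $S\subseteq\ker\bigl(\F_q^k\to U,\ (a_i)_i\mapsto\dot{\sum}_i\dot{a}_iu_i\bigr)$ and applying rank--nullity is exactly the intended argument, and the identification $R^{\perp}_U=U\otimes_{\F_q}R^{\perp}$ you use for part (1) already appears in the remark of \cref{ssec:knop_indexing}. Your caveat on part (1) is also the right one to flag: as literally stated the maximum is $\min(\dim U,\,k-\dim R)$, so the claimed equality needs $\dim U\ge k-\dim R$, which does hold where the lemma is applied (there $U=\F_q^{n+1}$ with $n\ge s+k$).
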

\begin{proof}[Proof of \cref{lem:Gamma_n_stabilization}]
By \cref{ssec:basis_of_endomorphisms}, we may write $f=\sum_{R\subset \F_q^{s+k}} \alpha_R f_R^{(n+1)}$ for some $\alpha_R\in \C$. Consider the partially ordered set $X = \{R\subset \F_q^{s+k} \, |\, \alpha_R \neq 0\},$ the order given by inclusion. We need to prove that $X\subset Rel^{\infty}_{s,k}$; equivalently, that any minimal element of $X$ lies in $Rel^{\infty}_{s,k}$. 

Let $R \in X$ be such a minimal element. Let $\pi: \F_q^{s+k} \to \F_q^k$ be the obvious projection. We need to show that $\pi(R)= \F_q^k$. We will now choose $v_1, \ldots, v_s, w_1, \ldots, w_k \in \F_q^{n+1}$ such that $$(v_1| \ldots| v_s|w_1|\ldots|w_k) \in R^{\perp}_{\F_q^{n+1}} \;\; \text{ and }\;\;\dim span\{v_1, \ldots, v_s, w_1, \ldots, w_k\} = k+s-\dim R.$$

Since $n\geq k+s$, one may choose $v_1, \ldots, v_s \in \iota_n(\F_q^n) \subset \F_q^{n+1}$. Assume that $\pi(R)\neq \F_q^k$; then we may also choose $w_1, \ldots, w_k$ such that at least one of them does not belong to $\iota_n(\F_q^n)$. The set $\mathcal{R}:=\mathcal{R}_{(v_1| \ldots| v_s, w_1| \ldots| w_k)}$ as defined in \cref{lem:aux_orth_span} contains $R$; since $R$ is a minimal element in $X$, we actually have $ \mathcal{R} \cap X = \{R\}.$

Consider the decomposition of
$f(v_1 \otimes \ldots \otimes v_s)$ with respect to the basis $(\F_q^{n+1})^{\times k}$ of $\V_{n+1}^{\otimes k}$. The summand $w_1 \otimes \ldots \otimes w_k$ appears with coefficient 
$ \sum_{S\in \mathcal{R}} \alpha_S$ which is, by the computation above, $ \alpha_R$. But $f(v_1 \otimes \ldots \otimes v_s) \in \V_n^{\otimes k}$, so this coefficient must be $0$, contradicting that $R\in X$. Thus $R\in Rel_{s, k}^{\infty}$ and the statement is proved.

\end{proof}
}
 The functor $\Gamma_n$ is clearly left-exact. It also behaves nicely on the subcategory $\mathcal{T}(GL_{\infty}(\F_q))$:
\begin{lemma}\label{lem:Gamma_n_on_tensor_powers}
 $\Gamma_n(\V_{\infty}^{\otimes k}) = \V_n^{\otimes k} $ and $\Gamma_n(\overline{f}_R) = {f}^{\InnaA{(n)}}_R $ for any $R\in Rel^{\infty}_{s,k}$.
\end{lemma}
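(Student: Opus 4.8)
The claim has two parts: an identification of objects, $\Gamma_n(\V_\infty^{\otimes k}) = \V_n^{\otimes k}$, and an identification of morphisms, $\Gamma_n(\overline f_R) = f_R^{(n)}$ for $R\in Rel^\infty_{s,k}$. I would treat them in that order, since the morphism statement only makes sense once we know where $\Gamma_n$ sends the relevant objects.

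\textbf{Step 1: the object identification.} Recall $\Gamma_n(W) = (Res^{GL_\infty(\F_q)}_{P_n} W)^{H_n}$. For $W = \V_\infty^{\otimes k} = \C V^{\times k}$ with $V = \F_q^\infty$, I would describe the $H_n$-action on the basis $V^{\times k}$ and identify the invariants. A basis vector $(w_1|\dots|w_k)$ is fixed by every $h\in H_n$ (matrices which are the identity on the first $n$ coordinates and arbitrary unipotent-upper-triangular-block otherwise) precisely when each $w_i$ lies in $\F_q^n = \iota_n(\F_q^n)$; a basis vector with some $w_i\notin\F_q^n$ has an $H_n$-orbit of size $>1$, and averaging over this orbit shows its contribution to an $H_n$-invariant vector must vanish. (More carefully: $H_n$ acts on $V/\F_q^n$ trivially on no nonzero coset, so no nonzero invariant combination can involve basis vectors outside $(\F_q^n)^{\times k}$; a coset-summing argument makes this precise.) Hence $\Gamma_n(\V_\infty^{\otimes k})$ is spanned by $(w_1|\dots|w_k)$ with all $w_i\in\F_q^n$, i.e. $\Gamma_n(\V_\infty^{\otimes k})=\C(\F_q^n)^{\times k}=\V_n^{\otimes k}$, and this identification is $L_n=GL_n(\F_q)$-equivariant by inspection (the $P_n$-action on invariants factors through $L_n$). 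I expect this is routine.

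\textbf{Step 2: the morphism identification.} Given $R\in Rel^\infty_{s,k}$, $\overline f_R:\V_\infty^{\otimes s}\to\V_\infty^{\otimes k}$ is $GL_\infty(\F_q)$-equivariant hence in particular $P_n$-equivariant, so it restricts to a map on $H_n$-invariants $\Gamma_n(\overline f_R):\V_n^{\otimes s}\to\V_n^{\otimes k}$ under the identification of Step 1. Now I would simply compute: for $(v_1|\dots|v_s)\in(\F_q^n)^{\times s}$,
$$\overline f_R(v_1\otimes\dots\otimes v_s)=\sum_{\substack{(w_1|\dots|w_k)\in V^{\times k}\\ (v_1|\dots|v_s|w_1|\dots|w_k)\in R^\perp_V}} w_1\otimes\dots\otimes w_k.$$
The key point is that because the $v_i$ already lie in $\F_q^n$ and $n\geq s+k$ (or more simply, because we are landing in the $H_n$-invariants and Step 1 tells us only the $(\F_q^n)^{\times k}$-part survives), the projection of this sum onto $\Gamma_n(\V_\infty^{\otimes k})=\V_n^{\otimes k}$ keeps exactly the summands with all $w_i\in\F_q^n$. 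I need: $\{(w_1|\dots|w_k)\in V^{\times k}: (v_1|\dots|v_s|w_1|\dots|w_k)\in R^\perp_V,\ \text{all }w_i\in\F_q^n\} = \{(w_1|\dots|w_k)\in(\F_q^n)^{\times k}: (v_1|\dots|v_s|w_1|\dots|w_k)\in R^\perp_{\F_q^n}\}$, which is immediate from the definition of $R^\perp$ (the orthogonality condition is linear over $\F_q$ and insensitive to the ambient $V$ once all vectors lie in $\F_q^n$). Therefore the composite $\V_n^{\otimes s}\hookrightarrow\V_\infty^{\otimes s}\xrightarrow{\overline f_R}\V_\infty^{\otimes k}\twoheadrightarrow\V_n^{\otimes k}$ is exactly $v_1\otimes\dots\otimes v_s\mapsto\sum_{(w_1|\dots|w_k)\in(\F_q^n)^{\times k},\,(v_1|\dots|w_k)\in R^\perp_{\F_q^n}} w_1\otimes\dots\otimes w_k$, i.e. $f_R^{(n)}$ as defined in Section~\ref{sec:classical_endom}. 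One should also remark that $\overline f_R$ does map $\V_n^{\otimes s}$ into $\V_n^{\otimes k}$ (not just into $\V_\infty^{\otimes k}$) when $n\geq s+k$, so that $\Gamma_n(\overline f_R)$ and the honest restriction agree; this follows from the surjectivity of $R\to\F_q^k$ together with a dimension count (any $(w_1|\dots|w_k)$ with $(v_1|\dots|w_k)\in R^\perp_V$ and $v_i\in\F_q^n$ must have $\dim\operatorname{span}\{v_i,w_j\}\leq s+k\leq n$, and can be chosen inside $\F_q^n$ — but in fact every such $w_j$ automatically lies in $\F_q^n$ once the $v_i$ do and $R\to\F_q^k$ is onto, which is the content needed).

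\textbf{Main obstacle.} The only genuinely delicate point is making the "only the $(\F_q^n)^{\times k}$-part survives passage to $H_n$-invariants" argument clean, i.e. Step 1, and in particular verifying that taking $H_n$-invariants of $\overline f_R$ really is computed by the naive projection $\V_\infty^{\otimes k}\twoheadrightarrow\V_n^{\otimes k}$ rather than something subtler — this is where one must be careful that $H_n$-invariants form a direct summand (as a $\C$-vector space) spanned by a subset of the standard basis, which holds because $H_n$ permutes the basis $V^{\times k}$ and one can average over finite $H_n$-orbits. Everything after that is bookkeeping with the definition of $R^\perp_V$.
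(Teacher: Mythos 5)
Your proof is correct and essentially the paper's: the paper simply records that the $H_n$-invariants of $\V_{\infty}^{\otimes k}$ are spanned by the basis tuples lying in $(\F_q^n)^{\times k}$ and that $\Gamma_n(\overline{f}_R)$ is then the restriction of $\overline{f}_R$ to $\V_n^{\otimes s}$, which equals $f_R^{(n)}$ exactly for the reason you give at the end — surjectivity of $R\to\F_q^k$ forces each $w_i$ to be an $\F_q$-linear combination of the $v_j$, hence to lie in $\F_q^n$, so no projection is ever needed. Two small touch-ups: the coefficient of a basis tuple with some $w_i\notin\F_q^n$ vanishes in an invariant vector because its $H_n$-orbit is \emph{infinite} while elements of $\V_{\infty}^{\otimes k}$ have finite support and orbit-constant coefficients (averaging over a merely finite orbit would not force vanishing), and the hypothesis $n\geq s+k$ you invoke is unnecessary.
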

\begin{proof}
If a finite sum $$\sum_{v_1, \ldots, v_k \in \F_q^{\infty}} \alpha_{v_1,\ldots, v_k} v_1\otimes \ldots \otimes v_k $$ belongs to $(\V_{\infty}^{\otimes k})^{H_n}$ then $v_1, \ldots, v_k \in \F_q^n \subset \F_q^{\infty}$. This proves the first statement.

In particular, $\Gamma_n(\overline{f}_R)$ is just the restriction of $\overline{f}_R$ to the subspace $$\V_n^{\otimes k} \cong (\C \F_q^n)^{\otimes k} \subset (\C\F_q^{\infty})^{\otimes k} = \V_{\infty}^{\otimes k}.$$
\end{proof}
\begin{corollary}\label{cor:Gamma_n_SM}
 The functor $\Gamma_n: \mathcal{T}(GL_{\infty}(\F_q)) \to \Rep(GL_n(\F_q))$ is SM.
\end{corollary}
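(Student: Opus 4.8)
The plan is to show that, with the explicit models of tensor powers fixed throughout the paper, in which $\V_{\infty}^{\otimes k}=\C V^{\times k}$ and $\V_n^{\otimes k}=\C(\F_q^n)^{\times k}$ assemble into \emph{strict} symmetric monoidal categories (pure tensors $(v_1|\ldots|v_k)\otimes(w_1|\ldots|w_l)$ being identified with $(v_1|\ldots|v_k|w_1|\ldots|w_l)$), the restriction of $\Gamma_n$ to $\mathcal{T}(GL_{\infty}(\F_q))$ is a \emph{strict} SM functor, so that essentially the only thing to check is naturality of the structure maps. By \cref{lem:Gamma_n_on_tensor_powers} we have $\Gamma_n(\V_{\infty}^{\otimes k})=(\V_{\infty}^{\otimes k})^{H_n}=\V_n^{\otimes k}$, and the proof of that lemma identifies this invariant subspace with the span of the basis vectors $(v_1|\ldots|v_k)$ whose entries all lie in $\F_q^n$. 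In particular
$$(\V_{\infty}^{\otimes k_1}\otimes \V_{\infty}^{\otimes k_2})^{H_n}=\V_n^{\otimes(k_1+k_2)}=(\V_{\infty}^{\otimes k_1})^{H_n}\otimes(\V_{\infty}^{\otimes k_2})^{H_n},$$
so we may take the monoidal constraint $c^{\Gamma_n}_{X,Y}\colon\Gamma_n(X)\otimes\Gamma_n(Y)\to\Gamma_n(X\otimes Y)$ and the unit map $\one\to\Gamma_n(\one)=\C$ to be identities. With this choice the associativity, unit and hexagon coherence diagrams for $\Gamma_n$ commute automatically: every arrow in them is an identity, and $\Gamma_n$ sends the (identity) associativity, unit and braiding constraints of $GL_{\infty}(\F_q)\text{-mod}$ to those of $\Rep(GL_n(\F_q))$, both being the tautological permutation-of-factors maps on the models $\C V^{\times k}$.

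The one remaining point is naturality of $c^{\Gamma_n}$, which, since $c^{\Gamma_n}=\mathrm{id}$, is precisely the equality
$$\Gamma_n(g\otimes h)=\Gamma_n(g)\otimes\Gamma_n(h)$$
for all morphisms $g\colon\V_{\infty}^{\otimes s_1}\to\V_{\infty}^{\otimes k_1}$ and $h\colon\V_{\infty}^{\otimes s_2}\to\V_{\infty}^{\otimes k_2}$ in $\mathcal{T}(GL_{\infty}(\F_q))$. Here $\Gamma_n$ acts on a morphism $g$ by restriction, $\Gamma_n(g)=g|_{(\V_{\infty}^{\otimes s_1})^{H_n}}$, which is well defined since $g$ is $H_n$-equivariant. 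To check the displayed identity I would evaluate both sides on a basis vector $x\otimes y$ of $\V_n^{\otimes s_1}\otimes\V_n^{\otimes s_2}$ with $x\in\V_n^{\otimes s_1}$, $y\in\V_n^{\otimes s_2}$: since $g\otimes h$ acts as $g$ on the first $s_1$ factors and as $h$ on the last $s_2$, we get $(g\otimes h)(x\otimes y)=g(x)\otimes h(y)$; because $x$, $y$ are $H_n$-fixed and $g$, $h$ are $H_n$-equivariant, $g(x)$ and $h(y)$ are again $H_n$-fixed, hence lie in $\V_n^{\otimes k_1}$, $\V_n^{\otimes k_2}$ and equal $\Gamma_n(g)(x)$, $\Gamma_n(h)(y)$. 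Thus $\Gamma_n(g\otimes h)(x\otimes y)=\Gamma_n(g)(x)\otimes\Gamma_n(h)(y)$, as needed. Compatibility with the symmetry follows the same way: the braiding on both sides is the permutation of tensor factors and restriction to the $H_n$-invariants commutes with it.

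There is no genuine obstacle: once \cref{lem:Gamma_n_on_tensor_powers} is available the argument is purely formal, the only things to be careful about being the bookkeeping of which category's coherence constraints are in play and the fact that $\Gamma_n$ sends a morphism to its literal restriction rather than to a less explicit induced map. As a sanity check one can note that \cref{lem:Gamma_n_on_tensor_powers} also gives $\Gamma_n(\overline{f}_R)=f_R^{(n)}$, so on the morphisms $\overline{f}_R$ ($R\in Rel^{\infty}_{s,k}$) the functor $\Gamma_n$ coincides with the composite $\widetilde{\Gamma}_n\colon\mathcal{I}_{\infty}\hookrightarrow\kar{t=q^n}\xrightarrow{F_n}\Rep(GL_n(\F_q))$, which is SM by construction; this gives an independent confirmation and also records that $\Gamma_n|_{\mathcal{T}(GL_{\infty}(\F_q))}$ factors $\widetilde{\Gamma}_n$ through the (non-full) assignment $[k]\mapsto\V_{\infty}^{\otimes k}$, $f_R\mapsto\overline{f}_R$.
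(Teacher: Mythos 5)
Your proof is correct and is essentially the paper's argument: the paper derives the corollary directly from Lemma \ref{lem:Gamma_n_on_tensor_powers}, with exactly the identification $(\V_{\infty}^{\otimes k})^{H_n}=\V_n^{\otimes k}$ making the monoidal constraints identities and the compatibility on morphisms following formally since $\Gamma_n$ acts by restriction to $H_n$-invariants. Your write-up just spells out the routine naturality check that the paper leaves implicit.
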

In fact, one can show (see also \cite{N2, SS}) that the functor $\Gamma_n: \repinf \to \Rep(GL_n(\F_q))$ is SM.

\begin{proposition}\label{prop:rep_infty_inj_objects}
 There exists a functor $F: \mathcal{I}_{\infty} \to \mathcal{T}(GL_{\infty}(\F_q))$ which sends $$[k] \longmapsto \V_{\infty}^{\otimes k}, \;\; f_R \longmapsto \overline{f}_R.$$ Furthermore, this functor is SM and is an equivalence of SM categories.
\end{proposition}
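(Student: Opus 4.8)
The plan is to construct the functor $F$ explicitly on objects and morphisms, verify functoriality and monoidality using the combinatorial results already established for $\mathcal{I}_\infty$, and then prove it is an equivalence by exhibiting an essentially surjective, fully faithful assignment. On objects we send $[k]\mapsto \V_\infty^{\otimes k}$; on morphisms we send the basis element $f_R$ (for $R\in Rel^\infty_{s,k}$) to $\overline{f}_R$ and extend $\C$-linearly. Functoriality amounts to checking $\overline{f}_S\circ\overline{f}_R = \overline{f}_{S\star R}$ for $R\in Rel^\infty_{s,k}$, $S\in Rel^\infty_{k,l}$: this is the infinite-dimensional analogue of Proposition \ref{prop:composition}, and the point is precisely that $d(R,S)=0$ by Lemma \ref{lem:Rel_infty_well_def}, so no power of $q^n$ (which does not even make sense here) appears. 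Concretely one repeats the summation argument of Proposition \ref{prop:composition} verbatim over $V = \F_q^\infty$; the set $Z$ appearing there is now forced to be a single point because surjectivity of $R\to\F_q^k$ and $S\to\F_q^l$ makes the relevant linear system have a unique solution, matching $d(R,S)=0$. Monoidality is immediate: $\overline{f}_{R_1}\otimes\overline{f}_{R_2}=\overline{f}_{R_1\times R_2}$ directly from the definition of $\overline{f}_R$, and $R_1\times R_2\in Rel^\infty$ by Lemma \ref{lem:Rel_infty_well_def}; compatibility with the symmetry $f_{R'}$ is checked on pure tensors exactly as for $\kar{t}$. Thus $F$ is a well-defined SM functor.

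It remains to show $F$ is an equivalence. Essential surjectivity is trivial since the objects of $\mathcal{T}(GL_\infty(\F_q))$ are by definition the $\V_\infty^{\otimes k}$. For full faithfulness, I would use the specialization functors $\Gamma_n$ as a ``probe''. By Lemma \ref{lem:Gamma_n_on_tensor_powers} we have $\Gamma_n\circ F = \widetilde{\Gamma}_n$ on the nose (it sends $[k]\mapsto\V_n^{\otimes k}$, $f_R\mapsto f_R^{(n)}$), and by Lemma \ref{lem:basis} the maps $f_R^{(n)}$ with $R\subset\F_q^{s+k}$ span $\Hom_{GL_n(\F_q)}(\V_n^{\otimes s},\V_n^{\otimes k})$ and are linearly independent once $n\geq s+k$. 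To prove injectivity of $F$ on $\Hom([s],[k])$: suppose $\sum_R \alpha_R\,\overline{f}_R = 0$ with $R$ ranging over $Rel^\infty_{s,k}$; restricting to $\V_n^{\otimes s}$ (i.e. applying $\Gamma_n$) for $n\geq s+k$ gives $\sum_R\alpha_R f_R^{(n)}=0$, and since the $f_R^{(n)}$ for $R\in Rel^\infty_{s,k}\subset Rel_{s,k}$ are part of a basis, all $\alpha_R=0$. For surjectivity of $F$ on Hom-spaces: given $g\in\Hom_{GL_\infty(\F_q)}(\V_\infty^{\otimes s},\V_\infty^{\otimes k})$, since the tensor powers of $\V_\infty$ are admissible (every vector is $P_n$-fixed for some $n$), $g$ is determined by its restrictions $g|_{\V_n^{\otimes s}}$, which land in $\V_n^{\otimes k}$ for $n$ large; one shows $g|_{\V_{n}^{\otimes s}}$ is the restriction of $g|_{\V_{n+1}^{\otimes s}}$, so by Lemma \ref{lem:Gamma_n_stabilization} these restrictions are all of the form $\sum_R\alpha_R f_R^{(n)}$ with $R\in Rel^\infty_{s,k}$ and the coefficients $\alpha_R$ \emph{independent of $n$} (again by linear independence for $n\geq s+k$). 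Then $\sum_R\alpha_R\overline{f}_R$ and $g$ agree on every $\V_n^{\otimes s}$, hence are equal, so $g=F(\sum_R\alpha_R f_R)$.

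\textbf{Main obstacle.} The only genuinely nontrivial point is the surjectivity half of full faithfulness: one must know that an arbitrary $GL_\infty(\F_q)$-equivariant map between tensor powers of $\V_\infty$ is built from the $\overline{f}_R$ with $R\in Rel^\infty$. This is where Lemma \ref{lem:Gamma_n_stabilization} does the real work — it says precisely that any equivariant map on $\V_{n+1}^{\otimes s}$ preserving $\V_n^{\otimes s}$ is a combination of $f_R^{(n+1)}$ with $R$ having surjective projection to $\F_q^k$. Combined with admissibility (which guarantees $g$ is the colimit of its truncations) and the stability of the coefficients $\alpha_R$ across $n$, this pins down $g$. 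I would also need the elementary observation that for a morphism $\overline{f}_R$ with $R\in Rel^\infty_{s,k}$ the defining sum is finite (stated in Definition \ref{def:repinf}(6)), so that $\overline{f}_R$ genuinely lands in $\V_\infty^{\otimes k}$ rather than a completion. Everything else is a routine transcription of the $\kar{t}$ arguments with $t$-powers deleted.
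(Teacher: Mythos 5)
Your proposal is correct and follows essentially the same route as the paper: the paper likewise deduces functoriality, monoidality and faithfulness by probing with $\Gamma_n$ (using $\Gamma_n(\overline{f}_R)=f^{(n)}_R$ and linear independence of the $f^{(n)}_R$ for $n\geq s+k$), and proves fullness exactly as you do, via admissibility, Lemma \ref{lem:Gamma_n_stabilization}, and the $n$-independence of the coefficients $\alpha_R$. The only cosmetic difference is that the paper obtains the composition and tensor-product relations for the $\overline{f}_R$ directly from their restrictions to the $\V_n^{\otimes s}$ rather than redoing the summation argument of Proposition \ref{prop:composition} over $\F_q^{\infty}$, but both hinge on $d(R,S)=0$ from Lemma \ref{lem:Rel_infty_well_def}.
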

\begin{proof}
 We need to check that the following statements hold:
 \begin{enumerate}
 \item For $R \in Rel^{\infty}_{s, k}$, $S \in Rel^{\infty}_{k, l}$, we have $\overline{f}_S \circ \overline{f}_R = \overline{f}_{S\star R}$.
 \item For $R_1 \in Rel^{\infty}_{s_1, k_1}$, $R_2 \in Rel^{\infty}_{s_2, k_2}$, we have $\overline{f}_{R_1} \otimes \overline{f}_{R_2} = \overline{f}_{R_1 \times R_2}$.
  \item  $\{\overline{f}_R: R\in Rel^{\infty}_{s,k}\}$ form a basis for $\Hom_{GL_{\infty}(\F_q)}(\V_{\infty}^{\otimes s}, \V_{\infty}^{\otimes k})$
 \end{enumerate}
 The fact that $\Gamma_n(\overline{f}_R) = f^{\InnaA{(n)}}_R$ for any $n$ immediately implies the first two statements above; it also shows that $\{\overline{f}_R: R\in Rel^{\infty}_{s,k}\}$ are linearly independent in $\Hom_{GL_{\infty}(\F_q)}(\V_{\infty}^{\otimes s}, \V_{\infty}^{\otimes k})$, since they are linearly independent for $n>>s+k$. 
 
It remains to show that $\{\overline{f}_R: R\in Rel^{\infty}_{s,k}\}$ form a spanning set in $\Hom_{GL_{\infty}(\F_q)}(\V_{\infty}^{\otimes s}, \V_{\infty}^{\otimes k})$.
\InnaA{
Indeed, let $f\in \Hom_{GL_{\infty}(\F_q)}(\V_{\infty}^{\otimes s}, \V_{\infty}^{\otimes k})$. Then for any $n\geq 0$, $f(\V_n^{\otimes s}) \subset \V_n^{\otimes k}$ (since these are $H_n$-invariants). By Lemma \ref{lem:Gamma_n_stabilization}, for $n>s+k$, the map $f\rvert_{\V_n^{\otimes s}}: \V_n^{\otimes s} \InnaA{\to} \V_n^{\otimes k}$ can be uniquely written as $\sum_{R \in Rel^{\infty}_{s,k}} \alpha_R^{(n)} f_R^{(n)}$ for some $ \alpha_R^{(n)} \in \C$. Since $f_R^{(n+1)} \rvert_{\V_n^{\otimes s}} = f_R^{(n)}$ for any $R\in Rel^{\infty}_{s,k}$, we conclude that for any $R\in Rel^{\infty}_{s,k}$, the scalar $ \alpha^{(n)}_R$ does not depend on $n$ when $n\geq s+k$. This implies that $f\in span \{\overline{f}_R: R\in Rel^{\infty}_{s,k}\}$ and the proof is complete. 
}
\end{proof}

Thus we obtain faithful SM functors $\mathcal{T}(GL_{\infty}(\F_q)) \to \kar{t}$ for every $t\in \C$, and (non-faithful) SM functors $\mathcal{T}(GL_{\infty}(\F_q)) \to \Rep(GL_n(\F_q))$ for every $n\in \Z_{\geq 0}$. We will denote such a functor again by $\Gamma_t$ and $\widetilde{\Gamma}_{n}$ respectively.

\begin{remark} We will see in \cite{EAH} that the objects $\V_{\infty}^{\otimes k} \in \mathcal{T}(GL_{\infty}(\F_q))$ are injective objects in the category $\repinf$, and the full subcategory of $\repinf$ which is the additive idempotent (Karoubi) completion of $\mathcal{T}(GL_{\infty}(\F_q))$ in $\repinf$ is the full subcategory of injective objects in $\repinf$.
\end{remark}

\subsection{Universal property}\label{ssec:universal_property_repinf}

\begin{definition}\label{def:semi_Frob_space}
 Let $\mathcal{C}$ be a SM category. An {\it $\F_q$-linear semi-Frobenius space} in $\mathcal{C}$ is an object $\V\in\mathcal{C}$  equipped with the following 
structures:
 \begin{enumerate}[label=(\subscript{SemiStr}{{\arabic*}})]
  \item $\V$ is equipped with the structure of a coalgebra object in 
$\mathcal{C}$ given by morphisms $$m^*:\V\to \V^{\otimes 2}, \;\; \eps^*:\V \to \triv$$ and by a multiplication operation $m:\V^{\otimes 2}\to \V$  such that the following conditions hold:
\begin{enumerate}[label=(\subscript{SemiFr}{{\arabic*}})]
\item $(\V, m^*, \eps^*)$ is a 
cocommutative counital coalgebra object.
\item The 
multiplication $m$ is associative and commutative.

 \item $m, m^*$ satisfy the Frobenius Relations and the Speciality Relation as in \ref{rel:Frob_alg2}.
\end{enumerate}

  \item$\V$ is a module over the field $\F_q$. In other words, $\V$ is equipped 
with
maps \begin{align*}
      &\dot{+}: \V \otimes \V \to \V, \\ &\mu:(\F_q, 
\cdot) \to (End_{\mathcal{C}}(\V), \circ),\;\; a \mapsto \mu_a, \\ &z :\triv 
\to \V
     \end{align*}
 satisfying the Relations  \ref{rel:F_q_lin_plus_ass_comm},\ref{rel:F_q_lin_zero}, \ref{rel:F_q_lin_mu}, \ref{rel:F_q_lin_plus_lin_distr}. 
 
 Furthermore, we require that the above structures satisfy Relations \ref{rel:mu_coalg_mor}, \ref{rel:z_coalg_mor}, \ref{rel:plus_coalg_mor}, \ref{rel:cancellation_axiom}.

\end{enumerate}

\end{definition}

Clearly, any $\F_q$-linear Frobenius space is also an $\F_q$-linear semi-Frobenius space.

\begin{lemma}\label{lem:semiFrob_infty}
 The object $[1]$ in $\mathcal{I}_{\infty}$ is an $\F_q$-linear semi-Frobenius space.
\end{lemma}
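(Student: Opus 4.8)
The statement asserts that the object $[1]$ in $\mathcal{I}_{\infty}$, equipped with the appropriate morphisms, is an $\F_q$-linear semi-Frobenius space in the sense of Definition \ref{def:semi_Frob_space}. The natural approach is to exhibit explicitly the structure morphisms on $[1]$ as morphisms $f_R$ for suitable subspaces $R$, and then check that each of the required relations holds. The key observation which makes this almost immediate is Proposition \ref{prop:rep_infty_inj_objects}: the functor $F:\mathcal{I}_{\infty}\to \mathcal{T}(GL_{\infty}(\F_q))$ is a fully faithful SM embedding (an equivalence onto its image, in fact an equivalence of SM categories), so it suffices to verify that the image $\V_{\infty}=\C\F_q^\infty$ in $\mathcal{T}(GL_{\infty}(\F_q))$ carries an $\F_q$-linear semi-Frobenius space structure. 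Alternatively — and this is the route I would present — one can transport the verification all the way down to $\Rep(GL_n(\F_q))$ using the faithful functors $\widetilde{\Gamma}_n:\mathcal{I}_\infty\to \Rep(GL_n(\F_q))$ for $n$ large.

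\textbf{Step 1: identify the structure morphisms.} First I would specify the candidate morphisms on $[1]\in\mathcal{I}_\infty$: take $m^*=f_{\{(a+b,-a,-b)\}}$, $\eps^*=f_{\{\z\}}$, $m=f_{\{(a+b,-a,-b)\}}$, $\dot{+}=f_{\{(b,b,-b)\}}$, $\mu_a=f_{\{(-ab,b)\}}$, $z=f_{\F_q^1}$ — exactly the same defining subspaces as in the $\kar{t}$ case (see the Example following Definition \ref{def:Frob_linear_space}), but one must check each of these subspaces $R\subset \F_q^{s+k}$ actually lies in $Rel^{\infty}_{s,k}$, i.e. that the projection $R\to\F_q^k$ is surjective. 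This is a quick direct check for each morphism: e.g. for $m^*$, $R=\{(a+b,-a,-b)\}\subset\F_q^{3}$ projects onto the last two coordinates $\F_q^2$ surjectively; for $\eps^*$, $k=0$ so surjectivity is vacuous; for $z=f_{\F_q^1}$, $s=0,k=1$ and the projection is onto; and so on. Crucially, the morphism $\eps=f_{\{\z\}}:\triv\to[1]$ does \emph{not} survive, since $\{\z\}\subset\F_q^{0+1}$ does not project onto $\F_q^1$ — which is precisely why we get only a \emph{semi}-Frobenius space and lose rigidity; I would remark on this explicitly.

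\textbf{Step 2: verify the relations.} The relations required in Definition \ref{def:semi_Frob_space} are: the coalgebra/multiplication axioms (\ref{rel:Frob_alg2} for $m,m^*$ plus cocommutativity, counitality, associativity, commutativity), the $\F_q$-module axioms \ref{rel:F_q_lin_plus_ass_comm}, \ref{rel:F_q_lin_zero}, \ref{rel:F_q_lin_mu}, \ref{rel:F_q_lin_plus_lin_distr}, and the compatibility relations \ref{rel:mu_coalg_mor}, \ref{rel:z_coalg_mor}, \ref{rel:plus_coalg_mor}, \ref{rel:cancellation_axiom}. Each such relation is an equality between two $\C$-linear combinations of morphisms $f_R$ in $\mathcal{I}_\infty$. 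Since $\widetilde{\Gamma}_n:\mathcal{I}_\infty\to\Rep(GL_n(\F_q))$ is a faithful SM functor sending $[1]\mapsto \V_n$ and $f_R\mapsto f_R^{(n)}$ (compatibly with composition and tensor product, by Lemma \ref{lem:Gamma_n_on_tensor_powers} and Corollary \ref{cor:Gamma_n_SM}), any such relation holds in $\mathcal{I}_\infty$ as soon as it holds in $\Rep(GL_n(\F_q))$ for some single $n$ — or better, for all $n$, which one can invoke to avoid worrying about the spanning sets being bases. But all of these relations were already verified for $\V_n\in\Rep(GL_n(\F_q))$ in the Example following Definition \ref{def:Frob_linear_space} (the computations there used only the coalgebra/module/compatibility axioms, not the unit $\eps$ or rigidity). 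Hence each relation descends. The only subtlety: one must observe that none of the \emph{left}-hand or right-hand sides of the semi-Frobenius relations involve $\eps$ or $coev$ — a glance through Definition \ref{def:semi_Frob_space} confirms this, as it is designed precisely so. I would state this as the single point requiring care.

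\textbf{Main obstacle.} There is no serious obstacle; the proof is essentially bookkeeping. The one thing to be careful about — and the place where a referee might object — is making sure that the relations in Definition \ref{def:semi_Frob_space} are genuinely ``$\eps$-free'', so that faithfulness of $\widetilde{\Gamma}_n$ together with the already-established relations in $\Rep(GL_n(\F_q))$ really does the job. (For instance, the Speciality Relation $m\circ m^*=\id$ and the Frobenius Relations in \ref{rel:Frob_alg2} involve only $m,m^*$; the counit condition uses $\eps^*$ but not $\eps$; etc.) Once that is checked, the proof is a two-line application of Proposition \ref{prop:rep_infty_inj_objects} (or of faithfulness of $\widetilde\Gamma_n$) plus a pointer to the Example after Definition \ref{def:Frob_linear_space}.
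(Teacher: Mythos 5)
Your proof is correct in substance, but it takes a different (and slightly longer) route than the paper, and it contains one misstatement that needs repair. The paper's own argument is shorter: the structure morphisms $m,m^*,\eps^*,\dot{+},\mu_a,z$ are, by their explicit description in Proposition \ref{prop:gen_morphisms}, all of the form $f_R$ with $R\in Rel^{\infty}_{s,k}$ (exactly your Step 1), and since the embedding $\Gamma_t:\mathcal{I}_{\infty}\hookrightarrow\kar{t}$ is faithful and $[1]$ is already known to be an $\F_q$-linear Frobenius space in $\kar{t}$, every $\eps$-free relation of Definition \ref{def:semi_Frob_space} holds in $\mathcal{I}_{\infty}$ because it holds in $\kar{t}$. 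Your proposal instead re-verifies the relations by specializing to $\Rep(GL_n(\F_q))$, which essentially redoes for $\mathcal{I}_{\infty}$ what the Example after Definition \ref{def:Frob_linear_space} already did when establishing the Frobenius structure on $[1]\in\kar{t}$; this works, but buys nothing over invoking the $\kar{t}$ structure through the faithful embedding.

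The point needing repair: you assert that $\widetilde{\Gamma}_n:\mathcal{I}_{\infty}\to\Rep(GL_n(\F_q))$ is faithful. It is not — the paper states explicitly that these specialization functors are non-faithful (they are compositions of the faithful $\Gamma_{q^n}$ with the full, non-faithful $F_n$), so ``the relation holds for some single $n$'' does not suffice in general. Your hedge (``for all $n$'') does save the argument, but the correct justification is the one you gesture at: by Lemma \ref{lem:basis}, for $n\geq s+k$ the images $f_R^{(n)}$, $R\subset\F_q^{s+k}$, are linearly independent, so $\widetilde{\Gamma}_n$ is injective on $\Hom_{\mathcal{I}_{\infty}}([s],[k])$ for $n$ large relative to the relation being checked (here $s+k\leq 6$ suffices), and the relations indeed hold in $\Rep(GL_n(\F_q))$ for every $n$. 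With that substitution for the faithfulness claim, and keeping your observation that Definition \ref{def:semi_Frob_space} involves neither $\eps$ nor $coev$ (which is also implicit in the paper's proof), your argument is complete.
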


\begin{proof} 
The object $[1]$ is a Frobenius space in $\kar{t}$. Hence all the required properties of Definition \ref{def:semi_Frob_space} hold, provided the maps $m, m^*, \eps^*, \dot{+}, \mu$ and $z$ are defined by relations of the form $Rel_{s,k}^{\infty}$ (morphisms in $\mathcal{I}_{\infty}$). This follows from their explicit definition given in Proposition \ref{prop:gen_morphisms}.
\end{proof}

\begin{theorem}\label{thrm:inf_univ_prop}
  Let $\mathcal{C}$ be a SM category, and let $\V $ be an $\F_q$-linear semi-Frobenius space in $\mathcal{C}$. Then there exists a SM functor
 $$ \Gamma_{\V}: \mathcal{I}_{\infty} \to \mathcal{C}, \;\; [1] \longmapsto \V.$$
\end{theorem}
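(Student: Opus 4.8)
The strategy mirrors the proof of Theorem \ref{thrm:univ_prop_Del}, with $\eps$ and $coev$ removed from the toolkit. First I would define a functor $\Gamma$ on $\mathcal{T}'_{\infty}$ (the skeletal version of $\mathcal{I}_{\infty}$, with objects $[k]$ and $\Hom([s],[k]) = \C Rel^{\infty}_{s,k}$) by sending $[k]$ to $\V^{\otimes k}$ and each $f_R$, $R \in Rel^{\infty}_{s,k}$, to a morphism $\widehat{f}_R \colon \V^{\otimes s} \to \V^{\otimes k}$ built out of $m, m^*, \eps^*, \sigma, z, \dot{+}, \mu_a$. The construction of $\widehat{f}_R$ is the analogue of $\widetilde f_R = T^{-1}(\phi_R)$, but since $\V$ has no unit and hence no coevaluation, the isomorphism $T$ of Definition \ref{def:isom_T} is not available; instead, for $R \in Rel^{\infty}_{s,k}$ the surjectivity of the projection $R \to \F_q^k$ lets us pick a matrix of the form $\begin{bmatrix} -A & I_k \\ A' & 0 \end{bmatrix}$ with $Row = R$ (as in the proof of Lemma \ref{lem:Rel_infty_well_def}), and define $\widehat f_R$ by applying $\mu_A$ to produce the last $k$ output strands directly from the $s$ inputs, while $\phi_{Row(A')}$ (in the sense of Definition \ref{def:phi_R}) closes off the remaining $d_1$ constraints with copies of $z^*$. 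Diagrammatically, $\widehat f_R$ has the inputs feeding into a box $\begin{bmatrix} -A \\ A' \end{bmatrix}$, with $d_1$ of the outputs capped by $z^*$ and $k$ of them becoming the outputs of $\widehat f_R$; one must check this does not depend on the choice of $A, A'$ within a fixed $R$, using Corollary \ref{cor:regular_system_of_eq} exactly as in the well-definedness lemma for $\phi_R$.

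Next I would verify functoriality: for $R \in Rel^{\infty}_{s,k}$, $S \in Rel^{\infty}_{k,l}$, one needs $\widehat f_S \circ \widehat f_R = \widehat f_{S\star R}$. Here the crucial point is that $d(R,S) = 0$ by Lemma \ref{lem:Rel_infty_well_def}, so no scalar $(\dim\V)^{d(R,S)}$ appears — which is essential, since $\dim\V$ is not even defined for a semi-Frobenius space. The computation is the $Rel^{\infty}$-restricted analogue of Steps 1--4 in the proof of Proposition \ref{prop:composition_phi_R}; because $d(R,S)=0$, the matrix $M$ in Step 3 is the full identity on the middle $k$ columns, so after using the counit relation for $\eps^*$ (equation \eqref{eq:Step_4_comult_counit}) and Lemma \ref{lem:eps_star_mu_A}, every middle strand is consumed and one is left with exactly $\widehat f_{S\star R}$, with no leftover factor $(\eps^*\circ\eps)^{\otimes d(R,S)}$. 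The explicit block-matrix multiplication from the proof of Lemma \ref{lem:Rel_infty_well_def}, namely $S\circledast R = Row\,\begin{bmatrix} -A & I_k & 0 \\ A' & 0 & 0 \\ 0 & -B & I_l \\ 0 & B' & 0\end{bmatrix}$ reduced to $S \star R = Row\,\begin{bmatrix} -BA & I_l \\ A' & 0 \\ B'A & 0 \end{bmatrix}$, tells us precisely which compositions of $\mu$'s and $z^*$'s to perform, and Propositions \ref{prop:compos_mu_A} and \ref{prop:tensor_prod_mu_B} together with Corollary \ref{cor:horizontal stacking} do the bookkeeping. Monoidality — $\widehat f_{R_1} \otimes \widehat f_{R_2} = \widehat f_{R_1\times R_2}$ — follows from Corollary \ref{cor:tensor_prod_phi_R} and Proposition \ref{prop:tensor_prod_mu_B} exactly as Lemma \ref{lem:tensor_prod_f} did, using that $R_1 \times R_2 \in Rel^{\infty}$ by Lemma \ref{lem:Rel_infty_well_def}. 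Symmetry of the functor follows because the braiding $\sigma = f_{R'}$ is sent to the symmetry morphism of $\mathcal{C}$, by the same argument as in Theorem \ref{thrm:univ_prop_Del}. Finally, since $\mathcal{I}_{\infty}$ is identified with the skeletal category $\mathcal{T}'_{\infty}$ (there is no Karoubi completion to worry about here — $\mathcal{C}$ is an arbitrary SM $\C$-linear category, not required additive or Karoubi, so $\Gamma_{\V}$ is just $\Gamma$ itself), we are done.

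\textbf{Main obstacle.} The essential difficulty is that all the earlier machinery (the morphisms $\mu_A$, the lemmas in Section \ref{sec:linear_algebra_for_Frob_space}) was developed for an $\F_q$-linear Frobenius space, and one must make sure every lemma invoked in building and composing $\widehat f_R$ survives the passage to a semi-Frobenius space, i.e. uses neither $\eps$ nor $coev$. The excerpt flags this: the results of Sections \ref{ssec:muA_tens_prod}, \ref{ssec:muA_comp}, \ref{ssec:transp_mu_A} were deliberately proved without $\eps, coev$, and Lemma \ref{lem:comparing_with_zero} and Lemma \ref{lem:eps_star_mu_A} are available; so the genuinely new work is (i) giving the $\eps$-free definition of $\widehat f_R$ just described and checking its independence of the matrix representative, and (ii) redoing the composition computation of Proposition \ref{prop:composition_phi_R} in the $d(R,S)=0$ setting, carefully tracking that the ``$\eps^{\otimes k}$ inserted in the middle'' — which in that proof came from $coev_{\V^{\otimes k}}$ — is here replaced by the direct $\mu_A$-construction so that $coev$ never appears. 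This re-derivation, rather than any conceptual subtlety, is where the real labor lies; the excerpt's own remark that ``the main difficulty lies in the construction of analogues of the morphisms $\widetilde f_R$'' confirms this.
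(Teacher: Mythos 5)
Your construction coincides with the paper's: Definition \ref{def:f_R_semiFrob_def} defines $\hat{f}_R=\left(\id_{\V^{\otimes k}}\otimes(z^*)^{\otimes d}\right)\circ\mu_{\begin{bmatrix}A\\A'\end{bmatrix}}$ from the normal form $R=Row\begin{bmatrix}-A&I_k\\A'&0\end{bmatrix}$ (note the box carries $A$, not $-A$, consistent with your own remark that $\mu_A$ produces the output strands), and the functor is then verified through exactly the statements you list: composition (Proposition \ref{prop:composition_f_R_infty}), tensor products (Lemma \ref{lem:tensor_prod_f_R_infty}), and symmetry as in Theorem \ref{thrm:univ_prop_Del}, with no Karoubi completion needed. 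However, two of your justifications would not go through as written.

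First, well-definedness of $\hat{f}_R$ is not ``exactly as for $\phi_R$'': the pair $(A,A')$ is unique only up to left multiplication by a block-triangular matrix $\begin{bmatrix}I_k&D'\\0&D\end{bmatrix}$ with $D\in GL_d(\F_q)$, $D'\in Mat_{k\times d}(\F_q)$, i.e.\ the ambiguity is $A\mapsto A+D'A'$, $A'\mapsto DA'$. Corollary \ref{cor:regular_system_of_eq} disposes only of the $D$-part; eliminating the $D'$-block is the bulk of the paper's well-definedness lemma and uses Lemma \ref{lem:comparing_with_zero}, the relation $\mu_{D'}\circ z^{\otimes d}=z^{\otimes k}$ coming from \ref{rel:F_q_lin_mu}, together with Lemma \ref{lem:horizontal stacking_with_zero} and Corollary \ref{cor:horizontal stacking}. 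Second, you cannot literally rerun Steps 1--4 of Proposition \ref{prop:composition_phi_R} in the semi-Frobenius setting: Step 1 inserts $\eps^{\otimes k}$ (coming from $coev_{\V^{\otimes k}}$ via the operation $\circledast$), and the reduction of the middle strands there rests on Lemma \ref{lem:eps_star_mu_A} and \eqref{eq:Step_4_comult_counit}, all of which involve the missing unit $\eps$. The variant of your argument that does work --- and is what the paper carries out in Proposition \ref{prop:composition_f_R_infty} --- is the direct computation you also sketch: write $\mu_{\begin{bmatrix}A\\A'\end{bmatrix}}=(\mu_A\otimes\mu_{A'})\circ\mu_{\begin{bmatrix}I_s\\I_s\end{bmatrix}}$ and reduce $\hat{f}_{R_2}\circ\hat{f}_{R_1}$ using Propositions \ref{prop:compos_mu_A}, \ref{prop:tensor_prod_mu_B} and Corollary \ref{cor:horizontal stacking} to $\left(\id\otimes(z^*)^{\otimes d_1+d_2}\right)\circ\mu_{\begin{bmatrix}BA\\B'A\\A'\end{bmatrix}}=\hat{f}_{R_2\star R_1}$, with no scalar appearing since $d(R_1,R_2)=0$. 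So commit to the matrix-calculus computation and drop the Step 1--4 detour; with these two repairs your outline matches the paper's proof.
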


\begin{proof} 
The proof is analogous to the proof of Theorem \ref{thrm:univ_prop_Del}.
We may assume $\mathcal{C}$ to be a strict rigid symmetric monoidal category. Then define a functor $\Gamma_V:\mathcal{I}_{\infty} \to \mathcal{C}$ by sending $[k]$ to $\V^{\otimes k}$ and for each linear subspace $R \subset \F_q^{k+l}$, $\Gamma_V$ sends $f_R: [k] \to [l]$ to $\hat{f}_R:\V^{\otimes k} \to \V^{\otimes l}$, as defined in Definition \ref{eq:f_R_semiFrob_def} below.  

We then extend $\Gamma_V$ by linearity to the entire space $\Hom([k], [l])$.
 
In order to ensure that this is indeed a functor, we need to check that for any $R \in Rel^{\infty}_{s, k}, S \in Rel^{\infty}_{k, l}$, $\Gamma_V(f_S \circ f_R) = \hat{f}_S \circ \hat{f}_R$. This is proved in Proposition \ref{prop:composition_f_R_infty}.
 
Now, to check that this is a monoidal functor, we only need to check that $\Gamma_V(f_{R_1} \otimes f_{R_2}) = \hat{f}_{R_1} \otimes \hat{f}_{R_2}$ for any linear subspaces $R_1 \in Rel^{\infty}_{s_1, k_1}, R_2\in Rel^{\infty}_{s_2, k_2}$. This is proved in Lemma \ref{lem:tensor_prod_f_R_infty}. It remains to show that this functor is symmetric, which follows from the fact the the symmetry morphism 
 $\sigma\in \Hom_{\mathcal{C}}(\V^{\otimes l} \otimes \V^{\otimes k}, \V^{\otimes k} \otimes \V^{\otimes l})$ is given by $\hat{f}_{R'}$ where $$ R' = \{(a_1, \ldots, a_l, b_1, \ldots, b_k,-b_1, \ldots, -b_k,-a_1, \ldots, -a_l)|\, \forall i, j, \; a_i,  b_j \in \F_q\} \subset  \F_q^{2k+2l}.$$
\end{proof}


\begin{corollary}\label{cor:univ_prop_semi_2}
The functor
 $$Fun^{\otimes}_{iso}\left(\mathcal{I}_{\infty}, \mathcal{C}\right) \longrightarrow SFrob_{\F_q}(\mathcal{C}), \;\; \Gamma\longmapsto \Gamma([1])$$
 is an equivalence of categories. Here 
 \begin{itemize}
  \item $SFrob_{\F_q}(\mathcal{C})$ is the subcategory of $\mathcal{C}$ consisting of $\F_q$-linear semi-Frobenius spaces in $\mathcal{C}$ and isomorphisms between them.
  \item $Fun^{\otimes}_{iso}\left(\kar{t}, \mathcal{C}\right)$ is the category of SM functors $\mathcal{I}_{\infty} \to \mathcal{C}$ and natural SM isomorphisms.
 \end{itemize}
\end{corollary}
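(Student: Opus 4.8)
The plan is to follow the blueprint already set by Corollary \ref{cor:univ_prop_2}, whose proof we would mimic almost verbatim. The content of Corollary \ref{cor:univ_prop_semi_2} is that the functor $\Gamma \mapsto \Gamma([1])$ from $Fun^{\otimes}_{iso}\left(\mathcal{I}_{\infty}, \mathcal{C}\right)$ to $SFrob_{\F_q}(\mathcal{C})$ is an equivalence of categories. We already have the object map going the other way: Theorem \ref{thrm:inf_univ_prop} produces, from an $\F_q$-linear semi-Frobenius space $\V$ in $\mathcal{C}$, a SM functor $\Gamma_{\V}: \mathcal{I}_{\infty} \to \mathcal{C}$ with $\Gamma_{\V}([1]) = \V$; and Lemma \ref{lem:semiFrob_infty} guarantees that $\Gamma([1])$ is indeed a semi-Frobenius space for any SM functor $\Gamma$, so both assignments land where claimed.

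First I would spell out the assignment $\V \mapsto \Gamma_{\V}$ on morphisms: given an isomorphism $\phi: \V \to \V'$ of semi-Frobenius spaces (i.e. an isomorphism in $\mathcal{C}$ intertwining all the structure maps $m, m^*, \eps^*, \dot{+}, \mu_a, z$), it induces isomorphisms $\phi^{\otimes k}: \V^{\otimes k} \to \V'^{\otimes k}$, that is isomorphisms $\Gamma_{\V}([k]) \to \Gamma_{\V'}([k])$ for each $k$. Because the morphisms $\hat{f}_R$ (Definition \ref{eq:f_R_semiFrob_def}) are built out of $m^*, \eps^*, \dot{+}, \mu_a, z$ by composition and tensor product, and $\phi$ commutes with each of these generators, the square
\[
\begin{tikzpicture}[anchorbase,scale=1]
\node (a) at (0,1) {$\V^{\otimes s}$};
\node (b) at (3,1) {$\V^{\otimes k}$};
\node (c) at (0,0) {$\V'^{\otimes s}$};
\node (d) at (3,0) {$\V'^{\otimes k}$};
\draw[->] (a)--(b) node[midway,above] {$\hat{f}_R$};
\draw[->] (c)--(d) node[midway,below] {$\hat{f}_R$};
\draw[->] (a)--(c) node[midway,left] {$\phi^{\otimes s}$};
\draw[->] (b)--(d) node[midway,right] {$\phi^{\otimes k}$};
\end{tikzpicture}
\]
commutes. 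Hence $(\phi^{\otimes k})_k$ is a natural SM isomorphism $\Gamma_{\V} \to \Gamma_{\V'}$; this makes $\V \mapsto \Gamma_{\V}$ a functor $SFrob_{\F_q}(\mathcal{C}) \to Fun^{\otimes}_{iso}(\mathcal{I}_{\infty}, \mathcal{C})$.

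Next I would verify that the two functors are mutually (quasi-)inverse. In one direction, $\Gamma_{\V}([1]) = \V$ on the nose, and an isomorphism $\phi$ is sent to $\phi^{\otimes 1} = \phi$, so the composite $SFrob_{\F_q}(\mathcal{C}) \to Fun^{\otimes}_{iso}(\mathcal{I}_{\infty},\mathcal{C}) \to SFrob_{\F_q}(\mathcal{C})$ is the identity. In the other direction, given a SM functor $\Gamma$, I must exhibit a natural SM isomorphism $\Gamma \cong \Gamma_{\Gamma([1])}$. This is the only substantive point: one uses the structure isomorphisms $c^{\Gamma}_{X,Y}: \Gamma(X)\otimes\Gamma(Y)\to\Gamma(X\otimes Y)$ to build, for each $k$, a canonical isomorphism $\Gamma([1])^{\otimes k} \xrightarrow{\sim} \Gamma([k])$ (using $[k] = [1]^{\otimes k}$ in $\mathcal{I}_\infty$), and then checks that under these identifications $\Gamma(f_R) = \hat{f}_{R} = \Gamma_{\Gamma([1])}(f_R)$ for every $R \in Rel^{\infty}_{s,k}$. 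The key input here is that $\mathcal{I}_{\infty}$ is generated under composition and tensor product by the morphisms $m, m^*, \eps^*, z, \dot{+}, \mu_a, \sigma$ — this is the analogue, for $\mathcal{I}_\infty$, of the generation statement recorded in Proposition \ref{prop:gen_morphisms}, and it holds since all of these morphisms lie in $Rel^\infty$ (as noted in the proof of Lemma \ref{lem:semiFrob_infty}) and the construction of $\hat{f}_R$ writes an arbitrary $f_R$ in terms of them; a SM functor preserves all of these generators up to the coherence isomorphisms $c^{\Gamma}$, so it is determined up to natural SM isomorphism by its value on $[1]$ together with that value's semi-Frobenius structure.

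The main obstacle — and it is more bookkeeping than depth — is the coherence chase in the last step: one must check that the isomorphisms $\Gamma([1])^{\otimes k} \cong \Gamma([k])$ assembled from $c^{\Gamma}$ are compatible with the braidings and associators on both sides, so that the resulting family is genuinely a natural \emph{SM} isomorphism and not merely a natural isomorphism of underlying functors. By Mac Lane coherence one may strictify $\mathcal{C}$ (as in the proof of Theorem \ref{thrm:inf_univ_prop}), which collapses most of this; after strictification $\Gamma([1])^{\otimes k}$ and $\Gamma([k])$ are literally equal on objects, the $\hat{f}_R$ are genuine composites, and the verification $\Gamma(f_R) = \hat{f}_R$ reduces to the fact that $\Gamma$ sends each generating morphism of $\mathcal{I}_\infty$ to the corresponding morphism built from the semi-Frobenius structure of $\Gamma([1])$, which is immediate from $\Gamma$ being symmetric monoidal. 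I would then remark that this proof is word-for-word the proof of Corollary \ref{cor:univ_prop_2}, with $\kar{t}$ replaced by $\mathcal{I}_{\infty}$, $Frob_{\F_q}(\mathcal{C},t)$ by $SFrob_{\F_q}(\mathcal{C})$, Theorem \ref{thrm:univ_prop_Del} by Theorem \ref{thrm:inf_univ_prop}, and $\widetilde f_R$ by $\hat f_R$, and leave the details to the reader.
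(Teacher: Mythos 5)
Your proposal is correct and takes essentially the same route as the paper, which simply declares the proof to be the same as that of Corollary \ref{cor:univ_prop_2}: build the inverse $\V \mapsto \Gamma_{\V}$ via Theorem \ref{thrm:inf_univ_prop} and extend an isomorphism $\phi$ to a natural SM isomorphism using that $\phi^{\otimes k}$ commutes with the morphisms $\hat{f}_R$. Your additional verification that $\Gamma \cong \Gamma_{\Gamma([1])}$ (via strictification and the fact that the $\hat{f}_R$ are built from the semi-Frobenius structure maps) only spells out bookkeeping the paper leaves implicit.
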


\begin{proof}
The proof is the same as that of Corollary \ref{cor:univ_prop_2}.
\end{proof}

\begin{remark}
If the object $\V \in \mathcal{C}$ is actually rigid, of dimension $t$ (that is, it is an $\F_q$-linear Frobenius space, and not just semi-Frobenius), then Theorem \ref{thrm:univ_prop_Del} implies that we have a symmetric monoidal functor $F_{\V}: \mathcal{T}(\underline{GL}_t(\F_q)) \longrightarrow \mathcal{C}$  whose restriction to the subcategory $\mathcal{I}_{\infty} \subset \mathcal{T}(\underline{GL}_t(\F_q))$ gives a symmetric monoidal functor $$\Gamma': \mathcal{I}_{\infty}  \longrightarrow \mathcal{C}$$ 
This functor is equal to $\Gamma_V$ by Corollary \ref{cor:univ_prop_semi_2}.
In particular, in that case the morphisms $\hat{f}_R$ in $\mathcal{C}$ described in Definition \ref{eq:f_R_semiFrob_def} will coincide with the morphisms $\widetilde{f}_R$ defined in Section \ref{sec:morph_fR}.
\end{remark}

\subsection{Diagrams for the semi-Frobenius spaces} \label{ssec:semi-diagrams}

Let $\mathcal{C}$ be a SM category, and let $\V $ be an $\F_q$-linear semi-Frobenius space in $\mathcal{C}$. 

In this setting, we still have all the constructions of Section \ref{ssec:Frob_obj} except for the morphisms $\eps$ and $coev$ (note that the morphism $$z^*:=ev \circ (z\otimes \id) = ev \circ (\id \otimes z)$$ is still defined!). 

Consequently, all the statements of Sections  \ref{ssec:muA_tens_prod}, \ref{ssec:muA_comp} and \ref{ssec:transp_mu_A} will hold. 

In this setting, we will define morphisms $\hat{f}_R $ in $\mathcal{C}$ corresponding to $R\in Rel^{\infty}_{s,k}$, which will be the analogues of the morphisms defined in Section \ref{ssec:infty_def}, \ref{ssec:repinf_def}. 

\begin{example}
Let $\mathcal{C} = \Rep(GL_n(\F_q))$ and let $\V:=\V_n$. Then for any $R\in Rel^{\infty}_{s, k}$ there exist matrices $A \in Mat_{k\times s}(\F_q), A'\in Mat_{d\times s}(\F_q)$ such that $$R = Row\begin{bmatrix} -A &I_k\\
A' &0
\end{bmatrix}, \,\, rk(A')=d.$$ 
The map $f_R \in \Hom_{GL_n(\F_q)}(\V_n^{\otimes s}, \V_n^{\otimes k})$ can now be described as follows:
$$ v_1\otimes \ldots \otimes v_s \; \mapsto \; \delta_{A'\vec{v},\, \dot{0}} \, w_1 \otimes\ldots\otimes w_k, \;\;\; \text{ where } \;\;\; \vec{v} := \begin{bmatrix}
v_1 \\v_2 \\ \vdots \\ v_s
\end{bmatrix},\;\; \begin{bmatrix}
w_1\\ w_2 \\ \vdots \\ w_k
\end{bmatrix} := A\vec{v},
$$
Here we used notation as defined in Section \ref{ssec:notn_gen_lin_finite}.
\end{example}

\begin{definition}\label{def:f_R_semiFrob_def}
Let $R\in Rel^{\infty}_{s,k}$. We define a morphism $\hat{f}_R \in \Hom_{\mathcal{C}}(\V^{\otimes s}, \V^{\otimes k})$ as follows: let $A \in Mat_{k\times s}(\F_q), A'\in Mat_{d\times s}(\F_q)$ be such that $$R = Row\begin{bmatrix} -A &I_k\\
A' &0
\end{bmatrix}, \,\, rk(A')=d$$ 
Then we set $$\hat{f}_R :=\left( \id_{\V^{\otimes k}} \otimes (z^*)^{\otimes d} \right) \,\circ\, \mu_{\begin{bmatrix} A \\
A'
\end{bmatrix}}$$
Diagrammatically, $\hat{f}_R$ is given by  
\begin{equation}\label{eq:f_R_semiFrob_def}
\begin{tikzpicture}[anchorbase,scale=1.3]
\draw[-] (-0.7,-0.6)--(-0.7,0.6)--(0.7,0.6)--(0.7,-0.6)--(-0.7,-0.6);
\node at (0,0) {$\begin{bmatrix}
A \\ A'
\end{bmatrix}$};
\draw[-] (-0.4,-0.6)--(-0.4,-0.9);
\draw[-] (0.4,-0.6)--(0.4,-0.9);
\node at (-0.2,-0.75) {$\cdot$};
\node at (-0,-0.75) {$\cdot$};
\node at (0.2,-0.75) {$\cdot$};

\draw[-] (-0.6,0.6)--(-0.6,0.9);
\draw[-] (-0.1,0.6)--(-0.1,0.9);

\draw[-] (0.1,0.6)--(0.1,0.85);
\draw[-] (0.6,0.6)--(0.6,0.85);

\node at (0.1,0.9) {$\circ$};
\node at (0.6,0.9) {$\circ$};

\node at (-0.3,0.7) {$\cdot$};
\node at (-0.35,0.7) {$\cdot$};
\node at (-0.4,0.7) {$\cdot$};

\node at (0.3,0.7) {$\cdot$};
\node at (0.35,0.7) {$\cdot$};
\node at (0.4,0.7) {$\cdot$};
\end{tikzpicture}
\quad = \quad
 \begin{tikzpicture}[anchorbase,scale=1.3]
\draw[-] (-0.7,-0.2)--(-0.7,0.4)--(0.7,0.4)--(0.7,-0.2)--(-0.7,-0.2);
\node at (0,0.1) {$A$};
\draw[-] (-0.4,-0.2)--(-0.4,-0.7);
\draw[-] (0.4,-0.2)--(0.4,-0.5);
\node at (-0.2,-0.4) {$\cdot$};
\node at (-0,-0.4) {$\cdot$};
\node at (0.2,-0.4) {$\cdot$};
\draw[-] (-0.4,0.4)--(-0.4,0.8);
\draw[-] (0.4,0.4)--(0.4,0.8);
\node at (-0.2,0.5) {$\cdot$};
\node at (-0,0.5) {$\cdot$};
\node at (0.2,0.5) {$\cdot$};

\draw[-] (3.3,-0.2)--(3.3,0.4)--(4.7,0.4)--(4.7,-0.2)--(3.3,-0.2);
\node at (4,0.1) {$A'$};
\draw[-] (3.6,-0.2)--(3.6,-0.7);
\draw[-] (4.4,-0.2)--(4.4,-0.5);
\node at (3.8,-0.4) {$\cdot$};
\node at (4,-0.4) {$\cdot$};
\node at (4.2,-0.4) {$\cdot$};
\draw[-] (3.6,0.4)--(3.6,0.7);
\draw[-] (4.4,0.4)--(4.4,0.7);
\node at (3.6,0.75) {$\circ$};
\node at (4.4,0.75) {$\circ$};
\node at (3.8,0.5) {$\cdot$};
\node at (4,0.5) {$\cdot$};
\node at (4.2,0.5) {$\cdot$};
\draw[-] (-0.4,-0.7)--(3.6,-0.7);
\draw[-] (0.4,-0.5)--(3.5, -0.5);
\draw[-] (3.7,-0.5)--(4.4,-0.5);
\draw[-] (3, -0.7)--(3,-1);
\draw[-] (3.4, -0.5)--(3.4, -0.65);
\draw[-] (3.4, -0.75)--(3.4, -1);
\node at (3.1,-0.95) {$\cdot$};
\node at (3.2,-0.95) {$\cdot$};
\node at (3.3,-0.95) {$\cdot$};
\end{tikzpicture}
\end{equation}
Explicitly, this means that $\hat{f}_R$ looks like this:
\begin{equation*}
\begin{tikzpicture}[anchorbase,scale=1.5]
\draw[-] (-0.5,-0.2)--(-0.5,0.2)--(0.5,0.2)--(0.5,-0.2)--(-0.5,-0.2);
\node at (0,0) {${A_1}$};

\draw[-] (-0.4,-0.2)--(-0.4,-0.9);
\draw[-] (0.4,-0.2)--(0.4,-0.5);
\draw[-] (0,-0.2)--(0,-0.7);

\draw[-] (0,0.2)--(0,0.5);

\node at (-0.25,-0.4) {$\cdot$};
\node at (-0.2,-0.4) {$\cdot$};
\node at (-0.15,-0.4) {$\cdot$};

\node at (0.25,-0.4) {$\cdot$};
\node at (0.2,-0.4) {$\cdot$};
\node at (0.15,-0.4) {$\cdot$};

\draw[-] (1.5,-0.2)--(1.5,0.2)--(2.5,0.2)--(2.5,-0.2)--(1.5,-0.2);
\node at (2,0) {${A_2}$};

\draw[-] (1.6,-0.2)--(1.6,-0.9);
\draw[-] (2.4,-0.2)--(2.4,-0.5);
\draw[-] (2,-0.2)--(2,-0.7);

\draw[-] (2,0.2)--(2,0.5);

\node at (1.85,-0.4) {$\cdot$};
\node at (1.8,-0.4) {$\cdot$};
\node at (1.75,-0.4) {$\cdot$};

\node at (2.15,-0.4) {$\cdot$};
\node at (2.2,-0.4) {$\cdot$};
\node at (2.25,-0.4) {$\cdot$};
\node at (3.2,0) {$\cdot$};
\node at (3.3,0) {$\cdot$};
\node at (3.4,0) {$\cdot$};
\draw[-] (4,-0.2)--(4,0.2)--(5,0.2)--(5,-0.2)--(4,-0.2);
\node at (4.5,0) {${A_{k}}$};

\draw[-] (4.1,-0.2)--(4.1,-0.9);
\draw[-] (4.9,-0.2)--(4.9,-0.5);
\draw[-] (4.5,-0.2)--(4.5,-0.7);

\draw[-] (4.5,0.2)--(4.5,0.5);

\node at (4.25,-0.4) {$\cdot$};
\node at (4.3,-0.4) {$\cdot$};
\node at (4.35,-0.4) {$\cdot$};

\node at (4.65,-0.4) {$\cdot$};
\node at (4.7,-0.4) {$\cdot$};
\node at (4.75,-0.4) {$\cdot$};

\draw[-] (6,-0.2)--(6,0.2)--(7,0.2)--(7,-0.2)--(6,-0.2);
\node at (6.5,0) {${A'_{1}}$};

\draw[-] (6.1,-0.2)--(6.1,-0.9);
\draw[-] (6.9,-0.2)--(6.9,-0.5);
\draw[-] (6.5,-0.2)--(6.5,-0.7);

\draw[-] (6.5,0.2)--(6.5,0.5);
\node at (6.5,0.53) {$\circ$};

\node at (6.25,-0.4) {$\cdot$};
\node at (6.3,-0.4) {$\cdot$};
\node at (6.35,-0.4) {$\cdot$};

\node at (6.65,-0.4) {$\cdot$};
\node at (6.7,-0.4) {$\cdot$};
\node at (6.75,-0.4) {$\cdot$};
\node at (7.7,0) {$\cdot$};
\node at (7.8,0) {$\cdot$};
\node at (7.9,0) {$\cdot$};

\draw[-] (8.5,-0.2)--(8.5,0.2)--(9.5,0.2)--(9.5,-0.2)--(8.5,-0.2);
\node at (9,0) {${A'_{d}}$};

\draw[-] (8.6,-0.2)--(8.6,-0.9);
\draw[-] (9.4,-0.2)--(9.4,-0.5);
\draw[-] (9,-0.2)--(9,-0.7);

\draw[-] (9,0.2)--(9,0.5);
\node at (9,0.53) {$\circ$};

\node at (8.75,-0.4) {$\cdot$};
\node at (8.8,-0.4) {$\cdot$};
\node at (8.85,-0.4) {$\cdot$};

\node at (9.15,-0.4) {$\cdot$};
\node at (9.2,-0.4) {$\cdot$};
\node at (9.25,-0.4) {$\cdot$};

\draw[-] (-0.4,-0.9)--(8.6,-0.9);

\draw[-] (0,-0.7)--(1.55, -0.7);
\draw[-] (1.65,-0.7)--(4.05, -0.7);
\draw[-] (4.15,-0.7)--(6.05,-0.7);
\draw[-] (6.15,-0.7)--(8.55,-0.7);
\draw[-] (8.65,-0.7)--(9,-0.7);

\draw[-] (0.4,-0.5)--(1.55, -0.5);
\draw[-] (1.65, -0.5)--(1.95, -0.5);
\draw[-] (2.05, -0.5)--(4.05, -0.5);
\draw[-] (4.15,-0.5)--(4.45,-0.5);
\draw[-] (4.55,-0.5)--(6.05,-0.5);
\draw[-] (6.15,-0.5)--(6.45,-0.5);
\draw[-] (6.55,-0.5)--(8.55,-0.5);
\draw[-] (8.65,-0.5)--(8.95,-0.5);
\draw[-] (9.05,-0.5)--(9.4,-0.5);

\draw[-] (3.2, -0.9)--(3.2,-1.25);

\draw[-] (3.5, -0.7)--(3.5, -0.85);
\draw[-] (3.5, -0.95)--(3.5, -1.25);

\draw[-] (3.8, -0.5)--(3.8, -0.65);
\draw[-] (3.8, -0.75)--(3.8, -0.85);
\draw[-] (3.8, -0.95)--(3.8, -1.25);

\node at (3.3,-1.15) {$\cdot$};
\node at (3.35,-1.15) {$\cdot$};
\node at (3.4,-1.15) {$\cdot$};

\node at (3.6,-1.15) {$\cdot$};
\node at (3.65,-1.15) {$\cdot$};
\node at (3.7,-1.15) {$\cdot$};
\end{tikzpicture}
\end{equation*}
\end{definition}

We first check that $\hat{f}_R$ is well defined: 
\begin{lemma}
The morphism $\hat{f}_R$ doesn't depend on the choice of $A, A'$ as above.
\end{lemma}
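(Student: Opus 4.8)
The plan is to show that any two valid presentations of $R$ differ by row operations that preserve the morphism, reducing everything to statements already established about the $\mu_A$ maps and the morphism $z^*$. Suppose $R = Row\begin{bmatrix} -A &I_k\\ A' &0\end{bmatrix}$ with $rk(A')=d$, and also $R = Row\begin{bmatrix} -B &I_k\\ B' &0\end{bmatrix}$ with $rk(B')=d'$. First I would observe that $d = d' = \dim_{\F_q} R - k$, since the top $k$ rows of each presenting matrix are already linearly independent (they project onto a basis of $\F_q^k$ under the last $k$ coordinates) and are independent of the bottom rows (whose last $k$ coordinates vanish); thus the bottom block contributes exactly $\dim R - k$ to the rank and must have full row rank equal to this number in both presentations. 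So the two block matrices $\begin{bmatrix} -A &I_k\\ A' &0\end{bmatrix}$ and $\begin{bmatrix} -B &I_k\\ B' &0\end{bmatrix}$ are both $(k+d)\times(s+k)$ matrices of rank $k+d$ with the same row space, hence are related by left multiplication by some $P \in GL_{k+d}(\F_q)$.

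Next I would pin down the shape of $P$. Since the last $k$ columns of the first matrix are $\begin{bmatrix} I_k \\ 0\end{bmatrix}$ and likewise for the second, the equality of the last $k$ columns after multiplying by $P$ forces $P = \begin{bmatrix} I_k & 0 \\ Q & P'\end{bmatrix}$ for some $P' \in GL_d(\F_q)$ and some $Q \in Mat_{d\times k}(\F_q)$ (the top-left block must be $I_k$ and the top-right block $0$ to reproduce $\begin{bmatrix} I_k\\0\end{bmatrix}$; then invertibility of $P$ forces $P'$ invertible). Comparing the first $s$ columns gives $-B = -A$, i.e. $A = B$, and $B' = QA + P'A' = Q(-(-A)) + P'A'$; more precisely $B' = -Q\cdot(-A) + P' A'$, so writing $C := \begin{bmatrix} A \\ A'\end{bmatrix}$ and $C' := \begin{bmatrix} B \\ B'\end{bmatrix}$, we have $C' = \begin{bmatrix} I_k & 0 \\ Q & P'\end{bmatrix} C = PC$.

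Now the computation is immediate from the results already proved. By Proposition \ref{prop:compos_mu_A}, $\mu_{C'} = \mu_P \circ \mu_C$. The matrix $P$ is block lower-triangular with identity and $P'$ on the diagonal; applying Corollary \ref{cor:regular_system_of_eq} is not quite direct since $P$ acts on $\V^{\otimes (k+d)}$ but only the last $d$ outputs are fed into $z^*$. Instead I would argue: $\left(\id_{\V^{\otimes k}} \otimes (z^*)^{\otimes d}\right) \circ \mu_P$ can be analyzed using that the first $k$ rows of $P$ are $\begin{bmatrix} I_k & 0\end{bmatrix}$, so $\mu_P = $ (something acting as identity on the first $k$ factors after suitable bookkeeping). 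The cleanest route: the matrix $P$ factors as $\begin{bmatrix} I_k & 0 \\ Q & I_d\end{bmatrix}\begin{bmatrix} I_k & 0 \\ 0 & P'\end{bmatrix}$. For the second factor, $\mu_{\begin{bmatrix} I_k & 0\\0&P'\end{bmatrix}} = \id_{\V^{\otimes k}} \otimes \mu_{P'}$ by Proposition \ref{prop:tensor_prod_mu_B} and Lemma \ref{lem:mu_identity}, and then $\left(\id_{\V^{\otimes k}} \otimes (z^*)^{\otimes d}\right)\circ \left(\id_{\V^{\otimes k}} \otimes \mu_{P'}\right) = \id_{\V^{\otimes k}} \otimes \left((z^*)^{\otimes d} \circ \mu_{P'}\right) = \id_{\V^{\otimes k}} \otimes (z^*)^{\otimes d}$ by Corollary \ref{cor:regular_system_of_eq} since $P' \in GL_d(\F_q)$. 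For the first factor $\begin{bmatrix} I_k & 0\\ Q & I_d\end{bmatrix}$, each of its last $d$ rows is of the form $Q_i \oplus e_i$ where $e_i$ is a standard basis row; using Corollary \ref{cor:horizontal stacking} together with Lemma \ref{lem:comparing_with_zero} (which gives $(\id_\V \otimes z^*)\circ m^* = z\circ z^*$, allowing one to absorb the extra $\mu_{Q_i}$ contributions that get killed by $z^*$ composed appropriately), one checks that $\left(\id_{\V^{\otimes k}} \otimes (z^*)^{\otimes d}\right) \circ \mu_{\begin{bmatrix} I_k & 0\\ Q & I_d\end{bmatrix}} = \left(\id_{\V^{\otimes k}} \otimes (z^*)^{\otimes d}\right)$. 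Combining, $\hat{f}_R$ computed from $C'$ equals $\left(\id_{\V^{\otimes k}} \otimes (z^*)^{\otimes d}\right) \circ \mu_P \circ \mu_C = \left(\id_{\V^{\otimes k}} \otimes (z^*)^{\otimes d}\right) \circ \mu_C$, which is $\hat{f}_R$ computed from $C$. The main obstacle is the handling of the unipotent block $\begin{bmatrix} I_k & 0\\ Q & I_d\end{bmatrix}$: one must verify carefully that the ``mixing'' of the first $k$ factors into the last $d$ is annihilated after applying $(z^*)^{\otimes d}$, and the right tool is precisely Lemma \ref{lem:comparing_with_zero} applied factor-by-factor, so I would spell that reduction out in a short auxiliary computation rather than leaving it implicit.
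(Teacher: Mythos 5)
There is a genuine error in your linear algebra, and it propagates into a false key identity. If $\begin{bmatrix} -B & I_k\\ B' & 0\end{bmatrix} = P\begin{bmatrix} -A & I_k\\ A' & 0\end{bmatrix}$ with $P\in GL_{k+d}(\F_q)$, then comparing the last $k$ columns gives $P\begin{bmatrix}I_k\\ 0\end{bmatrix}=\begin{bmatrix}I_k\\ 0\end{bmatrix}$, i.e.\ it is the \emph{left} column-block of $P$ that is pinned down: $P_{11}=I_k$, $P_{21}=0$, so $P=\begin{bmatrix} I_k & D'\\ 0 & D\end{bmatrix}$ is block \emph{upper} triangular with $D'\in Mat_{k\times d}(\F_q)$ unconstrained and $D\in GL_d(\F_q)$. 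In particular $B=A-D'A'$ need \emph{not} equal $A$: the real freedom in the presentation is exactly that one may add row-combinations of $A'$ to the rows of $A$ (for instance $R=\F_q^2\in Rel^{\infty}_{1,1}$ is presented both by $\begin{bmatrix}0&1\\ 1&0\end{bmatrix}$ and by $\begin{bmatrix}-1&1\\ 1&0\end{bmatrix}$). Your claim that $P=\begin{bmatrix}I_k&0\\ Q&P'\end{bmatrix}$ and $A=B$ is exactly backwards, so your reduction skips the only nontrivial part of the statement.

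Worse, the identity you rely on for your unipotent factor, $\left(\id_{\V^{\otimes k}}\otimes(z^*)^{\otimes d}\right)\circ\mu_{\begin{bmatrix}I_k&0\\ Q&I_d\end{bmatrix}}=\id_{\V^{\otimes k}}\otimes(z^*)^{\otimes d}$, is false: in $\Rep(GL_n(\F_q))$ with $k=d=1$ and $Q=(1)$ the left-hand side sends $v\otimes w\mapsto \delta_{v\dot{+}w,\dot{0}}\,v$ while the right-hand side sends $v\otimes w\mapsto \delta_{w,\dot{0}}\,v$; mixing the surviving first $k$ factors into the zero-tested last $d$ factors genuinely changes the morphism, and Lemma \ref{lem:comparing_with_zero} cannot repair this. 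What is true, and what the paper actually proves, is the identity for the upper-triangular matrix, $\left(\id_{\V^{\otimes k}}\otimes(z^*)^{\otimes d}\right)\circ\mu_{\begin{bmatrix}I_k&D'\\ 0&D\end{bmatrix}}=\id_{\V^{\otimes k}}\otimes(z^*)^{\otimes d}$: there the zero-tested factors are the ones fed (via $D'$) into the surviving outputs, which is harmless because $z^*$ forces them to behave as $\dot{0}$ (this is where Lemma \ref{lem:comparing_with_zero}, Corollary \ref{cor:horizontal stacking} and Relations \ref{rel:F_q_lin_mu}, \ref{rel:F_q_lin_zero}, \ref{rel:z_coalg_mor} are used, after splitting off the block-diagonal part). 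Your handling of the factor $\begin{bmatrix}I_k&0\\ 0&D\end{bmatrix}$ via Proposition \ref{prop:tensor_prod_mu_B} and Corollary \ref{cor:regular_system_of_eq} is correct and agrees with the paper, but the upper-triangular step, which is the essential one, is missing from your argument as written.
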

\begin{proof}
Let $B \in Mat_{k\times s}(\F_q), B'\in Mat_{d\times s}(\F_q)$ be another pair of matrices such that $$R = Row\begin{bmatrix} B &I_k\\
B' &0
\end{bmatrix},\, rk(B')=d.$$ Then we can find some $D\in GL_d(\F_q)$ and $D'\in Mat_{k \times d}$ for which
$$\begin{bmatrix} B \\
B'
\end{bmatrix} = \begin{bmatrix} I_k &D'\\ 
0 &D
\end{bmatrix} \begin{bmatrix} A \\
A'
\end{bmatrix}. $$ 
Thus we only need to show that 
$$\left( \id_{\V^{\otimes k}} \otimes (z^*)^{\otimes d} \right) \,\circ\, \mu_{\begin{bmatrix} I_k &D'\\ 0 &D \end{bmatrix}} = \left( \id_{\V^{\otimes k}} \otimes (z^*)^{\otimes d} \right)$$
Diagrammatically, we need to prove that
\begin{equation}\label{eq:f_R_well_def_semiFrob}
 \begin{tikzpicture}[anchorbase,scale=1.3]
\draw[-] (-0.7,-0.5)--(-0.7,0.5)--(0.7,0.5)--(0.7,-0.5)--(-0.7,-0.5);
\node at (0,0) {$\begin{bmatrix}
I_k &D'\\
0 &D
\end{bmatrix}$};
\draw[-] (-0.4,-0.5)--(-0.4,-0.8);
\draw[-] (0.4,-0.5)--(0.4,-0.8);
\node at (-0.2,-0.65) {$\cdot$};
\node at (-0,-0.65) {$\cdot$};
\node at (0.2,-0.65) {$\cdot$};

\draw[-] (-0.6,0.5)--(-0.6,0.8);
\draw[-] (-0.1,0.5)--(-0.1,0.8);

\node at (-0.3,0.65) {$\cdot$};
\node at (-0.35,0.65) {$\cdot$};
\node at (-0.4,0.65) {$\cdot$};

\draw[-] (0.1,0.5)--(0.1,0.7);
\draw[-] (0.6,0.5)--(0.6,0.7);
\node at (0.3,0.65) {$\cdot$};
\node at (0.35,0.65) {$\cdot$};
\node at (0.4,0.65) {$\cdot$};

\node at (0.1,0.75) {$\circ$};
\node at (0.6,0.75) {$\circ$};
 \end{tikzpicture} \quad = \quad
  \begin{tikzpicture}[anchorbase,scale=1.3]
\draw[-] (-0.6,-0.8)--(-0.6,0.8);
\draw[-] (-0.1,-0.8)--(-0.1,0.8);

\node at (-0.3,0) {$\cdot$};
\node at (-0.35,0) {$\cdot$};
\node at (-0.4,0) {$\cdot$};

\draw[-] (0.1,-0.8)--(0.1,0.7);
\draw[-] (0.6,-0.8)--(0.6,0.7);
\node at (0.3,0) {$\cdot$};
\node at (0.35,0) {$\cdot$};
\node at (0.4,0) {$\cdot$};

\node at (0.1,0.75) {$\circ$};
\node at (0.6,0.75) {$\circ$};
 \end{tikzpicture} 
 \end{equation}

 The left hand side of \eqref{eq:f_R_well_def_semiFrob}, by definition, is
\begin{equation*}
 \begin{tikzpicture}[anchorbase,scale=1.3]
\draw[-] (-0.7,-0.2)--(-0.7,0.4)--(0.7,0.4)--(0.7,-0.2)--(-0.7,-0.2);
\node at (0,0.1) {$\begin{bmatrix}
I_k &D'
\end{bmatrix}$};
\draw[-] (-0.4,-0.2)--(-0.4,-0.7);
\draw[-] (0.4,-0.2)--(0.4,-0.5);
\node at (-0.2,-0.4) {$\cdot$};
\node at (-0,-0.4) {$\cdot$};
\node at (0.2,-0.4) {$\cdot$};
\draw[-] (-0.3,0.4)--(-0.3,0.8);
\draw[-] (0.3,0.4)--(0.3,0.8);
\node at (-0.15,0.6) {$\cdot$};
\node at (-0,0.6) {$\cdot$};
\node at (0.15,0.6) {$\cdot$};

\draw[-] (3.3,-0.2)--(3.3,0.4)--(4.7,0.4)--(4.7,-0.2)--(3.3,-0.2);
\node at (4,0.1) {$\begin{bmatrix}
0 &D
\end{bmatrix}$};
\draw[-] (3.6,-0.2)--(3.6,-0.7);
\draw[-] (4.4,-0.2)--(4.4,-0.5);
\node at (3.8,-0.4) {$\cdot$};
\node at (4,-0.4) {$\cdot$};
\node at (4.2,-0.4) {$\cdot$};
\draw[-] (3.7,0.4)--(3.7,0.7);
\draw[-] (4.3,0.4)--(4.3,0.7);
\node at (3.7,0.75) {$\circ$};
\node at (4.3,0.75) {$\circ$};
\node at (3.8,0.5) {$\cdot$};
\node at (4,0.5) {$\cdot$};
\node at (4.2,0.5) {$\cdot$};
\draw[-] (-0.4,-0.7)--(3.6,-0.7);
\draw[-] (0.4,-0.5)--(3.5, -0.5);
\draw[-] (3.7,-0.5)--(4.4,-0.5);
\draw[-] (3, -0.7)--(3,-1);
\draw[-] (3.4, -0.5)--(3.4, -0.65);
\draw[-] (3.4, -0.75)--(3.4, -1);
\node at (3.1,-0.95) {$\cdot$};
\node at (3.2,-0.95) {$\cdot$};
\node at (3.3,-0.95) {$\cdot$};
\end{tikzpicture}
\end{equation*}

We now use Lemma \ref{lem:horizontal stacking_with_zero} to say that 
\begin{equation*}
 \begin{tikzpicture}[anchorbase,scale=1.3]
\draw[-] (3.3,-0.2)--(3.3,0.4)--(4.7,0.4)--(4.7,-0.2)--(3.3,-0.2);
\node at (4,0.1) {$\begin{bmatrix}
0 &D
\end{bmatrix}$};
\draw[-] (3.6,-0.2)--(3.6,-0.7);
\draw[-] (4.4,-0.2)--(4.4,-0.7);
\node at (3.8,-0.4) {$\cdot$};
\node at (4,-0.4) {$\cdot$};
\node at (4.2,-0.4) {$\cdot$};
\draw[-] (3.7,0.4)--(3.7,0.7);
\draw[-] (4.3,0.4)--(4.3,0.7);
\node at (3.7,0.75) {$\circ$};
\node at (4.3,0.75) {$\circ$};
\node at (3.9,0.6) {$\cdot$};
\node at (4,0.6) {$\cdot$};
\node at (4.1,0.6) {$\cdot$};
\end{tikzpicture} \quad = \quad
 \begin{tikzpicture}[anchorbase,scale=1.3]
 \draw[-] (2.7,0.7)--(2.7,-0.7);
\draw[-] (3.1,0.7)--(3.1,-0.7);
\node at (2.85,-0.4) {$\cdot$};
\node at (2.9,-0.4) {$\cdot$};
\node at (2.95,-0.4) {$\cdot$};

\node at (2.7,0.75) {$\bullet$};
\node at (3.1,0.75) {$\bullet$};

\draw[-] (3.5,-0.2)--(3.5,0.4)--(4.5,0.4)--(4.5,-0.2)--(3.5,-0.2);
\node at (4,0.1) {$D$};
\draw[-] (3.6,-0.2)--(3.6,-0.7);
\draw[-] (4.4,-0.2)--(4.4,-0.7);
\node at (3.95,-0.4) {$\cdot$};
\node at (4,-0.4) {$\cdot$};
\node at (4.05,-0.4) {$\cdot$};
\draw[-] (3.6,0.4)--(3.6,0.7);
\draw[-] (4.4,0.4)--(4.4,0.7);
\node at (3.6,0.75) {$\circ$};
\node at (4.4,0.75) {$\circ$};
\node at (3.8,0.5) {$\cdot$};
\node at (4,0.5) {$\cdot$};
\node at (4.2,0.5) {$\cdot$};
\end{tikzpicture} 
 \quad = \quad
 \begin{tikzpicture}[anchorbase,scale=1.3]
 \draw[-] (2.7,0.7)--(2.7,-0.7);
\draw[-] (3.1,0.7)--(3.1,-0.7);
\node at (2.85,0) {$\cdot$};
\node at (2.9,0) {$\cdot$};
\node at (2.95,0) {$\cdot$};

\node at (2.7,0.75) {$\bullet$};
\node at (3.1,0.75) {$\bullet$};

\draw[-] (3.8,0.7)--(3.8,-0.7);
\draw[-] (4.2,0.7)--(4.2,-0.7);
\node at (3.8,0.75) {$\circ$};
\node at (4.2,0.75) {$\circ$};
\node at (3.95,0) {$\cdot$};
\node at (4,0) {$\cdot$};
\node at (4.05,0) {$\cdot$};
\end{tikzpicture} 
\end{equation*}
The second equality follows from the fact that $D$ is invertible and from Lemma \ref{cor:regular_system_of_eq}, stating that $(z^*)^{\otimes d} \circ \mu_D =  (z^*)^{\otimes d}$. Hence the left hand side of \eqref{eq:f_R_well_def_semiFrob} equals
\begin{equation*}
 \begin{tikzpicture}[anchorbase,scale=1.2]
\draw[-] (-0.7,-0.2)--(-0.7,0.4)--(0.7,0.4)--(0.7,-0.2)--(-0.7,-0.2);
\node at (0,0.1) {$\begin{bmatrix}
I_k &D'
\end{bmatrix}$};
\draw[-] (-0.6,-0.2)--(-0.6,-1);
\draw[-] (-0.2,-0.2)--(-0.2,-0.8);
\draw[-] (0.2,-0.2)--(0.2,-0.6);
\draw[-] (0.6,-0.2)--(0.6,-0.4);

\node at (-0.45,-0.4) {$\cdot$};
\node at (-0.4,-0.4) {$\cdot$};
\node at (-0.35,-0.4) {$\cdot$};

\node at (0.45,-0.4) {$\cdot$};
\node at (0.4,-0.4) {$\cdot$};
\node at (0.35,-0.4) {$\cdot$};

\draw[-] (-0.4,0.4)--(-0.4,0.8);
\draw[-] (0.4,0.4)--(0.4,0.8);
\node at (-0.15,0.6) {$\cdot$};
\node at (-0,0.6) {$\cdot$};
\node at (0.15,0.6) {$\cdot$};

\node at (3,0.75) {$\bullet$};
\node at (3.4,0.75) {$\bullet$};
\node at (3.15,0.5) {$\cdot$};
\node at (3.2,0.5) {$\cdot$};
\node at (3.25,0.5) {$\cdot$};

\node at (3.6,0.75) {$\circ$};
\node at (4,0.75) {$\circ$};
\node at (3.75,0.5) {$\cdot$};
\node at (3.8,0.5) {$\cdot$};
\node at (3.85,0.5) {$\cdot$};
\draw[-] (-0.6,-1)--(3,-1) -- (3, 0.7);
\draw[-] (-0.2,-0.8)--(2.9, -0.8);
\draw[-] (3.1, -0.8)--(3.4, -0.8)-- (3.4, 0.7);
\draw[-] (0.2,-0.6)--(2.9, -0.6);
\draw[-] (3.1, -0.6)--(3.3, -0.6);
\draw[-] (3.45, -0.6)--(3.6, -0.6)-- (3.6, 0.7);
\draw[-] (0.6,-0.4)-- (2.9, -0.4);
\draw[-] (3.1, -0.4)--(3.3, -0.4);
\draw[-] (3.45, -0.4)--(3.55, -0.4);
\draw[-] (3.7, -0.4)--(4,-0.4)-- (4, 0.7);

\draw[-] (1.5,-1)--(1.5,-1.4);
\draw[-] (1.8,-0.8)--(1.8,-0.95);
\draw[-] (1.8,-1.05)--(1.8,-1.4);

\draw[-] (2.1,-0.6)--(2.1,-0.75);
\draw[-] (2.1,-0.85)--(2.1,-0.95);
\draw[-] (2.1,-1.05)--(2.1,-1.4);

\draw[-] (2.4,-0.4)--(2.4,-0.55);
\draw[-] (2.4,-0.65)--(2.4,-0.75);
\draw[-] (2.4,-0.85)--(2.4,-0.95);
\draw[-] (2.4,-1.05)--(2.4,-1.4);

\node at (1.6,-1.2) {$\cdot$};
\node at (1.65,-1.2) {$\cdot$};
\node at (1.7,-1.2) {$\cdot$};

\node at (2.2,-1.2) {$\cdot$};
\node at (2.25,-1.2) {$\cdot$};
\node at (2.3,-1.2) {$\cdot$};
\end{tikzpicture}  \InnaA{\quad \xlongequal{\text{\ref{rel:Frob_alg}}} \quad}
 \begin{tikzpicture}[anchorbase,scale=1.3]
\draw[-] (-0.7,-0.2)--(-0.7,0.4)--(0.7,0.4)--(0.7,-0.2)--(-0.7,-0.2);
\node at (0,0.1) {$\begin{bmatrix}
I_k &D'
\end{bmatrix}$};
\draw[-] (-0.6,-0.2)--(-0.6,-1);
\draw[-] (-0.2,-0.2)--(-0.2,-1);
\draw[-] (0.2,-0.2)--(0.2,-0.6);
\draw[-] (0.6,-0.2)--(0.6,-0.4);

\node at (-0.45,-0.4) {$\cdot$};
\node at (-0.4,-0.4) {$\cdot$};
\node at (-0.35,-0.4) {$\cdot$};

\node at (0.45,-0.4) {$\cdot$};
\node at (0.4,-0.4) {$\cdot$};
\node at (0.35,-0.4) {$\cdot$};

\draw[-] (-0.4,0.4)--(-0.4,0.8);
\draw[-] (0.4,0.4)--(0.4,0.8);
\node at (-0.15,0.6) {$\cdot$};
\node at (-0,0.6) {$\cdot$};
\node at (0.15,0.6) {$\cdot$};


\node at (1.6,0.75) {$\circ$};
\node at (2,0.75) {$\circ$};
\node at (1.75,0.5) {$\cdot$};
\node at (1.8,0.5) {$\cdot$};
\node at (1.85,0.5) {$\cdot$};
\draw[-] (0.2,-0.6)--(1.6, -0.6)-- (1.6, 0.7);
\draw[-] (0.6,-0.4)--(1.55, -0.4);
\draw[-] (1.7, -0.4)--(2,-0.4)-- (2, 0.7);

\draw[-] (1.1,-0.6)--(1.1,-1);

\draw[-] (1.4,-0.4)--(1.4,-0.55);
\draw[-] (1.4,-0.65)--(1.4,-1);

\node at (1.2,-0.85) {$\cdot$};
\node at (1.25,-0.85) {$\cdot$};
\node at (1.3,-0.85) {$\cdot$};
\end{tikzpicture}
\InnaA{\quad \xlongequal{\text{\cref{lem:comparing_with_zero}}} \quad }
 \begin{tikzpicture}[anchorbase,scale=1.3]
\draw[-] (3.3,-0.2)--(3.3,0.4)--(4.7,0.4)--(4.7,-0.2)--(3.3,-0.2);
\node at (4,0.1) {$\begin{bmatrix}
I_k &D'
\end{bmatrix}$};
\draw[-] (3.4,-0.2)--(3.4,-1.3);
\draw[-] (3.8,-0.2)--(3.8,-1.3);
\draw[-] (4.2,-0.2)--(4.2,-0.5);
\draw[-] (4.6,-0.2)--(4.6,-0.5);

\node at (4.2,-0.53) {$\circ$};
\node at (4.6,-0.53) {$\circ$};

\draw[-] (4.2,-1.3)--(4.2,-0.75);
\draw[-] (4.6,-1.3)--(4.6,-0.75);
\node at (4.2,-0.72) {$\circ$};
\node at (4.6,-0.72) {$\circ$};

\node at (4.35,-0.8) {$\cdot$};
\node at (4.4,-0.8) {$\cdot$};
\node at (4.45,-0.8) {$\cdot$};

\node at (3.55,-0.4) {$\cdot$};
\node at (3.6,-0.4) {$\cdot$};
\node at (3.65,-0.4) {$\cdot$};

\node at (4.35,-0.4) {$\cdot$};
\node at (4.4,-0.4) {$\cdot$};
\node at (4.45,-0.4) {$\cdot$};

\draw[-] (3.7,0.4)--(3.7,0.7);
\draw[-] (4.3,0.4)--(4.3,0.7);
\node at (3.9,0.6) {$\cdot$};
\node at (4,0.6) {$\cdot$};
\node at (4.1,0.6) {$\cdot$};
\end{tikzpicture} 
\end{equation*}
which is just $\mu_{[I_k \, D']} \circ \left(\id_{\V}^{\otimes k} \otimes z^{\otimes d}\right)$.
By Corollary \ref{cor:horizontal stacking}, we have: $$\mu_{[I_k \, D']} \circ (\id_{\V}^{\otimes k} \otimes z^{\otimes d}) = \mu_{[I_k \, I_k]} \circ \left(\id_{\V}^{\otimes k} \otimes \left(\mu_{D'} \circ z^{\otimes d} \right)\right) $$
Now, by Relations \ref{rel:F_q_lin_mu}, we have: $$\mu_{D'} \circ z^{\otimes d} =z^{\otimes k} = \mu_{[\dot 0]} \circ z^{\otimes k},$$ where $[\dot 0]$ denotes the $k \times k$ zero matrix. So 
$$\mu_{[I_k \, D']} \circ (\id_{\V}^{\otimes k} \otimes z^{\otimes d}) = \mu_{[I_k \, I_k]} \circ \left(\id_{\V}^{\otimes k} \otimes \left(\mu_{[\dot 0]} \circ z^{\otimes k} \right)\right) =  \mu_{[I_k \, 0]} \circ (\id_{\V}^{\otimes k} \otimes z^{\otimes d})$$ In terms of diagrams, we obtained that the left hand side of \eqref{eq:f_R_well_def_semiFrob} equals
\begin{equation*}
 \begin{tikzpicture}[anchorbase,scale=1.3]
\draw[-] (3.3,-0.2)--(3.3,0.4)--(4.7,0.4)--(4.7,-0.2)--(3.3,-0.2);
\node at (4,0.1) {$\begin{bmatrix}
I_k &0
\end{bmatrix}$};
\draw[-] (3.4,-0.2)--(3.4,-1);
\draw[-] (3.8,-0.2)--(3.8,-1);
\draw[-] (4.2,-0.2)--(4.2,-0.45);
\draw[-] (4.6,-0.2)--(4.6,-0.45);

\node at (4.2,-0.5) {$\circ$};
\node at (4.6,-0.5) {$\circ$};

\draw[-] (4.2,-1)--(4.2,-0.75);
\draw[-] (4.6,-1)--(4.6,-0.75);
\node at (4.2,-0.7) {$\circ$};
\node at (4.6,-0.7) {$\circ$};

\node at (4.35,-0.8) {$\cdot$};
\node at (4.4,-0.8) {$\cdot$};
\node at (4.45,-0.8) {$\cdot$};

\node at (3.55,-0.4) {$\cdot$};
\node at (3.6,-0.4) {$\cdot$};
\node at (3.65,-0.4) {$\cdot$};

\node at (4.35,-0.4) {$\cdot$};
\node at (4.4,-0.4) {$\cdot$};
\node at (4.45,-0.4) {$\cdot$};

\draw[-] (3.7,0.4)--(3.7,0.7);
\draw[-] (4.3,0.4)--(4.3,0.7);
\node at (3.9,0.6) {$\cdot$};
\node at (4,0.6) {$\cdot$};
\node at (4.1,0.6) {$\cdot$};
\end{tikzpicture} 
\InnaA{\quad\xlongequal{\text{\cref{lem:horizontal stacking_with_zero}}}\quad}
 \begin{tikzpicture}[anchorbase,scale=1.3]
\draw[-] (3.3,-0.2)--(3.3,0.4)--(4.3,0.4)--(4.3,-0.2)--(3.3,-0.2);
\node at (3.8,0.1) {$I_k $};
\draw[-] (3.4,-0.2)--(3.4,-0.7);
\draw[-] (4.2,-0.2)--(4.2,-0.7);

\node at (3.8,-0.4) {$\cdot$};
\node at (3.85,-0.4) {$\cdot$};
\node at (3.75,-0.4) {$\cdot$};
\draw[-] (3.4,0.4)--(3.4,0.7);
\draw[-] (4.2,0.4)--(4.2,0.7);
\node at (3.8,0.6) {$\cdot$};
\node at (3.85,0.6) {$\cdot$};
\node at (3.75,0.6) {$\cdot$};

\draw[-] (4.8,-0.7)--(4.8,-0.3);
\draw[-] (5.2,-0.7)--(5.2,-0.3);
\node at (4.8,-0.25) {$\circ$};
\node at (5.2,-0.25) {$\circ$};
\draw[-] (4.8,0.2)--(4.8,0.7);
\draw[-] (5.2,0.2)--(5.2,0.7);
\node at (4.8,0.15) {$\circ$};
\node at (5.2,0.15) {$\circ$};
\node at (4.8,0.7) {$\bullet$};
\node at (5.2,0.7) {$\bullet$};

\node at (4.95,0.4) {$\cdot$};
\node at (5,0.4) {$\cdot$};
\node at (5.05,0.4) {$\cdot$};

\node at (4.95,-0.4) {$\cdot$};
\node at (5,-0.4) {$\cdot$};
\node at (5.05,-0.4) {$\cdot$};

\end{tikzpicture} 
 \quad = \quad
 \begin{tikzpicture}[anchorbase,scale=1]
 \draw[-] (2.7,0.7)--(2.7,-0.7);
\draw[-] (3.1,0.7)--(3.1,-0.7);
\node at (2.85,0) {$\cdot$};
\node at (2.9,0) {$\cdot$};
\node at (2.95,0) {$\cdot$};

\draw[-] (3.8,0.7)--(3.8,-0.7);
\draw[-] (4.2,0.7)--(4.2,-0.7);
\node at (3.8,0.75) {$\circ$};
\node at (4.2,0.75) {$\circ$};
\node at (3.95,0) {$\cdot$};
\node at (4,0) {$\cdot$};
\node at (4.05,0) {$\cdot$};
\end{tikzpicture} 
\end{equation*}
The last equality follows from Relation \ref{rel:z_coalg_mor} (see also \ref{itm:str_rel_diag_z_coalg}), which states that $\eps^*\circ z=\id_{\triv}$. This proves Equality \ref{eq:f_R_well_def_semiFrob}, and the lemma is proved.
\end{proof}

\begin{proposition}\label{prop:composition_f_R_infty}
 Let $R_1 \subset Rel^\infty_{s, k}$ and $R_2 \subset Rel^\infty_{k, l}$. Then $$\hat{f}_{R_2} \circ \hat{f}_{R_1} = \hat{f}_{R_2 \star R_1}$$ where $R_2\star R_1$ are defined in Definition \ref{def:composition_star_d_R_S}.
\end{proposition}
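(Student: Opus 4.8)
The plan is to reduce the statement entirely to the $\mu_A$-calculus of \cref{sec:linear_algebra_for_Frob_space} and \cref{ssec:transp_mu_A}, which was deliberately set up not to use $\eps$ or $coev$, so that it applies verbatim to a semi-Frobenius space. Fix presentations $R_1 = Row\begin{bmatrix} -A & I_k \\ A' & 0\end{bmatrix}$ with $A\in Mat_{k\times s}(\F_q)$, $A'\in Mat_{d_1\times s}(\F_q)$, $rk(A')=d_1$, and $R_2 = Row\begin{bmatrix}-B & I_l\\ B' & 0\end{bmatrix}$ with $B\in Mat_{l\times k}(\F_q)$, $B'\in Mat_{d_2\times k}(\F_q)$, $rk(B')=d_2$. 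By the computation in the proof of \cref{lem:Rel_infty_well_def} we have $R_2\star R_1 = Row\begin{bmatrix}-BA & I_l\\ A' & 0\\ B'A & 0\end{bmatrix}$, so if $C'$ denotes a full-rank matrix with $Row(C') = Row\begin{bmatrix}A'\\ B'A\end{bmatrix}$, then by \cref{def:f_R_semiFrob_def} we have $\hat f_{R_2\star R_1} = (\id_{\V^{\otimes l}}\otimes (z^*)^{\otimes d'})\circ \mu_{\begin{bmatrix}BA\\ C'\end{bmatrix}}$, where $d'$ is the number of rows of $C'$.

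For bookkeeping I would introduce, for each $M\in Mat_{(l+d)\times s}(\F_q)$, the morphism $g_M := (\id_{\V^{\otimes l}}\otimes (z^*)^{\otimes d})\circ \mu_M : \V^{\otimes s}\to \V^{\otimes l}$ (so that each $\hat f_R$ has this shape), and establish two invariance properties. First, $g_{PM} = g_M$ whenever $P = \begin{bmatrix}I_l & 0\\ 0 & Q\end{bmatrix}$ with $Q\in GL_d(\F_q)$: indeed $\mu_P = \id_{\V^{\otimes l}}\otimes \mu_Q$ by \cref{prop:tensor_prod_mu_B} together with $\mu_{I_l}=\id$ (\cref{lem:mu_identity}), while $(z^*)^{\otimes d}\circ\mu_Q = (z^*)^{\otimes d}$ by \cref{cor:regular_system_of_eq}; in particular $g_M$ is insensitive to permuting or to taking invertible combinations of the last $d$ rows of $M$. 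Second, adjoining a zero row does not change $g_M$: by \cref{cor:vert_stacking}, $\mu_{\begin{bmatrix}M\\ 0\end{bmatrix}} = (\mu_M\otimes \mu_{[0\,\cdots\,0]})\circ \mu_{\begin{bmatrix}I_s\\ I_s\end{bmatrix}}$, and since $\mu_{[0\,\cdots\,0]} = z\circ(\eps^*)^{\otimes s}$ (from $\mu_0 = z\circ\eps^*$ and \cref{lem:horizontal stacking_with_zero}) we get $z^*\circ\mu_{[0\,\cdots\,0]} = (z^*\circ z)\circ(\eps^*)^{\otimes s} = (\eps^*)^{\otimes s}$, whence $g_{\begin{bmatrix}M\\ 0\end{bmatrix}} = (g_M\otimes(\eps^*)^{\otimes s})\circ\mu_{\begin{bmatrix}I_s\\ I_s\end{bmatrix}} = g_M$, the last equality because $\eps^*$ is a counit for $m^*$ and $\mu_{\begin{bmatrix}I_s\\ I_s\end{bmatrix}} = w\circ((m^*)^{it})^{\otimes s}$ (\cref{lem:mu_identity}). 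Combining the two, $g_M$ depends only on the top $l$ rows of $M$ and on the $\F_q$-span of the remaining rows; in particular $\hat f_{R_2\star R_1} = g_{\begin{bmatrix}BA\\ A'\\ B'A\end{bmatrix}}$.

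It then remains to identify $\hat f_{R_2}\circ\hat f_{R_1}$ with $g_{\begin{bmatrix}BA\\ A'\\ B'A\end{bmatrix}}$. First rewrite $\hat f_{R_1}$: by \cref{cor:vert_stacking} and \cref{def:phi_R}, $\hat f_{R_1} = (\mu_A\otimes\phi_{Row(A')})\circ\mu_{\begin{bmatrix}I_s\\ I_s\end{bmatrix}}$. Since $\phi_{Row(A')}$ lands in $\triv$, post-composing with $\hat f_{R_2}$ and using bifunctoriality of $\otimes$ gives $\hat f_{R_2}\circ\hat f_{R_1} = \bigl((\hat f_{R_2}\circ\mu_A)\otimes\phi_{Row(A')}\bigr)\circ\mu_{\begin{bmatrix}I_s\\ I_s\end{bmatrix}}$; and $\hat f_{R_2}\circ\mu_A = (\id_{\V^{\otimes l}}\otimes(z^*)^{\otimes d_2})\circ\mu_{\begin{bmatrix}B\\ B'\end{bmatrix}}\circ\mu_A = (\id_{\V^{\otimes l}}\otimes(z^*)^{\otimes d_2})\circ\mu_{\begin{bmatrix}BA\\ B'A\end{bmatrix}} = g_{\begin{bmatrix}BA\\ B'A\end{bmatrix}}$ by \cref{prop:compos_mu_A}. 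Unpacking the definition of $g$ once more via \cref{cor:vert_stacking} (this time splitting off the bottom block $A'$) identifies $\bigl(g_{\begin{bmatrix}BA\\ B'A\end{bmatrix}}\otimes\phi_{Row(A')}\bigr)\circ\mu_{\begin{bmatrix}I_s\\ I_s\end{bmatrix}} = g_{\begin{bmatrix}BA\\ B'A\\ A'\end{bmatrix}}$, which equals $g_{\begin{bmatrix}BA\\ A'\\ B'A\end{bmatrix}}$ by the row-permutation invariance above. Hence $\hat f_{R_2}\circ\hat f_{R_1} = g_{\begin{bmatrix}BA\\ A'\\ B'A\end{bmatrix}} = \hat f_{R_2\star R_1}$. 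The well-definedness of the functor $\Gamma_\V$ and its symmetric-monoidal compatibilities are then purely formal given this identity, \cref{lem:tensor_prod_f_R_infty}, and the explicit form of the symmetry.

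The only genuine obstacle I anticipate is that $\begin{bmatrix}A'\\ B'A\end{bmatrix}$ need not have full row rank, so $\hat f_{R_2}\circ\hat f_{R_1}$ does not literally present as $\hat f$ of a matrix in the normal form required by \cref{def:f_R_semiFrob_def}; this is precisely what forces the passage through the auxiliary morphisms $g_M$, and in particular the zero-row invariance, which is the one step where the relations $\mu_0 = z\circ\eps^*$, $z^*\circ z = \id$ and the counit law actually intervene. Everything else is repeated use of \cref{prop:compos_mu_A}, \cref{prop:tensor_prod_mu_B}, \cref{cor:vert_stacking} and \cref{cor:regular_system_of_eq}, best carried out in the diagrammatic calculus of \cref{ssec:semi-diagrams}, and it is reassuring that the surjectivity hypothesis in $Rel^\infty$ is exactly what makes the scalar factor $(\dim\V)^{d(R_1,R_2)}$ of \cref{prop:composition_phi_R} disappear here, since $d(R_1,R_2)=0$ by \cref{lem:Rel_infty_well_def}.
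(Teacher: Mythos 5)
Your proposal is correct and follows the same route as the paper's proof: present $R_1,R_2$ by matrices $\begin{bmatrix}-A&I_k\\A'&0\end{bmatrix}$, $\begin{bmatrix}-B&I_l\\B'&0\end{bmatrix}$, use \cref{lem:Rel_infty_well_def} to write $R_2\star R_1=Row\begin{bmatrix}-BA&I_l\\A'&0\\B'A&0\end{bmatrix}$, and reduce everything to the $\eps$-free $\mu$-calculus via \cref{prop:compos_mu_A}, \cref{prop:tensor_prod_mu_B}, \cref{cor:vert_stacking} and \cref{cor:regular_system_of_eq}; the paper's own computation ends with exactly your intermediate expression $\left(\id_{\V^{\otimes l}}\otimes(z^*)^{\otimes d_1+d_2}\right)\circ\mu_{\begin{bmatrix}BA\\B'A\\A'\end{bmatrix}}$. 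The one genuine difference is the point you flag yourself: the stacked bottom block $\begin{bmatrix}B'A\\A'\end{bmatrix}$ need not have full row rank (e.g.\ $B'A$ can be zero), so this expression is not literally of the normal form required by \cref{def:f_R_semiFrob_def}, and the paper's final equality sign silently absorbs the reduction to a full-rank presentation. Your auxiliary morphisms $g_M$, with invariance under $\mathrm{diag}(I_l,Q)$, $Q\in GL_d(\F_q)$ (from \cref{cor:regular_system_of_eq}) and under adjoining a zero row (via $\mu_{[0\,\cdots\,0]}=z\circ(\eps^*)^{\otimes s}$, $z^*\circ z=\id$ from \ref{rel:z_coalg_mor}, and the counit law through \cref{lem:mu_identity}), supply precisely that missing reduction, and all the individual steps you cite do hold in the semi-Frobenius setting since none of them uses $\eps$ or $coev$. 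So your write-up is, if anything, slightly more complete than the paper's at that final step; the remaining framing (surjectivity in $Rel^\infty$ forcing $d(R_1,R_2)=0$, and the functoriality consequences) matches the paper.
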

\begin{proof}
Let $d_i = \dim_{\F_q} R_i$, $r_i := s_i+k_i$ for $i=1,2$. 

Let $A \in Mat_{k\times s}(\F_q), A'\in Mat_{d_1\times s}(\F_q)$, $B \in Mat_{l\times k}(\F_q), B'\in Mat_{d_2\times k}(\F_q)$ be such that $$R_1 = Row\begin{bmatrix} -A &I_{k}\\
A' &0
\end{bmatrix}, \; R_2 = Row\begin{bmatrix} -B &I_{l}\\
B' &0
\end{bmatrix}, \;\; rk(A')=d_1, \, rk(B')=d_2.$$ 
As we saw in the proof of Lemma \ref{lem:Rel_infty_well_def}, we have: $$R_2\star R_1 = Row \begin{bmatrix}
-BA &I_l \\
A'&0\\
B'A  &0 
\end{bmatrix} = Row \begin{bmatrix}
-BA &I_l \\
B'A  &0 \\
A'&0
\end{bmatrix}.$$

Recall that by Proposition \ref{prop:tensor_prod_mu_B}, we have the following equalities (for equality of appropriate diagrams, see Equation \eqref{eq:f_R_semiFrob_def}):
\begin{align*}
\hat{f}_{R_1} &= \left( \id_{\V^{\otimes k}} \otimes (z^*)^{\otimes d_1} \right) \,\circ\, \mu_{\begin{bmatrix} A \\
A'
\end{bmatrix}} \;= \; \left( \id_{\V^{\otimes k}} \otimes (z^*)^{\otimes d_1} \right) \,\circ\, \mu_{\begin{bmatrix} A &0 \\
0 &A'
\end{bmatrix}} \,\circ\, \mu_{\begin{bmatrix} I_s \\
I_s
\end{bmatrix}} \\&=\;\left( \id_{\V^{\otimes k}} \otimes (z^*)^{\otimes d_1} \right) \,\circ\,\left( \mu_{A} \otimes \mu_A' \right) \,\circ\, \mu_{\begin{bmatrix} I_s \\
I_s
\end{bmatrix}}
\end{align*}

The statement which we want to prove, $\hat{f}_{R_2} \circ \hat{f}_{R_1} = \hat{f}_{R_2 \star R_1}$ can \InnaA{then} be interpreted as:
\begin{equation*}
 \begin{tikzpicture}[anchorbase,scale=1.1]
\draw[-] (1.3,-0.2)--(1.3,0.4)--(2.7,0.4)--(2.7,-0.2)--(1.3,-0.2);
\node at (2,0.1) {$A$};
\draw[-] (1.6,-0.2)--(1.6,-0.7);
\draw[-] (2.4,-0.2)--(2.4,-0.5);
\node at (1.8,-0.4) {$\cdot$};
\node at (2,-0.4) {$\cdot$};
\node at (2.2,-0.4) {$\cdot$};

\node at (1.8,0.5) {$\cdot$};
\node at (2,0.5) {$\cdot$};
\node at (2.2,0.5) {$\cdot$};

\draw[-] (1.6,0.4)--(1.6,0.8);
\draw[-] (2.4,0.4)--(2.4,0.75);
\draw[-] (2.4,0.85)--(2.4,1);

\draw[-] (1,0.8)--(2.6,0.8);
\draw[-] (1.4,1)--(2.55,1);
\draw[-] (2.65,1)--(3,1);

\draw[-] (1,0.8)--(1,1.4);
\draw[-] (1.4,1)--(1.4,1.4);
\draw[-] (0.8,1.4)--(0.8,2)--(1.6,2)--(1.6,1.4)--(0.8,1.4);
\node at (1.2,1.7) {$B$};

\draw[-] (1,2)--(1,2.3);
\draw[-] (1.4,2)--(1.4,2.3);
\node at (1.15,2.15) {$\cdot$};
\node at (1.2,2.15) {$\cdot$};
\node at (1.25,2.15) {$\cdot$};

\draw[-] (2.6,0.8)--(2.6,1.4);
\draw[-] (3,1)--(3,1.4);
\draw[-] (2.4,1.4)--(2.4,2)--(3.2,2)--(3.2,1.4)--(2.4,1.4);
\node at (2.8,1.7) {$B'$};

\draw[-] (2.6,2)--(2.6,2.2);
\draw[-] (3,2)--(3,2.2);
\node at (2.75,2.15) {$\cdot$};
\node at (2.8,2.15) {$\cdot$};
\node at (2.85,2.15) {$\cdot$};
\node at (2.6,2.25) {$\circ$};
\node at (3,2.25) {$\circ$};

\draw[-] (3.3,-0.2)--(3.3,0.4)--(4.7,0.4)--(4.7,-0.2)--(3.3,-0.2);
\node at (4,0.1) {$A'$};
\draw[-] (3.6,-0.2)--(3.6,-0.7);
\draw[-] (4.4,-0.2)--(4.4,-0.5);
\node at (3.8,-0.4) {$\cdot$};
\node at (4,-0.4) {$\cdot$};
\node at (4.2,-0.4) {$\cdot$};
\draw[-] (3.6,0.4)--(3.6,0.7);
\draw[-] (4.4,0.4)--(4.4,0.7);
\node at (3.6,0.75) {$\circ$};
\node at (4.4,0.75) {$\circ$};
\node at (3.8,0.5) {$\cdot$};
\node at (4,0.5) {$\cdot$};
\node at (4.2,0.5) {$\cdot$};
\draw[-] (1.6,-0.7)--(3.6,-0.7);
\draw[-] (2.4,-0.5)--(3.5, -0.5);
\draw[-] (3.7,-0.5)--(4.4,-0.5);
\draw[-] (3, -0.7)--(3,-1);
\draw[-] (3.4, -0.5)--(3.4, -0.65);
\draw[-] (3.4, -0.75)--(3.4, -1);
\node at (3.1,-0.95) {$\cdot$};
\node at (3.2,-0.95) {$\cdot$};
\node at (3.3,-0.95) {$\cdot$};
\end{tikzpicture}
\quad = \quad 
 \begin{tikzpicture}[anchorbase,scale=1.3]
\draw[-] (1.3,-0.2)--(1.3,0.4)--(2.7,0.4)--(2.7,-0.2)--(1.3,-0.2);
\node at (2,0.1) {$BA$};
\draw[-] (1.6,-0.2)--(1.6,-0.7);
\draw[-] (2.4,-0.2)--(2.4,-0.5);
\node at (1.8,-0.4) {$\cdot$};
\node at (2,-0.4) {$\cdot$};
\node at (2.2,-0.4) {$\cdot$};
\draw[-] (1.6,0.4)--(1.6,0.8);
\draw[-] (2.4,0.4)--(2.4,0.8);
\node at (1.8,0.5) {$\cdot$};
\node at (2,0.5) {$\cdot$};
\node at (2.2,0.5) {$\cdot$};

\draw[-] (3.3,-0.2)--(3.3,0.4)--(4.7,0.4)--(4.7,-0.2)--(3.3,-0.2);
\node at (4,0.1) {$B'A$};
\draw[-] (3.6,-0.2)--(3.6,-0.7);
\draw[-] (4.4,-0.2)--(4.4,-0.5);
\node at (3.8,-0.4) {$\cdot$};
\node at (4,-0.4) {$\cdot$};
\node at (4.2,-0.4) {$\cdot$};
\draw[-] (3.6,0.4)--(3.6,0.7);
\draw[-] (4.4,0.4)--(4.4,0.7);
\node at (3.6,0.75) {$\circ$};
\node at (4.4,0.75) {$\circ$};
\node at (3.8,0.5) {$\cdot$};
\node at (4,0.5) {$\cdot$};
\node at (4.2,0.5) {$\cdot$};

\draw[-] (5.3,-0.2)--(5.3,0.4)--(6.7,0.4)--(6.7,-0.2)--(5.3,-0.2);
\node at (6,0.1) {$A'$};
\draw[-] (5.6,-0.2)--(5.6,-0.7);
\draw[-] (6.4,-0.2)--(6.4,-0.5);
\node at (5.8,-0.4) {$\cdot$};
\node at (6,-0.4) {$\cdot$};
\node at (6.2,-0.4) {$\cdot$};
\draw[-] (5.6,0.4)--(5.6,0.7);
\draw[-] (6.4,0.4)--(6.4,0.7);
\node at (5.6,0.75) {$\circ$};
\node at (6.4,0.75) {$\circ$};
\node at (5.8,0.5) {$\cdot$};
\node at (6,0.5) {$\cdot$};
\node at (6.2,0.5) {$\cdot$};
\draw[-] (1.6,-0.7)--(5.6,-0.7);
\draw[-] (2.4,-0.5)--(3.55, -0.5);
\draw[-] (3.65,-0.5)--(5.55,-0.5);
\draw[-] (5.65,-0.5)--(6.4,-0.5);
\draw[-] (3, -0.7)--(3,-1);
\draw[-] (3.4, -0.5)--(3.4, -0.65);
\draw[-] (3.4, -0.75)--(3.4, -1);

\node at (3.1,-0.95) {$\cdot$};
\node at (3.2,-0.95) {$\cdot$};
\node at (3.3,-0.95) {$\cdot$};
\end{tikzpicture}
\end{equation*}

Let us now prove this by an explicit computation, which boils down to matrix multiplication. Proposition \ref{prop:compos_mu_A} implies
\begin{align*}
\hat{f}_{R_2} \circ \hat{f}_{R_1} &=\;  \left( \id_{\V^{\otimes l}} \otimes (z^*)^{\otimes d_2} \right) \,\circ\, \mu_{\begin{bmatrix} B \\
B'
\end{bmatrix}} \circ \left( \id_{\V^{\otimes k}} \otimes (z^*)^{\otimes d_1} \right) \,\circ\, \mu_{\begin{bmatrix} A \\
A'
\end{bmatrix}} \\
&=\; \left( \id_{\V^{\otimes l}} \otimes (z^*)^{\otimes d_2} \right) \,\circ\, \mu_{\begin{bmatrix} B \\
B'
\end{bmatrix}} \circ \left( \id_{\V^{\otimes k}} \otimes (z^*)^{\otimes d_1} \right) \,\circ\, (\mu_A 
\otimes \mu_{A'})\otimes \mu_{\begin{bmatrix} I_s \\
I_s
\end{bmatrix}} \\ &=\;
 \left( \id_{\V^{\otimes l}} \otimes (z^*)^{\otimes d_2} \right) \,\circ\, \mu_{\begin{bmatrix} BA \\
B'A
\end{bmatrix}} \circ \left( \id_{\V^{\otimes k}} \otimes \left(  (z^*)^{\otimes d_1} \,\circ\, \mu_{A'}\right) \right)  \,\circ\,  \mu_{\begin{bmatrix} I_s \\
I_s
\end{bmatrix}} \\ & =\;  \left( \id_{\V^{\otimes l}} \otimes (z^*)^{\otimes d_2} \otimes (z^*)^{\otimes d_1} \right) \,\circ\, \left(\mu_{BA} \otimes \mu_{B'A}\otimes   \mu_{A'}\right)  \,\circ\,  \left(\mu_{\begin{bmatrix} I_s \\
I_s
\end{bmatrix}} \otimes \id_{\V^{\otimes s}} \right) \,\circ\,  \mu_{\begin{bmatrix} I_s \\
I_s
\end{bmatrix}}.
\end{align*}

From Corollary \ref{cor:horizontal stacking} and Proposition \ref{prop:tensor_prod_mu_B}, we obtain:
$$ \left(\mu_{\begin{bmatrix} I_s \\
I_s
\end{bmatrix}} \otimes \id_{\V^{\otimes s}} \right) \,\circ\,  \mu_{\begin{bmatrix} I_s \\
I_s
\end{bmatrix}} = \mu_{\begin{bmatrix} I_s &0 \\
I_s &0 \\
0 &I_s
\end{bmatrix}}  \,\circ\,  \mu_{\begin{bmatrix} I_s \\
I_s
\end{bmatrix}} = \mu_{\begin{bmatrix} I_s \\
I_s \\
I_s
\end{bmatrix}}.$$
Thus we have:
\begin{align*}
\hat{f}_{R_2} \circ \hat{f}_{R_1} &=\;  \left( \id_{\V^{\otimes l}} \otimes (z^*)^{\otimes d_2} \otimes (z^*)^{\otimes d_1} \right) \,\circ\, \left(\mu_{BA} \otimes \mu_{B'A} \otimes  \mu_{A'}\right)  \,\circ\,  \mu_{\begin{bmatrix} I_s \\
I_s\\I_s
\end{bmatrix}} \\
&=\;  \left( \id_{\V^{\otimes l}} \otimes (z^*)^{\otimes d_2+ d_1} \right)  \,\circ\, \mu_{\begin{bmatrix} BA \\
B'A\\A' 
\end{bmatrix}} = \hat{f}_{R_2 \star R_1}
\end{align*}
where the equality between the two lines follows from Proposition \ref{prop:tensor_prod_mu_B}. The proposition is proved.

\end{proof}

\begin{lemma}\label{lem:tensor_prod_f_R_infty}
Let $R_1 \in Rel^\infty_{s_1, k_1}$ and $R_2\in Rel^\infty_{s_2, k_2}$. We have: $\hat{f}_{R_1}\otimes \hat{f}_{R_2} = \hat{f}_{R_1\times R_2}$.
\end{lemma}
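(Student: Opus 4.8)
The plan is to mimic the proof of Proposition \ref{prop:tensor_prod_mu_B}, translated through Definition \ref{def:f_R_semiFrob_def}. First I would fix, for $i=1,2$, matrices $A_i \in Mat_{k_i\times s_i}(\F_q)$ and $A_i' \in Mat_{d_i\times s_i}(\F_q)$ with $\rk(A_i') = d_i$ and
\[
R_i = Row\begin{bmatrix} -A_i & I_{k_i}\\ A_i' & 0\end{bmatrix},
\]
which is possible because $R_i\to\F_q^{k_i}$ is onto; by definition $\hat f_{R_i} = \bigl(\id_{\V^{\otimes k_i}}\otimes (z^*)^{\otimes d_i}\bigr)\circ \mu_{B_i}$, where $B_i := \left[\begin{smallmatrix}A_i\\ A_i'\end{smallmatrix}\right]$. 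Next I would compute $R_1\times R_2$ in the standard form: with the coordinates ordered so that the two "source" blocks precede the two "target" blocks (the convention of Section \ref{sec:Deligne_def}), one finds $R_1\times R_2 = Row\left[\begin{smallmatrix} -A & I_{k_1+k_2}\\ A' & 0\end{smallmatrix}\right]$ where $A = \left[\begin{smallmatrix}A_1 & 0\\ 0 & A_2\end{smallmatrix}\right]$ and $A' = \left[\begin{smallmatrix}A_1' & 0\\ 0 & A_2'\end{smallmatrix}\right]$, and $\rk(A') = d_1+d_2$ since $A'$ is block diagonal. Hence
\[
\hat f_{R_1\times R_2} = \bigl(\id_{\V^{\otimes(k_1+k_2)}}\otimes (z^*)^{\otimes(d_1+d_2)}\bigr)\circ \mu_{\left[\begin{smallmatrix}A\\ A'\end{smallmatrix}\right]}.
\]

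The key observation is that the $(k_1+k_2+d_1+d_2)\times(s_1+s_2)$ matrix $\left[\begin{smallmatrix}A\\ A'\end{smallmatrix}\right]$ is obtained from the block-diagonal matrix $\left[\begin{smallmatrix}A_1 & 0\\ A_1' & 0\\ 0 & A_2\\ 0 & A_2'\end{smallmatrix}\right]$ by the row permutation $P$ that reorders the four row-blocks from $(k_1,d_1,k_2,d_2)$ to $(k_1,k_2,d_1,d_2)$. By Proposition \ref{prop:compos_mu_A}, together with the fact that $\mu_P$ is the corresponding permutation morphism $w$ of tensor factors (this follows by writing $P$ as a product of elementary transposition matrices and noting, as in the proof of Lemma \ref{lem:transp_mu_A} and Lemma \ref{lem:mu_identity}, that $\mu_{E_{R_i\leftrightarrow R_{i+1}}}$ is the symmetry morphism on the relevant factors), and by Proposition \ref{prop:tensor_prod_mu_B}, we get
\[
\mu_{\left[\begin{smallmatrix}A\\ A'\end{smallmatrix}\right]} = w\circ \mu_{\left[\begin{smallmatrix}A_1 & 0\\ A_1' & 0\\ 0 & A_2\\ 0 & A_2'\end{smallmatrix}\right]} = w\circ \bigl(\mu_{B_1}\otimes \mu_{B_2}\bigr).
\]
Since each $z^*\colon\V\to\triv$ acts on a single factor and $w$ merely permutes factors, precomposing $\id_{\V^{\otimes(k_1+k_2)}}\otimes (z^*)^{\otimes(d_1+d_2)}$ with $w$ only redistributes the copies of $z^*$, yielding $(\id_{\V^{\otimes k_1}}\otimes (z^*)^{\otimes d_1})\otimes(\id_{\V^{\otimes k_2}}\otimes (z^*)^{\otimes d_2})$.

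Combining these and applying the interchange law for the monoidal product gives
\[
\hat f_{R_1\times R_2} = \bigl[(\id_{\V^{\otimes k_1}}\otimes (z^*)^{\otimes d_1})\circ\mu_{B_1}\bigr]\otimes\bigl[(\id_{\V^{\otimes k_2}}\otimes (z^*)^{\otimes d_2})\circ\mu_{B_2}\bigr] = \hat f_{R_1}\otimes\hat f_{R_2},
\]
which is the claim. The only genuinely fiddly point is the bookkeeping of the row/factor permutation $P$ and the verification that the factors of $(z^*)$ land on exactly the "$A_i'$-outputs"; everything else is a formal consequence of Propositions \ref{prop:compos_mu_A} and \ref{prop:tensor_prod_mu_B}. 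If one prefers to sidestep $P$ entirely, an equivalent route is purely diagrammatic: split the $\mu$-boxes for $A = \operatorname{diag}(A_1,A_2)$ and $A' = \operatorname{diag}(A_1',A_2')$ via Lemma \ref{lem:horizontal stacking_with_zero}, and use cocommutativity and coassociativity of $m^*$ to recognize the diagram \eqref{eq:f_R_semiFrob_def} of $\hat f_{R_1\times R_2}$ as the horizontal concatenation of the diagrams \eqref{eq:f_R_semiFrob_def} of $\hat f_{R_1}$ and $\hat f_{R_2}$ — which is precisely the argument of Proposition \ref{prop:tensor_prod_mu_B} adapted to the present setting.
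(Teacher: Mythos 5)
Your proof is correct and follows essentially the same route as the paper: write each $R_i$ in the standard form $Row\left[\begin{smallmatrix}-A_i & I_{k_i}\\ A_i' & 0\end{smallmatrix}\right]$, identify the standard form of $R_1\times R_2$ with block-diagonal $A$ and $A'$, and reduce to Proposition \ref{prop:tensor_prod_mu_B} together with the interchange law. The only (cosmetic) difference is how the row/factor permutation is absorbed: you make $\mu_P=w$ explicit via elementary transpositions, whereas the paper inserts $\sigma\circ\sigma=\id$ and invokes naturality of $\sigma$ — the same bookkeeping in different clothing.
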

\begin{proof}
Let $d_i = \dim_{\F_q} R_i$, $r_i := s_i+k_i$ for $i=1,2$. 

Let $A^{(i)} \in Mat_{k_i\times s_i}(\F_q), (A')^{(i)}\in Mat_{d_i\times s_i}(\F_q)$ be such that $$R_i = Row\begin{bmatrix} -A^{(i)} &I_{k_i}\\
(A')^{(i)} &0
\end{bmatrix}\;\; rk\left((A')^{(i)}\right)=d_i.$$ 

Consider $R_1 \times R_2 $ as a subset $ \F_q^{s_1+s_2+k_1+k_2}$.
Then $$ R_1 \times R_2 = Row \begin{bmatrix} 
-A^{(1)} &0 &I_{k_1} &0\\
(A')^{(1)} &0 &0 &0\\
0 &-A^{(2)} &0  &I_{k_2} \\
0 &(A')^{(2)} &0  &0
\end{bmatrix} = Row \begin{bmatrix} 
-A^{(1)} &0 &I_{k_1} &0\\
0 &-A^{(2)} &0  &I_{k_2} \\
(A')^{(1)} &0 &0 &0\\
0 &(A')^{(2)} &0  &0
\end{bmatrix}$$ so $$\hat{f}_{R_1\times R_2} = \left( \id_{\V^{\otimes k_1 + k_2}} \otimes (z^*)^{\otimes d_1+ d_2} \right) \,\circ\,\mu_{\begin{bmatrix} A^{(1)} &0 \\
0 &A^{(2)} \\(A')^{(1)}  &0 \\ 
0 &(A')^{(2)}
\end{bmatrix}}.$$
Let us compute $\hat{f}_{R_1} \otimes \hat{f}_{R_2}$. We have:
\begin{align*}
\hat{f}_{R_1}\otimes \hat{f}_{R_2} &=\left(\left( \id_{\V^{\otimes k_1}} \otimes (z^*)^{\otimes d_1} \right) \,\circ\, \mu_{\begin{bmatrix} A^{(1)} \\
(A')^{(1)}
\end{bmatrix}} \right) \otimes \left(\left( \id_{\V^{\otimes k_2}} \otimes (z^*)^{\otimes d_2}  \right) \,\circ\, \mu_{\begin{bmatrix} A^{(2)} \\
(A')^{(2)}
\end{bmatrix}} \right) \\
&= \left( \id_{\V^{\otimes k_1}} \otimes (z^*)^{\otimes d_1}  \otimes  \id_{\V^{\otimes k_2}} \otimes (z^*)^{\otimes d_2} \right) \,\circ\,\left( \mu_{\begin{bmatrix} A^{(1)} \\
(A')^{(1)}
\end{bmatrix}} \otimes  \mu_{\begin{bmatrix} A^{(2)} \\
(A')^{(2)}
\end{bmatrix}} \right) 
\end{align*}
By Proposition \ref{prop:tensor_prod_mu_B}, we have:
$$\mu_{\begin{bmatrix} A^{(1)} \\
(A')^{(1)}
\end{bmatrix}} \otimes  \mu_{\begin{bmatrix} A^{(2)} \\
(A')^{(2)}
\end{bmatrix}} = \mu_{\begin{bmatrix} A^{(1)} &0 \\
(A')^{(1)}  &0 \\ 0 &A^{(2)} \\
0 &(A')^{(2)}
\end{bmatrix}}$$
Let $\sigma:\V^{\otimes d_1} \otimes \V^{\otimes k_2}\to  \V^{\otimes k_2} \otimes \V^{\otimes d_1}$ be the symmetry morphism. Then $\sigma^2 = \id$, so
\begin{align*}
&\hat{f}_{R_1}\otimes \hat{f}_{R_2}  = \left( \id_{\V^{\otimes k_1}} \otimes (z^*)^{\otimes d_1}  \otimes  \id_{\V^{\otimes k_2}} \otimes (z^*)^{\otimes d_2} \right) \, \circ \sigma \, \circ \sigma \,\circ\,\mu_{\begin{bmatrix} A^{(1)} &0 \\
(A')^{(1)}  &0 \\ 0 &A^{(2)} \\
0 &(A')^{(2)}
\end{bmatrix}} \\ & = \left( \InnaA{\id} \otimes (z^*)^{\otimes d_1+ d_2} \right) \, \circ \sigma \,\circ\,\mu_{\begin{bmatrix} A^{(1)} &0 \\
(A')^{(1)}  &0 \\ 0 &A^{(2)} \\
0 &(A')^{(2)}
\end{bmatrix}}  = \left( \InnaA{\id} \otimes (z^*)^{\otimes d_1+ d_2} \right) \,\circ\,\mu_{\begin{bmatrix} A^{(1)} &0 \\
0 &A^{(2)} \\(A')^{(1)}  &0 \\ 
0 &(A')^{(2)}
\end{bmatrix}}
\end{align*}
(the last equality follows from the fact that $\sigma$ is a natural transformation). The statement is thus proved.
\end{proof}

\section*{Acknowledgements}
The research of I. E. and T. H. was supported by the ISF grant 711/18. The research of T. H. was partially funded by the Deutsche Forschungsgemeinschaft (DFG, German Research
Foundation) under Germany's Excellence Strategy – EXC-2047/1 – 390685813. We thank Avraham Aizenbud, Nate Harman and Andrew Snowden for helpful discussions.

\section*{Conflict of Interest} The authors declare that they have no conflict of interest.


\begin{thebibliography}{999999}

\bibitem[BEH]{BEAH} D. Barter, I. Entova-Aizenbud, T. Heidersdorf, 
\emph{Deligne categories and representations of the infinite symmetric group}.
Adv. Math. 346, 1-47 (2019). 


\bibitem[CO]{CO} J. Comes, V. Ostrik, {\it On blocks of Deligne's category 
$\underline{\Rep}(S_t)$}, Advances in Mathematics 226 (2011), no. 2, 1331-1377 


\bibitem[CO2]{CO2} J. Comes, V. Ostrik, {\it On Deligne's category 
$\uRep^{ab}(S_d)$}, Algebra and Number Theory, Vol. 8, No. 2, 
pp. 473--496, (2014).

\bibitem[D]{Del02} P. Deligne, \emph{Cat\'egories tensorielles}, Mosc. Math. J. 2, no. 2, (2002). 

\bibitem[D2]{Del07} P. Deligne, {\it La categorie des representations du groupe 
symetrique $S_t$, lorsque $t$ n'est pas un entier naturel}, Algebraic groups 
and homogeneous spaces, Tata Inst. Fund. Res. Stud. Math., p. 209-273, Mumbai 
(2007).

\bibitem[DM]{Deligne-Milne}
{P. Deligne, J. S. Milne},
{\it Tannakian categories.},
{Hodge cycles, motives, and Shimura varieties, Lect. Notes Math.
    900, (1982).}


\bibitem[EH]{EAH} I. Entova-Aizenbud, T.Heidersdorf, {\it Deligne categories and representations of the finite general linear group, part 2: Abelian envelopes}, in preparation

\bibitem[EHS]{EHS} I. Entova-Aizenbud, V. Serganova, V. Hinich, {\it Deligne 
categories 
and the limit of categories $\Rep(GL(m|n))$}, Int. Math. Res. Not. 2020, No. 15, 4602-4666 (2020). 

\bibitem[EGNO]{EGNO} P. Etingof, S. Gelaki, D. Niksych, V. Ostrik, {\it Tensor 
Categories}, Mathematical Surveys and Monographs, Vol. 205, AMS (2015).

\bibitem[HS]{HS} N. Harman, S. Snowden, {\it Oligomorphic groups and tensor categories}, arXiv preprint arXiv:2204.04526, (2022).


\bibitem[KOK]{KOK} M. Khovanov, V. Ostrik, Y. Kononov, {\it Two-dimensional topological theories, rational functions and their tensor envelopes}, arXiv:2011.14758, (2020).

\bibitem[KS]{KS} M. Khovanov, R. Sazdanovic, {\it Bilinear pairings on two-dimensional cobordisms and generalizations of the Deligne category.} arXiv preprint arXiv:2007.11640, (2020).

\bibitem[K]{K} F. Knop,
\emph{A construction of semisimple tensor categories},
C. R., Math., Acad. Sci. Paris 343, No. 1, 15-18 (2006). 

\bibitem[K2]{K2} F. Knop, \emph{Tensor envelopes of regular categories}, Adv. Math. 214, No. 2, 571-617 (2007). 

\bibitem[ML]{SML} S. Mac Lane, \emph{Categories for the working mathematician}, 2nd ed.,
Graduate Texts in Mathematics. 5. New York, NY: Springer. xii, 314 p. (1998). 

\bibitem[M]{M} E. Meir, \emph{Interpolations of monoidal categories and algebraic structures by invariant theory}, arXiv:2105.04622, (2021).






\bibitem[N1]{N1} R. Nagpal, \emph{VI-modules in nondescribing characteristic. I},  Algebra Number Theory 13, No. 9, 2151-2189 (2019). 

\bibitem[N2]{N2} R. Nagpal, \emph{VI-modules in non-describing characteristic. II}, J. Reine Angew. Math. 781, 187-205 (2021). 

\bibitem[PS]{PS} A. Putman, S. Sam, \emph{Representation stability and finite linear groups}, Duke Math. J. 166, No. 13, 2521-2598 (2017). 


\bibitem[SS]{SS} S. Sam, A. Snowden, {\it Stability patterns in representation 
theory}, Forum Math. Sigma, Vol. 3 (2015), arXiv:1209.5122 [math.AC].

\bibitem[S]{S} P. Selinger, \emph{A survey of graphical languages for monoidal categories},
Coecke, Bob (ed.), New structures for physics. Berlin: Springer (ISBN 978-3-642-12820-2/pbk; 978-3-642-12821-9/ebook). Lecture Notes in Physics 813, 289-355 (2011). 


\end{thebibliography}
\end{document}